\newcommand{\mar}[1]{\marginpar{\tiny #1}} 
\newcommand{\BA}{\ensuremath{\mathbb {A}}\xspace}
\newcommand{\BG}{\ensuremath{\mathbb {G}}\xspace}
\newcommand{\BQ}{\ensuremath{\mathbb {Q}}\xspace}
\newcommand{\BR}{\ensuremath{\mathbb {R}}\xspace}
\newcommand{\BS}{\ensuremath{\mathbb {S}}\xspace}
\newcommand{\CA}{\ensuremath{\mathcal {A}}\xspace}
\newcommand{\CS}{\ensuremath{\mathcal {S}}\xspace}
\newcommand{\ab}{{\mathrm{ab}}}
\DeclareMathOperator{\Aut}{Aut}
\DeclareMathOperator{\End}{End}
\DeclareMathOperator{\Gal}{Gal}
\newcommand{\GL}{\mathrm{GL}}
\DeclareMathOperator{\Hom}{Hom}
\newcommand{\id}{\ensuremath{\mathrm{id}}\xspace}
\DeclareMathOperator{\Lie}{Lie}
\DeclareMathOperator{\Nm}{Nm}
\DeclareMathOperator{\ord}{ord}
\DeclareMathOperator{\rank}{rank}
\newcommand{\PGL}{{\mathrm{PGL}}}
\DeclareMathOperator{\Res}{Res}
\DeclareMathOperator{\Spec}{Spec}
\DeclareMathOperator{\Spf}{Spf}
\newcommand{\Sh}{{\mathrm{Sh}}}
\newtheorem{theorem}{Theorem}
\newtheorem{proposition}[theorem]{Proposition}
\newtheorem{lemma}[theorem]{Lemma}
\newtheorem{corollary}[theorem]{Corollary}
\theoremstyle{definition}
\newtheorem{definition}[theorem]{Definition}
\newtheorem{remark}[theorem]{Remark}
\newtheorem{variant}[theorem]{Variant}
\numberwithin{equation}{section}
\numberwithin{theorem}{section}
\renewcommand{\to}{%
   \ifbool{@display}{\longrightarrow}{\longrightarrow}%
   }
\let\shortmapsto\mapsto
\renewcommand{\mapsto}{%
   \ifbool{@display}{\longmapsto}{\shortmapsto}%
   }
\newlength{\olen}
\newlength{\ulen}
\newlength{\xlen}
\newcommand{\xra}[2][]{%
   \ifbool{@display}%
      {\settowidth{\olen}{$\overset{#2}{\longrightarrow}$}%
       \settowidth{\ulen}{$\underset{#1}{\longrightarrow}$}%
       \settowidth{\xlen}{$\xrightarrow[#1]{#2}$}%
       \ifdimgreater{\olen}{\xlen}%
          {\underset{#1}{\overset{#2}{\longrightarrow}}}%
          {\ifdimgreater{\ulen}{\xlen}%
             {\underset{#1}{\overset{#2}{\longrightarrow}}}
             {\xrightarrow[#1]{#2}}}}%
      {\xrightarrow[#1]{#2}}
   }
\newcommand{\xyra}[2][]{%
   \settowidth{\xlen}{$\xrightarrow[#1]{#2}$}%
   \ifbool{@display}%
      {\settowidth{\olen}{$\overset{#2}{\longrightarrow}$}%
       \settowidth{\ulen}{$\underset{#1}{\longrightarrow}$}%
       \ifdimgreater{\olen}{\xlen}%
          {\mathrel{\xymatrix@M=.12ex@C=3.2ex{\ar[r]^-{#2}_-{#1} &}}}%
          {\ifdimgreater{\ulen}{\xlen}%
             {\mathrel{\xymatrix@M=.12ex@C=3.2ex{\ar[r]^-{#2}_-{#1} &}}}
             {\mathrel{\xymatrix@M=.12ex@C=\the\xlen{\ar[r]^-{#2}_-{#1} &}}}}}%
      {\mathrel{\xymatrix@M=.12ex@C=\the\xlen{\ar[r]^-{#2}_-{#1} &}}}%
   }
\newcommand{\xla}[2][]{%
   \ifbool{@display}%
      {\settowidth{\olen}{$\overset{#2}{\longleftarrow}$}%
       \settowidth{\ulen}{$\underset{#1}{\longleftarrow}$}%
       \settowidth{\xlen}{$\xleftarrow[#1]{#2}$}%
       \ifdimgreater{\olen}{\xlen}%
          {\underset{#1}{\overset{#2}{\longleftarrow}}}%
          {\ifdimgreater{\ulen}{\xlen}%
             {\underset{#1}{\overset{#2}{\longleftarrow}}}
             {\xleftarrow[#1]{#2}}}}%
      {\xleftarrow[#1]{#2}}
   }
\newcommand{\isoarrow}{%
   \ifbool{@display}{\overset{\sim}{\longrightarrow}}{\xrightarrow\sim}%
   }
\DeclareMathOperator{\Trace}{Tr}
\DeclareMathOperator{\height}{height}
\begin{document}


\title{On the $p$-adic uniformization of  quaternionic Shimura curves }
\author{Jean-Fran\c cois Boutot}
\address{12 av. Parmentier, 75011 Paris, France.}
\email{jf.boutot@outlook.fr}
\author{Thomas Zink}
\address{Fakult\"at f\"ur Mathematik,
Universit\"at Bielefeld,
Postfach 100131,
33501 Bielefeld, Germany}
\email{zink@math.uni-bielefeld.de}

\date{\today}
\maketitle

\tableofcontents

\section{Introduction}

Let $D$ be a quaternion division algebra over a totally real number field $F$.
We assume that $D$ splits at exactly one infinite place
$\chi_0: F \rightarrow \mathbb{R}$. We fix an isomorphism
$D \otimes_{F, \chi_0} \mathbb{R} \cong {\rm M}_2(\mathbb{R})$. Let $h_D$ be the
homomorphism  
\begin{equation}\label{defhD}
    \mathbb{C}^{\times}  \rightarrow  (D \otimes_{F, \chi_0} \mathbb{R})^{\times}  
    \subset (D \otimes \mathbb{R})^{\times} , \quad 
z = a + b \mathbf{i}  \mapsto  \left(
  \begin{array}{rr} 
    a & -b\\
    b & a
  \end{array} \right)   .
\end{equation} 

We regard $D^\times$ as an algebraic group over $\mathbb{Q}$. Then
$(D^\times, h_D)$ is a Shimura datum  of a Shimura curve $\Sh(D^{\times}, h_D)$.
We prove
the $p$-adic uniformization of these curves under certain conditions, discovered by Cherednik \cite{Ch}. Let us describe our main result.

We fix a prime number $p$ and choose a diagram  of field embeddings,
\begin{equation}\label{LDiagramm} 
  \mathbb{C} \leftarrow \bar{\mathbb{Q}} \rightarrow \bar{\mathbb{Q}}_p.
\end{equation}
This gives a bijection 
\begin{displaymath}
   \Hom_{\mathbb{Q}{\rm -Alg}}(F, \mathbb{C}) = 
  \Hom_{\mathbb{Q}{\rm -Alg}}(F, \bar{\mathbb{Q}}_p) .
  \end{displaymath}
Let $\mathfrak{p}_0, \ldots, \mathfrak{p}_s$ be the prime ideals of $F$ over
$p$. We assume that $\mathfrak{p}_0$ is induced by
$\chi_0: F \rightarrow \bar{\mathbb{Q}}_p$ and that
$D_{\mathfrak{p}_0} = D \otimes_{F} F_{\mathfrak{p}_0}$ is a division algebra.
Let $O_{D_{\mathfrak{p}_0}}$
be the maximal order of $D_{\mathfrak{p}_0}$. We choose an open and compact
subgroup $\mathbf{K} \subset (D \otimes_{\mathbb{Q}} \mathbb{A}_f)^{\times}$
as follows. We set $\mathbf{K}_{\mathfrak{p}_0} = O_{D_{\mathfrak{p}_0}}^{\times}$. For
$i = 1, \ldots, s$ we choose arbitrarily open and compact subgroups
$\mathbf{K}_{\mathfrak{p}_i} \subset D_{\mathfrak{p}_i}^{\times}$. We set
\begin{equation}\label{defKp}
  \mathbf{K}_p = \prod_{i=0}^{s} \mathbf{K}_{\mathfrak{p}_i} \subset
  (D \otimes_{\mathbb{Q}} \mathbb{Q}_p)^{\times}. 
  \end{equation}
We also choose a sufficiently small open compact subgroup
$\mathbf{K}^p \subset (D \otimes_{\mathbb{Q}} \mathbb{A}^p_f)^{\times}$ and set
\begin{displaymath}
\mathbf{K} = \mathbf{K}_p \mathbf{K}^p.
\end{displaymath}
The Shimura field $E(D^{\times}, h_{D})$ is $\chi_0(F)$. The diagram
(\ref{LDiagramm}) induces a $p$-adic place $\nu$ of the Shimura field, and 
$\chi_0$ gives an identification  $E(D^{\times}, h_{D})_{\nu} \cong F_{\mathfrak{p}_0}$.
As an abbreviation we write $E_{\nu} = E(D^{\times}, h_{D})_{\nu}$. We denote by
$\breve{E}_{\nu}$ the completion of the maximal unramified extension of
$E_{\nu}$. 

We will prove (see Corollary \ref{remstab}) that the  curve $\Sh_{\mathbf{K}}(D^{\times}, h_D)$  has stable reduction over
$\Spec O_{E_{\nu}}$, i.e., $\Sh_{\mathbf{K}}(D^{\times}, h_D)$ extends to a  stable curve  $\widetilde{\Sh}_{\mathbf{K}}(D^{\times}, h_D)$ over  $O_{E_{\nu}}$, in the sense of Deligne-Mumford \cite{DM}. By \cite[Lem. 1.12]{DM}, this extension is unique up to unique isomorphism.  The action of 
$(D \otimes \mathbb{A}_f)^{\times}$ on the tower $\Sh_{\mathbf{K}}(D^{\times}, h_D)$ for
varying $\mathbf{K}$ as above extends to the stable model. 

Let $\check{D}$ be a \emph{Cherednik twist} of $D$.  This is a quaternion
algebra over $F$ such that
\begin{equation}\label{CherednikUnif1e}
  \check{D} \otimes_{F} \mathbb{A}_{F,f}^{\mathfrak{p}_0} \cong
  D \otimes_{F} \mathbb{A}_{F,f}^{\mathfrak{p}_0} 
  \end{equation} 
and such that
$\check{D} \otimes_{F} F_{\mathfrak{p}_0} \cong \rm{M}_2 (F_{\mathfrak{p}_0})$  
and such that $\check{D}$ is non-split at all infinite places of $F$.
For a more canonical definition of $\check{D}$ see (\ref{Cheredniktwist}).

Let $\hat{\Omega}_{F_{\mathfrak{p}_0}}^2$ be the \emph{integral model of the Drinfeld
  halfplane} for the local field $F_{\mathfrak{p}_0}$, cf. \cite{Dr}. 
It is a $p$-adic formal scheme over $\Spf O_{F_{\mathfrak{p}_0}}$ with an action
of the group
$\check{D}_{\mathfrak{p}_0}^{\times} = (\check{D} \otimes_{F} F_{\mathfrak{p}_0})^{\times} \cong \GL_2(F_{\mathfrak{p}_0})$, cf. \cite{Dr}, (\ref{PGL-O1e}). 
This action factors through an action of $\PGL_2(F_{\mathfrak{p}_0})$.  
We consider on 
\begin{equation}\label{CherednikUnif2e} 
  (\hat{\Omega}_{F_{\mathfrak{p}_0}}^2 \times_{\Spf O_{F_{\mathfrak{p}_0}},\chi_0}
  \Spf O_{\breve{E}_\nu})\times (D \otimes_{F} F_{\mathfrak{p}_0})^{\times}/
  \mathbf{K}_{\mathfrak{p}_0} =
  (\hat{\Omega}_{F_{\mathfrak{p}_0}}^2
  \times_{\Spf O_{F_{\mathfrak{p}_0}},\chi_0} \Spf O_{\breve{E}_\nu}) \times \mathbb{Z}, 
\end{equation}
the action of $\check{D}_{\mathfrak{p}_0}$ which is on the first factor on the
right hand side obtained from the action introduced above and which acts on
$\mathbb{Z}$ by translation with
$\ord_{F_{\mathfrak{p}_0}} \det_{\check{D}_{\mathfrak{p}_0}/F_{\mathfrak{p}_0}}$, cf. Proposition
\ref{RZ7p}. 
We formulate our main result as follows. 
\begin{theorem}\label{MainIntro}
Let 
$\widetilde{\Sh}_{\mathbf{K}}(D^{\times}, h_{D})^\wedge_{ \, /\Spf O_{\breve{E}_{\nu}}}$  be the
completion of the scheme
$\widetilde{\Sh}_{\mathbf{K}}(D^{\times},h_D)\times_{\Spec O_{E_{\nu}}}\Spec O_{\breve{E}_{\nu}}$
along the special fiber. Then there is an isomorphism of formal schemes
  \begin{equation}\label{CherednikUnif3e}  
    \check{D}^{\times} \backslash \big((\hat{\Omega}_{F_{\mathfrak{p}_0}}^2 
    \times_{\Spf O_{F_{\mathfrak{p}_0}},\chi_0} \Spf O_{\breve{E}_\nu})\times
    (D \otimes \mathbb{A}_f)^{\times}/\mathbf{K})\big) 
\overset{\sim}{\longrightarrow}
    \widetilde{\Sh}_{\mathbf{K}}(D^{\times}, h_{D})^\wedge_{\, / \Spf O_{\breve{E}_{\nu}}} .
  \end{equation}
  The action of $\check{D}^{\times}$ is given by (\ref{CherednikUnif1e}) and
  (\ref{CherednikUnif2e}). 
  For varying $\mathbf{K}$ this uniformization isomorphism is compatible with 
  the action of Hecke correspondences in $(D\otimes \mathbb{A}_f)^{\times}$ on both sides.  
 
  Let $\Pi \in D_{\mathfrak{p}_0}$ be a prime element in this division algebra
  over $F_{\mathfrak{p}_0}$. We denote also by $\Pi$ the image by the canonical
  embedding $D_{\mathfrak{p}_0}^\times \subset (D \otimes \mathbb{A}_f)^{\times}$. 
  Let $\tau \in \Gal(\breve{E}_{\nu}/ E_{\nu})$ be the Frobenius automorphism and 
  $\tau_c = \Spf\tau^{-1}: \Spf O_{\breve{E}_{\nu}}\rightarrow\Spf O_{\breve{E}_{\nu}}$.  
  The natural Weil descent datum with respect to  
  $O_{\breve{E}_\nu}/O_{E_{\nu}}$ on the right hand side of (\ref{CherednikUnif3e})
  induces on the
  left hand side the Weil descent datum given by the following diagram
  \begin{displaymath}
\xymatrix{
    \check{D}^{\times} \backslash ((\hat{\Omega}_{F_{\mathfrak{p}_0}}^2 
    \times_{\Spf O_{F_{\mathfrak{p}_0}},\chi_0} \Spf O_{\breve{E}_\nu})\times
    D^{\times}(\mathbb{A}_f)/\mathbf{K})
  \ar[d]_{ \id \times \tau_c \times \Pi^{-1}} \ar[r] & 
 \widetilde{\Sh}_{\mathbf{K}}(D^{\times}, h_{D})^\wedge_{\,/ \Spf O_{\breve{E}_{\nu}}}
  \ar[d]^{\id \times \tau_c}\\
   \check{D}^{\times} \backslash ((\hat{\Omega}_{F_{\mathfrak{p}_0}}^2 
    \times_{\Spf O_{F_{\mathfrak{p}_0}},\chi_0} \Spf O_{\breve{E}_\nu})\times
    D^{\times}(\mathbb{A}_f)/\mathbf{K})
  \ar[r] & 
 \widetilde{\Sh}_{\mathbf{K}}(D^{\times}, h_{D})^\wedge_{\,/ \Spf O_{\breve{E}_{\nu}}}. 
     }
    \end{displaymath}
\end{theorem}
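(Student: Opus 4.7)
The plan is to prove this by combining the Rapoport–Zink uniformization theorem for the basic locus with Drinfeld's identification of the local moduli space, then to read off the descent datum and stable reduction from the Drinfeld model. I break this into four steps.

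\textbf{Step 1 (moduli interpretation and integral model).} The Shimura datum $(D^\times, h_D)$ is not of PEL type when $F \neq \mathbb{Q}$, so I would follow Carayol's trick: choose an auxiliary imaginary quadratic CM field $K/\mathbb{Q}$, split at $p$ in an appropriate way, and work with the PEL datum for $B^\times = (D \otimes_\mathbb{Q} K)^\times$ equipped with the induced $h$-map. This refines $\Sh_{\mathbf{K}}(D^\times, h_D)$ and yields a moduli problem of abelian varieties with $O_D$-action, polarization, and level structure, giving an integral model over $\Spec O_{E_\nu}$. The key local structure is that at $\mathfrak{p}_0$ the $p$-divisible group is a \emph{special formal} $O_{D_{\mathfrak{p}_0}}$-module of height $4[F_{\mathfrak{p}_0}:\mathbb{Q}_p]$ in the sense of Drinfeld, while at $\mathfrak{p}_i$ ($i \geq 1$) it is étale-by-multiplicative and the local model is formally smooth. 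Stable reduction (Corollary \ref{remstab}) will be read off later from the Drinfeld local model.

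\textbf{Step 2 (Rapoport--Zink uniformization).} Since $D_{\mathfrak{p}_0}$ is a division algebra, the isocrystal of the universal $p$-divisible group at $\mathfrak{p}_0$ is isoclinic of slope $1/2$, so every geometric point of the special fibre is basic. Hence the Rapoport--Zink uniformization theorem in its non-PEL form produces a canonical isomorphism
\begin{equation*}
  I(\mathbb{Q}) \bs \bigl( \mathcal{M}_{\loc} \times (D \otimes \mathbb{A}_f)^\times / \mathbf{K} \bigr) \isoarrow \widetilde{\Sh}_{\mathbf{K}}(D^\times, h_D)^\wedge_{/\Spf O_{\breve{E}_\nu}},
\end{equation*}
where $\mathcal{M}_{\loc}$ is the local RZ formal scheme parametrising $p$-divisible groups (with the requisite $O_D \otimes \mathbb{Z}_p$-action and polarization) quasi-isogenous to a fixed basic one, and $I$ is the algebraic group over $\mathbb{Q}$ of self-quasi-isogenies preserving all structures. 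The $(D \otimes \mathbb{A}_f)^\times$-action and compatibility with Hecke correspondences are built in.

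\textbf{Step 3 (identification of local space and of $I$).} The local space $\mathcal{M}_{\loc}$ factors as a product over the places of $F$ above $p$. At $\mathfrak{p}_0$, Drinfeld's theorem identifies the corresponding factor with $(\hat{\Omega}^2_{F_{\mathfrak{p}_0}} \times_{\Spf O_{F_{\mathfrak{p}_0}},\chi_0} \Spf O_{\breve{E}_\nu}) \times \mathbb{Z}$, equipped with the action (\ref{CherednikUnif2e}); at the places $\mathfrak{p}_i$, $i \geq 1$, the local space collapses to the discrete set $D_{\mathfrak{p}_i}^\times / \mathbf{K}_{\mathfrak{p}_i}$, producing collectively the factor $(D\otimes\mathbb{Q}_p)^\times/\mathbf{K}_p$. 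To identify $I$, I would compute its local invariants via the Honda--Tate dictionary: $I$ is an inner form of $D^\times$ which by construction is isomorphic to $D^\times$ away from $\mathfrak{p}_0$ and the infinite places; the slope $1/2$ isocrystal at $\mathfrak{p}_0$ forces $I_{\mathfrak{p}_0}$ to be split, and the positivity of the Rosati involution forces $I_\infty$ to be the compact form at every archimedean place. By the characterisation in (\ref{CherednikUnif1e}) and the product formula for Brauer invariants, this is precisely $\check{D}^\times$, and the abstract arithmetic subgroup of quasi-isogenies coincides with $\check{D}^\times \subset I(\mathbb{Q})$.

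\textbf{Step 4 (Weil descent and stable reduction).} The Weil descent on $\widetilde{\Sh}^\wedge$ with respect to $O_{\breve{E}_\nu}/O_{E_\nu}$ pulls back on the RZ side to the datum induced by Frobenius on the universal $p$-divisible group. Under Drinfeld's identification, this Frobenius acts as $\id \times \tau_c$ on $\hat{\Omega}^2 \times_{\Spf O_{F_{\mathfrak{p}_0}}} \Spf O_{\breve{E}_\nu}$ and as translation by $-1$ on the $\mathbb{Z}$-factor, because the universal quasi-isogeny is twisted by Frobenius of slope $1/2$; expressing this translation via the embedding $D^\times_{\mathfrak{p}_0} \hookrightarrow (D \otimes \mathbb{A}_f)^\times$ gives the factor $\Pi^{-1}$ as displayed. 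Finally, because $\hat{\Omega}^2_{F_{\mathfrak{p}_0}}$ is strictly semistable — its special fibre is a reduced union of $\mathbb{P}^1$'s meeting transversally along the Bruhat--Tits tree of $\PGL_2(F_{\mathfrak{p}_0})$ — and $\check{D}^\times$ acts through $\PGL_2(F_{\mathfrak{p}_0})$ on the tree with compact quotient, the quotient for sufficiently small $\mathbf{K}^p$ is a stable curve, giving stable reduction.

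The main obstacle, in my view, is Step 1 combined with Step 3: one must set up the auxiliary PEL moduli problem precisely enough to apply Rapoport--Zink, then independence of the auxiliary CM data must be verified, and the algebraic group $I$ of global quasi-isogenies must be identified with $\check{D}^\times$ by a careful computation of local invariants at every prime — especially at the places of $F$ above $p$ other than $\mathfrak{p}_0$, where the unitary auxiliary structure introduces twists that must cancel.
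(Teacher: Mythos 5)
Your overall strategy — auxiliary CM embedding, Rapoport–Zink uniformization, Drinfeld's identification of the local space, then read off the descent and stable reduction — matches the shape of the paper's argument. But there are two genuine gaps that keep the proposal from going through as written.

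\textbf{The RZ theorem does not apply directly to $\Sh(D^\times, h_D)$.} In Step 2 you invoke the ``Rapoport--Zink uniformization theorem in its non-PEL form'' applied to $\widetilde{\Sh}_{\mathbf K}(D^\times, h_D)$. No such theorem is available here: the Shimura variety for $(D^\times, h_D)$ with $F \neq \mathbb{Q}$ is of abelian type but not of Hodge type, it carries no direct moduli interpretation by abelian varieties, and RZ's uniformization is stated for PEL data. The whole purpose of Step 1 is to embed $\Sh_{\mathbf K}(D^\times, h_D)$ \emph{into} an auxiliary PEL (or nearly-PEL) Shimura variety, apply RZ \emph{there}, and then restrict. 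Your Step 1 speaks as if the auxiliary construction ``refines'' the Shimura curve and directly endows it with a moduli problem — that conflation is exactly where the difficulty is hidden. What one actually proves is an open and closed immersion $\Sh_{\mathbf K}(D^\times, h_D) \hookrightarrow \Sh_{\mathbf K^\bullet}(G^\bullet, h^\bullet_D)$ (for $\mathbf K^\bullet$ chosen using a variant of Chevalley's theorem to kill the failure of strong approximation of the central torus), then takes closures inside the integral model of the larger variety. Your outline is silent on this immersion being open and closed, which is a nontrivial step requiring Chevalley-type unit estimates.

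\textbf{The Galois twist is the crux, not a corollary.} Even after passing to the auxiliary group $G^\bullet$, the PEL moduli functor over $O_{E_\nu}$ is \emph{not} the integral model of $\Sh_{\mathbf K^\bullet}(G^\bullet, h^\bullet_D)$; the Shimura variety for the $h$-map $h^\bullet_D = h\delta^{-1}\gamma$ is a ramified abelian twist of the coarse moduli scheme. The datum ($\xi_p$) in the moduli problem lives in a cyclotomic torsor that has no meaning in characteristic $p$, and replacing it by a bare section in $\mathbb{Z}_p^\times/\mathbf M$ is exactly the source of the twist. Identifying the Weil descent datum on the RZ side then requires composing the natural Frobenius descent with a Hecke operator $\dot z = (1,\dots,1)\times(\pi_{\mathfrak p_0}^{-1}p^{f_\nu},p^{f_\nu},\dots,p^{f_\nu})$ coming from the reciprocity law of the central character $\delta^{-1}\gamma$; it is this interplay — degree $(0,0)$ on the Drinfeld factor, translation by $1$ on the $\mathbb Z$-factor at $\mathfrak p_0$, trivial at $\mathfrak p_j$ for $j\neq 0$ because the framing objects are defined over $\kappa_{E_\nu}$ — that makes the factor come out as $\Pi^{-1}$ and nothing else. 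Your Step 4 asserts the translation by $-1$ ``because the universal quasi-isogeny is twisted by Frobenius of slope $1/2$'', but this slope argument only accounts for the Drinfeld factor; it does not by itself show that the contributions from the other primes $\mathfrak p_j$ cancel, nor that the unramified twist is exactly by the Lubin–Tate character of $\pi_{\mathfrak p_0}$. That cancellation is the delicate content.

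Two smaller remarks. First, you take $K/\mathbb{Q}$ imaginary quadratic (Carayol's choice), whereas the paper uses a CM extension $K/F$ of the totally real field and works with $B = D^{\rm opp}\otimes_F K$. Both auxiliary setups can be made to work, but the resulting similitude groups differ and the Galois-twist bookkeeping changes accordingly; one should commit to one and carry it through. Second, the identification of $\mathbf I$ with $\check D^\times$ via Honda–Tate is sound, but the compatibility of the uniformization isomorphism over the whole tower of levels requires choosing a compatible system of framing objects and rigidifications, which your sketch does not address and which is where the ``independence of auxiliary choices'' you flag at the end actually gets resolved.
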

The left hand side of (\ref{CherednikUnif3e}) can be written in more concrete
terms as follows.  We write
$\mathbf{K} = \mathbf{K}_{\mathfrak{p}_0} \mathbf{K}^{\mathfrak{p}_0}$ where
$\mathbf{K}^{\mathfrak{p}_0} \subset (\check{D}\otimes_{F}\mathbb{A}_{F,f}^{\mathfrak{p}_0})^{\times} = (D \otimes_{F} \mathbb{A}_{F,f}^{\mathfrak{p}_0})^{\times}$. For 
$g \in (D \otimes_{F} \mathbb{A}_{F,f}^{\mathfrak{p}_0})^{\times}$, let
$$\Gamma_g = \{d \in \check{D}^\times \cap g\mathbf{K}^{\mathfrak{p}_0}g^{-1} \mid \ord_{F_{\mathfrak{p}_0}} \det d = 0\} .$$ 
Let $\bar{\Gamma}_g$ be the image of $\Gamma_g$ by the natural map
$\check{D}^{\times} \rightarrow  \check{D}^{\times}_{\mathfrak{p}_0}\rightarrow \PGL_2(F_{\mathfrak{p}_0})$. 
Then $\bar{\Gamma}_g$ is a discrete cocompact subgroup of
$\PGL_2(F_{\mathfrak{p}_0})$, comp.  the proof of Proposition
\ref{uniform4l}. It acts properly discontinuously on  the formal scheme $\hat{\Omega}_{F_{\mathfrak{p}_0}}^2 \times_{\Spf O_{F_{\mathfrak{p}_0}},\chi_0} \Spf O_{\breve{E}_\nu}$, and the quotients 
$\mathfrak{X}_{\Gamma_g} := \bar{\Gamma}_g \backslash (\hat{\Omega}_{F_{\mathfrak{p}_0}}^2 \times_{\Spf O_{F_{\mathfrak{p}_0}},\chi_0} \Spf O_{\breve{E}_\nu})$ are exactly the connected components of the formal scheme on the LHS of  (\ref{CherednikUnif3e}), for varying $g$. By \cite{Mum}, $\mathfrak{X}_{\Gamma_g}$ 
is algebraizable, i.e. it is the formal scheme associated to a proper scheme
$\mathfrak{X}_{\Gamma_g}^{\mathrm{alg}}$ over $O_{\breve{E}_\nu}$. The general fibers
of these schemes  for varying $g$ give back  the
connected components of $\Sh(D^{\times}, h_D)_{\breve{E}_{\nu}}$.

We prove Theorem \ref{MainIntro}  using the method which Drinfeld \cite{Dr}
used in the case
$F = \mathbb{Q}$. The case $F \neq \mathbb{Q}$ becomes more difficult
because in this case the  Shimura curve is not described by a PEL-moduli 
problem. In fact, the Shimura curve is then a Shimura variety of abelian type which is not of Hodge type. Also, the weight homomorphism $w\colon \BG_m\to D^\times_\BR$ is not defined over $\BQ$. The existence of a canonical model is proved by the method of Shimura and Deligne \cite[\S 6]{D-TS}, by embedding this Shimura variety into one of PEL type   (\emph{m\'ethode des mod\`eles \'etranges}).  We use here a variant of this method to construct  integral models  over $O_{\breve{E}_{\nu}}$ of the Shimura curve. A similar approach was used by  Carayol \cite{C}.  More precisely, we show that  the Shimura curve
$\Sh(D^{\times}, h_D)$ can be embedded as an open and closed subscheme in a Shimura variety 
  which is an unramified twist of a PEL-moduli scheme which has a natural integral model. This PEL-moduli scheme can be so chosen that it 
has a $p$-adic uniformization by \cite[\S 6]{RZ}.  In this way, we obtain the isomorphism  \eqref{CherednikUnif3e}. Finally we must determine the
descent datum to obtain the result over $E_{\nu}$. Let us explain our strategy
in more detail. 

Let $K/F$ be a CM-field and assume that each $\mathfrak{p}_i$ is split in
$K$, i.e. $\mathfrak{p}_i O_K = \mathfrak{q}_i \bar{\mathfrak{q}_i}$.
By (\ref{LDiagramm}) we write 
\begin{equation}\label{CherednikUnif4e}
  \begin{array}{l} 
    \Phi := \Hom_{\mathbb{Q}{\rm -Alg}}(K, \mathbb{C}) = 
    \Hom_{\mathbb{Q}{\rm -Alg}}(K, \bar{\mathbb{Q}}_p) = \\[2mm] 
  \big(\coprod_{i=0}^s \Hom_{\mathbb{Q}{\rm -Alg}}(K_{\mathfrak{q}_i}, \bar{\mathbb{Q}}_p)\big)
  \; \coprod \; 
  \big(\coprod_{i=0}^s \Hom_{\mathbb{Q}{\rm -Alg}}(K_{\bar{\mathfrak{q}}_i}, \bar{\mathbb{Q}}_p)\big). 
    \end{array}
\end{equation}
We denote by $\varphi_0 \in \Phi$ the extension of $\chi_0$ which on the right
hand side of (\ref{CherednikUnif4e}) lies in the first summand. We define a
function $r: \Phi \rightarrow \{0,1,2\}$ as follows. We set
$r_{\varphi_0} = r_{\bar{\varphi}_0} = 1$. If the restriction of $\varphi \in \Phi$
to $F$ is not $\chi_0$ we set $r_{\varphi} = 0$ if $\varphi$ is in the first $s$
summands on the right hand side and $r_{\varphi} = 2$ if $\varphi$ is in the last
$s$ summands. If $\chi\neq\chi_0$ the extension $\varphi$ of $\chi$ such that
$r_{\varphi} = 2$ defines an isomorphism
$K \otimes_{F, \chi} \mathbb{R} \cong \mathbb{C}$. We define the group
homomorphism
\begin{displaymath}
  \begin{array}{rcr}
  h_K: \mathbb{C}^{\times} & \rightarrow & (K \otimes \mathbb{R})^{\times} \cong
  \prod_{\chi} (K \otimes_{F, \chi} \mathbb{R})^{\times} =
  (K \otimes_{F, \chi_0}\mathbb{R})^{\times} \times \prod_{\chi \neq \chi_0}
  \mathbb{C}^{\times}.
  \\[2mm]
  1 & \mapsto & (1,z,z,\ldots,z) \qquad\\
    \end{array}
\end{displaymath}
Let $B = D^{{\rm opp}} \otimes_{F} K$. We denote by $d \mapsto d^{\iota}$ the main  
involution of $D$ and by $a \mapsto \bar{a}$  the conjugation of $K/F$.
We denote by $b \mapsto b'$ the involution of the second kind on $B/K$
which is defined by $d \otimes a \mapsto d^{\iota} \otimes \bar{a}$.  
Let $V = B$ considered as a $B$-left module. Multiplication from the right
defines a ring homomorphism
\begin{displaymath}
D \otimes_F K \rightarrow \End_B V.   
\end{displaymath}
In particular the group $D^\times \times K^\times$ acts on $V$. 
By (\ref{defhD}) we obtain a ring homomorphism 
\begin{displaymath}
  \mathbb{C} \rightarrow D \otimes_{F, \chi_0} \mathbb{R} \rightarrow
  (D \otimes_F K) \otimes_{F, \chi_0} \mathbb{R}
  \end{displaymath}
and by the isomorphisms $K \otimes_{F, \chi} \mathbb{R} \cong \mathbb{C}$
chosen above for $\chi \neq \chi_0$ we obtain ring homomorphisms 
\begin{displaymath}
  \mathbb{C} \rightarrow K \otimes_{F, \chi} \mathbb{R} \rightarrow
  (D \otimes_F K) \otimes_{F, \chi} \mathbb{R}. 
\end{displaymath}
Taking the product of these ring homomorphisms over all 
$\chi: F \rightarrow \mathbb{R}$ we obtain a ring homomorphism
\begin{displaymath}
\mathbb{C} \rightarrow (D \otimes_F K) \otimes_{\mathbb{Q}} \mathbb{R}  
\end{displaymath}
and therefore a complex structure on the real vector space
$V \otimes \mathbb{R}$. 
Alternatively, this complex structure is given by the group homomorphism 
\begin{displaymath}
  h = h_D \times h_K: \mathbb{S} \rightarrow
  \prod_{\chi\in \Hom_{\mathbb{Q}{\rm -Alg}}(F,\mathbb{C})}  \big((D\otimes_{F,\chi} \mathbb{R})^{\times}
  \times (K \otimes_{F, \chi} \mathbb{R})^{\times}\big) ,
\end{displaymath}
where the group on the right hand side acts on $V \otimes \mathbb{R}$ by
the action of $D^\times \times K^\times$ on $V$. 

We consider
$\mathbb{Q}$-bilinear forms $\psi: V \times V \rightarrow \mathbb{Q}$
such that
\begin{displaymath}
\psi(x b , y) = \psi(x , y b'),  \quad \text{for}\; x,y \in V, \; b \in B. 
\end{displaymath}
By \cite{D-TS} one can choose $\psi$ in such a way that the complex structure
$h$ satisfies the Riemann period relations. We consider 
$G^{\bullet} = \{b \in B^{{\rm opp}} \mid b'b \in F^{\times}\}$ as an algebraic
group over $\mathbb{Q}$. The right multiplication by elements $d \otimes 1$ and
$1 \otimes a$ define elements of $G^{\bullet}$. This gives a homomorphism of algebraic groups,
\begin{equation}\label{DmalK-G.1e}
 D^{\times} \times K^{\times} \overset{\kappa}{\rightarrow}
 G^{\bullet} .
  \end{equation}
Then $(G^{\bullet}, h)$ is the Shimura datum for a Shimura variety of PEL-type. 
By (\ref{DmalK-G.1e}) we have an embedding $D^\times \rightarrow G^{\bullet}$. The decomposition
$B\otimes\mathbb{Q}_p=\prod_{i=0}^s (B_{\mathfrak{q}_i}\times B_{\bar{\mathfrak{q}}_i})$
induces a similiar decomposition of $V \otimes \mathbb{Q}_p$.
 We choose maximal orders $O_{D_{\mathfrak{p}_i}} \subset D_{\mathfrak{p}_i}$ and hence
 maximal orders $O_{B_{\mathfrak{q}_i}} \subset B_{\mathfrak{q}_i}$. 
 We assume in the definition \eqref{defKp} that
 $\mathbf{K}_{\mathfrak{p}_i} \subset O^{\times}_{D_{\mathfrak{p}_i}}$.
 There is a natural isomorphism
$D_{\mathfrak{p}_i}^{\times} \cong (B^{{\rm opp}}_{\mathfrak{q}_i})^{\times}$. The image
$\mathbf{K}_{\mathfrak{p}_i}$ by this isomorphism will be denoted by
$\mathbf{K}^{\bullet}_{\mathfrak{q}_i}$. From these last groups we define a subgroup
$\mathbf{K}^{\bullet}_p \subset G^{\bullet}(\mathbb{Q}_p)$, cf. (\ref{BZKpPkt1e})
with $\mathbf{M}^{\bullet}_{\mathfrak{p}_i} = O_{F_{\mathfrak{p}_i}}^{\times}$. This
subgroup satisfies
$\mathbf{K}_p = D^\times(\mathbb{Q}_p) \cap \mathbf{K}^{\bullet}_p$. Moreover 
we choose $\mathbf{K}^{\bullet,p}$ such that
$\mathbf{K}^p = (D\otimes\mathbb{A}_f^p)^\times \cap \mathbf{K}^{\bullet,p}$.  

The form $\psi$ induces an involution $\star$ of the second kind on $B$,
 \begin{displaymath}
\psi(bx, y) = \psi(b^{\star}x, y), \quad x,y \in V, \; b \in B. 
 \end{displaymath}
 We denote by $O_{B_{\bar{\mathfrak{q}}_i}} \subset B_{\bar{\mathfrak{q}}_i}$ the image
  of $O_{B_{\mathfrak{q}_i}}$ by $\star$. Let $O_{B, (p)}$ be the set of elements of $B$
  whose images in $B_{\mathfrak{q}_i}$ and $B_{\bar{\mathfrak{q}}_i}$ lie in the
  chosen maximal orders. 
 We obtain the lattice   $\Lambda_{\mathfrak{q}_i} = O_{B_{\mathfrak{q}_i}} \subset V_{\mathfrak{q}_i}$. 

 Let $U_p(F) \subset F^{\times}$ be the subgroup of elements which are units in
 each $F_{\mathfrak{p}_i}$. We
 define the following functor on the category of $O_{E_{\nu}}$-schemes $S$. The upper index $t$ is referring to the fact that $\tilde{\mathcal{A}}^{\bullet t}_{\mathbf{K}^{\bullet}}$, when restricted to the category of $E_\nu$-schemes, is a twisted form of another functor ${\mathcal{A}}^{\bullet }_{\mathbf{K}^{\bullet}}$.

 \begin{definition}
Let $S$ be an $O_{E_{\nu}}$-scheme. 
A point of $\tilde{\mathcal{A}}^{\bullet t}_{\mathbf{K}^{\bullet}}(S)$ consists of the  
following data: 
\begin{enumerate} 
\item[(a)] An abelian scheme $A$ over $S$ up to isogeny prime to $p$ with an
  action 
  $\iota: O_{B,(p)} \rightarrow \End A \otimes_{\mathbb{Z}} \mathbb{Z}_{(p)}$.

\item[(b)] 
  An $U_p(F)$-homogeneous polarization $\bar{\lambda}$ of $A$ which is
  principal in $p$.

\item[(c)]
  A class $\bar{\eta}^p$   modulo $\mathbf{K}^{\bullet, p}$ of
  $B \otimes \mathbb{A}^p_f$-module isomorphisms
  \begin{displaymath}
    \eta^p: V \otimes \mathbb{A}^p_f \isoarrow \mathrm{V}^p_f(A) ,
  \end{displaymath}
  such that 
  \begin{displaymath}
\psi(\xi^{(p)}(\lambda) v_1, v_2) = E^{\lambda}(\eta^p(v_1), \eta^p(v_2))
  \end{displaymath}
  for some function $\xi^{(p)}(\lambda) \in (F \otimes \mathbb{A}^p_f)^{\times}(1)$
  on $\bar{\lambda}$.  
\item[(e)]
 A class $\bar{\eta}_{\mathfrak{q}_i}$  modulo $\mathbf{K}^{\bullet}_{\mathfrak{q}_i}$
  of $O_{B_{\mathfrak{q}_i}}$-module isomorphisms  for each $i = 1, \ldots s$, 
  \begin{displaymath}
    \eta_{\mathfrak{q}_i}: \Lambda_{\mathfrak{q}_i} \isoarrow T_{\mathfrak{q}_i}(A) .
  \end{displaymath} 
  \end{enumerate}
We require that the following Kottwitz condition ${\rm (KC)}$ holds,
  \begin{equation}
     {\rm char}(T, \iota(b) \mid \Lie A) = \prod_{\varphi: K \rightarrow \bar{\mathbb{Q}}}
    \varphi(\Nm^o_{B/K} (T -b))^{r_{\varphi}} . 
    \end{equation}

\end{definition}
 The general fiber over $E_{\nu}$ of this functor is a Galois form of 
 $\Sh_{\mathbf{K}^{\bullet}}(G^{\bullet}, h)_{E_{\nu}}$ but this is irrelevant for
 this Introduction.     
 We prove that the \'etale sheafification of 
 $\tilde{\mathcal{A}}^{\bullet t}_{\mathbf{K}^{\bullet}}$ is representable, cf. 
 Proposition \ref{BZ8p}. We also show that (cf. (\ref{Hecke4e})) 
 \begin{equation}\label{Intro1e}
     (K \otimes \mathbb{Q}_p)^{\times} = \prod_{i=0}^{s} K_{\mathfrak{q}_i}^{\times} \times
   \prod_{i=0}^{s} K_{\bar{\mathfrak{q}}_i}^{\times} 
 \end{equation}  
 acts by Hecke operators on the functor
 $\tilde{\mathcal{A}}^{\bullet t}_{\mathbf{K}^{\bullet}}$.
 
  We consider the homomorphism
 \begin{equation}\label{def:bulletshim}
   h_D^{\bullet} = h_D \times 1: \mathbb{C}^{\times} \rightarrow
   (D \otimes \mathbb{R})^{\ast} \times (K \otimes \mathbb{R})^{\ast}
   \rightarrow G^{\bullet}_{\mathbb{R}}. 
   \end{equation} 
The Shimura variety $\Sh_{\mathbf{K}^{\bullet}}(G^{\bullet}, h^{\bullet}_{D})$ 
is defined over $E_{\nu}$. It is a Galois form of
$\Sh_{\mathbf{K}^{\bullet}}(G^{\bullet}, h)$.  

We find a model
$\widetilde{\Sh}_{\mathbf{K}^{\bullet}}(G^{\bullet}, h^{\bullet}_{D})$ over $O_{E_{\nu}}$
of this Shimura variety and a commutative diagram
 \begin{equation}\label{Intro2e}
  \begin{aligned}\xymatrix{
    \tilde{\mathcal{A}}^{\bullet t}_{\mathbf{K}^{\bullet}} \times_{\Spec O_{E_{\nu}}}
    \Spec O_{E^{nr}_{\nu}}   
  \ar[d]_{\dot{z} \times \tau_c} \ar[r]
  & \widetilde{\Sh}_{\mathbf{K}^{\bullet}}(G^{\bullet}, h^{\bullet}_{D}) 
  \times_{\Spec O_{E_{\nu}}} \Spec O_{E^{nr}_{\nu}} \ar[d]^{\id \times \tau_c}\\   
  \tilde{\mathcal{A}}^{\bullet t}_{\mathbf{K}^{\bullet}} \times_{\Spec O_{E_{\nu}}}
  \Spec O_{E^{nr}_{\nu}} \ar[r] &
  \widetilde{\Sh}_{\mathbf{K}^{\bullet}}(G^{\bullet}, h^{\bullet}_{D}) 
  \times_{\Spec O_{E_{\nu}}} \Spec O_{E^{nr}_{\nu}} ,\\   
   } 
   \end{aligned}
 \end{equation}
 cf. Proposition \ref{Sh_D1p} and (\ref{tildeSh_D1e}). 
 Here $E_{\nu}^{nr}$ is the maximal unramified extension of $E_{\nu}$, and 
 $\tau \in \Gal(E_{\nu}^{nr}/E_{\nu})$ is the Frobenius automorphism, and 
 $\tau_c = \Spec \tau^{-1}$. We denote by
 $\pi_{\mathfrak{p}_0} \in F_{\mathfrak{p}_0} = K_{\bar{\mathfrak{q}}_0}$ a prime
 element and by $f_{\nu}$ the inertia index of $E_{\nu}/\mathbb{Q}_p$. 
 The element 
 \begin{displaymath}
 \dot{z} =  (1, \ldots, 1) \times
   (\pi_{\mathfrak{p}_0}^{-1} p^{f_{\nu}}, p^{f_{\nu}}, \ldots p^{f_{\nu}})
   \end{displaymath}
 from the right hand side of (\ref{Intro1e}) acts as an Hecke operator.
 
 The horizontal arrow in the diagram (\ref{Intro2e}) is the \'etale
 sheafification. It follows from \cite{RZ} that the \'etale sheafification of
   $\tilde{\mathcal{A}}^{\bullet t}_{\mathbf{K}^{\bullet}}$  has a $p$-adic uniformization
   by the formal scheme $\hat{\Omega}^2_{E_\nu}$, cf. Theorem \ref{4epeg1t}. 
 This gives a uniformization of the model 
 $\widetilde{\Sh}_{\mathbf{K}^{\bullet}}(G^{\bullet}, h^{\bullet}_{D})$. The embedding of Shimura data $(D^{\times}, h_{D}) \subset
   (G^{\bullet}, h^{\bullet}_{D})$ and the fact that $\mathbf{K}\subset \mathbf{K}^\bullet$ define a morphism of Shimura varieties $ \Sh_{\mathbf{K}}(D^{\times}, h_{D})_{E_{\nu}} \to
   \Sh_{\mathbf{K}^{\bullet}}(G^{\bullet}, h^{\bullet}_{D})_{E_{\nu}} $.
By a theorem of Chevalley, for  suitable
$\mathbf{K}^{\bullet,p} \in G^{\bullet}(\mathbb{A}^p_f)$ of the type considered
above, this morphism induces  an open and closed embedding,
 \begin{equation}
   \Sh_{\mathbf{K}}(D^{\times}, h_{D})_{E_{\nu}} \subset
   \Sh_{\mathbf{K}^{\bullet}}(G^{\bullet}, h^{\bullet}_{D})_{E_{\nu}} .
 \end{equation}
  The closure of the left hand side in
 $\widetilde{\Sh}_{\mathbf{K}^{\bullet}}(G^{\bullet}, h^{\bullet}_{D})$ gives the stable 
  model $\widetilde{\Sh}_{\mathbf{K}}(D^{\times}, h_D)$ whose formal scheme
 inherits a uniformization by $\hat{\Omega}^2_{E_{\nu}}$, proving the main theorem. 
 
So far, we have only mentioned the Shimura pairs   $(G^\bullet, h)$ and
$(G^\bullet, h^\bullet_D)$. However, in the body of the paper, also Shimura pairs
$(G, h)$ and $(G, h\delta)$ play an important role. Here $G\subset G^\bullet$ is the subgroup where the similitude factor lies in $\BQ$, and $\delta$ is a central character of $G$. The  Shimura variety for  $(G, h)$ is  of PEL-type and has the key property that it is a fine moduli scheme for a moduli problem $\CA_{\mathbf K}$, for small enough level ${\mathbf K}$. Similarly, the  Shimura variety for  $(G, h\delta)$ is the unramified twist of the fine moduli scheme for a moduli problem  $\CA^t_{\mathbf K}$, which, furthermore,  has a natural extension $\tilde\CA^t_{\mathbf K}$ over $\Spec O_{E_\nu}$. The fine moduli scheme  for $\tilde\CA^t_{\mathbf K}$ is then used to show that  the horizontal arrow in the diagram (\ref{Intro2e}) is the \'etale sheafification. 
 
   The lay-out of the paper is as follows. In \S \ref{s:LA} we explain the linear algebra behind the formation of the Shimura pairs $(G^\bullet, h)$ and $(G, h)$. In \S \ref{s:AG}, we explain the Shimura varieties for $(G, h)$ and $(G, h\delta)$ and the corresponding moduli problems $\CA_{\mathbf K}$ and $\CA^t_{\mathbf K}$ and the integral extension $\tilde\CA^t_{\mathbf K}$ of the latter. In \S \ref{s:AGbullet} we explain the Shimura varieties for $(G^\bullet, h)$ and $(G^\bullet, h_D^\bullet)$ and the corresponding moduli problems $\CA^\bullet_{\mathbf K}$ and $\CA^{\bullet, t}_{\mathbf K}$  and the integral extension $\tilde\CA^t_{\mathbf K}$. Furthermore, we establish a relation between  the integral extensions $\tilde\CA^t_{\mathbf K}$ and the integral extension $\tilde\CA^{\bullet, t}_{\mathbf K}$ and use this to show that the horizontal arrow in the diagram (\ref{Intro2e}) is the \'etale sheafification. In \S \ref{s:RZ} we explain the Rapoport-Zink spaces relevant to these moduli problems. In \S \ref{s:uniform} we prove the $p$-adic uniformization of the integral models of the Shimura varieties for the pairs $(G^\bullet, h^\bullet_D)$ and $(D^\times, h_D)$. The last two sections are really appendices.  In \S \ref{s:desc}, we clarify our conventions about Galois descent, and in \S \ref{s:shimvar} we make precise our sign conventions for Shimura varieties.
   We formulate a result of Kisin \cite{KisinJAMS} on embeddings of Shimura
   varieties in the form needed here.

 The present paper is an improved version of parts of the preprint \cite{BZ}.
 The strategy here is the same but some serious gaps in the arguments are
 repaired. However, not all results of \cite{BZ} are covered.   

 We thank M. Rapoport for his many useful suggestions which helped to improve
 our work.
 
 \section{The Shimura data}\label{s:LA}
 In this section, we introduce the linear algebra which leads to the
 definition of the Shimura pairs $(G^\bullet, h^\bullet_D)$ and $(G^\bullet, h)$
 and $(G, h)$.
 
 Let $K/F$ be a CM-field.
Let $a \mapsto \bar{a}$ be the conjugation of $K/F$. We
consider a quaternion algebra $D$ over $F$. Let $d \mapsto d^{\iota}$ be the
main involution of $D$. We set $B = D^{{\rm opp}} \otimes_F K$. We extend the map
$d \mapsto d^{\iota}$ $K$-linearily to $B$. Then we obtain the main involution
$b \mapsto b^{\iota}$ of $B/K$. The conjugation acts via the second factor
on $B = D^{{\rm opp}} \otimes_F K$. We set $b' = \bar{b}^{\iota}$. We consider the
sesquilinear form  
\begin{displaymath}
  \varkappa_0: B \times B \rightarrow K, \quad
  \varkappa_0(b_1, b_2) = {\rm Tr}^{o}_{B/K} b_2 b'_1. 
\end{displaymath}
It is $K$-linear in the second variable and antilinear in the first and
it is hermitian
\begin{displaymath}
  \varkappa_0(b_1, b_2) = \overline{\varkappa_0(b_2, b_1)}.
  \end{displaymath}
Moreover we obtain 
\begin{equation}\label{BZ1e} 
  \varkappa_0(x b, y) = \varkappa_0(x, y b'), \quad 
  \varkappa_0(bx, y) = \varkappa_0(x, b'y),\quad x, y, b \in B. 
\end{equation}
We set
\begin{equation}\label{Gpunkt2e} 
G^{\bullet} = \{b \in B^{{\rm opp}} \; | \; b'b \in F^{\times} \},   
\end{equation}
and consider it as an algebraic group over $\mathbb{Q}$. We write
$\tilde{G}^{\bullet}$ if we consider it as an algebraic group over $F$, i.e.
$\Res_{F/\mathbb{Q}} \tilde{G}^{\bullet} = G^{\bullet}$. 

We will write $V = B$ considered as a $B$-left module. The right multiplication
by an element of $B$ gives an isomorphism $\End_B V = B^{{\rm opp}} = D \otimes_F K$.
Therefore we can write 
\begin{equation}\label{Gpunkt1e} 
  G^{\bullet} = \{g \in GL_B (V) \mid \varkappa_0(g v_1, g v_2) =
  \mu(g) \varkappa_0 (v_1, v_2), \mu(g) \in F^{\times}   \}  . 
  \end{equation}

\begin{lemma}\label{BZ1l}  
  There is an exact sequence of algebraic groups over $\BQ$,
  \begin{displaymath}
    0 \rightarrow F^{\times} \rightarrow D^{\times} \times K^{\times}
    \overset{\kappa}{\rightarrow}
    G^{\bullet} \rightarrow 0.   
  \end{displaymath}
  The map $\kappa$ maps $(d, k)$ to $d \otimes k$. \qed
  \end{lemma}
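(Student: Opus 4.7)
The exactness over $\BQ$ is equivalent, via restriction of scalars from $F$, to the exactness of the corresponding sequence of $F$-algebraic groups, so I will work over $F$ throughout. The plan is to verify in turn: (a) that $\kappa$ is a well-defined homomorphism into $\tilde{G}^{\bullet}$; (b) that its kernel is the antidiagonal copy of $F^\times$; and (c) that $\kappa$ is surjective, which is the main obstacle.

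For (a), $\kappa$ is evidently a group homomorphism into $(D \otimes_F K)^\times = (B^{\mathrm{opp}})^\times$ from the formula for multiplication in a tensor product. To see that the image lies in $\tilde{G}^{\bullet}$, I would compute
\begin{displaymath}
   \kappa(d,k)' \, \kappa(d,k) = (d^\iota \otimes \bar k)(d \otimes k) = d^\iota d \cdot \bar k k \in F^{\times},
\end{displaymath}
using that $d^\iota d \in F$ is the reduced norm of $d$ and $\bar k k = \Nm_{K/F}(k) \in F$, identified with their common image in $F \subset D \otimes_F K$.

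For (b), fix an $F$-basis $\{1, e_2, e_3, e_4\}$ of $D$, so that $D \otimes_F K$ is free over $K$ with basis $\{e_i \otimes 1\}$. Writing $d = \sum d_i e_i$ with $d_i \in F$, the equation $d \otimes k = 1 \otimes 1$ becomes $\sum_i (d_i k)(e_i \otimes 1) = e_1 \otimes 1$, forcing $d_i = 0$ for $i \geq 2$ and $d_1 k = 1$. Thus $d \in F^\times$ and $k = d^{-1}$, so $\ker\kappa = \{(f, f^{-1}) : f \in F^\times\}$.

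For (c), since all groups in sight are smooth and connected, it suffices to show that the differential $d\kappa$ at the identity is surjective, and I would verify this by a Lie algebra dimension count rather than attempting an explicit factorization. Explicitly, $d\kappa : D \oplus K \to D \otimes_F K$ sends $(x,y) \mapsto x \otimes 1 + 1 \otimes y$, so its image is the $F$-subspace $(D \otimes 1) + (1 \otimes K)$, of $F$-dimension $4 + 2 - 1 = 5$ (the intersection being $F \otimes 1$). Linearizing the defining condition $b'b \in F^\times$ of $\tilde G^\bullet$ gives $\Lie \tilde G^\bullet = \{b \in D \otimes_F K : b + b' \in F\}$, which is the preimage of the line $F$ under the surjection $b \mapsto b + b'$ from $D \otimes_F K$ onto its $(+1)$-eigenspace under $'$; since the kernel of this map (the $(-1)$-eigenspace) has $F$-dimension $4$, the Lie algebra has $F$-dimension $4 + 1 = 5$. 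As the image of $d\kappa$ is automatically contained in $\Lie \tilde G^\bullet$ and the dimensions agree, $d\kappa$ is surjective, yielding the desired exactness.
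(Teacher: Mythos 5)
Your proof is correct and complete. The paper states Lemma~\ref{BZ1l} with no accompanying argument (it ends directly with the end-of-proof mark), so there is no proof in the paper to compare against; your write-up supplies one. The reduction over $F$ via Weil restriction, the verification that $\kappa$ lands in $\tilde{G}^{\bullet}$, the basis computation of the kernel, and the Lie-algebra dimension count $5=5$ are all sound. The one ingredient you invoke without comment is the connectedness of $\tilde{G}^{\bullet}$, which is needed to upgrade surjectivity of $d\kappa$ to surjectivity of $\kappa$. It does hold: after base change to $\bar F$ one has $D\otimes_F K\otimes_F\bar F\cong M_2(\bar F)\times M_2(\bar F)$ with the involution acting by $(m_1,m_2)\mapsto(\operatorname{adj}(m_2),\operatorname{adj}(m_1))$, and $(c,m)\mapsto\bigl(m,\,c\,m/\det m\bigr)$ identifies $\mathbb{G}_m\times\GL_2$ with $\tilde{G}^{\bullet}_{\bar F}$ as a variety. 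This is worth a sentence in a fully written-out proof, but it does not affect the correctness of your argument.
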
 
We set $\Phi = \Hom_{\mathbb{Q}{\rm -Alg}}(K, \mathbb{C})$. We assume that there is
a unique embedding $\chi_0: F \rightarrow \mathbb{R}$ such that the quaternion
algebra $D \otimes_{F, \chi_0} \mathbb{R}$ splits. We consider a generalized
CM-type of rank $2$ in the sense of \cite{KRnew}, comp. \cite{KRZ},
\begin{equation}\label{BZgCM1e} 
r: \Phi \rightarrow \mathbb{Z}_{\geq 0}, 
  \end{equation}
such that $r_{\varphi_0} = r_{\bar{\varphi}_0} = 1$ for the extensions
$\varphi_0, \bar{\varphi}_0: K \rightarrow \mathbb{C}$ of $\chi_0$ 
and such that $r_{\varphi} = 0, 2$ for all other $\varphi \in \Phi$. 

We will define a complex structure on the $\mathbb{R}$-vector space
$V \otimes \mathbb{R}=B \otimes \mathbb{R}$. For this we consider the decomposition
\begin{displaymath}
  B \otimes \mathbb{R} = \bigoplus_{\chi: F \rightarrow \mathbb{R}} B \otimes_{F, \chi}
  \mathbb{R} = \bigoplus_{\chi}\big( (D^{{\rm opp}} \otimes_{F, \chi} \mathbb{R})
  \otimes_{\mathbb{R}} (K \otimes_{F, \chi} \mathbb{R})\big). 
\end{displaymath}
We define the complex structure on each summand on the right hand side.
Let  $\chi \neq \chi_0$ and let
$\varphi : K \rightarrow \mathbb{C}$ be the
extension of $\chi$ such that $r_{\varphi} = 2$. Then $\varphi$ defines an
isomorphism 
$K \otimes_{F, \chi} \mathbb{R} \cong \mathbb{C}$. This induces a complex
structure on the summand belonging to $\chi$ via the second factor of the
tensor product.  For $\chi_0$ we have
$D \otimes_{F, \chi_0} \mathbb{R} \cong {\rm M}_2(\mathbb{R})$. We endow the
$\mathbb{R}$-vector space $D^{{\rm opp}} \otimes_{F, \chi_0} \mathbb{R}$ with the
complex structure $J_{\chi_0}$ given by right multiplication by 
\begin{displaymath}
  J_{\chi_0} =
\left(
  \begin{array}{rr} 
    0 & -1\\
    1 & 0
  \end{array}
  \right).  
\end{displaymath}
This induces a complex structure on 
$(D^{{\rm opp}}\otimes_{F,\chi_0}\mathbb{R})\otimes_{\mathbb{R}}(K\otimes_{F,\chi_0}\mathbb{R})$
via the first factor. Together we obtain a complex structure $J$ on
$B \otimes_{\mathbb{Q}} \mathbb{R}$ such that 
\begin{displaymath}
  \Trace_{\mathbb{C}} (k | (B \otimes_{\mathbb{Q}} \mathbb{R}, J)) =
  \sum_{\varphi \in \Phi} 2 r_{\varphi} \varphi(k), \quad k\in K. 
  \end{displaymath}
This complex structure on $V \otimes_{\mathbb{Q}} \mathbb{R}$ commutes with the
$B \otimes_{\mathbb{Q}} \mathbb{R}$-module structure and defines therefore a
homomorphism $\mathbb{C} \rightarrow B^{{\rm opp}} \otimes_{\mathbb{Q}} \mathbb{R}$.
This homomorphism induces a homomorphism of groups 
\begin{equation}\label{BZh1e}
  h: \mathbb{S} \rightarrow \prod_{\chi \in \Hom_{\mathbb{Q}{\rm -Alg}}(F, \mathbb{C})}
  \big((D \otimes_{F, \chi} \mathbb{R})^{\times}
  \times (K \otimes_{F, \chi} \mathbb{R})^{\times}\big). 
\end{equation}
Let $z \in \mathbb{C}^\times = \mathbb{S}(\mathbb{R})$. Then the $\chi_0$-component
$h_{\chi_0}(z)$ is
\begin{displaymath}
\left(
  \begin{array}{rr} 
    a & -b\\
    b & a
  \end{array}
  \right) \times 1, \quad z = a + b{\bf i} ,  
\end{displaymath}
and for $\chi \neq \chi_0$ the component $h_{\chi}(z)$ is
$1 \times 1\otimes z \in (D \otimes_{F,\chi}\mathbb{R})^{\times}\times (K \otimes_{K, \varphi} \mathbb{C})^{\times}$.
Here $\varphi \in \Phi$ is the extension of $\chi$ with $r_{\varphi} = 2$.
We have used the natural isomorphism
$K \otimes_{F, \chi} \mathbb{R} = K \otimes_{K, \varphi} \mathbb{C}$.
We can write $h({\bf i}) = J$. The composite with the projection to
$G^{\bullet}_{\mathbb{R}}$ given by Lemma \ref{BZ1l} is also denoted by $h$,
\begin{equation}\label{hforbul}
h\colon \BS\to G^{\bullet}_{\mathbb{R}} .
\end{equation}  
\begin{lemma}
  There exist elements $\gamma \in B$ such that 
  $\mathfrak{h}(x,y) = \varkappa_0(\gamma x, y J)$ is hermitian and positive
  definite on $B \otimes \mathbb{R}$. More precisely, this means that for
  each $\varphi$ the form
  $$\mathfrak{h}_{\varphi}: B \otimes_{K, \varphi} \mathbb{C}
  \times B \otimes_{K, \varphi} \mathbb{C} \rightarrow  K \otimes_{K, \varphi}
  \mathbb{C}$$
  is hermitian and positive definite. 
\end{lemma}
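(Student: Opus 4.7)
The plan is to reduce the existence of $\gamma$ to a linear hermiticity constraint and a componentwise positive-definiteness check at each infinite place of $F$, then patch the local choices via weak approximation.

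First I would identify the hermiticity constraint. Using the identities in \eqref{BZ1e} one computes $\mathfrak{h}(x,y) = \varkappa_0(\gamma x, yJ) = \varkappa_0(x, \gamma' yJ)$, and, via $\varkappa_0(b_1,b_2) = \overline{\varkappa_0(b_2,b_1)}$, $\overline{\mathfrak{h}(y,x)} = \varkappa_0(xJ, \gamma y) = \varkappa_0(x, \gamma y J')$. Hermiticity of $\mathfrak{h}$ is therefore equivalent to $\gamma' yJ = \gamma y J'$ for all $y\in B$, i.e.\ to $\gamma' = \gamma J' J^{-1}$. A direct componentwise computation of $J' = \bar J^\iota$ gives $J' = -J$: at $\chi = \chi_0$, the main involution of $M_2(\mathbb{R}) \cong D^{{\rm opp}}\otimes_{F,\chi_0}\mathbb{R}$ sends the traceless matrix $J_{\chi_0}$ to $-J_{\chi_0}$, while $\bar{1} = 1$ on the $K$-factor; at $\chi \neq \chi_0$, $J$ lies in the CM-factor $K\otimes_{F,\chi}\mathbb{R} \cong \mathbb{C}$, where $\iota$ acts trivially and $\bar{\mathbf{i}} = -\mathbf{i}$. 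Since $J^2 = -1$ we obtain $J'J^{-1} = -1$, so hermiticity amounts to $\gamma$ lying in the $\mathbb{Q}$-vector subspace $B^- := \{b \in B : b' = -b\}$.

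Next I would verify positive-definiteness one archimedean place at a time. The form $\mathfrak{h}$ respects the decomposition $B\otimes\mathbb{R} = \bigoplus_\chi (D^{{\rm opp}}\otimes_{F,\chi}\mathbb{R})\otimes_\mathbb{R}(K\otimes_{F,\chi}\mathbb{R})$ and its refinement into $\varphi$-eigenspaces under the $K$-action. For each $\chi$, the constraint $\gamma' = -\gamma$ cuts out a real subspace $(B^-)_\chi$ of $B\otimes_{F,\chi}\mathbb{R}$, and I would exhibit $\gamma_\chi\in (B^-)_\chi$ making the resulting $\mathfrak{h}_\chi$ positive-definite. For $\chi \neq \chi_0$, where $D\otimes_{F,\chi}\mathbb{R} \cong \mathbb{H}$ and $J$ acts through the CM-factor, one may take $\gamma_\chi = 1\otimes k$ with $k\in K$ purely imaginary of appropriate sign; then $\mathfrak{h}_\chi$ becomes (up to a positive real scalar) the tensor of the positive reduced-trace form on $\mathbb{H}$ with the natural hermitian form on $\mathbb{C}$. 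For $\chi = \chi_0$, one takes $\gamma_{\chi_0}$ proportional to $J_{\chi_0}\otimes 1$, which lies in $(B^-)_{\chi_0}$ because $J_{\chi_0}^\iota = -J_{\chi_0}$; with a suitable sign, positivity follows from the fact that $X\mapsto \tr(X \,{}^tX)$ is a positive form on $M_2(\mathbb{R})$.

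Finally, since $B^-$ is a $\mathbb{Q}$-vector subspace of $B$ with $B^-\otimes\mathbb{R} = \bigoplus_\chi (B^-)_\chi$, rational elements are dense in the real space, and positive-definiteness is an open condition on the space of hermitian forms. Hence one may choose a single $\gamma\in B^-$ simultaneously close to each locally-chosen $\gamma_\chi$, producing $\mathfrak{h}$ positive-definite on every $\varphi$-component. The main obstacle is the explicit positivity check at $\chi = \chi_0$, where the non-scalar complex structure $J_{\chi_0}$ and the interaction of $\varkappa_0$ with the main involution on $M_2(\mathbb{R})$ require a careful matrix computation; the remaining places are more routine because the complex structure is compatible with the CM-structure on $K$.
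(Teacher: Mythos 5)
Your argument is correct and follows the standard route (reduction to the linear condition $\gamma'=-\gamma$, local positivity, density), which is what the paper tacitly delegates to \cite{D-TS}; the paper itself records the equivalence $\gamma'=-\gamma\Leftrightarrow\mathfrak{h}$ hermitian just below the lemma.

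One small logical shortcut deserves a flag: you assert that the condition $\gamma' yJ=\gamma yJ'$ \emph{for all} $y$ is ``i.e.'' equivalent to the $y=1$ case $\gamma'=\gamma J'J^{-1}$. In general $J$ is not central (at $\chi_0$ it is $J_{\chi_0}\otimes 1$, a non-scalar matrix), so that step is not automatic. It does work here, but only because of the fact you establish next, namely $J'J^{-1}=-1$, which \emph{is} central; the clean chain is $\gamma'yJ=\gamma yJ'$ for all $y$ $\Leftrightarrow$ $\gamma'y=-\gamma y$ for all $y$ $\Leftrightarrow$ $\gamma'=-\gamma$. Reordering so that $J'=-J$ is proved before the reduction to $y=1$ removes the gap. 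Beyond that, the sign bookkeeping at $\chi_0$ (e.g.\ that $\gamma_{\chi_0}\propto J_{\chi_0}\otimes 1$ gives $\mathfrak{h}(A\otimes 1,A\otimes 1)$ a positive multiple of $a^2+b^2+c^2+d^2$) is the only place where an explicit computation is genuinely needed, and your outline of it is right; the density/openness step at the end then works as stated since $B^-$ is $F$-linear, hence decomposes over the archimedean places of $F$.
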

This follows as in \cite{D-TS}. Note that alternatively we can say that
$\Trace_{K/F} \mathfrak{h}$ is symmetric and positive definite on
$B \otimes \mathbb{R}$. Let
\begin{equation}\label{defG}
G = \{b \in B^{{\rm opp}} \; | \; b'b \in \mathbb{Q}^{\times} \} \subset G^{\bullet}.   
\end{equation}
Since $h(z)'h(z) = \bar{z} z \in \mathbb{R}^{\times}$ for $z \in \mathbb{C}^{\times}=\CS(\BR)$,
the morphism $h$ factors through $G_{\mathbb{R}}$. We define
\begin{displaymath}
\varkappa: V \times V \rightarrow K ,  \quad \quad\varkappa(x, y) = \varkappa_0(\gamma x, y), \quad x, y \in V = B.
  \end{displaymath}
 The first 
equation of (\ref{BZ1e}) continues to hold for $\varkappa$. We have
$\mathfrak{h}(x, y) = \varkappa(x, y J)$. We note that $\varkappa$ is an
antihermitian form: 
\begin{displaymath}
  \overline{\varkappa(y, x)} = - \overline{\varkappa(y, x J^{2})} =
  - \overline{\mathfrak{h}(y, x J)} = - \mathfrak{h}(x J, y) =
  - \varkappa(x J, y J) =  - \varkappa(x, y). 
\end{displaymath}
It is easily seen that  $\gamma' = -\gamma$ is equivalent with the property
that $\varkappa$ is antihermitian or that $\mathfrak{h}$ is hermitian. Equivalently
one could use the alternating
form 
\begin{equation}\label{BZpsi1e}
\psi : V \times V \rightarrow \mathbb{Q}, \quad\psi(x, y) = \Trace_{K/\mathbb{Q}} \varkappa (x, y), \quad x,y \in B ,
  \end{equation} 
which satisfies
\begin{displaymath}
\psi(k x, y) = \psi (x, \bar{k} y), \quad k \in K. 
\end{displaymath}
Then $\psi (x, y J)$ is symmetric and positive definite. We define an involution $b \mapsto b^{\star}$ on $B$ by
\begin{equation}\label{defstar}
\varkappa(b x, y) = \varkappa(x , b^{\star} y). 
\end{equation}
Because the same equation holds for $\mathfrak{h}$, the involution
$b \mapsto b^{\star}$ is positive. From the definition we obtain
$b = \gamma^{-1} b' \gamma$. We obtain
\begin{equation}\label{BZpsi3e}
  \psi(b x, y) = \psi(x , b^{\star} y), \quad  \psi(x b , y) = \psi(x , y b'),
  \quad  x,y,b \in B. 
  \end{equation}
We can also write 
\begin{equation}\label{BZh3e}
  G = \{g \in \End_B(V) \; | \; \psi(g x, g y) = \mu(g) \psi(x, y), \;
  \text{for} \; \mu(g) \in \mathbb{Q}^{\times}  \}. 
\end{equation}
We also obtain $G$ if we replace on the right hand side $\psi$ by $\varkappa$.  

The action of $ g = (d, k) \in D^{\times} \times K^{\times}$ on $B$ is by
definition
\begin{displaymath}
(d,k) (u \otimes a) = ud \otimes ak, \quad u \otimes a \in D^{{\rm opp}} \otimes_{F} K 
= B. 
\end{displaymath}
The product $ud$ is taken in $D^{{\rm opp}}$.

The homomorphism induced by (\ref{BZh1e}) 
\begin{equation}\label{BZh2e} 
h: \mathbb{S} \rightarrow G_{\mathbb{R}} 
  \end{equation}
gives a Shimura datum in the sense of \cite{D-TS}, except that we denote by
$h$ what is $h^{-1}$ in Deligne's normalization. The Hodge structure on $V$ is therefore
in this paper of type $(1,0), (0,1)$.

We fix a prime number $p$ and we choose a diagram
\begin{equation}\label{BZ2e}
\mathbb{C} \leftarrow \bar{\mathbb{Q}} \rightarrow \bar{\mathbb{Q}}_p.
\end{equation} 
By this diagram we obtain $\Phi =  \Hom_{\mathbb{Q}{\rm -Alg}}(K, \bar{\mathbb{Q}}_p)$. 
We assume that all prime ideals of $O_F$ containing $pO_F$ are split in $K/F$. 
We denote these prime ideals of $O_F$ by
\begin{equation}\label{BZsplit1e}
\mathfrak{p}_0, \ldots, \mathfrak{p}_s.
\end{equation}
Let $\mathfrak{q}_i, \bar{\mathfrak{q}}_i$ the two prime ideals of $O_K$ over
$\mathfrak{p}_i$. We obtain
\begin{displaymath}
\mathfrak{p}_i O_K = \mathfrak{q}_i \bar{\mathfrak{q}}_i. 
  \end{displaymath}
We obtain a decomposition 
\begin{equation}\label{p-zerlegt}  
  \begin{array}{l} 
    \Hom_{\mathbb{Q}{\rm -Alg}}(K, \bar{\mathbb{Q}}_p) = \\[2mm] 
    \quad (\coprod_{i=0}^{s}
    \Hom_{\mathbb{Q}_p{\rm -Alg}}(K_{\mathfrak{q}_i}, \bar{\mathbb{Q}}_p))
    \; \amalg\;
    (\coprod_{i=0}^{s}
    \Hom_{\mathbb{Q}_p{\rm -Alg}}(K_{\bar{\mathfrak{q}}_i},\bar{\mathbb{Q}}_p)). 
    \end{array}
\end{equation}

We denote the components of this disjoint sum by
$\Phi_{\mathfrak{q}_i}$, resp. $\Phi_{\bar{\mathfrak{q}}_i}$. We assume that
$\varphi_0 \in \Phi_{\mathfrak{q}_0}$ and $ \bar{\varphi}_0 \in \Phi_{\bar{\mathfrak{q}}_0}$.
For all other $\varphi$ we require that
\begin{equation}\label{BZEnu1e} 
  \begin{array}{ll}
    r_{\varphi} = 0 & \text{if} \; \varphi \in \Phi_{\mathfrak{q}_i} \;
    \text{for some} \; i = 0, \ldots, s,
    \; \text{and} \, \varphi \neq \varphi_0 \\
    r_{\varphi} = 2 & \text{if} \; \varphi \in \Phi_{\bar{\mathfrak{q}}_i} \; 
    \text{for some} \; i = 0, \ldots, s,
    \; \text{and} \, \varphi \neq \bar{\varphi}_0 .  
    \end{array}
  \end{equation}
Let $E = E(G,h)$ be the reflex field, i.e.,
\begin{equation}\label{BZE1e}
  \Gal(\bar{\mathbb{Q}}/E) = \{\sigma \in \Gal(\bar{\mathbb{Q}}/\mathbb{Q})
  \; | \; r_{\sigma\varphi} = r_{\varphi}, \; \text{for all}\; \varphi \in \Phi\}. 
\end{equation}
The embedding $E \rightarrow \bar{\mathbb{Q}}_p$ in the sense of diagram
(\ref{BZ2e}) defines a place $E_{\nu} \subset \bar{\mathbb{Q}}_p$. We call this
the\emph{ local Shimura field}. If $\varphi \neq \varphi_0, \bar{\varphi}_0$, the
number $r_{\varphi}$ depends only on the place $\mathfrak{q}_i$ of $K$ which is
induced by $\varphi: K \rightarrow \bar{\mathbb{Q}}_p$. We conclude that
\begin{equation}\label{BZ3e} 
  \Gal(\bar{\mathbb{Q}}_p/E_{\nu}) = \{\sigma \in
  \Gal(\bar{\mathbb{Q}}_p/\mathbb{Q}_p) \; | \;
  r_{\sigma \varphi_0} = r_{\varphi_0} \}. 
\end{equation}
 The condition (\ref{BZ3e})
on $\sigma$ signifies that $\sigma$ fixes the embedding
$F_{\mathfrak{p}_0} \rightarrow \bar{\mathbb{Q}}_p$ induced by $\varphi_0$.
We obtain that
\begin{equation}\label{BZ4e}
  E_{\nu} = \varphi_0(F_{\mathfrak{p}_0}). 
\end{equation}
We remark that $E_\nu$ coincides with the localization of the Shimura field
$\chi_0(F)$ of the Shimura curve we have chosen, cf. the beginning of this
section. 

\bigskip

\section{The moduli problem for $\Sh(G, h)$ and a reduction modulo $p$}\label{s:AG}

We consider the alternating $\mathbb{Q}$-bilinear form $\psi$ on the
$B$-module $V$ (\ref{BZpsi1e}). It satisfies 
\begin{displaymath}
\psi(b v_1, v_2) = \psi(v_1, b^{\star} v_2) \quad v_1, v_2 \in V.  
\end{displaymath}
We state the moduli problem associated to the $B$-module $V$ and the alternating form $\psi$, cf. \cite[4.10]{D-TS}. Recall 
$(G, h)$ from (\ref{BZh3e}), (\ref{BZh2e}).

Let $\mathbf{K} \subset G(\mathbb{A}_f)$ be an open compact subgroup. 
The Shimura variety $\Sh(G,h)_{\mathbf{K}}$ is the coarse moduli scheme of the
following functor $\mathcal{A}_{\mathbf{K}}$ on the category of schemes over
$E=E(G, h)$. If $\mathbf{K}$ satisfies the condition (\ref{BZneat1e})
below, the functor $\mathcal{A}_{\mathbf{K}}$ is representable. 
\begin{definition}\label{BZAK1d} 
Let $S$ be a scheme over $E$. 
A point of $\mathcal{A}_{\mathbf{K}}(S)$ is given by the following data:
\begin{enumerate} 
\item[(a)] An abelian scheme $A$ over $S$ up to isogeny with an action
  $\iota: B \rightarrow \End^o A$.
\item[(b)] A $\mathbb{Q}$-homogeneous polarization $\bar{\lambda}$ of $A$
  which induces on $B$ the involution $b \mapsto b^{\star}$.
\item[(c)] A class $\bar{\eta}\; \text{modulo}\; \mathbf{K}$ of
  $B \otimes \mathbb{A}_f$-module isomorphisms
  \begin{displaymath}
\eta: V \otimes \mathbb{A}_f \isoarrow V_f(A) 
  \end{displaymath}
  such that for each $\lambda \in \bar{\lambda}$ there is locally
  for the Zariski topology on $S$ a constant
  $\xi \in \mathbb{A}_f^{\times}(1)$ with
  \begin{equation}\label{BZsimilis1e} 
\xi \psi(v_1,, v_1) = E^{\lambda}(\eta(v_1), \eta(v_2)). 
  \end{equation}
   \end{enumerate}
 We require that the following condition $ {\rm (KC)}$ holds,
  \begin{equation}
  {\rm char}(T, \iota(b) \mid \Lie A) = \prod_{\varphi: K \rightarrow \bar{\mathbb{Q}}}
    \varphi(\Nm^o_{B/K} (T -b))^{r_{\varphi}} . 
    \end{equation}
 
  \end{definition} 
A more precise formulation of the datum $(c)$ is as follows. We
assume that $S$ is connected and we choose a geometric point $\bar{s}$ of
$S$. Then we may regard $V_f(A)$ resp. $V \otimes \mathbb{A}_f$ as continuous
representation of the fundamental group $\pi_1(\bar{s}, S)$. We denote by
$\mathbb{A}_f(1)$ the group $\mathbb{A}_f$ endowed with the action of
$\pi_1(\bar{s}, S)$ via the cyclotomic character
\begin{displaymath}
  \varsigma: \pi_1(\bar{s}, S) \rightarrow \hat{\mathbb{Z}}^{\times} \subset
  \mathbb{A}_f^{\times}. 
  \end{displaymath}

Then $\bar{\eta}$ is determined by a
$B \otimes \mathbb{A}_f$-linear symplectic similitude $\eta$, i.e. 
(\ref{BZsimilis1e}) holds with $\xi \in \mathbb{A}_f$. The class $\bar{\eta}$
must be invariant by the action of $\pi_1(\bar{s}, S)$, i.e., for each
$\gamma \in \pi_1(\bar{s}, S)$ there is $k(\gamma) \in \mathbf{K}$ such that
\begin{equation}\label{BZeta1e}
\gamma \eta(v) = \eta (k(\gamma) v), \quad v \in V \otimes \mathbb{A}_f. 
\end{equation}
Since the polarization $\lambda$ is defined over $S$ the form $E^{\lambda}$
satisfies
\begin{displaymath}
  E^{\lambda}(\gamma \eta(v_1), \gamma \eta(v_2)) = \gamma (E^{\lambda}(v_1, v_2)) =
  \varsigma(\gamma) E^{\lambda}(v_1, v_2) .
\end{displaymath}
 When we apply the symplectic similitude $\eta$, this translates into
\begin{equation}\label{BZeta2e}
\varsigma(\gamma) = \mu(k(\gamma)), 
  \end{equation}
where  $\mu$ is the multiplicator (\ref{BZh3e}). The datum $\bar{\eta}$
of $(c)$ for a connected scheme $S$ is now equivalently a class
modulo $\mathbf{K}$ of symplectic similitudes $\eta$ of
$B \otimes \mathbb{A}_f$-modules
$V \otimes \mathbb{A}_f \isoarrow V_f(A_{\bar{s}})$ such that (\ref{BZeta1e})
and (\ref{BZeta2e}) hold.

An alternative way to describe the functor $\mathcal{A}_{\mathbf{K}}$ is as
follows, cf. \cite[4.12]{D-TS}. We fix a $\mathbb{Z}$-lattice $\Gamma \subset V$
such that $\psi(\Gamma \times \Gamma) \subset \mathbb{Z}$. Let $m > 0$ an
integer and assume that $\mathbf{K} = \mathbf{K}_m$ is the subgroup of all
$g \in G(\mathbb{A}_f)$, such that $g \hat\Gamma = \hat\Gamma$ and $g \equiv \id_{\hat\Gamma}$
modulo $m\hat\Gamma$. Let $O_B \subset B$ the order of all elements $b$ such that
$b \Gamma \subset \Gamma$. Then for a connected scheme $S$ over $E$ a point of
$\mathcal{A}_{\mathbf{K}_m}(S)$ consists of 
\begin{enumerate} 
\item[(a)] An abelian scheme $A_0$ over $S$ with an action
  $\iota: O_B \rightarrow \End A_0$.
\item[(b)] A polarization $\lambda$ of $A_0$ 
  which induces on $B$ the involution $b \mapsto b^{\star}$.
\item[(c)] An isomorphism of \'etale sheaves on $S$ 
  \begin{displaymath}
\eta_m: \Gamma/m \Gamma \rightarrow A_0[m] 
    \end{displaymath}
such that $\eta_m$ lifts to an isomorphism of $O_B$-modules 
  \begin{displaymath}
\eta: \Gamma \otimes \hat{\mathbb{Z}} \rightarrow \hat{T}(A_{0, {\bar{s}}})  
  \end{displaymath}
and such that there is $\xi \in \hat{\mathbb{Z}}^{\times}(1)$ with 
  \begin{displaymath}
    \xi \psi(v_1,, v_1) = E^{\lambda}(\eta(v_1), \eta(v_2)), \quad
    v_1, v_2 \in \Gamma. 
    \end{displaymath}
     \end{enumerate}
We require that the following condition ${\rm (KC)}$ holds,
  \begin{equation}
    {\rm char} (T, \iota(b) \mid \Lie A_0) = \prod_{\varphi: K \rightarrow \bar{\mathbb{Q}}}
    \varphi(\Nm^o_{B/K} (T -b))^{r_{\varphi}} , \quad b \in O_B.  
    \end{equation}

If we start with a point $(A, \iota, \bar{\lambda}, \bar{\eta})$ of the first
description of $\mathcal{A}_{\mathbf{K}}(S)$, we construct a point of the second
description as follows. We choose $\eta \in \bar{\eta}$. Then there is
an abelian variety $A_0 \in A$ such that
\begin{displaymath}
  \eta: \Gamma \otimes \hat{\mathbb{Z}} \overset{\sim}{\longrightarrow}
  \hat{T}(A_0). 
  \end{displaymath}
Then $A_0$ is independent of the choice of $\eta$. There exists a unique
$\lambda \in \bar{\lambda}$ such that the equation (\ref{BZsimilis1e})
holds with $\xi \in \hat{\mathbb{Z}}^{\times}(1)$. Modulo $m$ we obtain an
isomorphism
\begin{displaymath}
  \eta_m: \Gamma/ m\Gamma \overset{\sim}{\longrightarrow}
  \hat{T}(A_0) / m\hat{T}(A_0) \cong A_0[m], 
\end{displaymath}
which is also independent of the choice of $\eta$.
Conversely, to produce from a point of the second decription a point of the first
description is even more obvious and we omit it.  

It follows from these considerations that the functor $\mathcal{A}_{\mathbf{K}}$
is representable if $\mathbf{K}$ satisfies the following condition: 
\begin{equation}\label{BZneat1e} 
 \begin{aligned}
  \text{\emph{There is a $\mathbb{Z}$-lattice $\Gamma \subset V$ and an integer $m \geq 3$
  such that}}\\
 \mathbf{K}\subset  \{g\in G(\mathbb{A}_f)\mid g (\Gamma\otimes \hat{\mathbb{Z}})\subset \Gamma\otimes \hat{\mathbb{Z}}, \, g\equiv\id \mod m(\Gamma\otimes \hat{\mathbb{Z}})\}.
   \end{aligned}
    \end{equation}
As above (\ref{BZsplit1e}) we will assume that all prime ideals of $O_F$
over $p$ are split in $K/F$. Let $K \rightarrow F_{\mathfrak{p}_i}$ be the embedding
over $F$ which induces the prime ideal $\mathfrak{q}_i$ of $K$. It we compose 
the embedding with the conjugation on $K$, the induced prime ideal is
$\bar{\mathfrak{q}}_i$. We write
  \begin{displaymath}
  \begin{aligned} 
    K \otimes_{F} F_{\mathfrak{p}_i} & \overset{\sim}{\longrightarrow} &
    F_{\mathfrak{p}_i} \times F_{\mathfrak{p}_i} =
    K_{\mathfrak{q}_i} \times K_{\bar{\mathfrak{q}}_i}\\
  x \otimes f & \longmapsto & (xf, \bar{x} f)\\
    \end{aligned}
  \end{displaymath}
We will from now on always assume that the
function $r_{\varphi}$ satifies (\ref{BZEnu1e}). We consider the moduli problem
$\mathcal{A}_{\mathbf{K}}$ over the local reflex field $E_{\nu}$ (\ref{BZ3e}). We will extend it to a
moduli problem over $O_{E_{\nu}}$. For this, we need to impose some restrictions on $\mathbf{K}$.

 We set 
\begin{displaymath}
  V_{\mathfrak{q}_i} = V \otimes_K K_{\mathfrak{q}_i}, \quad 
  V_{\bar{\mathfrak{q}}_i} = V \otimes_K K_{\bar{\mathfrak{q}}_i}, \quad 
  V_{\mathfrak{p}_i} = V \otimes_F F_{\mathfrak{p}_i} = V_{\mathfrak{q}_i} \oplus
  V_{\bar{\mathfrak{q}}_i}. 
  \end{displaymath}
We use the decompositions 
\begin{equation}\label{p-zerlegt3e}
  B \otimes \mathbb{Q}_p = \prod_{i=0}^s 
  (B_{\mathfrak{q}_i} \times B_{\bar{\mathfrak{q}}_i}), \quad 
  V \otimes \mathbb{Q}_p = \bigoplus_{i=0}^s V_{\mathfrak{p}_i} = \bigoplus_{i=0}^s
  (V_{\mathfrak{q}_i} \oplus V_{\bar{\mathfrak{q}}_i}), 
\end{equation} 
cf. (\ref{p-zerlegt}). All  $V_{\mathfrak{p}_i} $ in the last decomposition are orthogonal with respect to
$\psi_p: V\otimes\mathbb{Q}_p\times V\otimes\mathbb{Q}_p\rightarrow\mathbb{Q}_p$.

An element $g \in G(\mathbb{Q}_p)$ has the form
$g = (\ldots, g_{\mathfrak{q}_i}, g_{\bar{\mathfrak{q}}_i}, \ldots)$, where
$g_{\mathfrak{q}_i} \in \End_{B_{\mathfrak{q}_i}} V_{\mathfrak{q}_i}$ and
$g_{\bar{\mathfrak{q}}_i} \in \End_{B_{\bar{\mathfrak{q}}_i}} V_{\bar{\mathfrak{q}}_i}$.
We define $g'_{\mathfrak{q}_i} \in \End_{B_{\bar{\mathfrak{q}}_i}} V_{\bar{\mathfrak{q}}_i}$
by
\begin{equation}\label{BZGQ_p4e} 
  \psi_p(g_{\mathfrak{q}_i} v, w) = \psi_p(v, g'_{\mathfrak{q}_i} w), \quad v \in
  V_{\mathfrak{q}_i}, \; w \in V_{\bar{\mathfrak{q}}_i}.
\end{equation}
We see that $g \in G(\mathbb{Q}_p)$ if and only if 
\begin{equation}\label{BZGQ_p1e}
g'_{\mathfrak{q}_i} g_{\bar{\mathfrak{q}}_i} \in \mathbb{Q}_p^{\times}
  \end{equation}
and  if this value is independent of $i$. 
We set
\begin{equation}\label{BZGQ_p5e}
G_{\mathfrak{q}_i} = \Aut_{B_{\mathfrak{q}_i}} \! V_{\mathfrak{q}_i}
  \end{equation}
By (\ref{BZGQ_p1e}) we obtain a canonical isomorphism
\begin{equation}\label{BZGQ_p2e}
  G(\mathbb{Q}_p) \cong
  G_{\mathfrak{q}_0} \times \ldots \times G_{\mathfrak{q}_s} \times \mathbb{Q}_p^{\times}.
\end{equation}
The multiplier homomorphism $\mu: G(\mathbb{Q}_p) \rightarrow \mathbb{Q}_p^{\times}$
corresponds on the right hand side to the projection on the factor
$\mathbb{Q}_p^{\times}$. 

We are only interested in the case where 
\begin{equation}\label{Dsplit-in0} 
  D_{\mathfrak{p}_0}^{{\rm opp}} \cong B_{\mathfrak{q}_0} \cong B_{\bar{\mathfrak{q}}_0} \quad
  \text{\emph{is a quaternion division algebra over}} \; F_{\mathfrak{p}_0} .
  \end{equation}

For 
each prime $\mathfrak{q}_i$, $i = 0, 1, \ldots, s$ we choose a maximal order
$O_{B_{\mathfrak{q}_i}} \subset B_{\mathfrak{q}_i}$. The image of $O_{B_{\mathfrak{q}_i}}$
by the involution $\star : B_{\mathfrak{q}_i} \rightarrow B_{\bar{\mathfrak{q}}_i}$
will be denoted by $O_{B_{\bar{\mathfrak{q}}_i}}$. 

We set
$\Lambda_{\mathfrak{q}_i} = O_{B_{\mathfrak{q}_i}} \subset V_{\mathfrak{q}_i}$.
Moreover we set
\begin{displaymath}
  \Lambda_{\bar{\mathfrak{q}}_i} = \{u \in V_{\bar{\mathfrak{q}}_i} \; | \; 
  \psi_{\mathfrak{p}_i}(x, u) \in \mathbb{Z}_p, \; \text{for all}\;
  x \in \Lambda_{\mathfrak{q}_i} \}, 
\end{displaymath}
where $\psi_{\mathfrak{p}_i}$ is the restriction of $\psi_p$ to $V_{\mathfrak{p}_i}$. 
Then $\Lambda_{\bar{\mathfrak{q}}_i}$ is an $O_{B_{\bar{\mathfrak{q}}_i}}$-module and the
pairings 
\begin{equation}\label{BZLambda1e}
  \psi_{\mathfrak{p}_i}: \Lambda_{\mathfrak{q}_i} \times \Lambda_{\bar{\mathfrak{q}}_i}
  \rightarrow \mathbb{Z}_p 
  \end{equation}
are perfect. We write
$\Lambda_{\mathfrak{p}_i} =\Lambda_{\mathfrak{q}_i}\oplus\Lambda_{\bar{\mathfrak{q}}_i}$
and $\Lambda_p = \oplus_{i=0}^{s} \Lambda_{\mathfrak{p}_i}$. 

We choose an open subgroup $\mathbf{M} \subset \mathbb{Z}_p^{\times}$. 
We set $\mathbf{K}_{\mathfrak{q}_0} = \Aut_{O_{B_{\mathfrak{q}_0}}} \Lambda_{\mathfrak{q}_0}$. 
For $i > 0$ we choose arbitrarily open and compact subgroups  
$\mathbf{K}_{\mathfrak{q}_i} \subset \Aut_{O_{B_{\mathfrak{q}_i}}} \Lambda_{\mathfrak{q}_i}$. We set 
\begin{equation}
  G_{\mathfrak{p}_i} = \{g \in \Aut_{B_{\mathfrak{p}_i}} \! V_{\mathfrak{p}_i} \; | \;
  \psi_{\mathfrak{p}_i}(g v, g w) = \mu_{\mathfrak{p}_i}(g)\psi_{\mathfrak{p}_i}(v, w) \;
  \text{for} \; \mu_{\mathfrak{p}_i}(g) \in \mathbb{Q}_p^{\times} \}. 
\end{equation}
We define
$\mathbf{K}_{\mathfrak{p}_i} \subset G_{\mathfrak{p}_i}$ 
as the group of all pairs $g=(c_1, c_2)$ of automorphisms
\begin{displaymath}
  c_1 \in \mathbf{K}_{\mathfrak{q}_i}, \quad c_2 \in
  \Aut_{B_{\bar{\mathfrak{q}}_i}} V_{\bar{\mathfrak{q}}_i}
\end{displaymath}
such that for some $m \in \mathbf{M}$  
\begin{displaymath}
  \psi(c_1 v, c_2 w) = m \psi(v, w), \quad \text{for all} \;
  v \in V_{\mathfrak{q}_i}, w \in V_{\bar{\mathfrak{q}}_i}. 
\end{displaymath}
Since $c_1 (\Lambda_{\mathfrak{q}_i}) \subset \Lambda_{\mathfrak{q}_i}$ it follows
from (\ref{BZGQ_p4e}) that
$c'_1 (\Lambda_{\bar{\mathfrak{q}}_i}) \subset \Lambda_{\bar{\mathfrak{q}}_i}$.
Since $c'_1c_2 = m$, this implies that 
$c_2 \in \Aut_{O_{B_{\bar{\mathfrak{q}}_i}}} \Lambda_{\bar{\mathfrak{q}}_i}$.

We obtain an isomorphism 
\begin{displaymath}
  \begin{array}{ccc}
    \mathbf{K}_{\mathfrak{p}_i} & \cong &\mathbf{K}_{\mathfrak{q}_i}\times \mathbf{M}\\
    (c_1,c_2) & \mapsto & c_1 \times m .
    \end{array}
\end{displaymath}
We define the subgroup $\mathbf{K}_p \subset G(\mathbb{Q}_p)$  as
\begin{equation}\label{BZKp1e}
  \begin{array}{rl}
  \mathbf{K}_p = & \{g = (g_{\mathfrak{p}_i})\in\prod_i \mathbf{K}_{\mathfrak{p}_i} \;
  | \; \mu (g_{\mathfrak{p}_0}) =\ldots= \mu(g_{\mathfrak{p}_s})
  \in \mathbf{M} \}\\[2mm]  
  \cong & \mathbf{K}_{\mathfrak{q}_0} \times \ldots \times \mathbf{K}_{\mathfrak{q}_s}
  \times \mathbf{M}.
  \end{array}
\end{equation}
The last equation follows from (\ref{BZGQ_p2e}). 
We choose an arbitrary open compact subgroup
$\mathbf{K}^p \subset G(\mathbb{A}^p_f)$ and define
\begin{equation}\label{BZ7e} 
\mathbf{K} = \mathbf{K}_p \mathbf{K}^p \subset G(\mathbb{A}_f). 
  \end{equation} 
This concludes the description of the class of open compact subgroups $\mathbf{K}$ for which we will extend $\mathcal{A}_{\mathbf{K}}$ to a  moduli problem  over $\Spec O_{E_\nu}$. For these $\mathbf{K}$ we may reformulate the Definition \ref{BZAK1d} of
the functor $\mathcal{A}_{\mathbf{K}}$. The datum $\bar{\eta}$ is then the
product of two classes $\bar{\eta}^p$ modulo $\mathbf{K}^p$, resp. 
$\bar{\eta}_p$ modulo $\mathbf{K}_p$, of isomorphisms 
\begin{displaymath}
  \eta^p: V \otimes \mathbb{A}^p_f \isoarrow V^p_f(A), \quad \text{resp.} 
  \quad  \eta_p: V \otimes \mathbb{Q}_p \isoarrow V_p(A),   
\end{displaymath}
which respect the bilinear forms on both sides up to a constant in
$(\mathbb{A}_f^p)^{\times}(1)$, resp. $\mathbb{Q}_p^{\times}(1)$. In particular there
is for each $\lambda \in \bar{\lambda}$ locally on $S$ a constant
$\xi_p(\lambda) \in \mathbb{Q}_p^{\times}(1)$ such that for the Riemann form
$E^{\lambda}$ 
\begin{equation}\label{BZ21e}
  E^{\lambda}(\eta_p(v), \eta_p(w)) = \xi_p(\lambda) \psi(v,w), \quad v,w \in
  V \otimes_{\mathbb{Q}} \mathbb{Q}_p. 
\end{equation}
If we change $\eta_p$ in its class by an element $g \in \mathbf{K}_p$ we find
\begin{displaymath}
  E^{\lambda}(\eta_p(gv), \eta_p(gw)) = \xi_p(\lambda) \psi(gv,gw) =
  \xi_p(\lambda) \mu(g) \psi(v,w).   
  \end{displaymath}
Since $\mu(g) \in \mathbf{M}$, the class of
$\xi_p(\lambda) \in \mathbb{Q}_p^{\times}(1)/\mathbf{M}$ is well-defined by the
class $\bar{\eta}_p$. If we change $\lambda$ into $u\lambda$ for
$u \in \mathbb{Q}^{\times}$, we obtain
\begin{displaymath}
\xi_p(u \lambda) = u \xi_p(\lambda). 
\end{displaymath}
By (\ref{p-zerlegt3e}) $\eta_p$ decomposes into isomorphisms
\begin{displaymath}
  \eta_{\mathfrak{q}_i}: V \otimes_K K_{\mathfrak{q}_i} \isoarrow V_{\mathfrak{q}_i}(A),
  \quad  \eta_{\bar{\mathfrak{q}}_i}: V \otimes_K K_{\bar{\mathfrak{q}}_i}
  \isoarrow V_{\bar{\mathfrak{q}}_i}(A), \quad \text{for} \; i= 0, \ldots, s.
\end{displaymath}
The equation (\ref{BZ21e}) becomes equivalent to the equations for  $i = 0, \ldots, s$,
\begin{equation}
  E^{\lambda}(\eta_{\mathfrak{q}_i}(v_i), \eta_{\bar{\mathfrak{q}}_i}(w_i) =
  \xi_p(\lambda) \psi(v_i, w_i), \quad v_i\in V\otimes_{K} K_{\mathfrak{q}_i},\, w_i\in V\otimes_{K} K_{\bar{\mathfrak{q}}_i} .
\end{equation}
From these equations it is clear that the set of data
$\eta_{\mathfrak{q}_i}, \eta_{\bar{\mathfrak{q}}_i}$ is determined by
$\eta_{\mathfrak{q}_i}, \xi_p(\lambda)$.

We obtain the following reformulation of Definition \ref{BZAK1d}. 
\begin{definition}\label{BZAK1altd} (alternative of Definition \ref{BZAK1d} for
  $\mathcal{A}_{\mathbf{K}}$)  
Let $\mathbf{K}=\mathbf{K}_p\mathbf{K}^p \subset G(\mathbb{A}_f)$, where $\mathbf{K}_p$ is defined as in \eqref{BZKp1e}.
  Then we can replace the datum $(c)$ of Definition \ref{BZAK1d} 
by the following data
  \begin{enumerate}
  \item[($c^p$)] A class $\bar{\eta}^p$ modulo $\mathbf{K}^p$ of
    $B \otimes \mathbb{A}^p_f$-module isomorphisms 
  \begin{displaymath}
    \eta^p: V \otimes \mathbb{A}^p_f \isoarrow \mathrm{V}^p_f(A) 
  \end{displaymath}
  such that for each $\lambda \in \bar{\lambda}$ there is a constant
  $\xi^{(p)}(\lambda) \in \mathbb{A}^p_f(1)$
  with
  \begin{displaymath}
\xi^{(p)}(\lambda) \psi(v_1, v_2) = E^{\lambda}(\eta^p(v_1), \eta^p(v_2)). 
  \end{displaymath}
\item[($c_p$)] For each $i = 0, \ldots s$ a class $\bar{\eta}_{\mathfrak{q}_i}$
  modulo $\mathbf{K}^{\bullet}_{\mathfrak{q}_i}$ of $B_{\mathfrak{q}_i}$-module
  isomorphisms 
  \begin{displaymath}
    \eta_{\mathfrak{q}_i}: V \otimes_K K_{\mathfrak{q}_i} \isoarrow
    V_{\mathfrak{q}_i}(A). 
  \end{displaymath}
\item[($\xi_p$)] A function
  $$\xi_p: \bar{\lambda} \rightarrow \mathbb{Q}_p^{\times}(1)/\mathbf{M}$$
   such that
  $\xi_p(u \lambda) = u \xi_p(\lambda)$ for each $u \in \mathbb{Q}^{\times}$. 
  \end{enumerate}
  \end{definition}
Let $g \in G(\mathbb{Q}_p) \subset G(\mathbb{A}_f)$ and consider the Hecke
operator
\begin{equation}\label{Heckeoperator1e}
  g: \mathcal{A}_\mathbf{\mathbf{K}} \rightarrow
  \mathcal{A}_{g^{-1}\mathbf{K}g}  
\end{equation}
which maps a point $(A, \iota, \bar{\lambda}, \bar{\eta})$ to 
$(A, \iota, \bar{\lambda}, \overline{\eta g})$. This makes sense, since with
$\mathbf{K}$ also $g^{-1} \mathbf{K}g$ satisfies the conditions imposed in
\eqref{BZ7e}. According to the isomorphism (\ref{BZGQ_p2e}) we represent $g$
in the form
$(\ldots, g_{\mathfrak{q}_i}, g_{\bar{\mathfrak{q}}_i}, \ldots, \mu(g))$. 
In terms of the Definition \ref{BZAK1altd}, the point is represented in the form
$(A, \iota, \bar{\lambda}, \bar{\eta}^p, (\bar{\eta}_{\mathfrak{q}_i})_i, \xi_p)$. 
The Hecke operator (\ref{Heckeoperator1e}) maps it to the point
$(A,\iota,\bar{\lambda},\bar{\eta}^p,(\overline{\eta_{\mathfrak{q}_i}g_{\mathfrak{q}_i}})_i,\mu(g)\xi_p)$.

Let $u \in \mathbb{Q}_p^{\times}$. The action which maps
$(A, \iota, \bar{\lambda}, \bar{\eta}^p, (\bar{\eta}_{\mathfrak{q}_i})_i, \xi_p)$
to
$(A,\iota,\bar{\lambda},\bar{\eta}^p, (\bar{\eta}_{\mathfrak{q}_i})_i, u\xi_p)$
is denoted by 
\begin{equation}\label{xioperator1e}
u_{\mid \xi}: \mathcal{A}_\mathbf{\mathbf{K}} \rightarrow
  \mathcal{A}_{\mathbf{K}} .
  \end{equation}
The element
\begin{displaymath}
  s_u := (1, \ldots 1) \times (u, \ldots, u)\in (K \otimes \mathbb{Q}_p)^{\times}
  = (\prod_{i=0}^{s} K^{\times}_{\mathfrak{q}_i}) \times
  (\prod_{i=0}^{s} K^{\times}_{\bar{\mathfrak{q}}_i}) 
  \end{displaymath}
lies in $G(\mathbb{Q}_p)$. The action of the Hecke operator $s_u$ coincides
with the action of $u_{\mid \xi}$. The notation $u_{\mid \xi}$ is a reminder that the action of $s_u$  only changes the last entry in a tuple $(A, \iota, \bar{\lambda}, \bar{\eta}^p, (\bar{\eta}_{\mathfrak{q}_i})_i, \xi_p)$.  

We set
\begin{equation}\label{BZG1e}
O_{B,p} = \prod_{i=0}^s
  (O_{B_{\mathfrak{q}_i}} \times O_{B_{\bar{\mathfrak{q}}_i}}) \subset B \otimes \mathbb{Q}_p. 
  \end{equation}
Let $O_{B,(p)} \subset B$ the subring of elements which lie in $O_{B,p}$. This subring is invariant under the involution $\star$, cf. \eqref{defstar}. 
We will consider abelian schemes $A$ with an action
\begin{displaymath}
  O_{K,(p)} = O_K \otimes_{\mathbb{Z}} \mathbb{Z}_{(p)} \rightarrow
\End A \otimes_{\mathbb{Z}} \mathbb{Z}_{(p)}. 
\end{displaymath}
Then $O_K \otimes_{\mathbb{Z}} \mathbb{Z}_p$ acts on the $p$-divisible group $X$
of $A$. Therefore we obtain a decomposition
\begin{displaymath}
  X = (\prod_{i=0}^{s} X_{\mathfrak{q}_i}) \times
  (\prod_{i=0}^{s} X_{\bar{\mathfrak{q}}_i}). 
  \end{displaymath}
 We will write $X_{\mathfrak{p}_i} = X_{\mathfrak{q}_i} \times X_{\bar{\mathfrak{q}}_i}$.
This continues to make sense if $A$ is an abelian scheme up to isogeny of order
prime to $p$.
We set
\begin{displaymath}
  U_p(\mathbb{Q}) = \{d \in \mathbb{Q}^{\times} \; | \; \ord_p d = 0 \} =
  \mathbb{Z}_{(p)}^{\times}. 
  \end{displaymath}
\begin{definition}\label{BZsAK2d}  
Let $\mathbf{K}=\mathbf{K}_p\mathbf{K}^p \subset G(\mathbb{A}_f)$, where $\mathbf{K}_p$ is defined as in \eqref{BZKp1e}. 
We define a variant $\mathcal{A}^{ bis}_{\mathbf{K}}$ of the functor
$\mathcal{A}_{\mathbf{K}}$. A point of $\mathcal{A}^{bis}_{\mathbf{K}}$ with
values in an $E$-scheme $S$ consists of:  
\begin{enumerate} 
\item[(a)] An abelian scheme $A$ over $S$ up to isogeny of degree prime to
  $p$ with an action
  $\iota: O_{B,(p)} \rightarrow \End A \otimes_{\mathbb{Z}} \mathbb{Z}_{(p)}$, 

\item[(b)] A $U_p(\mathbb{Q})$-homogeneous polarization $\bar{\lambda}$ of $A$
  which induces on $B$ the involution $b \mapsto b^{\star}$ and which is
  principal in $p$, i.e. of degree prime to $p$. 
 
\item[(c)] A class $\bar{\eta}^p$  modulo $\mathbf{K}^p$ of
  $B \otimes \mathbb{A}^p_f$-module isomorphisms 
  \begin{displaymath}
    \eta^p: V \otimes \mathbb{A}^p_f \isoarrow \mathrm{V}^p_f(A) .
   \end{displaymath}
  such that for each $\lambda \in \bar{\lambda}$ there is a constant
  $\xi^{(p)}(\lambda) \in \mathbb{A}^p_f(1)$
  with
  \begin{displaymath}
\xi^{(p)}(\lambda) \psi(v_1, v_2) = E^{\lambda}(\eta^p(v_1), \eta^p(v_2)). 
  \end{displaymath}
\item[(d)]
  For each polarization $\lambda \in \bar{\lambda}$ a section 
  $\xi_p(\lambda) \in \mathbb{Z}_p^{\times}(1)/\mathbf{M}$ such that
  $\xi_p(u \lambda) = u \xi_p(\lambda)$ for each $u \in U_p(\mathbb{Q})$. 
\item[(e)]
  A class $\bar{\eta}_{\mathfrak{q}_j}$   modulo $\mathbf{K}_{\mathfrak{q}_i}$ of
  $O_{B_{\mathfrak{q}_j}}$-module isomorphisms for each $j = 1, \ldots s$,
  \begin{displaymath}
    \eta_{\mathfrak{q}_j}: \Lambda_{\mathfrak{q}_j} \isoarrow T_{\mathfrak{q}_j}(A) .
   \end{displaymath} 
   \end{enumerate}
We require that the  condition ${\rm (KC)}$ holds
  \begin{equation}
    {\rm char}(T, \iota(b) \mid \Lie A) = \prod_{\varphi: K \rightarrow \bar{\mathbb{Q}}}
    \varphi(\Nm^o_{B/K} (T -b))^{r_{\varphi}} . 
  \end{equation}
 
\end{definition}
Here and in the sequel, the index $i$ will run through $0,\ldots, s$ and the index $j$  through $1,\cdots, s$. 
\begin{proposition}\label{BZ1p}
  The functors $\mathcal{A}_{\mathbf{K}}$ and $\mathcal{A}^{bis}_{\mathbf{K}}$ 
  on the category of $E$-schemes are isomorphic.
  \end{proposition}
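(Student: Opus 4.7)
The plan is to construct explicit mutually inverse natural transformations between $\mathcal{A}_{\mathbf{K}}$ and $\mathcal{A}^{bis}_{\mathbf{K}}$. The essential content of the proof is that the datum $\bar{\eta}_{\mathfrak{q}_0}$ (a rational level structure at $\mathfrak{q}_0$, taken modulo the stabilizer of $\Lambda_{\mathfrak{q}_0}$) together with the extra freedom encoded in the $\mathbb{Q}$-homogeneity of $\bar{\lambda}$ and the $\mathbb{Q}_p^\times(1)$-valued $\xi_p$ precisely compensates for the passage from prime-to-$p$ isogenies to all isogenies and from polarizations principal at $p$ to arbitrary $\mathbb{Q}$-homogeneous ones.

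In the direction $\mathcal{A}_{\mathbf{K}} \to \mathcal{A}^{bis}_{\mathbf{K}}$, starting from a point $(A, \iota, \bar{\lambda}, \bar{\eta}^p, (\bar{\eta}_{\mathfrak{q}_i})_i, \xi_p)$ in the form of Definition~\ref{BZAK1altd}, the classes $\bar{\eta}_{\mathfrak{q}_i}$ single out $O_{B_{\mathfrak{q}_i}}$-lattices $L_{\mathfrak{q}_i} := \eta_{\mathfrak{q}_i}(\Lambda_{\mathfrak{q}_i}) \subset V_{\mathfrak{q}_i}(A)$, independent of the representative since $\mathbf{K}_{\mathfrak{q}_i}$ stabilizes $\Lambda_{\mathfrak{q}_i}$ (note in particular that $\mathbf{K}_{\mathfrak{q}_0} = \Aut_{O_{B_{\mathfrak{q}_0}}}\Lambda_{\mathfrak{q}_0}$ by construction). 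Using the identification $\mathbb{Q}^\times/U_p(\mathbb{Q}) \cong \mathbb{Q}_p^\times/\mathbb{Z}_p^\times$, there is a unique $U_p(\mathbb{Q})$-subclass $\bar{\lambda}' \subset \bar{\lambda}$ on which $\xi_p$ takes values in $\mathbb{Z}_p^\times(1)/\mathbf{M}$; this choice makes every $\lambda \in \bar{\lambda}'$ principal at $p$ and identifies the $E^\lambda$-duals of the $L_{\mathfrak{q}_i}$ via (\ref{BZLambda1e}) with the lattices $\eta_{\bar{\mathfrak{q}}_i}(\Lambda_{\bar{\mathfrak{q}}_i})$. There is then a unique member $A^{bis}$ of the isogeny class of $A$, up to prime-to-$p$ isogeny, with $T_p(A^{bis}) = \bigoplus_i (L_{\mathfrak{q}_i} \oplus L_{\bar{\mathfrak{q}}_i})$, and all remaining data transfer naturally.

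For the reverse direction $\mathcal{A}^{bis}_{\mathbf{K}} \to \mathcal{A}_{\mathbf{K}}$, the plan is to enlarge the isogeny class, promote $\bar{\lambda}$ to its full $\mathbb{Q}^\times$-orbit, and extend $\xi_p$ by the rule $\xi_p(u\lambda) = u\xi_p(\lambda)$ for $u \in \mathbb{Q}^\times$ (well-defined in $\mathbb{Q}_p^\times(1)/\mathbf{M}$). The only nontrivial task is constructing $\bar{\eta}_{\mathfrak{q}_0}$. By hypothesis (\ref{Dsplit-in0}), $B_{\mathfrak{q}_0}$ is a quaternion division algebra over $F_{\mathfrak{p}_0}$ with unique maximal order $O_{B_{\mathfrak{q}_0}}$, so every finitely generated torsion-free $O_{B_{\mathfrak{q}_0}}$-module is free. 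Combined with (KC) and the fact that $V_{\mathfrak{q}_0}(A)$ has $F_{\mathfrak{p}_0}$-dimension $4$ and is therefore free of rank $1$ as a $B_{\mathfrak{q}_0}$-module, this forces $T_{\mathfrak{q}_0}(A) \cong \Lambda_{\mathfrak{q}_0}$ as $O_{B_{\mathfrak{q}_0}}$-modules. The class modulo $\mathbf{K}_{\mathfrak{q}_0} = \Aut_{O_{B_{\mathfrak{q}_0}}}\Lambda_{\mathfrak{q}_0}$ of any such identification is intrinsic (and $\pi_1(\bar{s},S)$-equivariant since $T_{\mathfrak{q}_0}(A)$ is stable under $\pi_1(\bar{s},S)$), thus yielding $\bar{\eta}_{\mathfrak{q}_0}$.

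The verification that these two constructions are mutually inverse is a routine unwinding of definitions: applying one after the other simply contracts and then re-expands the isogeny class via the same $p$-adic integral lattice data. The main delicate point is the identification $T_{\mathfrak{q}_0}(A) \cong \Lambda_{\mathfrak{q}_0}$ in the reverse direction, which genuinely uses the quaternion division algebra hypothesis (\ref{Dsplit-in0}) together with the principality of the polarization at $p$; without these, an object of $\mathcal{A}^{bis}_{\mathbf{K}}$ would not carry enough intrinsic structure at $\mathfrak{q}_0$ to canonically define $\bar{\eta}_{\mathfrak{q}_0}$.
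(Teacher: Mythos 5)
Your proposal is correct and follows the same route as the paper's proof: in both directions you work with the alternative description of Definition~\ref{BZAK1altd}, you use the lattice $\eta_p(\Lambda_p)\subset V_p(A)$ to single out the right member $A_0$ of the isogeny class, and you exploit the rescaling of $\lambda$ by a power of $p$ to make $\xi_p$ integral. The one place where you go beyond the paper is in the converse direction: the paper's proof simply asserts ``we find for any given $\xi'\in\mathbb{Z}_p^\times(1)$ an isomorphism of $O_{B_{\mathfrak{p}_0}}$-modules $\eta_{\mathfrak{p}_0}\colon\Lambda_{\mathfrak{p}_0}\to T_{\mathfrak{p}_0}(A_0)$ with the prescribed multiplier,'' while you make explicit the ingredient that guarantees existence of the $\mathfrak{q}_0$-component, namely that by (\ref{Dsplit-in0}) the order $O_{B_{\mathfrak{q}_0}}$ is the maximal order in a local division algebra, so every rank-one $O_{B_{\mathfrak{q}_0}}$-lattice is free and thus $T_{\mathfrak{q}_0}(A_0)\cong\Lambda_{\mathfrak{q}_0}$; moreover, since $\mathbf{K}_{\mathfrak{q}_0}=\Aut_{O_{B_{\mathfrak{q}_0}}}\Lambda_{\mathfrak{q}_0}$ is the full integral automorphism group, the class modulo $\mathbf{K}_{\mathfrak{q}_0}$ is unique and $\pi_1$-stable. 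This is a genuine and welcome addition, and it is exactly the fact the paper is implicitly relying on. (Two minor remarks: the justification that $V_{\mathfrak{q}_0}(A)$ has $B_{\mathfrak{q}_0}$-rank one is better deduced from the level structure $\bar\eta^p$ and polarization duality than from the Kottwitz condition alone; and once $\bar\eta_{\mathfrak{q}_0}$ is fixed, the $\bar{\mathfrak{q}}_0$-component is recovered by duality and $\xi_p$ exactly as for $j>0$, which you should state explicitly before claiming the two constructions are mutually inverse.)
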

\begin{proof}
  We begin with a point $(A, \iota, \bar{\lambda}, \bar{\eta})$ of
  $\mathcal{A}_{\mathbf{K}}(S)$ and construct a point of
  $\mathcal{A}^{bis}_{\mathbf{K}}(S)$.
  We choose an element $\eta \in \bar{\eta}$ and consider the component 
  $\eta_p: V \otimes \mathbb{Z}_p \isoarrow \mathrm{V}_p(A)$. By the choice
  of $\mathbf{K}_{p}$, the image of $\Lambda_p$ by this morphism is independent
  of the choice of $\eta_p$. Therefore we find an abelian variety $A_0 \in A$
  up to isogeny prime to $p$ such that $\eta_p$ induces an isomorphism 
  \begin{displaymath}
\eta_p: \Lambda_{p} \isoarrow T_p(A_0).  
    \end{displaymath}
  We choose a polarization $\lambda_0 \in \bar{\lambda}$. Then we obtain 
  \begin{displaymath}
    E_p^{\lambda_0}(\eta_p(v_1), \eta_p(v_2)) = \xi \psi_p(v_1, v_2), \quad
    v_1, v_2 \in \Lambda_p, \; \xi \in \mathbb{Q}_p^{\times}(1). 
  \end{displaymath}
  After multiplying $\lambda_0$ by a power of $p$ we may assume that 
  $\xi \in \mathbb{Z}_p^{\times}(1)$. This determines an 
  $U_p(\mathbb{Q})$-homogeneous polarization $\bar{\lambda}_0$ on $A_0$.
  We remark that the class of $\xi$ in $\mathbb{Z}_p^{\times}(1)/\mathbf{M}$
  is independent of the choice of $\eta$. 
  Finally $\eta_p$ induces $O_{B_{\mathfrak{q}_i}}$-module isomorphisms
$\eta_{\mathfrak{q}_i}:\Lambda_{\mathfrak{q}_i}\isoarrow T_{\mathfrak{q}_i}(A_0)$.  
  We have obtained a point
  $(A_0, \iota, \bar{\lambda}, \bar{\eta}^p,(\eta_{\mathfrak{q_j}})_j,  \xi_p)$ of
  $\mathcal{A}^{bis}_{\mathbf{K}}$. 

  Conversely assume that
  $(A_0, \iota, \bar{\lambda}, \bar{\eta}^p,(\eta_{\mathfrak{q_j}})_j,  \xi_p) \in
  \mathcal{A}^{bis}_{\mathbf{K}}$ 
is given. Let $\lambda \in \bar{\lambda}_0$. The Riemann form 
  $E^{\lambda}$ on $T_{\mathfrak{p}_0}(A_0)$ is by
assumption perfect. Therefore we find for any given
$\xi' \in \mathbb{Z}_p^{\times}(1)$ an isomorphism of $O_{B_{\mathfrak{p}_0}}$-modules 
  \begin{displaymath}
\eta_{\mathfrak{p}_0}: \Lambda_{\mathfrak{p}_0} \isoarrow T_{\mathfrak{p}_0}(A_0) 
  \end{displaymath}
such that 
  \begin{displaymath}
    \xi' \psi(v_1, v_2) =
    E^{\lambda}(\eta_{\mathfrak{p}_0}(v_1), \eta_{\mathfrak{p}_0}(v_2)), \quad
    \; v_1, v_2 \in \Lambda_{\mathfrak{p}_0}. 
  \end{displaymath}
  We choose $\eta_{\mathfrak{p}_0}$ such that $\xi' = \xi_p(\lambda)$. 
  For $i > 0$ the isomorphism $\eta_{\mathfrak{q}_i}$ induces by duality
  $\Hom( ?, \mathbb{Z}_p)$ an isomorphism
  $T_{\bar{\mathfrak{q}}_i}(A_0) \isoarrow \Lambda_{\bar{\mathfrak{q}}_i}(1)$.
  We multiply the inverse map with $\xi_p(\lambda)$ and obtain
  \begin{displaymath}
    \eta_{\bar{\mathfrak{q}}_i}: \Lambda_{\bar{\mathfrak{q}}_i} \isoarrow
    T_{\bar{\mathfrak{q}}_i}(A_0). 
    \end{displaymath}
 We obtain an isomorphism 
  \begin{displaymath}
    \eta_{\mathfrak{p}_i} = \eta_{\mathfrak{q}_i} \oplus \eta_{\bar{\mathfrak{q}}_i}:
    \Lambda_{\mathfrak{p}_i} \isoarrow T_{\mathfrak{p}_i} (A_0)
    \end{displaymath}
  which respects the bilinear forms on both sides up to the factor
  $\xi_p(\lambda)$. Then $\eta_p = \oplus^{s}_{i=0} \eta_{\mathfrak{p}_i}$ defines
  an isomorphism $\eta_p: \Lambda_p \rightarrow T_p(A_0)$ such that
  \begin{displaymath}
    \xi_p(\lambda) \psi(v_1, v_2) = E^{\lambda}(\eta_p(v_1), \eta_p(v_2)), \quad
    v_1, v_2 \in \Lambda_p.
    \end{displaymath}
 Therefore we have constructed a point of $\mathcal{A}_{\mathbf{K}}(S)$. The two procedures are inverses of each other, proving the proposition.
\end{proof}

From now on we will always assume that $\mathbf{K}$ satisfies the assumptions
made in Definition \ref{BZsAK2d}. If we write $\mathcal{A}_{\mathbf{K}}$,
we understand that this functor is given in the form of Definition
\ref{BZsAK2d}. To extend 
the functor $\mathcal{A}_{\mathbf{K}}(S)$ to an arbitrary $O_{E_{\nu}}$-scheme $S$, the main obstacle
is the datum $(d)$, since we do not have the
$(\mathbb{Z}/p\mathbb{Z})^{\times}$-\'etale torsor of primitive $p$th roots of unity.
Therefore we define a new functor $\tilde{\mathcal{A}}^t_{\mathbf{K}}$ by
replacing the data $\xi_p(\lambda) \in \mathbb{Z}_p^{\times}(1)/\mathbf{M}$ by
sections in the constant sheaf
$\xi_p(\lambda) \in \mathbb{Z}_p^{\times}/\mathbf{M}$. Here the upper index $t$ in  $\tilde{\mathcal{A}}^t_{\mathbf{K}}$ indicates that this functor, when restricted to the category of $E_\nu$-schemes, is a twisted version of $\mathcal {A}_{\mathbf{K}}$. 
\begin{definition}\label{BZsAtK2d}
Let $\mathbf{K}=\mathbf{K}_p\mathbf{K}^p \subset G(\mathbb{A}_f)$, where $\mathbf{K}_p$ is defined as in \eqref{BZKp1e}. We define a functor $\tilde{\mathcal{A}}^t_{\mathbf{K}}$ on the category of
$O_{E_{\nu}}$-schemes $S$. 
An $S$-valued point consists of the data $(a), (b), (c), (e)$ as in Definition
\ref{BZsAK2d}. But we replace $(d)$ by the following datum
\begin{enumerate}
\item[$(d^t)$]
   A section 
  $\xi_p(\lambda) \in \mathbb{Z}_p^{\times}/\mathbf{M}$ for each polarization $\lambda \in \bar{\lambda}$ such that
  $$\xi_p(u \lambda) = u \xi_p(\lambda), \quad u \in U_p(\mathbb{Q}) .
  $$
  \end{enumerate}
We continue to impose the condition ${\rm (KC)}$. 
\end{definition}
 The data (a)--(c) continue to make sense over a 
$O_{E_{\nu}}$-scheme $S$. Since an isogeny of
degree prime to $p$ induces an isomorphism on  tangent spaces, the condition (KC) also makes
sense. Since $r_{\varphi} = 0$ for each
$\varphi: K_{\mathfrak{q}_j} \rightarrow \bar{\mathbb{Q}}_p$ for $j= 1, \ldots, s$,
the $p$-divisible groups $X_{\mathfrak{q}_j}$ are \'etale. We note that
$X_{\mathfrak{q}_j}$ is a $p$-divisible group of height
$4[K_{\mathfrak{q}_j}: \mathbb{Q}_p]$ and this implies that $T_{\mathfrak{q}_j}(A)$
is a free $O_{B_{\mathfrak{q}_j}}$-module of rank $1$. Therefore the datum 
$(e)$ also continues to make sense. 

The functor $\tilde{\mathcal{A}}^t_{\mathbf{K}}$ is representable if the group
$\mathbf{K}$ satisfies the condition (\ref{BZneat1e}) for some integer
$m \geq 3$ which is prime to $p$. A standard argument shows that
$\tilde{\mathcal{A}}^t_{\mathbf{K}}$ is proper over $\Spec O_{E_{\nu}}$, cf.
\cite[Prop. 4.1]{Dr}. If we have another open and compact
subgroup $\tilde{\mathbf{K}} \subset \mathbf{K}$ as in Definition
\ref{BZsAtK2d},  we obtain an \'etale covering
\begin{displaymath}
  \widetilde{\mathcal{A}}^t_{\tilde{\mathbf{K}}} \rightarrow
  \widetilde{\mathcal{A}}^t_{\mathbf{K}}. 
\end{displaymath} 

The general fibre $\tilde{\mathcal{A}}^t_{\mathbf{K}, E_{\nu}}$  of
$\tilde{\mathcal{A}}^t_{\mathbf{K}}$ is a Galois twist of
$\mathcal{A}_{\mathbf{K},E_{\nu}}=\mathcal{A}_{\mathbf{K}}\times_{\Spec E}\Spec E_{\nu}$. Let us explain this. 
The definition of 
$\tilde{\mathcal{A}}^{t}_{\mathbf{K}}(S)$ makes sense for any $E$-scheme $S$.
We denote this functor on the category of $E$-schemes by 
$\mathcal{A}^{t}_{\mathbf{K}}$. (The fact, used above,  that the $p$-divisible groups $X_{\mathfrak{q}_j}$
are \'etale is automatic  in characteristic $0$, even for $X_{\mathfrak{q}_0}$.) We may represent a
point of $\mathcal{A}^{t}_{\mathbf{K}}(S)$ in the same way as in
Definition \ref{BZAK1altd} by
\begin{equation}\label{A^t-points-alt}
(A, \iota, \bar{\lambda}, \bar{\eta}^p, (\bar{\eta}_{\mathfrak{q}_i})_i, \xi_p) ,
\end{equation}
except that now $\xi_p$ is a function
$\xi_p: \bar{\lambda} \rightarrow \mathbb{Q}_p^{\times}/\mathbf{M}$.   
There is the 
canonical isomorphism
\begin{equation}\label{Gtwist4e}
  \mathcal{A}^{t}_{\mathbf{K}} \times_{\Spec E} \Spec E_{\nu} \cong
  \tilde{\mathcal{A}}^{t}_{\mathbf{K}} \times_{\Spec O_{E_{\nu}}} \Spec E_{\nu}. 
  \end{equation}

We will identify 
$\mathcal{A}^{t}_{\mathbf{K}}$ with a Shimura variety of the form $\Sh(G, h \cdot c)$ for some
$c: \mathbb{S} \rightarrow G_{\mathbb{R}}$ which factors through the center
of $G$. We consider the cyclotomic character  
\begin{displaymath}
\varsigma_{p^{\infty}}: \Gal(\bar{\BQ}/E) \rightarrow \mathbb{Z}_p^{\times}/\mathbf{M}.   
\end{displaymath}
Let $L \subset \bar{\BQ}$ be the subfield fixed by the kernel of this
homomorphism. Let $\zeta_{p^{\infty}} \in \mathbb{C}$ be a compatible system of
primitive
$p^n$-th roots of unity. We obtain a natural isomorphism
\begin{equation}\label{Gtwist1e}
  \begin{array}{ccc}
    \mathcal{A}^{t}_{\mathbf{K}} \times_{\Spec E} \Spec L & \isoarrow & 
    \mathcal{A}_{\mathbf{K}} \times_{\Spec E} \Spec L,\\
    \xi_p & \mapsto & \zeta_{p^{\infty}} \xi_p
    \end{array}
\end{equation}
i.e., in  Definition \ref{BZsAK2d}
we have to change only $(d)$ to pass from one functor to the other. 
\begin{proposition}\label{Gtwist1p}
  Let $\tau \in \Gal(L/E)$ be an automorphism. 
  The morphism (\ref{Gtwist1e}) fits into a commutative diagram
  \begin{displaymath}
    \xymatrix{
      \mathcal{A}^{t}_{\mathbf{K}} \times_{\Spec E} \Spec L \ar[d]_{\id \times \tau_c}
      \ar[r] & \mathcal{A}_{\mathbf{K}} \times_{\Spec E} \Spec L
      \ar[d]^{\varsigma_{p^{\infty}}(\tau^{-1})_{\mid \xi} \times \tau_c}\\
    \mathcal{A}^{t}_{\mathbf{K}} \times_{\Spec E} \Spec L \ar[r] & 
    \mathcal{A}_{\mathbf{K}} \times_{\Spec E} \Spec L .\\
    }
  \end{displaymath}
  {\rm (see (\ref{xioperator1e}) for the notation $\varsigma_{p^{\infty}}(\tau^{-1})_{\mid \xi}$).} 
\end{proposition}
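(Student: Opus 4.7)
The approach is a direct diagram chase at the level of functor points, reducing by Yoneda to a verification on $S$-valued points for $L$-schemes $S$. Since the two moduli problems share the data $(A, \iota, \bar{\lambda}, \bar{\eta}^p, (\bar{\eta}_{\mathfrak{q}_j})_j)$, and these are transported in exactly the same way by $\tau_c$ on both columns and left untouched by the Hecke operator $\varsigma_{p^{\infty}}(\tau^{-1})_{\mid \xi}$ and by the horizontal map \eqref{Gtwist1e}, the whole commutativity reduces to a statement purely about the $\xi_p$-component.

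The first step of the plan is to identify how $\tau_c = \Spec \tau^{-1}$ acts on $\xi_p$ in each of the two functors. On the $\mathcal{A}^{t}_{\mathbf{K}}$-side, $\xi_p$ is a section of the constant sheaf $\mathbb{Q}_p^{\times}/\mathbf{M}$ with trivial Galois action, hence $\tau_c$ leaves it unchanged. On the $\mathcal{A}_{\mathbf{K}}$-side, $\xi_p$ lies in the Galois module $\mathbb{Q}_p^{\times}(1)/\mathbf{M}$; the defining equation $\tau(\zeta_{p^{\infty}}) = \varsigma_{p^{\infty}}(\tau) \cdot \zeta_{p^{\infty}}$ of the cyclotomic character then shows that pullback along $\Spec \tau$, i.e.\ the effect of $\id \times \tau_c$ on $L$-valued points, multiplies the Tate-twisted $\xi_p$ by $\varsigma_{p^{\infty}}(\tau)$.

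The second step is the chase itself. Starting from a point $(A, \ldots, \xi_p)$ of $\mathcal{A}^{t}_{\mathbf{K}}(S)$, the down-then-right composition leaves $\xi_p$ unchanged under the left column and then sends it, via the horizontal map, to $\zeta_{p^{\infty}} \cdot \xi_p$. The right-then-down composition first produces $\zeta_{p^{\infty}} \cdot \xi_p$ via the horizontal map; then $\tau_c$ sends this to $\varsigma_{p^{\infty}}(\tau) \cdot \zeta_{p^{\infty}} \cdot \xi_p$ by the first step; finally the Hecke twist $\varsigma_{p^{\infty}}(\tau^{-1})_{\mid \xi}$ multiplies by $\varsigma_{p^{\infty}}(\tau^{-1})$, recovering $\zeta_{p^{\infty}} \cdot \xi_p$. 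The two outputs agree, so the diagram commutes.

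I do not expect a serious obstacle beyond keeping the sign conventions straight: the compensating factor on the right-hand column is $\varsigma_{p^{\infty}}(\tau^{-1})_{\mid \xi}$ precisely because the convention $\tau_c = \Spec \tau^{-1}$ makes $\id \times \tau_c$ act on $\mathbb{Z}_p(1)$-valued data by multiplication by $\varsigma_{p^{\infty}}(\tau)$ and not its inverse. Equivalently, one may phrase the argument by saying that $\mathcal{A}^{t}_{\mathbf{K}}$ is the twist of $\mathcal{A}_{\mathbf{K}}$ by the cocycle $\tau \mapsto \varsigma_{p^{\infty}}(\tau^{-1})_{\mid \xi}$ valued in the central Hecke action, and \eqref{Gtwist1e} is the trivialization of this twist over $L$ afforded by the choice of $\zeta_{p^{\infty}}$.
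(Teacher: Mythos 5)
Your proposal is correct and follows essentially the same route as the paper's proof: both reduce the commutativity to a computation on the $\xi_p$-component alone, observe that the descent datum acts trivially on the constant sheaf $\mathbb{Q}_p^\times/\mathbf{M}$ and by $\varsigma_{p^\infty}(\tau)$ on the Tate twist $\mathbb{Q}_p^\times(1)/\mathbf{M}$, and then chase the square. The paper phrases this more explicitly by working with a general $L$-scheme $S$ together with the twisted scheme $S_{[\tau_c]}$ and tracking the comorphism $\pi^*$ (so that the lower horizontal arrow is multiplication by $\pi^*(\tau^{-1}(\zeta_{p^\infty}))$, not $\pi^*(\zeta_{p^\infty})$); your version packages the same comparison as "the canonical descent datum multiplies Tate-twisted data by $\varsigma_{p^\infty}(\tau)$," which is exactly what the $\pi^*$-bookkeeping amounts to — so no genuine difference.
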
 
\begin{proof}
  Let $\pi\colon S \rightarrow \Spec L$ be an $L$-scheme. Denote by $S_{[\tau_c]}$  the
  $L$-scheme obtained when the structure morphism is changed to
  $\tau_c \circ \pi$. Our assertion says that the following diagram is
  commutative
\begin{equation}\label{Gtwist2e}
\begin{aligned}
    \xymatrix{
      (\mathcal{A}^{t}_{\mathbf{K}} \times_{\Spec E} \Spec L)(S)
      \ar[d]_{{ can}} \ar[r] & (\mathcal{A}_{\mathbf{K}} \times_{\Spec E} \Spec L)(S) 
      \ar[d]^{\varsigma_{p^{\infty}}(\tau^{-1})_{\mid \xi}\circ { can}}\\
    (\mathcal{A}^{t}_{\mathbf{K}} \times_{\Spec E} \Spec L)(S_{[\tau_c]}) \ar[r] & 
    (\mathcal{A}_{\mathbf{K}} \times_{\Spec E} \Spec L)(S_{[\tau_c]}) .\\
   }
   \end{aligned}
\end{equation}
The morphism ${ can}$ on the left hand side is defined by the identification
$\mathcal{A}^{t}_{\mathbf{K}}(S) = \mathcal{A}^{t}_{\mathbf{K}}(S_{[\tau_c]})$ which
exists because the functor is defined over $E$. In the same way there is
${can}$ on the right hand side. To show the commutativity of (\ref{Gtwist2e}),
we start with a point
$(A, \iota, \bar{\lambda}, \bar{\eta}^p, (\bar{\eta}_{\mathfrak{q}_i})_i, \xi_p)$ 
of the upper left corner. The image $\Theta$ of this point by the left $can$
in $\mathcal{A}^{t}_{\mathbf{K}}(S_{[\tau_c]})$ is given by the same tuple.
By the upper horizontal map the point is mapped to
\begin{equation}\label{Gtwist3e}
  (A,\iota,\bar{\lambda}, \bar{\eta}^p, (\bar{\eta}_{\mathfrak{q}_i})_i,
  \pi^*(\zeta_{p^{\infty}})\xi_p)
  \end{equation}
where $\pi^*: L \rightarrow \Gamma(S,\mathcal{O}_S)$ is the comorphism of the structure map.
The image of  (\ref{Gtwist3e}) by the right $can$-morphism is represented
by the same tuple but we write the last item as
\begin{displaymath}
  \pi^*( \tau^{-1} (\tau (\zeta_{p^{\infty}}))) \xi_p = \varsigma_{p^{\infty}}(\tau)
  \pi^* (\tau^{-1} (\zeta_{p^{\infty}})) \xi_p. 
\end{displaymath}
On the other hand the image of $\Theta$ by the lower horizontal bijection 
is the tuple 
$$(A, \iota, \bar{\lambda}, \bar{\eta}^p, (\bar{\eta}_{\mathfrak{q}_i})_i, \pi^* (\tau^{-1} (\zeta_{p^{\infty}}))  \xi_p).$$ 
This shows the commutativity of (\ref{Gtwist2e}). 
\end{proof}

Let $\Xi \subset \Phi$ be the CM-type of $K$ defined by
\begin{equation*}
\Xi = \coprod_{i=0}^{s}
\Hom_{\mathbb{Q}_p{\rm -Alg}}(K_{\bar{\mathfrak{q}}_i},\bar{\mathbb{Q}}_p).
\end{equation*}
As defined by diagram (\ref{BZ2e}), we can write
\begin{equation}\label{Xi2e}
\Xi = \{ \bar{\varphi}_0 \} \cup \{\varphi \in \Phi \; | \; r_{\varphi} =2 \} .
  \end{equation}
We obtain an isomorphism 
\begin{equation}\label{CMXi1e}
  K \otimes_{\mathbb{Q}} \mathbb{R}  \isoarrow  \prod\nolimits_{\Xi} \mathbb{C}, \quad 
  a \otimes \rho  \mapsto  (\varphi(a) \rho)_{\varphi \in \Xi} .
\end{equation}
This puts a complex structure on the left hand side and hence defines a homomorphism
$\mathbb{C}^{\times} \rightarrow (K \otimes_{\mathbb{Q}} \mathbb{R})^{\times}$ which
we view as a morphism of algebraic groups
$\delta: \mathbb{S} \rightarrow (\Res_{K/\mathbb{Q}} \mathbb{G}_{m,K})_\BR$.
We consider the algebraic torus over $\mathbb{Q}$ given by  
\begin{equation*}
  T(\mathbb{Q}) = \{t \in K^{\times} \; | \; t\bar{t} \in \mathbb{Q}^{\times} \}
  \subset G(\mathbb{Q}). 
\end{equation*}
This is a central subtorus of $G$. 
Clearly $\delta$ factors through 
\begin{equation}\label{delta1e}
\delta: \mathbb{S} \rightarrow T_{\mathbb{R}} \subset G_{\mathbb{R}}. 
  \end{equation}
Let $\Sh(G, h \delta^{-1})$ be the canonical model over $E(G, h \delta^{-1})$ of the Shimura variety attached to the Shimura datum $(G, h \delta^{-1})$. Note that $E(G, h \delta^{-1}) \subset E_{\nu}$.
Indeed, if we restrict
$\delta_{\mathbb{C}}: \mathbb{G}_{m, \mathbb{C}} \times \mathbb{G}_{m, \mathbb{C}} \rightarrow (K \otimes_{\mathbb{Q}} \mathbb{C})^{\times}$
to the first factor, we obtain
\begin{equation}\label{zentralerTwist1e}
  \mu_{\delta}: \mathbb{G}_{m, \mathbb{C}} \rightarrow
  (K \otimes_{\mathbb{Q}} \mathbb{C})^{\times} \cong \prod_{\Phi} \mathbb{C}^{\times}, \quad z\mapsto (1, \ldots 1, z, \ldots, z) .
\end{equation}
Here $z$ appears
exactly at the places $\Xi \subset \Phi$. By our choice of the diagram
(\ref{BZ2e}) $\mu_{\delta}$ is defined over $\mathbb{Q}_p$ since it comes
from the canonical embedding
\begin{equation*}
  \mathbb{Q}_p^{\times} \rightarrow \prod_{i=0}^{s} K_{\bar{\mathfrak{q}}_i} \subset
  (K \otimes_{\mathbb{Q}} \mathbb{Q}_p)^{\times}. 
\end{equation*}
Therefore $E(G, \delta^{-1}) \subset \mathbb{Q}_p$, which shows our assertion.
\begin{proposition}\label{Gtwist3p}
  Let 
  $\mathbf{K} \subset G(\mathbb{A}_f)$ be as in Definition \ref{BZsAtK2d}. 
  We denote by $\Sh_{\mathbf{K}}(G, h \delta^{-1})_{E_{\nu}}$ the scheme obtained
  by base change via $E(G, h\delta^{-1}) \subset E_{\nu}$ from the canonical
  model. We set 
  $\mathcal{A}^{t}_{\mathbf{K}, E_{\nu}} = \tilde{\mathcal{A}}^{t}_{\mathbf{K}} \times_{\Spec O_{E_{\nu}}} \Spec E_{\nu}={\mathcal{A}}^{t}_{\mathbf{K}} \times_{\Spec {E}} \Spec E_{\nu}$
  (cf. (\ref{Gtwist4e})). There exists an isomorphism over the maximal unramified extension
  $E^{nr}_{\nu}$
  \begin{equation}
    \Sh_{\mathbf{K}}(G, h \delta^{-1})_{E_{\nu}} \times_{\Spec E_{\nu}} \Spec E^{nr}_{\nu} 
    \isoarrow
    \mathcal{A}^{t}_{\mathbf{K}, E_{\nu}} \times_{\Spec E_{\nu}} \Spec E^{nr}_{\nu}, 
  \end{equation}
  which for varying $\mathbf{K}$ is compatible with the Hecke operators in
  $G(\mathbb{A}_f^p)$.
  
  Let $\tau \in \Gal(E_{\nu}^{nr}/E_{\nu})$ be the Frobenius automorphism.
  Let $f_{\nu}$ be the inertia index of $E_{\nu}/\mathbb{Q}_p$. Then the
  following diagram is commutative,
  \begin{displaymath}
   \xymatrix{
  \Sh_{\mathbf{K}}(G, h \delta^{-1})_{E_{\nu}}\times_{\Spec E_{\nu}}\Spec E^{nr}_{\nu}  
  \ar[d]_{\id \times \tau_c} \ar[r]
  & \mathcal{A}^{t}_{\mathbf{K}, E_{\nu}} \times_{\Spec E_{\nu}}\Spec E^{nr}_{\nu}
  \ar[d]^{{p^{f_{\nu}}}_{\mid \xi} \times \tau_c}\\
  \Sh_{\mathbf{K}}(G, h \delta^{-1})_{E_{\nu}}\times_{\Spec E_{\nu}}\Spec E^{nr}_{\nu}  
  \ar[r] & 
  \mathcal{A}^{t}_{\mathbf{K}, E_{\nu}}\times_{\Spec E_{\nu}}\Spec E^{nr}_{\nu} .
   }
    \end{displaymath}
  {\rm (See (\ref{xioperator1e}) for the notation ${p^{f_{\nu}}}_{\mid \xi}$).}  
  \end{proposition}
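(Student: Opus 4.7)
The strategy is to realize both $E_\nu$-schemes in the statement as Galois descents of a common $L\cdot E^{nr}_\nu$-scheme, and match the descent cocycles on the Frobenius of $E^{nr}_\nu/E_\nu$.

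First, classical PEL theory (cf.\ Proposition \ref{BZ1p}) gives $\mathcal{A}_{\mathbf{K}} \cong \Sh_{\mathbf{K}}(G, h)$ as $G(\mathbb{A}_f^p)$-equivariant schemes over $E$. Combining this with the canonical isomorphism (\ref{Gtwist1e}) yields
\[
\Phi_L\colon \mathcal{A}^t_{\mathbf{K}} \times_{\Spec E} \Spec L \isoarrow \Sh_{\mathbf{K}}(G, h) \times_{\Spec E} \Spec L.
\]
By Proposition \ref{Gtwist1p}, $\Phi_L$ intertwines the canonical descent datum on the left (coming from the intrinsic $E$-structure of $\mathcal{A}^t_{\mathbf{K}}$) with the datum $\varsigma_{p^{\infty}}(\tau^{-1})_{\mid \xi}\times \tau_c$ on the right. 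In other words, $\mathcal{A}^t_{\mathbf{K}}$ is the Galois twist of $\Sh_{\mathbf{K}}(G, h)$ along $\Gal(L/E)$ by the central Hecke cocycle $\tau\mapsto \varsigma_{p^{\infty}}(\tau^{-1})_{\mid \xi}$.

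Second, since $\delta\colon \mathbb{S}\to T_\mathbb{R}$ factors through the central torus $T\subset G$ defined in (\ref{delta1e}), Deligne's theory of Shimura varieties with a central twist (see the appendix \S \ref{s:shimvar}, and keeping track of the convention remarked after (\ref{BZh2e}) that our $h$ is Deligne's $h^{-1}$) identifies $\Sh_{\mathbf{K}}(G, h\delta^{-1})$ with the Galois twist of $\Sh_{\mathbf{K}}(G, h)$ by the reciprocity cocycle $r_\delta$ of the CM Shimura datum $(T,\delta)$, composed with $T\hookrightarrow G$ and viewed through the Hecke action. The problem thus reduces to matching the two cocycles on $\Gal(E^{nr}_\nu/E_\nu)$, which is topologically generated by $\tau$.

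For this, one computes $r_\delta(\tau)$ directly: by (\ref{zentralerTwist1e}), $\mu_\delta\colon \mathbb{G}_{m}\to T$ is defined over $\mathbb{Q}_p$ and equals the canonical embedding $\mathbb{Q}_p^{\times}\hookrightarrow \prod_i K_{\bar{\mathfrak{q}}_i}^{\times}\subset (K\otimes\mathbb{Q}_p)^{\times}$. The local reciprocity law at $p$ for the CM datum $(T,\delta)$ sends the arithmetic Frobenius of $E^{nr}_\nu/E_\nu$ to the image of $p^{f_\nu}$, that is, to the element $s_{p^{f_\nu}}=(1,\ldots,1)\times(p^{f_\nu},\ldots,p^{f_\nu})\in T(\mathbb{Q}_p)\subset G(\mathbb{Q}_p)$. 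By the discussion after (\ref{xioperator1e}), the Hecke action of $s_{p^{f_\nu}}$ on $\mathcal{A}^t_{\mathbf{K}}$ is precisely $p^{f_\nu}_{\mid\xi}$, which yields the commutativity of the square. Compatibility with $G(\mathbb{A}_f^p)$-Hecke operators is automatic: the PEL identification and $\Phi_L$ are manifestly $G(\mathbb{A}_f^p)$-equivariant, and Deligne's central-twist comparison is equivariant for the full Hecke action away from the center.

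The principal difficulty lies in the second and third steps: rigorously setting up Deligne's central-twist reciprocity in our sign convention, and correctly computing $r_\delta$ at Frobenius. This requires the careful Galois descent formalism of \S \ref{s:desc} and the Shimura-theoretic apparatus of \S \ref{s:shimvar}; once these are in place, Steps 1 and 3 are formal, and Step 2 becomes a direct invocation.
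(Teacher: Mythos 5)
Your overall architecture matches the paper's: identify $\mathcal{A}_{\mathbf{K}}\cong\Sh_{\mathbf{K}}(G,h)$, compare $\mathcal{A}^t_{\mathbf{K}}$ with $\mathcal{A}_{\mathbf{K}}$ via Proposition~\ref{Gtwist1p}, compare $\Sh_{\mathbf{K}}(G,h\delta^{-1})$ with $\Sh_{\mathbf{K}}(G,h)$ via Corollary~\ref{zentralerTwist1c}, and combine. But Step~4 has a genuine error, and the two pieces you quote do not fit together as you claim.

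The problem is the assertion that the reciprocity cocycle sends the arithmetic Frobenius $\tau$ of $E^{nr}_\nu/E_\nu$ to $s_{p^{f_\nu}}$. By local class field theory, $\tau$ (lifted to $\Gal(E^{ab}_\nu/E_\nu)$) corresponds to a uniformizer $\pi$ of $E_\nu$, and the reciprocity law $r_\nu(T,\delta^{-1})$ sends $\pi$ to $(1,\ldots,1)\times(\Nm_{E_\nu/\mathbb{Q}_p}\pi,\ldots,\Nm_{E_\nu/\mathbb{Q}_p}\pi)$, i.e.\ to $s_{\Nm_{E_\nu/\mathbb{Q}_p}\pi}$. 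This differs from $s_{p^{f_\nu}}$ by the unit $\varsigma_{p^{\infty}}(\pi^{-1})\in\mathbb{Z}_p^\times$, which need not lie in $\mathbf{M}$. So $r_\nu(T,\delta^{-1})(\tau)$ is not $s_{p^{f_\nu}}$, and the Hecke operator on $\mathcal{A}^t_{\mathbf{K}}$ it induces is not $p^{f_\nu}_{\mid\xi}$. What saves the proposition is precisely the $\varsigma_{p^{\infty}}(\tau^{-1})_{\mid\xi}$-twist from your Step~2, which you set up but then never feed into the Step~4 computation: the cocycle comparing $\mathcal{A}^t_{\mathbf{K}}$ to $\Sh_{\mathbf{K}}(G,h\delta^{-1})$ is the \emph{product} of the reciprocity cocycle $\Nm(e)_{\mid\xi}$ with the cyclotomic correction $\varsigma_{p^{\infty}}(e)_{\mid\xi}$, and the whole point of the proof is the class-field-theoretic identity $\Nm_{E_\nu/\mathbb{Q}_p}(e)\,\varsigma_{p^{\infty}}(e) = p^{f_\nu\ord e}$, which the paper records as formula~(\ref{formLFT}) and proves from \cite{CF}. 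Without invoking this identity explicitly, your composite cocycle is $\Nm(\pi)_{\mid\xi}$ or $\varsigma_{p^{\infty}}(\pi^{\pm 1})\Nm(\pi)_{\mid\xi}$ depending on how you orient the composition, and you cannot conclude.

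A secondary gap: you compute the cocycle only at $\tau$ and appeal to ``topologically generated by $\tau$,'' but this presupposes the isomorphism already descends from $\bar E_\nu$ to $E^{nr}_\nu$. That descent is not automatic; it is exactly the statement that the composite cocycle $\sigma\mapsto(p^{f_\nu\ord e})_{\mid\xi}\times\sigma_c$ is trivial for $e\in O_{E_\nu}^{\times}$, which again follows from the identity above (since $\ord e=0$) and must be established before restricting attention to the Frobenius. Finally, a minor slip: the identification $\mathcal{A}_{\mathbf{K}}\cong\Sh_{\mathbf{K}}(G,h)$ is not Proposition~\ref{BZ1p} (which compares $\mathcal{A}_{\mathbf{K}}$ with $\mathcal{A}^{bis}_{\mathbf{K}}$) but is part of the PEL moduli description preceding Definition~\ref{BZAK1d}.
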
  
\begin{proof}
  Let $e \in E_{\nu}^{\times}$ and let $\sigma \in \Gal(E^{ab}_{\nu}/E_{\nu})$ be the
  automorphism that corresponds to it by local class field theory. We set
  $\varsigma_{p^{\infty}}(e) = \varsigma_{p^{\infty}}(\sigma)$. Then we obtain from
  local class field theory 
  \begin{equation}\label{formLFT}
\varsigma_{p^{\infty}}(e) = (\Nm_{E_{\nu}/\mathbb{Q}_p} e)^{-1} p^{f_{\nu} \ord e} ,
  \end{equation}
  where $\ord=\ord_\nu\colon E_{\nu}^{\times} \rightarrow \mathbb{Z}$ maps a uniformizer
  to $1$. Indeed, this formula makes sense for an arbitrary $p$-adic local
  field $E_{\nu}$. In the case $E_{\nu} = \mathbb{Q}_p$ (\ref{formLFT}) follows
  from \cite{CF} Chapt. VI, Thm. 3.2. In the general case the action of
  $\sigma$ on $\mu_{p^{\infty}}$ depends only on the restriction of $\sigma$ to
  $\mathbb{Q}^{\ab}$. But this restriction corresponds to
  $\Nm_{E_{\nu}/\mathbb{Q}_p} e$ by the reciprocity law of the local field
  $\mathbb{Q}_p$ by the last diagram of \cite{CF} Chapt. VI, \S 2.4. This
  shows (\ref{formLFT}).

  It is enough to show that the two squares in the following diagram are
  commutative. 
  \begin{displaymath}
\xymatrix{
  \Sh_{\mathbf{K}}(G, h \delta^{-1}) \times \Spec E^{ab}_{\nu}  
  \ar[d]_{\id \times \sigma_c} \ar[r] &
  \mathcal{A}_{\mathbf{K}} \times \Spec E^{ab}_{\nu}
  \ar[d]_{(\varsigma_{p^{\infty}}(e^{-1}) p^{f_{\nu} \ord e})_{\mid \xi} \times \sigma_c} 
  & \mathcal{A}^{t}_{\mathbf{K}, E_{\nu}} \times \Spec E^{ab}_{\nu}
  \ar[d]^{(p^{f_{\nu} \ord e})_{\mid \xi} \times \sigma_c} \ar[l]  \\
  \Sh_{\mathbf{K}}(G, h \delta^{-1}) \times \Spec E^{ab}_{\nu}  
  \ar[r] & \mathcal{A}_{\mathbf{K}} \times \Spec E^{ab}_{\nu}
  & \mathcal{A}^{t}_{\mathbf{K}, E_{\nu}} \times \Spec E^{ab}_{\nu} \ar[l]
   }
  \end{displaymath}
  The commutativity of the diagram on the right hand side  follows from Proposition
  \ref{Gtwist1p}. Since $\mathcal{A}_{\mathbf{K}} \cong \Sh_{\mathbf{K}}(G,h)$, the
  square on the left hand side becomes commutative if we replace the vertical
  arrow in the middle by
   \begin{equation*}  
    r_{\nu}^{\rm cft}(T, \delta^{-1})(\sigma) \times \sigma_c\colon 
    \Sh_{\mathbf{K}}(G,h)_{E_{\nu}} \times_{\Spec E_{\nu}} \Spec E_{\nu}^{ab} \rightarrow 
    \Sh_{\mathbf{K}}(G,h)_{E_{\nu}} \times_{\Spec E_{\nu}} \Spec E_{\nu}^{ab}.
    \end{equation*}
  This is a consequence of Corollary \ref{zentralerTwist1c}. It remains to
  compute the class field theory version $r_{\nu}^{\rm cft}(T, \delta^{-1})$ of the reciprocity law.

  The morphism $\mu_{\delta^{-1}}$ is the inverse of (\ref{zentralerTwist1e}).
  Therefore we find the local reciprocity law
  \begin{equation}\label{rec-delta1e}
    \begin{array}{rcc}
      r_{\nu}(T, \delta^{-1}): E_{\nu}^{\times} & \rightarrow & \hspace{-3cm}
      \prod_{i=0}^s 
    K^\times_{\mathfrak{q}_i} \times \prod_{i=0}^s K^\times_{\bar{\mathfrak{q}}_i}\\[2mm] 
    e & \mapsto  &  (1, \ldots, 1)  \times 
    (\Nm_{E_{\nu}/\mathbb{Q}_p} e, \ldots, \Nm_{E_{\nu}/\mathbb{Q}_p} e). 
      \end{array}    
  \end{equation}
  We write
  $\Nm_{E_{\nu}/\mathbb{Q}_p} e = \varsigma_{p^{\infty}}(e^{-1}) p^{f \ord e}$, cf. \eqref{formLFT}.
  Under the isomorphism (\ref{BZGQ_p2e}), the element on the right hand side of
  (\ref{rec-delta1e}) corresponds to
  $(1,\ldots, 1,\varsigma_{p^{\infty}}(e^{-1}) p^{f \ord e})\in G_{\mathfrak{q}_0} \times \ldots \times G_{\mathfrak{q}_s} \times \mathbb{Q}_p^{\times}$. By the description
  of the Hecke operators (\ref{Heckeoperator1e}) we obtain the proposition. 
\end{proof}
We will next show how the action of $p^{f_{\nu}}_{\mid \xi}$ on
$\mathcal{A}^{t}_{\mathbf{K}, E_{\nu}}$ extends naturally to the model
$\widetilde{\mathcal{A}}^t_{\mathbf{K}}$ over $O_{E_{\nu}}$. It is enough
to define $p_{\mid \xi}$. Let 
\begin{equation}\label{wAt1e}  
  (A, \iota, \bar{\lambda}, \bar{\eta}^p, (\bar{\eta}_{\mathfrak{q}_j})_j, \xi_p) \in
  \widetilde{\mathcal{A}}^t_{\mathbf{K}}(S)
  \end{equation}
be a point as in Definition \ref{BZsAtK2d}. Let
$\bar{\lambda}_{\mathbb{Q}}$ be the $\mathbb{Q}$-homogeneous polarization
which contains $\bar{\lambda}$. We extend $\xi_p$ to a map
$\xi_p: \bar{\lambda}_{\mathbb{Q}} \rightarrow \mathbb{Q}_p^{\times}/\mathbf{M}$ such
that $\xi_p(u\lambda) = u \xi_p(\lambda)$ for $u \in \mathbb{Q}^{\times}$. 
Let $X = \prod_{i=0}^{s} (X_{\mathfrak{q}_i} \times X_{\bar{\mathfrak{q}}_i})$ be the
$p$-divisible group of $A$. We consider the isogeny 
\begin{equation}\label{Gtwist6e}
a: \prod_{i=0}^{s} (X_{\mathfrak{q}_i} \times X_{\bar{\mathfrak{q}}_i}) \rightarrow  
\prod_{i=0}^{s} (X_{\mathfrak{q}_i} \times X_{\bar{\mathfrak{q}}_i}) 
\end{equation}
which is the identity on the factors $X_{\mathfrak{q}_i}$ and multiplication
by $p$ on the factors $X_{\bar{\mathfrak{q}}_i}$. Let us fix a polarization
$\lambda \in \bar{\lambda}$. Since $\lambda$ is principal in $p$ it is given
on $X$ by an isomorphism of $X_{\bar{\mathfrak{q}}_i}$ with the dual of
$X_{\mathfrak{q}_i}$ for each $i = 0,\ldots,s$. The inverse
image of $\lambda$ on $X$ by (\ref{Gtwist6e}) is $p \lambda$. 
There is an isogeny of abelian varieties with an $O_{B, (p)}$-action
\begin{displaymath}
\alpha: A' \rightarrow A,  
\end{displaymath}
of order a power of $p$ such that the induced homomorphism of $p$-divisible
groups is isomorphic to (\ref{Gtwist6e}). A polarization $\lambda$ of
$A$  induces a polarization
$\lambda' = \alpha^*(\lambda) := \hat{\alpha} \lambda \alpha$ on $A'$.
This defines a
$\mathbb{Q}$-homogeneous polarization $\bar{\lambda'}_{\mathbb{Q}}$ of $A'$ and
a bijection $\bar{\lambda}_{\mathbb{Q}} \rightarrow \bar{\lambda'}_{\mathbb{Q}}$.
By this bijection $\xi_p$ induces
\begin{displaymath}
\xi'_p: \bar{\lambda'}_{\mathbb{Q}} \rightarrow \mathbb{Q}_p^{\times}/\mathbf{M}.  
\end{displaymath}
We obtain $\xi'_p(\lambda') = \xi_p(\lambda) \in \mathbb{Z}_p^{\times}$. 
But the polarization $\lambda_1 = (1/p) \lambda'$ is principal in $p$, as
we see by looking at the $p$-divisible groups (\ref{Gtwist6e}). Then
\begin{equation}\label{wAt2e}
(A', \iota', \bar{\lambda_1}, \bar{\eta}'^p, (\bar{\eta}'_{\mathfrak{q}_j})_j, p\xi'_p)
  \end{equation}
is a point of $\tilde{\mathcal{A}}^t_{\mathbf{K}}(S)$. Here $\eta'^p$, resp. ${\eta'_{\mathfrak{q}_j}}$,
denotes the composite
\begin{displaymath}
  \eta'^p: V \otimes \mathbb{A}^p_f \overset{\eta^p}{\longrightarrow}
  \mathrm{V}^p_f(A) \overset{\alpha^{-1}}{\longrightarrow} \mathrm{V}^p_f(A') ,
  \end{displaymath}
resp. 
\begin{displaymath}
  \eta'_{\mathfrak{q}_j}: \Lambda_{\mathfrak{q}_j}
  \overset{\eta_{\mathfrak{q}_j}}{\longrightarrow} T_{\mathfrak{q}_j}(A) 
  \overset{\alpha^{-1}}{\longrightarrow} T_{\mathfrak{q}_j}(A').
\end{displaymath}
Note that the last arrow  is an isomorphism
by definition of $\alpha$. 
The map which assigns (\ref{wAt2e}) to (\ref{wAt1e}) defines the desired
extension of the operator $p_{\mid \xi}$ to $\tilde{\mathcal{A}}^t_{\mathbf{K}}$.

\begin{corollary}
  The Shimura variety $\Sh_{\mathbf{K}}(G, h \delta^{-1})_{E_{\nu}}$ has  
  a unique model $\widetilde{\Sh}_{\mathbf{K}}(G, h \delta^{-1})$ over
  $O_{E_{\nu}}$ with the following properties.
There is an isomorphism
 \begin{equation}
   \widetilde{\Sh}_{\mathbf{K}}(G, h \delta^{-1}) \times_{\Spec O_{E_{\nu}}}
   \Spec O_{E^{nr}_{\nu}} 
    \isoarrow
     \widetilde{\mathcal{A}}^t_{\mathbf{K}} \times_{\Spec O_{E_{\nu}}}
   \Spec O_{E^{nr}_{\nu}} 
  \end{equation}
 which is compatible with the Hecke operators $G(\mathbb{A}_f^p)$. Let $\tau \in \Gal(E_{\nu}^{nr}/E_{\nu})$ be the Frobenius automorphism.
  Let $f_{\nu}$ be the inertia index of $E_{\nu}/\mathbb{Q}_p$. Then the
  following diagram is commutative. 
  \begin{displaymath}
   \xymatrix{
  \widetilde{\Sh}_{\mathbf{K}}(G, h \delta^{-1}) \times_{\Spec O_{E_{\nu}}}
   \Spec O_{E^{nr}_{\nu}} \ar[d]_{\id \times \tau_c} \ar[r]
  & \widetilde{\mathcal{A}}^t_{\mathbf{K}} \times_{\Spec O_{E_{\nu}}}
   \Spec O_{E^{nr}_{\nu}} \ar[d]^{(p^{f_{\nu}})_{\mid \xi} \times \tau_c}\\
  \widetilde{\Sh}_{\mathbf{K}}(G, h \delta^{-1}) \times_{\Spec O_{E_{\nu}}}
  \Spec O_{E^{nr}_{\nu}} \ar[r] & \widetilde{\mathcal{A}}^t_{\mathbf{K}}
  \times_{\Spec O_{E_{\nu}}} \Spec O_{E^{nr}_{\nu}} 
   }
    \end{displaymath}\qed
  \end{corollary}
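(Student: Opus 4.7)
The plan is to construct $\widetilde{\Sh}_{\mathbf{K}}(G, h \delta^{-1})$ by Weil descent from the base change $\widetilde{\mathcal{A}}^t_{\mathbf{K}} \times_{\Spec O_{E_\nu}} \Spec O_{E^{nr}_\nu}$ along the unramified extension $O_{E^{nr}_\nu}/O_{E_\nu}$. The required Weil descent datum is the isomorphism $(p^{f_{\nu}})_{\mid \xi} \times \tau_c$ of this base change with itself (covering $\tau_c^{-1}$ on the base), where the action $(p^{f_\nu})_{\mid \xi}$ on $\widetilde{\mathcal{A}}^t_{\mathbf{K}}$ is precisely the one explicitly extended in the paragraph preceding the corollary via the isogeny $a$ of (\ref{Gtwist6e}).

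The identification of the generic fiber with $\Sh_{\mathbf{K}}(G, h \delta^{-1})_{E_\nu}$ follows directly from Proposition \ref{Gtwist3p}: under the isomorphism
$\Sh_{\mathbf{K}}(G, h \delta^{-1})_{E_\nu} \times \Spec E^{nr}_\nu \isoarrow \mathcal{A}^{t}_{\mathbf{K}, E_\nu} \times \Spec E^{nr}_\nu$,
the Weil descent datum $(p^{f_\nu})_{\mid \xi} \times \tau_c$ corresponds to the canonical descent datum $\id \times \tau_c$ on the left-hand side. Hence any effective descent of $\widetilde{\mathcal{A}}^t_{\mathbf{K}} \times O_{E^{nr}_\nu}$ along the prescribed Weil datum automatically has generic fiber canonically isomorphic to $\Sh_{\mathbf{K}}(G, h \delta^{-1})_{E_\nu}$, which accounts for the $E_\nu$-rationality of the output.

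The crucial step, and the main obstacle, is the effectivity of this Weil descent datum. The scheme $\widetilde{\mathcal{A}}^t_{\mathbf{K}}$ is a moduli scheme of polarized abelian schemes with prime-to-$p$ level structure, hence is quasi-projective over $O_{E_\nu}$: the universal polarization supplies a relatively ample line bundle, and the action of $(p^{f_\nu})_{\mid \xi}$ preserves this ample class up to the evident twist. By the standard effectivity of Weil descent data on quasi-projective schemes over $O_{E_\nu}$ (cf. \cite{RZ}, Prop. 3.48), one obtains a scheme $\widetilde{\Sh}_{\mathbf{K}}(G, h \delta^{-1})$ over $O_{E_\nu}$ together with the desired isomorphism over $O_{E^{nr}_\nu}$.

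Finally, the Hecke action of $G(\mathbb{A}_f^p)$ on $\widetilde{\mathcal{A}}^t_{\mathbf{K}}$ (acting on the datum $\bar\eta^p$) commutes with $(p^{f_\nu})_{\mid \xi}$, which modifies only the $p$-part and the twist $\xi_p$; consequently it descends along the Weil descent datum and yields a $G(\mathbb{A}_f^p)$-action on $\widetilde{\Sh}_{\mathbf{K}}(G, h \delta^{-1})$. Its compatibility with the Hecke action on the generic fiber is inherited from Proposition \ref{Gtwist3p}. Uniqueness of the model follows from the fact that Weil descent defines a fully faithful functor from schemes over $O_{E_\nu}$ to schemes over $O_{E^{nr}_\nu}$ equipped with a Weil datum: any other model with the stated properties is carried to the same descent data and hence is uniquely isomorphic to $\widetilde{\Sh}_{\mathbf{K}}(G, h \delta^{-1})$.
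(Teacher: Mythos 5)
Your overall strategy matches the one the paper intends: construct $\widetilde{\Sh}_{\mathbf{K}}(G, h\delta^{-1})$ by descending $\widetilde{\mathcal{A}}^t_{\mathbf{K}}\times_{\Spec O_{E_\nu}}\Spec O_{E_\nu^{nr}}$ along the Weil descent datum $(p^{f_\nu})_{|\xi}\times\tau_c$, identify the generic fiber via Proposition \ref{Gtwist3p}, and appeal to full faithfulness of descent for uniqueness. Those parts are fine.

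The gap is in the effectivity step. You invoke quasi-projectivity of $\widetilde{\mathcal{A}}^t_{\mathbf{K}}$ and a standard effectivity theorem as if this were a finite Galois extension, but $O_{E_\nu^{nr}}/O_{E_\nu}$ is \emph{infinite}, with $\Gal(E_\nu^{nr}/E_\nu)\cong\hat{\mathbb{Z}}$. A single isomorphism covering $\tau_c$ does determine a semi-linear $\hat{\mathbb{Z}}$-action, but quasi-projective Galois descent is only automatic once the datum factors through a \emph{finite} quotient of the Galois group. The missing observation -- which the paper records explicitly in the analogous argument just before Definition \ref{tildeSh_D1d} -- is that $(p^{f_\nu})_{|\xi}$ is the Hecke operator attached to the central element $s_{p^{f_\nu}}\in T(\mathbb{Q}_p)$, and the central torus $T(\mathbb{A}_f)/(\mathbf{K}\cap T(\mathbb{A}_f))$ acts on the moduli problem through a finite quotient. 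Hence $(p^{f_\nu})_{|\xi}$ has finite order, say $n$, so the descent datum factors through $\Gal(L/E_\nu)$ for the degree-$n$ unramified subextension $L\subset E_\nu^{nr}$; over $O_L/O_{E_\nu}$ one can then apply ordinary finite Galois descent for quasi-projective (indeed projective) schemes. Without the finite-order observation, the appeal to \cite{RZ} does not establish effectivity, and the cited proposition there concerns Weil descent for \emph{formal} schemes over $W(\bar{\mathbb{F}}_p)$ in any case, not Galois descent of algebraic schemes along $O_{E_\nu^{nr}}/O_{E_\nu}$. You should replace that invocation by the finite-order reduction.
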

 The homomorphism $h \delta^{-1}$ may be written as follows. It factors as 
  \begin{displaymath}
  \mathbb{S} \longrightarrow 
   \prod_{\chi \in \Hom_{\mathbb{Q}{\rm -Alg}}(F, \mathbb{C})}
  (D \otimes_{F, \chi} \mathbb{R})^{\times}
  \times (K \otimes_{F, \chi} \mathbb{R})^{\times}
   \overset{\kappa}{\longrightarrow} G^{\bullet}(\mathbb{R}),  
    \end{displaymath}
The image of $z = a + b{\mathbf i} \in \mathbb{C} = \mathbb{S}(\mathbb{R})$ by the first
arrow is given by
\begin{displaymath}
((\left(
  \begin{array}{rr} 
    a & -b\\
    b & a
  \end{array}
  \right)  \times z^{-1}), 1, \ldots ,1), 
  \end{displaymath}
where the first entry in this vector is at the place $\chi_0$.

\section{The moduli problem for $\Sh(G^{\bullet}, h)$ and a reduction modulo $p$}\label{s:AGbullet}

We recall the groups $G$ of \eqref{BZh3e} and $G^\bullet$ of (\ref{Gpunkt1e}). We consider the map of Shimura data $(G, h) \rightarrow (G^{\bullet} , h)$, cf.
\eqref{BZh2e} and (\ref{hforbul}). The Shimura fields coincide, i.e. $E = E(G,h) = E(G^{\bullet},h)$.
We consider a pair of open compact subgroups
$\mathbf{K} \subset G(\mathbb{A}_f)$ and
$\mathbf{K}^{\bullet} \subset G^{\bullet}(\mathbb{A}_f)$ such that
$\mathbf{K} \subset \mathbf{K}^{\bullet}$ and such that the induced map
$\Sh(G,h)_{\mathbf{K}} \rightarrow \Sh(G^{\bullet}, h)_{\mathbf{K}^{\bullet}}$ is a closed
immersion, cf. \cite[Prop. 1.15.]{D-TS}.

The Shimura variety $\Sh(G^{\bullet}, h)_{\mathbf{K}^{\bullet}}$ is the coarse moduli scheme associated to
the following functor. 
\begin{definition}\label{BZApkt3d}
  Let $\mathbf{K}^{\bullet} \subset G^{\bullet}(\mathbb{A}_f)$ be an open  compact
  subgroup. We define a functor $\mathcal{A}_{\mathbf{K}^{\bullet}}^{\bullet}$ on the
  category of $E$-schemes. A point of
$\mathcal{A}^{\bullet}_{\mathbf{K}^{\bullet}}(S)$ is given by the following data:
\begin{enumerate} 
\item[(a)] An abelian scheme $A$ over $S$ up to isogeny with an action
  $\iota: B \rightarrow \End^o A$.
\item[(b)] A $F$-homogeneous polarization $\bar{\lambda}$ of $A$
  which induces on $B$ the involution $b \mapsto b^{\star}$.
\item[(c)] A class $\bar{\eta}\; \text{modulo}\; \mathbf{K}^{\bullet}$ of
  $B \otimes \mathbb{A}_f$-module isomorphisms
  \begin{displaymath}
\eta: V \otimes \mathbb{A}_f \isoarrow \mathrm{V}_f(A) 
  \end{displaymath}
  such that for each $\lambda \in \bar{\lambda}$ there is
  locally for the Zariski-topology on $S$ a constant
  $\xi \in (F \otimes_{\mathbb{Q}} \mathbb{A}_f)(1)$ with
  \begin{displaymath}
 \psi(\xi v_1, v_2) = E^{\lambda}(\eta(v_1), \eta(v_2)). 
  \end{displaymath}
  \end{enumerate}
We require that the following condition ${\rm (KC)}$ holds
  \begin{displaymath}
    {\rm char} (T, \iota(b) \mid \Lie A) = \prod_{\varphi: K \rightarrow \bar{\mathbb{Q}}}
    \varphi(\Nm^o_{B/K} (T -b))^{r_{\varphi}} . 
    \end{displaymath}

\end{definition}

We will reformulate this moduli problem in a way that makes sense over $O_{E_{\nu}}$.
As in the previous section,  we need additional requirements for the group
$\mathbf{K}^{\bullet} \subset G^{\bullet}(\mathbb{A}_f)$, similar to those discussed 
before (\ref{BZ7e}). We assume that
$\mathbf{K}^{\bullet} = \mathbf{K}^{\bullet}_p \mathbf{K}^{\bullet, p}$ where
$\mathbf{K}^{\bullet}_p \subset G(\mathbb{Q}_p)$. The decomposition
$V \otimes \mathbb{Q}_p = \oplus_{i=0}^{s} V_{\mathfrak{p}_i}$ is
orthogonal with respect to $\psi$. 
We obtain that
\begin{displaymath}
G^{\bullet}(\mathbb{Q}_p) = \prod_{i=0}^{s} G^{\bullet}_{\mathfrak{p}_i}, 
  \end{displaymath}
where 
\begin{equation}\label{BZGpi1e}
  G^{\bullet}_{\mathfrak{p}_i} =  \{g \in
  \Aut_{B_{\mathfrak{p}_i}} V_{\mathfrak{p}_i} \; | \;
  \psi_{\mathfrak{p}_i}(g v, g w) =\psi_{\mathfrak{p}_i}(\mu_{\mathfrak{p}_i}(g) v, w),
  \;   \mu_{\mathfrak{p}_i}(g) \in F^{\times}_{\mathfrak{p}_i}\}. 
\end{equation}
This defines the homomorphism
$\mu_{\mathfrak{p}_i}: G^{\bullet}_{\mathfrak{p}_i} \rightarrow F^{\times}_{\mathfrak{p}_i}$. 
According to the decomposition
$V \otimes_{F} F_{\mathfrak{p}_i} = V_{\mathfrak{q}_i} \oplus V_{\bar{\mathfrak{q}}_i}$   
we can write $g = (g_{\mathfrak{q}_i}, g_{\bar{\mathfrak{q}}_i})$. Then we obtain
\begin{equation}\label{BZGpi2e} 
  G^{\bullet}_{\mathfrak{p}_i} = \{g = (g_{\mathfrak{q}_i}, g_{\bar{\mathfrak{q}}_i}), \; 
  g_{\mathfrak{q}_i} \in \Aut_{B_{\mathfrak{q}_i}} \! V_{\mathfrak{q}_i}, \, 
  g_{\bar{\mathfrak{q}}_i} \in \Aut_{B_{\bar{\mathfrak{q}}_i}} \! V_{\bar{\mathfrak{q}}_i}  \;
  | \;
  g'_{\mathfrak{q}_i} g_{\bar{\mathfrak{q}}_i} \in F^{\times}_{\mathfrak{p}_i} \} ,
\end{equation}
cf. (\ref{BZGQ_p4e}). We note that
$\mu_{\mathfrak{p}_i}(g) = g'_{\mathfrak{q}_i} g_{\bar{\mathfrak{q}}_i}$. By this
equation we obtain an isomorphism of groups
\begin{equation}\label{BZGdot1e}
  \begin{array}{cccr} 
    G^{\bullet}_{\mathfrak{p}_i} & \cong & G^{\bullet}_{\mathfrak{q}_i} \times
    F^{\times}_{\mathfrak{p}_i}, & \quad \; G^{\bullet}_{\mathfrak{q}_i}
    = \Aut_{B_{\mathfrak{q}_i}} \! V_{\mathfrak{q}_i} ,\\
    g & \mapsto & g_{\mathfrak{q}_i} \times \mu_{\mathfrak{p}_i}(g) & 
    \end{array}
\end{equation}
cf. (\ref{BZGQ_p5e}). Altogether we obtain an isomorphism
\begin{equation}\label{BZGdot2e}
  G^{\bullet}(\mathbb{Q}_p) = \prod_{i=0}^{s} G^{\bullet}_{\mathfrak{q}_i} \times
  \prod_{i=0}^{s} F^{\times}_{\mathfrak{p}_i}.  
\end{equation}
We use the notations $\Lambda_{\mathfrak{q}_i}$, $\Lambda_{\bar{\mathfrak{q}}_i}$,
$\Lambda_{\mathfrak{p}_i}$, $O_{B_{\mathfrak{q}_i}}$, $O_{B_{\bar{\mathfrak{q}}_i}}$, and
$O_{B_{\mathfrak{p}_i}} = O_{B_{\mathfrak{q}_i}} \oplus O_{B_{\bar{\mathfrak{q}}_i}}$
as before \eqref{BZLambda1e}. 
For each prime $\mathfrak{p}_i$, $i= 0, \ldots, s$, we choose an open subgroup
$\mathbf{M}^{\bullet}_{\mathfrak{p}_i} \subset O_{F_{\mathfrak{p}_i}}^{\times}$.
We set 
\begin{equation}\label{BZ20e}
  \mathbf{M}^{\bullet} = \prod_{i=0}^{s} \mathbf{M}^{\bullet}_{\mathfrak{p}_i} \subset
  \prod_{i=0}^{s} O_{F_{\mathfrak{p}_i}}^{\times} = (O_F \otimes \mathbb{Z}_p)^{\times}. 
\end{equation}

As in the previous section (see right after \eqref{BZLambda1e}), we set 
$\mathbf{K}_{\mathfrak{q}_0} = \Aut_{O_{B_{\mathfrak{q}_0}}} \Lambda_{\mathfrak{q}_0}$, and  
 choose for $j > 0$ arbitrarily open compact subgroups  
$\mathbf{K}_{\mathfrak{q}_j}\subset\Aut_{O_{B_{\mathfrak{q}_j}}}\Lambda_{\mathfrak{q}_j}$.
We define, for $i=0,\ldots,s$, 
\begin{displaymath}
  \mathbf{K}_{\mathfrak{p}_i} = \mathbf{K}^{\bullet}_{\mathfrak{q}_i} \times
  \mathbf{M}^{\bullet}_{\mathfrak{p}_i} \subset G^\bullet_{\mathfrak{q}_i} \times
  F^{\times}_{\mathfrak{p}_i} \cong G_{\mathfrak{p}_i} ,
\end{displaymath}
cf. (\ref{BZGdot1e}). We obtain 
\begin{displaymath}
  \begin{array}{ll}
  \mathbf{K}^{\bullet}_{\mathfrak{p}_i} = & 
  \{g = (g_{\mathfrak{q}_i}, g_{\bar{\mathfrak{q}}_i}), \; 
  g_{\mathfrak{q}_i} \in \mathbf{K}^{\bullet}_{\mathfrak{q}_i} ,\; 
  g_{\bar{\mathfrak{q}}_i} \in \Aut_{O_{B_{\bar{\mathfrak{q}}_i}}} \! \Lambda_{\bar{\mathfrak{q}}_i} 
  \; | \; \\ & 
  \psi_{\mathfrak{p}_i}(g_{\mathfrak{q}_i}v_1, g_{\bar{\mathfrak{q}}_i} v_2) =
  \psi_{\mathfrak{p}_i}(mv_1, v_2), \; v_1 \in \Lambda_{\mathfrak{q}_i}, \,
  v_2\in \Lambda_{\bar{\mathfrak{q}}_i}, \, m\in \mathbf{M}^{\bullet}_{\mathfrak{p}_i} \},  
    \end{array}
\end{displaymath}
and in particular 
\begin{equation*}
  \mathbf{K}^{\bullet}_{\mathfrak{p}_0} = \{g \in
\Aut_{O_{B_{\mathfrak{p}_0}}} \Lambda_{\mathfrak{p}_0} \; | \;
\psi_{\mathfrak{p}_0}(gv_1, gv_2) =  \psi_{\mathfrak{p}_0}(\mu(g)v_1, v_2), \;
\mu(g) \in \mathbf{M}^\bullet_{\mathfrak{p}_0} \}. 
\end{equation*}
Finally we set
\begin{equation}\label{BZKpPkt1e}  
  \mathbf{K}^{\bullet}_p = \prod_{i=0}^{s} \mathbf{K}^{\bullet}_{\mathfrak{p}_i} =
  \prod_{i=0}^{s} (\mathbf{K}_{\mathfrak{q}_i} \times
  \mathbf{M}^{\bullet}_{\mathfrak{p}_i})  .
  \end{equation}
We choose $\mathbf{K}^{\bullet, p}$ arbitrarily and set
\begin{equation}\label{BZ14e}
\mathbf{K}^{\bullet} = \mathbf{K}^{\bullet}_p \mathbf{K}^{\bullet, p}.
  \end{equation}
With these assumptions on $\mathbf{K}^{\bullet}$ we can rewrite the definition
of the functor $\mathcal{A}^{\bullet}_{\mathbf{K}^{\bullet}}$ as follows.
\begin{definition}\label{BZApkt3altd}  (alternative of Definition \ref{BZApkt3d})
Let $\mathbf{K}^{\bullet}=\mathbf{K}^{\bullet}_p\mathbf{K}^{\bullet,p}\subset G^\bullet(\BA_f)$, with $\mathbf{K}^{\bullet}_p$ as in \eqref{BZKpPkt1e}. Let $S$ be an  
$E$-scheme. A point of $\mathcal{A}^{\bullet}_{\mathbf{K}^{\bullet}}(S)$ consists of the
following data: 
\begin{enumerate} 
\item[(a)] An abelian scheme $A$ over $S$ up to isogeny with an
  action $\iota: B \rightarrow \End^{o} A$.  

\item[(b)] A $F$-homogeneous polarization $\bar{\lambda}$ of $A$ which induces
  on $B$ the involution $b \mapsto b^{\star}$. 
  
\item[(c)] A class $\bar{\eta}^p$   modulo $\mathbf{K}^{\bullet, p}$
of $B \otimes \mathbb{A}^p_f$-module
  isomorphisms
  \begin{displaymath}
    \eta^p: V \otimes \mathbb{A}^p_f \isoarrow \mathrm{V}^p_f(A) ,
    \end{displaymath}
  such that for each $\lambda \in \bar{\lambda}$ there is locally for the
  Zariski topology a constant 
  $\xi^{(p)}(\lambda) \in (F \otimes \mathbb{A}^p_f)(1)$
  with
  \begin{displaymath}
\psi(\xi^{(p)}(\lambda) v_1, v_2) = E^{\lambda}(\eta^p(v_1), \eta^p(v_2)). 
  \end{displaymath}
\item[(d)]
  For each polarization $\lambda \in \bar{\lambda}$ and for each prime
  $\mathfrak{p}|p$ of $O_F$ a section
  $\xi_{\mathfrak{p}}(\lambda) \in F^{\times}_{\mathfrak{p}}(1)/\mathbf{M}_{\mathfrak{p}}^{\bullet}$
  such that $\xi_{\mathfrak{p}}(\lambda u) = u \xi_{\mathfrak{p}}(\lambda)$ for
  each $u \in F^{\times}$. 
\item[(e)]
  For each $i = 0, 1, \ldots, s$ a class $\bar{\eta}_{\mathfrak{q}_i}$   modulo $\mathbf{K}_{\mathfrak{q}_i}^{\bullet}$
  of $B_{\mathfrak{q}_i}$-module isomorphisms 
  \begin{displaymath}
    \eta_{\mathfrak{q}_i}: V_{\mathfrak{q}_i} \isoarrow \mathrm{V}_{\mathfrak{q}_i}(A) .
  \end{displaymath} 
  \end{enumerate}
We require that the following condition ${\rm (KC)}$ holds,
  \begin{displaymath}
    {\rm char}(T, \iota(b) \mid \Lie A) = \prod_{\varphi: K \rightarrow \bar{\mathbb{Q}}}
    \varphi(\Nm^o_{B/K} (T -b))^{r_{\varphi}} . 
    \end{displaymath}

  \end{definition}
We write a point of this functor in the form
\begin{equation}\label{BZGdot4e}  
  (A, \iota, \bar{\lambda}, \bar{\eta}^p, (\bar{\eta}_{\mathfrak{q}_i})_i,
  (\xi_{\mathfrak{p}})_{\mathfrak{p}}) 
\end{equation}
or alternatively
$(A,\iota,\bar{\lambda},\bar{\eta}^p,(\bar{\eta}_{\mathfrak{q}_i})_i,(\xi_{\mathfrak{p}_i})_i)$, $i = 0, \ldots, s$. 

To make the relationship of the last two Definitions \ref{BZApkt3d} and
\ref{BZApkt3altd} explicit, we consider an $S$-valued point
$(A, \iota, \bar{\lambda}, \bar{\eta})$ of Definition \ref{BZApkt3d}.
We fix $\eta \in \bar{\eta}$. Then $\eta_p$ is an isomorphism
\begin{displaymath}
  \eta_p = \oplus_{i=0}^{s} \eta_{\mathfrak{p}_i}: \bigoplus_{i=0}^{s} V_{\mathfrak{p}_i} 
  \isoarrow \bigoplus_{i=0}^{s} \mathrm{V}_{\mathfrak{p}_i} (A).   
\end{displaymath}
The component
$\eta_{\mathfrak{p}_i} = \eta_{\mathfrak{q}_i} \oplus \eta_{\bar{\mathfrak{q}}_i}$
satisfies an equation
\begin{displaymath}
  \psi(\xi_{\mathfrak{p}_i}(\lambda) v, w) =
  E^{\lambda}(\eta_{\mathfrak{q}_i}(v), \eta_{\bar{\mathfrak{q}}_i}(w)), \quad
  v \in V_{\mathfrak{q}_i}, \; w \in V_{\bar{\mathfrak{q}}_i}. 
  \end{displaymath}
We see that the data $\eta_{\mathfrak{q}_i}$ and $\eta_{\bar{\mathfrak{q}}_i}$
for $i =0, \ldots, s$ determine the data $\eta_{\mathfrak{q}_i}$ and
$\xi_{\mathfrak{p}_i}(\lambda)$ and vice versa. Therefore we obtain the
$S$-valued point
$(A,\iota,\bar{\lambda},\bar{\eta}^p,(\bar{\eta}_{\mathfrak{q}_i})_i,(\xi_{\mathfrak{p}_i})_i)$
of (\ref{BZApkt3altd}).

Let $g \in G^{\bullet}(\mathbb{Q}_p)$. According to (\ref{BZGdot2e}) we write
it in the form $(\ldots, g_{\mathfrak{q}_i}, \ldots, a_{\mathfrak{p}_i}, \ldots)$,
with $a_{\mathfrak{p}_i} = \mu_{\mathfrak{p}_i}(g)$. Then the Hecke operator
$g: \mathcal{A}^{\bullet}_{\mathbf{K}^{\bullet}} \rightarrow \mathcal{A}^{\bullet}_{g^{-1}\mathbf{K}^{\bullet}g},$ 
 sends (\ref{BZGdot4e}) to the point
\begin{equation}\label{BZGdot5e}   
  (A,\iota,\bar{\lambda},\bar{\eta}^p, 
  (\overline{{\eta}_{\mathfrak{q}_i} g_{\mathfrak{q}_i}})_i,
  (a_{\mathfrak{p}_i}\xi_{\mathfrak{p}_i})_i),
  \end{equation}
comp. (\ref{Heckeoperator1e}).

It is convenient for us to introduce another action of 
$(F \otimes \mathbb{Q}_p)^{\times}$, 
\begin{equation}\label{xi-action1e} 
  a_{\mid \xi}: \mathcal{A}^{\bullet}_{\mathbf{K}^{\bullet}} \rightarrow
  \mathcal{A}^{\bullet}_{\mathbf{K}^{\bullet}}, \quad
  a \in (F \otimes \mathbb{Q}_p)^{\times}. 
\end{equation}
We write 
$a = (\ldots, a_{\mathfrak{p}}, \ldots) \in (F \otimes \mathbb{Q}_p)^{\times} = \prod_{\mathfrak{p}|p} F_{\mathfrak{p}}^{\times}$.
Then $a_{\mid \xi}$  maps a point (\ref{BZGdot4e}) to
\begin{displaymath}
 (A, \iota, \bar{\lambda}, \bar{\eta}^p, (\bar{\eta}_{\mathfrak{q}_i})_i,
  (a_{\mathfrak{p}}\xi_{\mathfrak{p}})_{\mathfrak{p}}) .
  \end{displaymath} 
For a fixed $\mathfrak{p}$ let $a_{\mathfrak{p}} \in F_{\mathfrak{p}}^{\ast}$. The we
define ${a_{\mathfrak{p}}}_{\mid \xi_{\mathfrak{p}}} := a_{\mid \xi}$ where $a$ is the
element with component $a_{\mathfrak{p}}$ at $\mathfrak{p}$ and with
$a_{\mathfrak{p}'} = 1$ for $\mathfrak{p}' \neq \mathfrak{p}$. 

The action of $a_{\mid \xi}$ of (\ref{xi-action1e}) coincides with the
action of the Hecke operator
$g = (\ldots, g_{\mathfrak{q}_i}, g_{\bar{\mathfrak{q}}_i}. \ldots) \in G^{\bullet}(\mathbb{Q}_p)$,
where $g_{\mathfrak{q}_i} = 1$ and
$g_{\bar{\mathfrak{q}}_i}=a_{\mathfrak{p}_i}\in F^\times_{\mathfrak{p}_i}\cong K^\times_{\bar{\mathfrak{q}}_i}$
for $i = 0, \ldots, s$.  

We will denote by $U_p(F) \subset F^{\times}$ the set of all $a \in F^{\times}$
  which are units in all fields $F_{\mathfrak{p}}$ with $\mathfrak{p} | p$.
\begin{definition}\label{BZApkt4d}
Let $\mathbf{K}^{\bullet}=\mathbf{K}^{\bullet}_p\mathbf{K}^{\bullet,p}\subset G^\bullet(\BA_f)$, with $\mathbf{K}^{\bullet}_p$ as in \eqref{BZKpPkt1e}.  We define a functor$\mathcal{A}^{\bullet bis}_{\mathbf{K}^{\bullet}}$ on the category of
$E$-schemes $S$. A point of $\mathcal{A}^{\bullet bis}_{\mathbf{K}^{\bullet}}(S)$ consists of the
following data: 
\begin{enumerate} 
\item[(a)] An abelian scheme $A$ over $S$ up to isogeny prime to $p$ with an
  action
  $\iota: O_{B,(p)} \rightarrow \End A \otimes_{\mathbb{Z}} \mathbb{Z}_{(p)}$.

\item[(b)] 
  An $U_p(F)$-homogeneous polarization $\bar{\lambda}$ of $A$ which is
  principal in $p$ and which induces on $B$ the involution $b\mapsto b^\star$.

\item[(c)]
  A class $\bar{\eta}^p$    modulo $\mathbf{K}^{\bullet,p}$
of
  $B \otimes \mathbb{A}^p_f$-module isomorphisms
  \begin{displaymath}
    \eta^p: V \otimes \mathbb{A}^p_f \isoarrow \mathrm{V}^p_f(A) , \;
   \end{displaymath}
  such that for each $\lambda \in \bar{\lambda}$ there is locally for the
  Zariski topology a constant 
  $\xi^{(p)}(\lambda) \in (F \otimes \mathbb{A}^p_f)^{\times}(1)$
  with
  \begin{displaymath}
\psi(\xi^{(p)}(\lambda) v_1, v_2) = E^{\lambda}(\eta^p(v_1), \eta^p(v_2)). 
  \end{displaymath}
\item[(d)]
  For each polarization $\lambda \in \bar{\lambda}$ and for each prime
  $\mathfrak{p}|p$ of $O_F$ a section
  $\xi_{\mathfrak{p}}(\lambda) \in O^{\times}_{F_{\mathfrak{p}}}(1)/\mathbf{M}_{\mathfrak{p}}^{\bullet}$
   such that
  $\xi_{\mathfrak{p}}(\lambda u) = u \xi_{\mathfrak{p}}(\lambda)$ for
  each $u \in U_p(F)$. 
\item[(e)]
  For each $j= 1, \ldots s$, a class $\bar{\eta}_{\mathfrak{q}_j}$   modulo $\mathbf{K}^{\bullet}_{\mathfrak{q}_j}$
  of $O_{B_{\mathfrak{q}_j}}$-module isomorphisms 
  \begin{displaymath}
    \eta_{\mathfrak{q}_j}: \Lambda_{\mathfrak{q}_j} \isoarrow T_{\mathfrak{q}_j}(A) .\;
    \end{displaymath} 
  \end{enumerate}
 We require that the following condition ${\rm (KC)}$ holds,
  \begin{displaymath}
    {\rm char}(T, \iota(b) \mid \Lie A) = \prod_{\varphi: K \rightarrow \bar{\mathbb{Q}}}
    \varphi(\Nm^o_{B/K} (T -b))^{r_{\varphi}} . 
    \end{displaymath}
\end{definition}
\begin{variant}\label{varia}
We will also use a modified version of this Definition. We obtain a
functor isomorphic to $\mathcal{A}^{\bullet bis}_{\mathbf{K}^{\bullet}}$ if we modify
the items $(b)$ and $(d)$ of Definition \ref{BZApkt4d} as follows.

\begin{enumerate}
\item[($b^\prime$)] An $F$-homogeneous polarization $\bar{\lambda}$ on $A$ which
  induces on $B$ the involution $\star$ from \eqref{defstar}. 

\item[($d^\prime$)] For each polarization $\lambda \in \bar{\lambda}$ and for
  each prime $\mathfrak{p}|p$ of $O_F$ a section
  $\xi_{\mathfrak{p}}(\lambda) \in F^{\times}_{\mathfrak{p}}(1)/\mathbf{M}_{\mathfrak{p}}^{\bullet}$ such that
  $\xi_{\mathfrak{p}}(u \lambda) = u \xi_{\mathfrak{p}}(\lambda)$ for
each $u \in F^{\times}$ and such that $\lambda$ is principal in $\mathfrak{p}$
iff $\xi_{\mathfrak{p}}(\lambda) \in O_{F^{\times}_{\mathfrak{p}}}^{\times}(1)$.  
\end{enumerate}
\end{variant}
\begin{proposition}\label{BZ11p} 
Let $\mathbf{K}^{\bullet}=\mathbf{K}^{\bullet}_p\mathbf{K}^{\bullet,p}\subset G^\bullet(\BA_f)$, with $\mathbf{K}^{\bullet}_p$ as in \eqref{BZKpPkt1e}.  The 
functors $\mathcal{A}^{\bullet}_{\mathbf{K}^{\bullet}}$ and
$\mathcal{A}^{\bullet bis}_{\mathbf{K}^{\bullet}}$ on the category of
$E$-schemes are canonically isomorphic. 
\end{proposition}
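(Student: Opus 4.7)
The plan is to follow the strategy of Proposition \ref{BZ1p}, with the essential new feature being that the polarization class here is $F^{\times}$-homogeneous (rather than $\BQ^{\times}$-homogeneous), which permits us to adjust the polarization prime-by-prime at each $\mathfrak{p}|p$ by means of weak approximation in $F^{\times}$.

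Starting from a point $(A, \iota, \bar{\lambda}, \bar{\eta}) \in \mathcal{A}^{\bullet}_{\mathbf{K}^{\bullet}}(S)$ with $S$ connected, fix $\eta \in \bar{\eta}$. The $p$-component decomposes as $\eta_p = \bigoplus_{i=0}^{s} \eta_{\mathfrak{p}_i}$ with $\eta_{\mathfrak{p}_i} = \eta_{\mathfrak{q}_i} \oplus \eta_{\bar{\mathfrak{q}}_i}$. By the choice of $\mathbf{K}^{\bullet}_p$ in \eqref{BZKpPkt1e}, the lattice $\eta_p(\Lambda_p) \subset \mathrm{V}_p(A)$ is independent of the representative of $\bar{\eta}_p$, so there is an abelian scheme $A_0$ in the isogeny class of $A$, unique up to isogeny of degree prime to $p$, such that $\eta_p$ restricts to an $O_{B,p}$-isomorphism $\Lambda_p \isoarrow T_p(A_0)$. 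Since $O_{B,(p)} = B \cap O_{B,p}$, the action $\iota$ descends to an action on $A_0$ by $O_{B,(p)}$-endomorphisms up to isogeny prime to $p$, and $\eta^p$ is unchanged while the classes $\bar{\eta}_{\mathfrak{q}_j}$ for $j \geq 1$ are produced by restriction of $\eta_{\mathfrak{q}_j}$.

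It remains to select a good representative of $\bar{\lambda}$. Pick any $\lambda \in \bar{\lambda}$; there is $\xi_p(\lambda) = (\xi_{\mathfrak{p}_i}(\lambda))_i \in (F \otimes \BQ_p)^{\times}(1)$ with $E^{\lambda}(\eta_p(v), \eta_p(w)) = \psi(\xi_p(\lambda) v, w)$. Under the replacement $\lambda \mapsto u\lambda$ for $u \in F^{\times}$, the function $\xi_p$ transforms by $\xi_p(u\lambda) = u\xi_p(\lambda)$, so by weak approximation in $F^{\times}$ we may choose $u$ with $\ord_{\mathfrak{p}_i}(u) = -\ord_{\mathfrak{p}_i}(\xi_{\mathfrak{p}_i}(\lambda))$ for every $i = 0, \ldots, s$, making $u\lambda$ principal at each $\mathfrak{p}|p$ and $\xi_{\mathfrak{p}_i}(u\lambda) \in O_{F_{\mathfrak{p}_i}}^{\times}(1)$. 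This determines the $U_p(F)$-homogeneous polarization class $\bar{\lambda}_0$ on $A_0$ required in $(b)$, and the induced sections $\xi_{\mathfrak{p}}(\lambda) \bmod \mathbf{M}^{\bullet}_{\mathfrak{p}}$ supply datum $(d)$; a direct check shows they are independent of the initial choices.

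For the converse, starting from $(A_0, \iota, \bar{\lambda}, \bar{\eta}^p, (\bar{\eta}_{\mathfrak{q}_j})_j, (\xi_{\mathfrak{p}})_{\mathfrak{p}}) \in \mathcal{A}^{\bullet bis}_{\mathbf{K}^{\bullet}}(S)$, we reconstruct $\eta_p$. Since $\lambda$ is principal at $\mathfrak{p}_0$, the Riemann form $E^{\lambda}$ on $T_{\mathfrak{p}_0}(A_0)$ is perfect, so there exists an $O_{B_{\mathfrak{p}_0}}$-linear isomorphism $\eta_{\mathfrak{p}_0}\colon \Lambda_{\mathfrak{p}_0} \isoarrow T_{\mathfrak{p}_0}(A_0)$ satisfying $E^{\lambda}(\eta_{\mathfrak{p}_0}(v), \eta_{\mathfrak{p}_0}(w)) = \psi(\xi_{\mathfrak{p}_0}(\lambda) v, w)$, and its class mod $\mathbf{K}^{\bullet}_{\mathfrak{p}_0}$ is unique. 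For $j \geq 1$, the datum $\eta_{\mathfrak{q}_j}$ is already given, and $\eta_{\bar{\mathfrak{q}}_j}$ is obtained by composing $\xi_{\mathfrak{p}_j}(\lambda)$ with the dual of $\eta_{\mathfrak{q}_j}$ under the perfect pairing \eqref{BZLambda1e}, exactly as in the proof of Proposition \ref{BZ1p}. Assembling $\eta^p$ with $\eta_p = \bigoplus_i (\eta_{\mathfrak{q}_i} \oplus \eta_{\bar{\mathfrak{q}}_i})$ yields $\bar{\eta}$ satisfying $(c)$ of Definition \ref{BZApkt3d} (replacing $\lambda$ by a suitable $\BQ^{\times}$-multiple to land the similitude factor in $(F \otimes \mathbb{A}_f)(1)$ if desired, which is permitted since we have passed to the $F$-homogeneous class). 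The two constructions are mutually inverse, yielding the asserted canonical isomorphism; the only delicate point is the simultaneous adjustment of $\lambda$ at all primes $\mathfrak{p}|p$, which is handled by weak approximation.
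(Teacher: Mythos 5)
Your proof is correct and follows the same route the paper does: the paper explicitly describes its proof as an ``obvious modification'' of Proposition~\ref{BZ1p}, writing out only the inverse direction ($\mathcal{A}^{\bullet bis}\to\mathcal{A}^{\bullet}$) because that description is needed later to extend Hecke operators. You supply both directions, which matches what was intended. One detail you should make explicit in the forward direction: the element $u \in F^{\times}$ you choose with $\ord_{\mathfrak{p}_i}(u) = -\ord_{\mathfrak{p}_i}(\xi_{\mathfrak{p}_i}(\lambda))$ for all $i$ must also be taken \emph{totally positive} so that $u\lambda$ is again a polarization. This is available: by weak approximation $F^{\times}$ is dense in $\prod_{i} F_{\mathfrak{p}_i}^{\times}\times\prod_{\chi}\mathbb{R}^{\times}$, so you can prescribe the $\mathfrak{p}_i$-adic valuations and the archimedean signs simultaneously. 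This replaces the ``multiply $\lambda_0$ by a power of $p$'' step in the proof of Proposition~\ref{BZ1p}, where positivity was automatic. Finally, the parenthetical about rescaling by a $\mathbb{Q}^{\times}$-multiple in your converse direction is harmless but unnecessary, since $\mathbb{Q}^\times\subset F^\times$ and you have already passed to the $F$-homogeneous class.
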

\begin{proof}
  The proof is an obvious modification of the proof of Proposition \ref{BZ1p}.
  But for later use we indicate the point of
  $\mathcal{A}^{\bullet}_{\mathbf{K}^{\bullet}}(S)$ which corresponds to a point
$(A_0,\bar{\lambda}_0, \bar{\eta}^p, (\bar{\eta}_{\mathfrak{q}_j})_j, (\xi_{\mathfrak{p_i}})_i)$ 
  of $\mathcal{A}^{\bullet bis}_{\mathbf{K}^{\bullet}}(S)$
  (recall from Definition \ref{BZsAK2d} that the index $i$ runs from
  $0,\ldots, s$ and the index $j$ from $1,\ldots, s$). Since we work over $E$,
  the Tate module $T_{\mathfrak{q}_0}(A)$ makes sense. By
  our choice of $\mathbf{K}^{\bullet}_{\mathfrak{q}_0}$ there is a unique class of 
  $O_{B_{\mathfrak{q}_0}}$-module isomorphisms
  $\eta_{\mathfrak{q}_0}: \Lambda_{\mathfrak{q}_0} \rightarrow T_{\mathfrak{q}_0}(A)$
  modulo $\mathbf{K}^{\bullet}_{\mathfrak{q}_0}$. Therefore we may replace $j$ in
  datum $(e)$ by  $i = 0, \ldots s$ in Definition \ref{BZApkt4d},
    without changing anything.
  
  Let $A$ be the isogeny
  class of $A_0$ and  choose $\lambda_0 \in \bar{\lambda}_0$ and
  $\eta^{p} \in \bar{\eta}^p$ to construct a point of
  $\mathcal{A}^{\bullet}_{\mathbf{K}^{\bullet}}(S)$.
For $i = 0, \ldots, s$, the isomorphisms
  $\eta_{\mathfrak{q}_i}: \Lambda_{\mathfrak{q}_i} \rightarrow T_{\mathfrak{q}_i}(A_0)$
  induce by duality (using $\psi$ and $E^{\lambda_0}$) an isomorphism
  $T_{\bar{\mathfrak{q}}_i}(A_0) \rightarrow \Lambda_{\bar{\mathfrak{q}}_i}(1)$.
  If we multiply the inverse of this map by $\xi_{\mathfrak{p}_i}(\lambda_0)$
  we obtain an isomorphism
  \begin{displaymath}
    \eta_{\bar{\mathfrak{q}}_i}: 
    \Lambda_{\bar{\mathfrak{q}}_i} \isoarrow T_{\bar{\mathfrak{q}}_i}(A_0) 
    \end{displaymath}
  which satisfies
  \begin{equation}\label{BZ17S1e}
E^{\lambda_0}(\eta_{\mathfrak{q}_i}(v), \eta_{\bar{\mathfrak{q}}_i}(w))  
= \psi(\xi_{\mathfrak{p}_i}(\lambda_0) v, w), \quad v \in \Lambda_{\mathfrak{q}_i},
\; w \in \Lambda_{\bar{\mathfrak{q}}_i}. 
  \end{equation}
  We set $\eta_{\mathfrak{p}_i} = \eta_{\mathfrak{q}_i} \oplus \eta_{\bar{\mathfrak{q}}_i}$
  and $\eta_p = \oplus_{i=0}^{s} \eta_{\mathfrak{p}_i}$. We denote by $\bar{\eta}_p$
  the class modulo $\mathbf{K}_p$ of $\eta_p$, and  by $\lambda$ the
  $\mathbb{Q}$-homogeneous polarization which contains $\lambda_0$. Then
  $(A, \iota, \bar{\lambda}, \bar{\eta})$ is the corresponding point of 
  $\mathcal{A}^{\bullet}_{\mathbf{K}^{\bullet}}(S)$.
\end{proof}

We reformulate the action of the Hecke operator (\ref{BZGdot5e}) in terms
of Variant \ref{varia} of Definition \ref{BZApkt4d}. This will not be used
until section \ref{s:uniform}.  We consider an element 
$g \in G^{\bullet}(\mathbb{Q}_p) \subset G^{\bullet}(\mathbb{A}_f)$. We write
$g = (\ldots g_{\mathfrak{q}_i}, \bar{g}_{\mathfrak{q}_i},  \ldots)$ as before, where
$i = 0, \ldots, s$. 
We consider an open  compact subgroup
$\mathbf{K}^{\bullet}=\mathbf{K}^{\bullet}_p\mathbf{K}^{\bullet,p}\subset G^\bullet(\BA_f)$, with $\mathbf{K}^{\bullet}_p$
as in \eqref{BZKpPkt1e},  such that
$k g\Lambda_{\mathfrak{p}_i} =  g\Lambda_{\mathfrak{p}_i}$, for
$k \in \mathbf{K}^{\bullet}_{\mathfrak{p}_i}$.  

Let
\begin{equation}\label{Heckevar1e}
(A_0, \iota_0, \bar{\lambda}, \bar{\eta}^{p},(\bar{\eta}_{\mathfrak{q}_j})_j,
  (\xi_{\mathfrak{p_i}})_i) \in \mathcal{A}^{\bullet bis}_{\mathbf{K}^{\bullet}}(S) 
  \end{equation}
be a point of Variant \ref{varia}. We recall that there is a unique isomorphism
of $O_{B_{\mathfrak{q}_0}}$-modules 
$\eta_{\mathfrak{q}_0}: \Lambda_{\mathfrak{q}_0} \rightarrow T_{\mathfrak{q}_0}(A_0)$ 
modulo $\mathbf{K}^{\bullet}_{\mathfrak{p}_0}$. This defines the unique class
$\bar{\eta}_{\mathfrak{q}_0}$. First we use this class to describe the Hecke
operator $g$. 

A point 
\begin{equation}\label{RZ15e} 
  (A_1, \iota_1, \bar{\lambda}_1, \bar{\theta}^{p},(\bar{\theta}_{\mathfrak{q}_j})_j,
  (\xi'_{\mathfrak{p_i}})_i) \in \mathcal{A}^{\bullet bis}_{g^{-1}\mathbf{K}^{\bullet}g}(S)
\end{equation}
is the image of (\ref{Heckevar1e}) by the Hecke operator $g$ if the following
conditions are fulfilled. There exists a quasi-isogeny
\begin{equation}\label{Heckevar2e} 
\alpha: (A_1, \iota_1) \rightarrow (A_0, \iota_0)  
\end{equation}
such that 
\begin{equation}\label{RZ16e}
  \begin{array}{l} 
\alpha^{*}( \bar{\lambda}) = \bar{\lambda}_1, \quad 
\alpha^{*} (\bar{\eta}^p) = \bar{\theta}^p,\\[2mm]  
  \xi'_{\mathfrak{p_i}}(\alpha^{*}(\lambda)) =
  \mu_{\mathfrak{p_i}}(g) \xi_{\mathfrak{p_i}}(\lambda), \quad \text{for} \;
  \lambda \in \bar{\lambda}.
  \end{array}
\end{equation}
Moreover we require that the data $\bar{\theta}_{\mathfrak{q}_i}$ and
$\bar{\eta}_{\mathfrak{q}_i}$ for $i = 0, \ldots, s$ are connected by the
following diagrams 
\begin{equation}\label{Heckevar3e}
  \xymatrix{
    T_{\mathfrak{q}_i}(A_1) \otimes \mathbb{Q} \ar[r]^{\alpha} &
    T_{\mathfrak{q}_i}(A_0) \otimes \mathbb{Q}\\
     \Lambda_{\mathfrak{q}_i} \otimes \mathbb{Q} \ar[u]_{\theta_{\mathfrak{q}_i}} 
     \ar[r]_{g_{\mathfrak{q}_i}} &  \Lambda_{\mathfrak{q}_i} \otimes \mathbb{Q} , 
     \ar[u]_{\eta_{\mathfrak{q}_i}} \\ 
  }  
 \end{equation} 
where $\theta_{\mathfrak{q}_i} \in \bar{\theta}_{\mathfrak{q}_i}$, 
$\eta_{\mathfrak{q}_i} \in \bar{\eta}_{\mathfrak{q}_i}$. The diagrams are 
required to be commutative after replacing $\eta_{\mathfrak{q}_i}$ by
$\eta_{\mathfrak{q}_i} k$ for some $k \in \mathbf{K}^{\bullet}_{\mathfrak{q}_i}$.

We may reformulate the condition (\ref{Heckevar3e}) for $i=0$ without mentioning
the classes $\bar{\eta}_{\mathfrak{q}_0}$ and $\bar{\theta}_{\mathfrak{q}_0}$.
Recall that
$G^{\bullet}_{\mathfrak{q}_0}\cong (B^{\rm opp}_{\mathfrak{q}_0})^{\times}\cong D^{\times}_{\mathfrak{p}_0}$,
cf. (\ref{BZGdot1e}), (\ref{Dsplit-in0}).  
Let $m = \ord g_{\mathfrak{q}_0}$ be the valuation in the division algebra
$D_{\mathfrak{p}_0}$. Let $\Pi\in O_{D_{\mathfrak{q}_0}}$ be a prime element which 
we regard also as an  element of $O_{D^{opp}_{\mathfrak{q}_0}}$. We may define $m$ by
$g_{\mathfrak{q}_0} (\Lambda_{\mathfrak{q}_0}) = \Pi^{m} \Lambda_{\mathfrak{q}_0}$.
Let $X_0$ be the $p$-divisible group of $A_0$ and  $X_1$ the $p$-divisible
group of $A_1$. The condition (\ref{Heckevar3e}) for $i=0$ is equivalent
with the condition that the quasi-isogeny of $p$-divisible groups  
\begin{equation}
\Pi^{-m} \alpha: (X_1)_{\mathfrak{q}_0} \rightarrow (X_0)_{\mathfrak{q}_0} 
  \end{equation}
is an isomorphism. 

We note that (\ref{RZ16e}) means implicitly that a polarization
$\lambda_1 \in \bar{\lambda}_1$ is $\mathfrak{p}_i$-principal if it differs
from $\mu_{\mathfrak{p}_i}(g)^{-1} \alpha^{*} (\lambda)$ by a unit in
$O_{F_{\mathfrak{p_j}}}^{\times}$, for a polarization
$\lambda \in \bar{\lambda}$ of $A_0$ which is principal in $p$. 

That (\ref{RZ15e}) indeed describes the image by the Hecke operator given by
$g \in G^{\bullet}(\mathbb{Q}_p)$ follows immediately from (\ref{BZGdot5e}) 
if we pass from $A_0$ to its isogeny class as in Definition \ref{BZApkt3altd}.
Our conditions for $A_1$ only ensure that we obtain a point of Variant 
\ref{varia}. This description of the Hecke operators will allow us to extend
them in section \ref{s:uniform}  to a model of the functor Variant \ref{varia}
over $O_{E_{\nu}}$.  

   Using Proposition \ref{BZ11p}, we identify the functors $\mathcal{A}^{\bullet}_{\mathbf{K}^{\bullet}}$ and
  $\mathcal{A}^{\bullet bis}_{\mathbf{K}^{\bullet}}$
  and  use the  notation $\mathcal{A}^{\bullet}_{\mathbf{K}^{\bullet}}$ for this functor. 
Finally we can define a functor on the category of $O_{E_{\nu}}$-schemes.
\begin{definition}\label{BZsApkt4d} 
Let $\mathbf{K}^{\bullet}=\mathbf{K}^{\bullet}_p\mathbf{K}^{\bullet,p}\subset G^\bullet(\BA_f)$, with $\mathbf{K}^{\bullet}_p$ as in \eqref{BZKpPkt1e}.  We define a functor $\tilde{\mathcal{A}}^{\bullet t}_{\mathbf{K}^{\bullet}}$ on the
category of $O_{E_{\nu}}$-schemes $S$. 
An $S$-valued point consists of the data $(a), (b), (c), (e)$ as in Definition
\ref{BZApkt4d}. But we replace $(d)$ by the following datum,
\begin{enumerate}
\item[($d^t$)]
  For each polarization $\lambda \in \bar{\lambda}$ and for each prime 
  $\mathfrak{p}|p$ of $O_F$ a section
  $\xi_{\mathfrak{p}}(\lambda) \in O^{\times}_{F_{\mathfrak{p}}}/\mathbf{M}_{\mathfrak{p}}^{\bullet}$
   such that
  $\xi_{\mathfrak{p}}(u \lambda) = u \xi_{\mathfrak{p}}(\lambda)$ for
  each $u \in U_p(F)$.
  \end{enumerate}
\end{definition}

  Let $\mathcal{A}^{\bullet t}_{\mathbf{K}^{\bullet}}$ be the functor on the category of
  $E$-schemes $S$ which is obtained by changing in Definition \ref{BZApkt3altd}
  the item $(d)$ into
  \begin{enumerate}
\item[($d^t$)]
  For each polarization $\lambda \in \bar{\lambda}$ and for each prime
  $\mathfrak{p}|p$ of $O_F$ a section
  $\xi_{\mathfrak{p}}(\lambda)\in F^{\times}_{\mathfrak{p}}/\mathbf{M}_{\mathfrak{p}}^{\bullet}$
  such that $\xi_{\mathfrak{p}}(u \lambda) = u \xi_{\mathfrak{p}}(\lambda)$ for
  each $u \in F^{\times}$.
  \end{enumerate}
  By changing $(d)$ in Definition \ref{BZApkt4d} in the same way (i.e., replacing $O^{\times}_{F_{\mathfrak{p}}}(1)/\mathbf{M}_{\mathfrak{p}}^{\bullet}$ by $O^{\times}_{F_{\mathfrak{p}}}/\mathbf{M}_{\mathfrak{p}}^{\bullet}$), we obtain
  another description of $\mathcal{A}^{\bullet t}_{\mathbf{K}^{\bullet}}$.
We call this the $t$-version of Definition \ref{BZApkt4d}.
 
  By the proof of Proposition \ref{BZ11p}, we have a canonical isomorphism 
  \begin{equation}\label{BZGdot6e} 
    \mathcal{A}^{\bullet t}_{\mathbf{K}^{\bullet}} \times_{\Spec E} \Spec E_{\nu} \cong
    \tilde{\mathcal{A}}^{\bullet t}_{\mathbf{K}^{\bullet}} \times_{\Spec O_{E_{\nu}}}
    \Spec E_{\nu}. 
    \end{equation}
\begin{remark}\label{remHecke}
For $g \in G^{\bullet}(\mathbb{A}_f)$ we have the Hecke operators
$g: \mathcal{A}^{\bullet t}_{\mathbf{K}^{\bullet}} \rightarrow \mathcal{A}^{\bullet t}_{g^{-1}\mathbf{K}^{\bullet} g}$ . For $g\in G^{\bullet}(\mathbb{A}^p_f)$ these extend
obviously to
$g: \tilde{\mathcal{A}}^{\bullet t}_{\mathbf{K}^{\bullet}} \rightarrow \tilde{\mathcal{A}}^{\bullet t}_{g^{-1}\mathbf{K}^{\bullet} g}$. 
Let $g \in G^{\bullet}(\mathbb{Q}_p)$, we have defined the Hecke operator by
(\ref{BZGdot5e}).
  In section \ref{s:uniform} we will extend this Hecke operator to
  $g: \tilde{\mathcal{A}}^{\bullet t}_{\mathbf{K}^{\bullet}} \rightarrow \tilde{\mathcal{A}}^{\bullet t}_{g^{-1}\mathbf{K}^{\bullet} g}$,
  whenever both $\mathbf{K}^{\bullet}_p$  and $g^{-1}\mathbf{K}^{\bullet}_pg$ are as specified in \eqref{BZKpPkt1e}, so that  both source and target make sense.
  This will be done by the remark after Proposition \ref{BZ11p}.
For the time being, we only need the extensions of Hecke operators defined by elements in  $(K\otimes{\BQ_p})^\times\subset G^\bullet(\BQ_p)$. For these Hecke operators we give an ad hoc definition, cf. \eqref{Hecke2e}.
  \end{remark}
  
Let $\zeta_{p^{\infty}} \in \bar{\mathbb{Q}}$ be a compatible system of
primitive $p^n$-th roots of unity. It defines over $\bar{\BQ}$ an isomorphism
of \'etale sheaves for each $\mathfrak p$,
  \begin{equation}\label{BZ24e} 
   \kappa_{\mathfrak{p}}\colon O_{F_{\mathfrak{p}}}^{\times}/\mathbf{M}_{\mathfrak{p}}^{\bullet} 
    \isoarrow  O_{F_{\mathfrak{p}}}^{\times}(1)/\mathbf{M}_{\mathfrak{p}}^{\bullet}, \quad\quad
   \xi_{\mathfrak{p}}  \mapsto  \zeta_{p^{\infty}}\xi_{\mathfrak{p}}.
  \end{equation} 
It is defined over a finite abelian extension $L/E$ which we choose
independently of $\mathfrak{p}$. The isomorphism (\ref{BZ24e}) defines an
isomorphism of functors
\begin{equation}\label{BZ33e}
  \mathcal{A}^{\bullet t}_{\mathbf{K}^{\bullet}}\times_{\Spec E} \Spec L
  \isoarrow\mathcal{A}^{\bullet}_{\mathbf{K}^{\bullet}}\times_{\Spec E} \Spec L.
\end{equation}
Here, for an $L$-scheme $S$ a point
  \begin{equation}\label{BZ23e}  
    (A, \iota, \bar{\lambda}, \bar{\eta}^p, (\bar{\eta}_{\mathfrak{q}_j})_j,
   ( \xi_{\mathfrak{p_i}})_i) \in \mathcal{A}^{\bullet t}_{\mathbf{K}^{\bullet}}(S) 
    \end{equation}
is mapped by the  isomorphism (\ref{BZ33e}) to 
$(A, \iota, \bar{\lambda}, \bar{\eta}^p, (\bar{\eta}_{\mathfrak{q}_j})_j, (\zeta_{p^{\infty}} \xi_{\mathfrak{p_i}})_i)$.
The isomorphism (\ref{BZ33e}) is compatible with the Hecke operators
$G^{\bullet}(\mathbb{A}_f)$ acting on both sides. 

\begin{proposition}\label{BZ3c} 
  Let $\tau \in \Gal(L/E)$ be an automorphism. 
  The isomorphism (\ref{BZ33e}) fits into a commutative diagram
  \begin{displaymath}
    \xymatrix{
      \mathcal{A}^{\bullet t}_{\mathbf{K}^{\bullet}} \times_{\Spec E} \Spec L
      \ar[d]_{\id \times \tau_c}
      \ar[r] & \mathcal{A}^{\bullet}_{\mathbf{K}^{\bullet}} \times_{\Spec E} \Spec L 
      \ar[d]^{{\varsigma_{p^{\infty}}(\tau^{-1})}_{\mid \xi} \times \tau_c}\\
    \mathcal{A}^{\bullet t}_{\mathbf{K}^{\bullet}} \times_{\Spec E} \Spec L \ar[r] & 
    \mathcal{A}^{\bullet}_{\mathbf{K}^{\bullet}} \times_{\Spec E} \Spec L .\\
    }
    \end{displaymath}
  Here we take the composite of the cyclotomic character by the inclusion  
  \begin{displaymath}  
  \varsigma_{p^{\infty}}: \Gal(\bar{E}/E) \rightarrow \mathbb{Z}_p^{\times} \subset
  (O_F \otimes \mathbb{Z}_p)^{\times}. 
  \end{displaymath}
 {\rm See (\ref{xi-action1e}) for the definition of the automorphism $a_{\mid \xi}$ of  $\mathcal{A}^{\bullet}_{\mathbf{K}^{\bullet}}$.
  A similiar definition  applies to
  $\mathcal{A}^{\bullet t}_{\mathbf{K}^{\bullet}}$. }
\end{proposition}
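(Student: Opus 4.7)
My plan is to imitate the proof of Proposition~\ref{Gtwist1p} almost verbatim, replacing the single scalar $\xi_p \in \mathbb{Z}_p^\times/\mathbf{M}$ by the tuple $(\xi_{\mathfrak{p}_i})_i \in \prod_i O_{F_{\mathfrak{p}_i}}^\times/\mathbf{M}_{\mathfrak{p}_i}^\bullet$; the cyclotomic character acts uniformly on all components via the inclusion $\varsigma_{p^\infty}\colon \Gal(\bar E/E) \to \mathbb{Z}_p^\times \hookrightarrow (O_F\otimes\mathbb{Z}_p)^\times$, so the tupling adds no essential complication. Concretely, for an $L$-scheme $S$ with structure morphism $\pi\colon S \to \Spec L$, I would write $S_{[\tau_c]}$ for the $L$-scheme obtained by composing $\pi$ with $\tau_c = \Spec \tau^{-1}$, and reduce the commutativity of the diagram of functors to the commutativity of the corresponding diagram of $S$-points, analogous to (\ref{Gtwist2e}).

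The second step is to chase a point $(A,\iota,\bar\lambda,\bar\eta^p,(\bar\eta_{\mathfrak{q}_j})_j,(\xi_{\mathfrak{p}_i})_i)\in \mathcal{A}^{\bullet t}_{\mathbf{K}^\bullet}(S)$ around the square. Going \emph{right then down}: the upper arrow (\ref{BZ33e}) produces the tuple $(\ldots,\pi^*(\zeta_{p^\infty})\xi_{\mathfrak{p}_i})_i$; then the operator $\varsigma_{p^\infty}(\tau^{-1})_{\mid \xi}$ multiplies each $\xi_{\mathfrak{p}_i}$-component by $\varsigma_{p^\infty}(\tau^{-1})$; then the canonical identification $\mathcal{A}^\bullet_{\mathbf{K}^\bullet}(S) = \mathcal{A}^\bullet_{\mathbf{K}^\bullet}(S_{[\tau_c]})$ (coming from the fact that the functor is defined over $E$) rewrites $\pi^*(\zeta_{p^\infty}) = \pi^*(\tau^{-1}(\tau(\zeta_{p^\infty}))) = \pi^*_{[\tau_c]}(\varsigma_{p^\infty}(\tau)\,\zeta_{p^\infty})$, yielding the tuple with each $\xi_{\mathfrak{p}_i}$ replaced by $\pi^*_{[\tau_c]}(\zeta_{p^\infty})\xi_{\mathfrak{p}_i}$ after the cancellation $\varsigma_{p^\infty}(\tau^{-1})\varsigma_{p^\infty}(\tau)=1$.

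Going \emph{down then right}: the left $\id\times\tau_c$ is the canonical identification leaving the tuple unchanged, and the lower horizontal arrow then multiplies each $\xi_{\mathfrak{p}_i}$ by $\pi^*_{[\tau_c]}(\zeta_{p^\infty})$, giving exactly the same final tuple as above. Hence the diagram commutes on $S$-points functorially in $S$, which proves the proposition.

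The only substantive point is bookkeeping: one must verify that $\varsigma_{p^\infty}(\tau)$, viewed via the inclusion $\mathbb{Z}_p^\times \hookrightarrow \prod_\mathfrak{p} O_{F_\mathfrak{p}}^\times$, acts on each component of $(\xi_{\mathfrak{p}_i})_i$ by exactly the same scalar, so that the component-wise multiplication by $\pi^*(\zeta_{p^\infty})$ (which is the action of (\ref{BZ33e})) is indeed intertwined by the Hecke operator $\varsigma_{p^\infty}(\tau^{-1})_{\mid \xi}$ as defined by the analogue of (\ref{xi-action1e}) for $\mathcal{A}^{\bullet t}_{\mathbf{K}^\bullet}$ and $\mathcal{A}^{\bullet}_{\mathbf{K}^\bullet}$. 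This is built into the definitions, so I do not expect any real obstacle; the proposition is formally parallel to Proposition~\ref{Gtwist1p}, and will in turn be used (together with the local reciprocity law computation (\ref{formLFT})) to deduce a $G^\bullet$-analogue of Proposition~\ref{Gtwist3p} identifying $\mathcal{A}^{\bullet t}_{\mathbf{K}^\bullet,E_\nu}$ with a Galois twist of $\Sh_{\mathbf{K}^\bullet}(G^\bullet,h)$.
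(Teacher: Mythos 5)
Your proposal is correct and is exactly the paper's argument: the paper's own proof of this proposition is the one line ``The proof coincides with that of Proposition~\ref{Gtwist1p},'' and your reduction to a diagram of $S$-points, the point-chase tracking $\pi^*(\zeta_{p^\infty})$ through the canonical identification $\mathcal{A}^\bullet_{\mathbf{K}^\bullet}(S)\cong\mathcal{A}^\bullet_{\mathbf{K}^\bullet}(S_{[\tau_c]})$, and the cancellation $\varsigma_{p^\infty}(\tau^{-1})\varsigma_{p^\infty}(\tau)=1$ reproduce that proof verbatim with $\xi_p$ replaced by the tuple $(\xi_{\mathfrak{p}_i})_i$. (The only cosmetic slip is the target of $\xi_{\mathfrak{p}_i}$ in $\mathcal{A}^{\bullet t}_{\mathbf{K}^\bullet}$ over $E$-schemes, which per the $t$-version of Definition~\ref{BZApkt4d} is $F^\times_{\mathfrak{p}_i}/\mathbf{M}^\bullet_{\mathfrak{p}_i}$ rather than $O^\times_{F_{\mathfrak{p}_i}}/\mathbf{M}^\bullet_{\mathfrak{p}_i}$; this does not affect the argument.)
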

\noindent 
The proof coincides with that of Proposition \ref{Gtwist1p}. As a consequence
we have an analogue of Proposition \ref{Gtwist3p}, i.e. there is a morphism
of functors
\begin{equation}\label{compshim}
  \mathcal{A}^{\bullet t}_{\mathbf{K}^{\bullet}, E_{\nu}} \times_{\Spec E_{\nu}} 
  \Spec E^{nr}_{\nu}   
\rightarrow 
\Sh_{\mathbf{K}^{\bullet}}(G, h \delta^{-1})_{E_{\nu}} \times_{\Spec E_{\nu}} 
\Spec E^{nr}_{\nu} .
\end{equation}
The descent data relative to $E^{nr}_{\nu}/E_{\nu}$ on both sides are compatible
up to the factor $p^{f_{\nu}}_{\mid \xi}$ which can be expressed by a diagram
similiar to that of Proposition \ref{Gtwist1p}. In contrast to  Proposition
\ref{Gtwist1p}, the morphism \eqref{compshim} is no longer an isomorphism
since we are dealing with a coarse moduli scheme. 

We will next show that the action of the group 
$(K \otimes_{\mathbb{Q}} \mathbb{Q}_p)^{\times} \subset G^{\bullet}(\mathbb{Q}_p)$ on
$\mathcal{A}^{\bullet t}_{\mathbf{K}^{\bullet}}$ by Hecke operators extends naturally to
an action on the $O_{E_{\nu}}$-scheme
$\tilde{\mathcal{A}}^{\bullet t}_{\mathbf{K}^{\bullet}}$. We write an element of that
group as 
\begin{displaymath}
  z = (\ldots, a_i, b_i, \ldots) \in (K \otimes_{\mathbb{Q}} \mathbb{Q}_p)^{\times}
  \cong \prod_{i=0}^{s} (K_{\mathfrak{q}_i}^{\times} \times K_{\bar{\mathfrak{q}}_i}^{\times})
  \cong \prod_{i=0}^{s} (F_{\mathfrak{p}_i}^{\times} \times F_{\mathfrak{p}_i}^{\times}),  
  \end{displaymath}
where $a_i, b_i \in F_{\mathfrak{p}_i}^{\times}$, for $i = 0, 1, \ldots, s$. We note
that $\mu_{\mathfrak{p}_i}(z) = a_i b_i$.

We consider a point $(A, \iota, \bar{\lambda}, \bar{\eta})$ of
$\mathcal{A}_{\mathbf{K}^{\bullet}}^{\bullet}(S)$ as in Definition \ref{BZApkt3d}.
We write $\eta = \eta^p \eta_p$ and
\begin{displaymath}
  \eta_p = \oplus_{i=0}^s (\eta_{\mathfrak{q}_i} \oplus \eta_{\bar{\mathfrak{q}}_i}),
  \quad
  \eta_{\mathfrak{q}_i}: V_{\mathfrak{q}_i} \rightarrow \mathrm{V}_{\mathfrak{q}_i}(A),
  \; 
  \eta_{\bar{\mathfrak{q}}_i}: V_{\bar{\mathfrak{q}}_i} \rightarrow
  \mathrm{V}_{\bar{\mathfrak{q}}_i}(A). 
  \end{displaymath}
The Hecke operator
$z: \mathcal{A}_{\mathbf{K}^{\bullet}}^{\bullet} \rightarrow \mathcal{A}_{\mathbf{K}^{\bullet}}^{\bullet}$ 
on $S$-valued points is given by
\begin{equation}\label{Hecke2e} 
  (A, \iota, \bar{\lambda}, \bar{\eta}^p, (\bar{\eta}_{\mathfrak{q}_i})_i,
  (\bar{\eta}_{\bar{\mathfrak{q}}_i})_i) \longmapsto (A, \iota, \bar{\lambda},
  \bar{\eta}^p, \; (\bar{\eta}_{\mathfrak{q}_i}\cdot a_i)_i, \;
  (\bar{\eta}_{\bar{\mathfrak{q}}_i} \cdot b_i)_i). 
\end{equation}
Let $\mathbf{x} \in K^{\times}$. We write its image in
$(K\otimes_{\mathbb{Q}_p} \mathbb{Q}_p)^\times$ as

\begin{displaymath}
  (\ldots, x_i, y_i, \ldots) \in (K \otimes_{\mathbb{Q}} \mathbb{Q}_p)^{\times}
  \cong \prod_{i=0}^{s} (K_{\mathfrak{q}_i}^{\times} \times K_{\bar{\mathfrak{q}}_i}^{\times})
  \cong \prod_{i=0}^{s} (F_{\mathfrak{p}_i}^{\times} \times F_{\mathfrak{p}_i}^{\times}),  
\end{displaymath}
where $x_i, y_i \in F_{\mathfrak{p}_i}^{\times}$. We note that $x_i y_i$ is the
image of $\Nm_{K/F} \mathbf{x}$ in $F_{\mathfrak{p}_i}^{\times}$.

We consider the quasi-isogeny $\mathbf{x}: A \rightarrow A$ induced by multiplication by $\mathbf{x}$.
The inverse image of the data
$(A, \iota, \bar{\lambda}, \bar{\eta}^p, \; (\bar{\eta}_{\mathfrak{q}_i}\cdot a_i)_i, \;  (\bar{\eta}_{\bar{\mathfrak{q}}_i} \cdot b_i)_i)$ 
by this quasi-isogeny  is
\begin{equation}\label{Hecke3e}
  (A, \iota, \bar{\lambda}, \bar{\eta}^p \cdot \mathbf{x}^{-1}, \;
  (\bar{\eta}_{\mathfrak{q}_i}\cdot a_i x_i^{-1})_i,
  \;  (\bar{\eta}_{\bar{\mathfrak{q}}_i} \cdot b_i y_i^{-1})_i) .
\end{equation}
Therefore this is just another way to write the image under the Hecke operator
(\ref{Hecke2e}). 

We rewrite (\ref{Hecke2e}) in terms of the alternative
Definition \ref{BZApkt3altd}. 
In terms of this definition, the left hand side of (\ref{Hecke2e}) corresponds to 
$(A, \iota, \bar{\lambda}, \bar{\eta}^p, (\bar{\eta}_{\mathfrak{q}_i})_i,
  (\xi_{\mathfrak{p}_i})_i)$ 
and (\ref{Hecke3e}) corresponds to
$$(A, \iota, \bar{\lambda}, \bar{\eta}^p \mathbf{x}^{-1}, (\eta_{\mathfrak{q}_i} a_i x_i^{-1})_i, \; (a_i b_i (\Nm_{K/F} \mathbf{x}^{-1}) \xi_{\mathfrak{p}_i})_i) .
$$ 

Summarizing, the Hecke operator
$z: \mathcal{A}_{\mathbf{K}^{\bullet}}^{\bullet} \rightarrow \mathcal{A}_{\mathbf{K}^{\bullet}}^{\bullet}$ 
becomes in terms of Definition \ref{BZApkt3altd} the map 
\begin{equation}\label{Hecke4e}
(A, \iota, \bar{\lambda}, \bar{\eta}^p, (\bar{\eta}_{\mathfrak{q}_i})_i, 
(\xi_{\mathfrak{p}_i})_i) \longmapsto
(A, \iota, \bar{\lambda}, \bar{\eta}^p \mathbf{x}^{-1},(\bar{\eta}_{\mathfrak{q}_i}
a_i x_i^{-1})_i, \; (a_i b_i (\Nm_{K/F} \mathbf{x}^{-1}) \xi_{\mathfrak{p}_i})_i) .
\end{equation}
In the same way $z$ acts on $\mathcal{A}^{\bullet t}_{\mathbf{K}^{\bullet}}$.
In fact, we are only interested in the latter functor. 
Let us choose $\mathbf{x} \in K^{\times}$, such that
$a_i x_i^{-1}$ and $b_i y_i^{-1}$ are units in $O_{F_{\mathfrak{p}_i}}^{\times}$ for
$i = 0, \ldots, s$. 
Assume we have chosen the left hand side of (\ref{Hecke4e})
in the form of the t-version of Definition \ref{BZApkt4d}. Note that we
have added to the data of this definition the unique class of
$O_{B_{\mathfrak{q}_0}}$-module isomorphisms
$\bar{\eta}_{\mathfrak{q}_0}: \Lambda_{\mathfrak{q}_0} \isoarrow T_{\mathfrak{q}_0}(A)$ 
modulo $\mathbf{K}^{\bullet}_{\mathfrak{q}_0}$. We then see that the right hand
side of (\ref{Hecke4e}) is also a point in the sense of Definition
\ref{BZApkt4d}. For $i=0$ we have the isomorphism 
$\bar{\eta}_{\mathfrak{q}_0} a_0 x_0^{-1}: \Lambda_{\mathfrak{q}_0} \isoarrow T_{\mathfrak{q}_0}(A)$
as required. Hence we may forget about $i=0$ and obtain a definition of the
Hecke operator in terms of the  t-version of Definition \ref{BZApkt4d}, 
\begin{displaymath}
  z: \mathcal{A}_{\mathbf{K}^{\bullet}}^{\bullet t} \rightarrow
  \mathcal{A}_{\mathbf{K}^{\bullet}}^{\bullet t}  .
  \end{displaymath}
This definition of $z$ makes sense for the functor
$\tilde{\mathcal{A}}^{\bullet t}_{\mathbf{K}^{\bullet}}$. We define 
\begin{equation}\label{BZGdot7e} 
  \tilde{z}: \tilde{\mathcal{A}}^{\bullet t}_{\mathbf{K}^{\bullet}} \rightarrow
    \tilde{\mathcal{A}}^{\bullet t}_{\mathbf{K}^{\bullet}}
\end{equation}
as follows. Let
$(A, \iota, \bar{\lambda}, \bar{\eta}^p, (\bar{\eta}_{\mathfrak{q}_j})_j, ( \xi_{\mathfrak{p_i}})_i)$
be a point of $\tilde{\mathcal{A}}^{\bullet t}_{\mathbf{K}^{\bullet}}$
  with values in an $O_{E_{\nu}}$-scheme $S$. We define the image by morphism 
  (\ref{BZGdot7e}) as
\begin{displaymath}
  (A, \iota, \bar{\lambda}, \bar{\eta}^p \mathbf{x}^{-1},
  (\bar{\eta}_{\mathfrak{q}_j} a_j x_j^{-1})_j,
  (a_i b_i (\Nm_{K/F} \mathbf{x}^{-1})\xi_{\mathfrak{p_i}})_i).
  \end{displaymath}
It is clear that this is an extension of $z$ with respect to the isomorphism
(\ref{BZGdot6e}).

Recall from \eqref{defhD}  
  \begin{equation}\label{h_D1e}
    h_D\colon \mathbb{S}  \rightarrow
    (D \otimes \mathbb{R})^{\times }  \cong  
    \GL_2(\mathbb{R}) \times
    \prod_{\chi \neq \chi_0} (D \otimes_{F, \chi} \mathbb{R})^\times ,\quad 
     z = a + b\mathbf{i} \mapsto  
    \left(
    \begin{array}{rr}
      a & -b\\
      b & a
      \end{array}
    \right) \times (1, \ldots, 1).
  \end{equation}
 Moreover, we consider the composite 
\begin{equation}\label{h_D2e}
  \begin{array}{rcccl} 
    h^{\bullet}_D:  &\mathbb{S} & \rightarrow & \quad (D \otimes \mathbb{R})^{\times}
    \times (K \otimes \mathbb{R})^{\times} & \rightarrow  G^{\bullet}_{\mathbb{R}} ,\\ 
& z & \mapsto & h_D(z)\; \times \; 1 &
    \end{array}
  \end{equation}
  cf. Lemma \ref{BZ1l}.
  
  Recall from \eqref{def:bulletshim} the Shimura datum $(G^\bullet, h_D^\bullet)$. The next proposition relates the Shimura varieties ${\rm Sh}(G^\bullet, h)$ and ${\rm Sh}(G^\bullet, h_D^\bullet)$.  
\begin{proposition}\label{Sh_D1p} 
  Let $\mathbf{K}^{\bullet}=\mathbf{K}^{\bullet}_p\mathbf{K}^{\bullet,p}\subset G^\bullet(\BA_f)$, with $\mathbf{K}^{\bullet}_p$ as in \eqref{BZKpPkt1e}, where
  $\mathbf{M}_{\mathfrak{p}_0} = O_{F_{\mathfrak{p}_0}}^{\times}$. Denote by
  $f_{\nu}$ the inertia index  of $E_{\nu}/\mathbb{Q}_p$. 
  Let $\pi_{\mathfrak{p}_0}$ be an arbitrary prime element of $F_{\mathfrak{p}_0}$.
  We consider the element  
  \begin{displaymath}
    \dot{z} = (\pi_{\mathfrak{p}_0}^{-1} p^{f_{\nu}}, p^{f_{\nu}}, \ldots, p^{f_{\nu}})
    \in (F \otimes \mathbb{Q}_p)^{\times} = \prod_{i=0}^{s} F_{\mathfrak{p}_i}^{\times}.  
    \end{displaymath}
  Let $\tau \in \Gal(E^{nr}_{\nu}/E_{\nu})$ be the Frobenius automorphism.
  Then there is a morphism of functors
  \begin{equation}\label{BZGdot10e}
    \mathcal{A}^{\bullet t}_{\mathbf{K}^{\bullet}, E_{\nu}} \times_{\Spec E_{\nu}}
    \Spec E^{nr}_{\nu} \rightarrow
    \Sh_{\mathbf{K}^{\bullet}}(G^{\bullet}, h^{\bullet}_{D})_{E_{\nu}} \times_{\Spec E_{\nu}}
    \Spec E^{nr}_{\nu}, 
  \end{equation}
  such that the following diagram is commutative
  \begin{equation*}
  \xymatrix{
  \mathcal{A}^{\bullet t}_{\mathbf{K}^{\bullet},E_{\nu}}\times_{\Spec E_{\nu}}\Spec E^{nr}_{\nu}  
  \ar[d]_{\dot{z}_{\mid \xi} \times \tau_c} \ar[r]
  & \Sh_{\mathbf{K}^{\bullet}}(G^{\bullet}, h^{\bullet}_{D})_{E_{\nu}}
  \times_{\Spec E_{\nu}}\Spec E^{nr}_{\nu} \ar[d]^{\id \times \tau_c}\\
  \mathcal{A}^{\bullet t}_{\mathbf{K}^{\bullet}, E_{\nu}} \times_{\Spec E_{\nu}}
  \Spec E^{nr}_{\nu}  \ar[r] & \Sh_{\mathbf{K}^{\bullet}}(G^{\bullet}, h^{\bullet}_{D})_{E_{\nu}}
  \times_{\Spec E_{\nu}} \Spec E^{nr}_{\nu}, 
   }
  \end{equation*} 
  Here, the right hand side of (\ref{BZGdot10e}) is the coarse moduli scheme of
  the functor on the left hand side. {\rm See (\ref{xi-action1e}) for the definition of $\dot{z}_{\mid \xi}$.} 
  \end{proposition}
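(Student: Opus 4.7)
The strategy closely parallels the proof of Proposition \ref{Gtwist3p}, with the group $G^\bullet$ in place of $G$. The key observation is that the Shimura data $(G^\bullet, h)$ and $(G^\bullet, h_D^\bullet)$ differ only by a central twist: comparing (\ref{BZh1e}) with (\ref{h_D2e}), one has $h = h_D^\bullet \cdot c$, where $c = 1 \times h_K \colon \mathbb{S} \to G^\bullet_\mathbb{R}$ factors through the center $T^\bullet_\mathbb{R}$ of $G^\bullet$ (with $T^\bullet$ the $\mathbb{Q}$-torus satisfying $T^\bullet(\mathbb{Q}) = \{t \in K^\times \mid t\bar t \in F^\times\}$). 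Consequently the two Shimura varieties become canonically isomorphic over $E_\nu^{ab}$, and $\Sh_{\mathbf{K}^\bullet}(G^\bullet, h_D^\bullet)_{E_\nu}$ is the Galois twist of $\Sh_{\mathbf{K}^\bullet}(G^\bullet, h)_{E_\nu}$ governed by the reciprocity map $r_\nu^{\rm cft}(T^\bullet, c^{-1})$ in the sense of Corollary \ref{zentralerTwist1c}.

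I will construct (\ref{BZGdot10e}) as the composite of two maps: first, the natural morphism $\mathcal{A}^{\bullet t}_{\mathbf{K}^\bullet, E_\nu} \times \Spec E_\nu^{nr} \to \Sh_{\mathbf{K}^\bullet}(G^\bullet, h)_{E_\nu} \times \Spec E_\nu^{nr}$, analogous to the one in Proposition \ref{Gtwist3p} and constructed via the identification $\mathcal{A}^\bullet_{\mathbf{K}^\bullet} \cong \Sh_{\mathbf{K}^\bullet}(G^\bullet, h)$ of Proposition \ref{BZ11p} together with the twist (\ref{BZ33e}) and its descent described in Proposition \ref{BZ3c}; and second, the central-twist isomorphism between $\Sh_{\mathbf{K}^\bullet}(G^\bullet, h)_{E_\nu}$ and $\Sh_{\mathbf{K}^\bullet}(G^\bullet, h_D^\bullet)_{E_\nu}$ over $E_\nu^{ab}$ given by Corollary \ref{zentralerTwist1c}. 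To pin down the descent datum, I interpose $\mathcal{A}^\bullet_{\mathbf{K}^\bullet} \times \Spec E_\nu^{ab}$ in a diagram analogous to the one appearing in the proof of Proposition \ref{Gtwist3p}: for $\sigma \in \Gal(E_\nu^{ab}/E_\nu)$ corresponding under local class field theory to $e \in E_\nu^\times$, the left square contributes the twist $\varsigma_{p^\infty}(e^{-1})_{\mid \xi}$ by Proposition \ref{BZ3c}, while the right square contributes the Hecke action of $r_\nu^{\rm cft}(T^\bullet, c^{-1})(\sigma) \in (K \otimes \mathbb{Q}_p)^\times \subset G^\bullet(\mathbb{Q}_p)$.

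The remaining step is the class field theory computation of $r_\nu^{\rm cft}(T^\bullet, c^{-1})$. The cocharacter $\mu_c$ is defined over $E_\nu = F_{\mathfrak{p}_0}$, since the Galois elements fixing $\varphi_0$ preserve the distinguished place $\mathfrak{p}_0$; in the coordinates (\ref{p-zerlegt3e}) it sends $z$ to $(1,\ldots,1) \times (1, z, z, \ldots, z) \in \prod_i K_{\mathfrak{q}_i}^\times \times \prod_i K_{\bar{\mathfrak{q}}_i}^\times$, with the distinguished $1$ in the second factor sitting at $\bar{\mathfrak{q}}_0$. Combining the formula (\ref{formLFT}), $\Nm_{E_\nu/\mathbb{Q}_p} e = \varsigma_{p^\infty}(e^{-1}) p^{f_\nu \ord e}$, with the translation (\ref{Hecke4e}) of the action of $(K \otimes \mathbb{Q}_p)^\times$ into the language of the $(\xi_\mathfrak{p})$-data, I obtain that the combined twist from the two squares coincides with $\dot z_{\mid \xi}$. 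The main obstacle is precisely this bookkeeping: one must carefully track how the trivial $\mu_c$-component at $\mathfrak{p}_0$ combines with the non-trivial components at the remaining $\mathfrak{p}_i$ and with the cyclotomic contribution $\varsigma_{p^\infty}$ to produce the asymmetric element $\dot z = (\pi_{\mathfrak{p}_0}^{-1} p^{f_\nu}, p^{f_\nu}, \ldots, p^{f_\nu})$, the appearance of $\pi_{\mathfrak{p}_0}^{-1}$ at the place $\mathfrak{p}_0$ reflecting exactly the fact that $c$ is trivial there.
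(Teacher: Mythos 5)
Your overall route coincides with the paper's: both view $h_D^\bullet$ as a central twist of $h$, invoke Proposition~\ref{BZ3c} (the $G^\bullet$-analogue of Proposition~\ref{Gtwist1p}) for the passage $\mathcal{A}^{\bullet t}\leftrightarrow\mathcal{A}^\bullet$, and Corollary~\ref{zentralerTwist1c} for the central twist; the paper merely splits your $c^{-1}$ into $\delta^{-1}\cdot\gamma$ (with $\delta$ from (\ref{delta1e}) and $\gamma$ supported at $\bar\varphi_0$) and treats the two factors separately. That decomposition is not cosmetic, however: it is precisely what isolates the delicate $\mathfrak{p}_0$-contribution, and your description of that contribution is where the proposal goes wrong.

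You write that $\mu_c$ ``sends $z$ to $(1,\ldots,1)\times(1,z,z,\ldots,z)\in\prod_i K_{\mathfrak q_i}^\times\times\prod_i K_{\bar{\mathfrak q}_i}^\times$, with the distinguished $1$ ... sitting at $\bar{\mathfrak q}_0$.'' This cannot be taken at face value: $\mu_c$ is defined over $E_\nu$ but not over $\mathbb{Q}_p$ (unless $E_\nu=\mathbb{Q}_p$), so it does not land in $(K\otimes\mathbb{Q}_p)^\times$. Over $\bar{\mathbb{Q}}_p$, $\mu_c$ is trivial at the single place $\bar\varphi_0\in\Phi_{\bar{\mathfrak q}_0}$ and equal to $z$ at \emph{every other} place of $\Phi_{\bar{\mathfrak q}_0}$ (those have $r_\varphi=2$). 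The reciprocity $r_\nu(T^\bullet,c^{-1})(e)=\Nm_{E_\nu/\mathbb{Q}_p}(\mu_c(e))$ at the $K_{\bar{\mathfrak q}_0}$-component is computed through the decomposition $K_{\bar{\mathfrak q}_0}\otimes_{\mathbb{Q}_p}E_\nu\cong K_{\bar{\mathfrak q}_0}\times C_{\bar{\mathfrak q}_0}$ exactly as in the paper's treatment of $\gamma$ (equation (\ref{Gtwist7e}) and what follows); the result is $\Nm_{E_\nu/\mathbb{Q}_p}(e)\cdot\bar\varphi_0^{-1}(e^{-1})$, not $1$. This matters: combining with the $\varsigma_{p^\infty}(e)$ correction from Proposition~\ref{BZ3c}, the total $\mathfrak{p}_0$-twist becomes $p^{f_\nu\ord e}\bar\varphi_0^{-1}(e^{-1})$, which for the Frobenius ($\ord e=1$) reduces modulo $O_{F_{\mathfrak p_0}}^\times$ to $\pi_{\mathfrak p_0}^{-1}p^{f_\nu}$. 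If one instead takes your tuple literally --- a genuine $1$ at the whole place $\bar{\mathfrak q}_0$ --- the $\mathfrak p_0$-component of the twist becomes a unit, the $\pi_{\mathfrak p_0}^{-1}$ disappears, and the result is simply wrong whenever $E_\nu\neq\mathbb{Q}_p$. Your closing remark that ``$c$ is trivial there'' inverts the actual mechanism: $\dot z$ has a nontrivial $\mathfrak p_0$-component because $c$ is trivial at $\bar\varphi_0$ but \emph{not} at the remaining places over $\bar{\mathfrak q}_0$, and the norm mixes these. So the ``bookkeeping'' you defer is not an afterthought --- it is the heart of the proposition, and the claimed shortcut through a $\mathbb{Q}_p$-rational tuple for $\mu_c$ would not give the stated descent datum.
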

  We will show in Proposition \ref{BZ7p} that $\Sh_{\mathbf{K}^{\bullet}}(G^{\bullet}, h^{\bullet}_{D})_{E_{\nu}} \times_{\Spec E_{\nu}}
    \Spec E^{nr}_{\nu}$ 
   is in fact the \'etale sheafification of $  \mathcal{A}^{\bullet t}_{\mathbf{K}^{\bullet}, E_{\nu}} \times_{\Spec E_{\nu}}
    \Spec E^{nr}_{\nu}$.  
\begin{proof}
  Recall the morphism to the coarse moduli space
  \begin{equation}\label{BZGdot11e}
    \mathcal{A}^{\bullet}_{\mathbf{K}^{\bullet},E_{\nu}} \rightarrow
    \Sh_{\mathbf{K}^{\bullet}}(G^{\bullet}, h)_{E_{\nu}}. 
    \end{equation}
  Let $T^{\bullet} = (K \otimes \mathbb{Q})^{\times} \subset G^{\bullet}$ be the central
  torus. We consider $\delta: \mathbb{S} \rightarrow T^{\bullet}_{\mathbb{R}}$
  cf. (\ref{delta1e}). The local reciprocity law
  $r_{\nu}(T^{\bullet},\delta^{-1}): E_{\nu}^{\times}\rightarrow T^{\bullet}(\mathbb{Q}_p)$
  is the composite of the local reciprocity law     $r_{\nu}(T, \delta^{-1})$ for $T$ (given by \eqref{rec-delta1e}) with the inclusion
  $T(\mathbb{Q}_p) \subset T^{\bullet}(\mathbb{Q}_p)$. Let $e \in E_{\nu}^{\times}$
  and let $\sigma \in \Gal(E^{ab}_{\nu}/E_{\nu})$ be the automorphism which
  corresponds to it by local class field theory. 
  If we twist the morphism (\ref{BZGdot11e}) by $r_{\nu}(T^{\bullet},\delta^{-1})$
  we obtain as in the proof of Proposition \ref{Gtwist3p} a commutative diagram
  \begin{equation}\label{Gtwist5e} 
  \begin{aligned}\xymatrix{
  \mathcal{A}^{\bullet t}_{\mathbf{K}^{\bullet},E_{\nu}}\times_{\Spec E_{\nu}}\Spec E^{ab}_{\nu}  
  \ar[d]_{(p^{f_{\nu} \ord e})_{\mid \xi} \times \sigma_c} \ar[r]
  & \Sh_{\mathbf{K}^{\bullet}}(G^{\bullet}, h \delta^{-1})_{E_{\nu}}
  \times_{\Spec E_{\nu}}\Spec E^{ab}_{\nu} \ar[d]^{\id \times \sigma_c}\\
  \mathcal{A}^{\bullet t}_{\mathbf{K}^{\bullet}, E_{\nu}} \times_{\Spec E_{\nu}}
  \Spec E^{ab}_{\nu}  \ar[r] & \Sh_{\mathbf{K}^{\bullet}}(G^{\bullet}, h\delta^{-1})_{E_{\nu}}
  \times_{\Spec E_{\nu}} \Spec E^{ab}_{\nu}. 
   }
   \end{aligned}
    \end{equation}
  We consider the homomorphism
  $\gamma: \mathbb{S} \rightarrow (K \otimes_{\mathbb{Q}} \mathbb{R})^{\times}$
  which in terms of the isomorphism (\ref{CMXi1e}) is defined by
   \begin{equation}
    \gamma:  \mathbb{S}  \rightarrow  \mathbb{C}^{\times} \times
    \prod_{\varphi, r_{\varphi}=2} \mathbb{C}^{\times} , \quad
      z  \mapsto   (z,  1, \ldots, 1) ,
  \end{equation}
  i.e., on the right hand side we have $z$ at the factor which corresponds to
  $\bar{\varphi}_0$. We find that 
  \begin{equation}
  h^{\bullet}_{D} = h \delta^{-1} \gamma .
  \end{equation}
  Therefore we must twist the horizontal line of (\ref{Gtwist5e}) by the 
  local reciprocity law of $\gamma$. The one-parameter
  group $\mu_{\gamma}$ associated to the Shimura datum $\gamma$ is
  \begin{displaymath}
    \mu_{\gamma}: \mathbb{C}^{\times} \rightarrow 
    \prod_{\Phi} \mathbb{C}^{\times}, \quad z \mapsto (1,\ldots, 1,z,1,\ldots, 1) ,
  \end{displaymath}
  where $z$ is exactly at the place $\bar{\varphi}_0$. Since we are interested
  in the local reciprocity law we replace $\mathbb{C}$ by $\bar{\mathbb{Q}}_p$
  cf. (\ref{local-rec1e}). The field of definition $E_{\nu}$ of $\mu_{\gamma}$
  is the image of
  $\bar{\varphi}_0: K_{\bar{\mathfrak{q}}_0} \rightarrow \bar{\mathbb{Q}}_p$.
  There is a canonical isomorphism of $K_{\bar{\mathfrak{q}}_0}$-algebras  
  \begin{displaymath}
    K_{\bar{\mathfrak{q}}_0} \otimes_{\mathbb{Q}_p} E_{\nu} \cong
    K_{\bar{\mathfrak{q}}_0} \times C_{\bar{\mathfrak{q}}_0}, 
  \end{displaymath} 
  where the first factor corresponds to the compositum $K_{\bar{\mathfrak{q}}_0}$ of
  the fields $K_{\bar{\mathfrak{q}}_0}$ and $E_{\nu}$, given by $\id_{K_{_{\bar{\mathfrak{q}}_0}}}$
  and $\bar{\varphi}_0^{-1}$.
  
We consider the homomorphism 
\begin{equation}\label{Gtwist7e}
    E_{\nu}^{\times}  \rightarrow   
    (K_{_{\bar{\mathfrak{q}}_0}} \otimes_{\mathbb{Q}_p} E_{\nu})^{\times}\cong K_{\bar{\mathfrak{q}}_0}^{\times} \times
     C_{\bar{\mathfrak{q}}_0}^{\times}, \quad 
    e  \mapsto  \bar{\varphi}_0^{-1} (e) \times 1.
  \end{equation}
 The one-parameter group  $\mu_{\gamma}$ over $E_{\nu}$ is the homomorphism
  \begin{displaymath}
    E_{\nu}^{\times} \rightarrow (K \otimes_{\mathbb{Q}} E_{\nu})^{\times} \cong 
    \prod_i (K_{_{\mathfrak{q}_i}} \otimes_{\mathbb{Q}_p} E_{\nu})^{\times} \times
    \prod_{i} (K_{_{\bar{\mathfrak{q}}_i}} \otimes_{\mathbb{Q}_p} E_{\nu})^{\times} , 
  \end{displaymath}
 which is given by (\ref{Gtwist7e}) on the factor
  $(K_{_{\bar{\mathfrak{q}}_0}} \otimes_{\mathbb{Q}_p} E_{\nu})^{\times}$ and is trivial
  on all other factors.  The map
  \begin{displaymath}
\Nm_{E_{\nu}/\mathbb{Q}_p} =
\Nm_{K_{_{\bar{\mathfrak{q}}_0}}\otimes_{\mathbb{Q}_p} E_{\nu}/K_{\bar{\mathfrak{q}}_0}}: 
K_{\bar{\mathfrak{q}}_0} \otimes_{\mathbb{Q}_p} E_{\nu}\rightarrow K_{_{\bar{\mathfrak{q}}_0}} 
  \end{displaymath}
  becomes in terms of (\ref{Gtwist7e})
  \begin{displaymath}
    (a,c) \in K_{\bar{\mathfrak{q}}_0} \times C_{\bar{\mathfrak{q}}_0} \mapsto
    a \Nm_{C_{\bar{\mathfrak{q}}_0}/K_{\bar{\mathfrak{q}}_0}}.  
    \end{displaymath}
  We conclude that the local reciprocity law associated to $\gamma$ 
  \begin{equation}
    r(T^{\bullet}, \gamma): E_{\nu}^{\times} \rightarrow \prod_i K_{\mathfrak{q}_i}^{\times}
    \times \prod_i K_{\bar{\mathfrak{q}}_i}^{\times}
  \end{equation}
   maps $e$ to the element with component
  $\bar{\varphi}_{\bar{\mathfrak{q}}_0}^{-1}(e^{-1})$ at the
  factor $K_{\bar{\mathfrak{q}}_0}$ and with trivial component at all other
  factors.

  By Corollary \ref{zentralerTwist1c} and our remarks about the Hecke operators
  (\ref{BZGdot5e}) we obtain  from (\ref{Gtwist5e}) a commutative diagram
  \begin{displaymath}
  \xymatrix{
  \mathcal{A}^{\bullet t}_{\mathbf{K}^{\bullet},E_{\nu}}\times_{\Spec E_{\nu}}\Spec E^{ab}_{\nu}  
  \ar[d]_{(\varphi^{-1}_{\mathfrak{p}_0}(e^{-1})_{\mid \xi_{\mathfrak{p}_0}} (p^{f_{\nu} \ord e})_{\mid \xi} \times \sigma_c} \ar[r]
  & \Sh_{\mathbf{K}^{\bullet}}(G^{\bullet}, h \delta^{-1} \gamma)_{E_{\nu}}
  \times_{\Spec E_{\nu}}\Spec E^{ab}_{\nu} \ar[d]^{\id \times \sigma_c}\\
  \mathcal{A}^{\bullet t}_{\mathbf{K}^{\bullet}, E_{\nu}} \times_{\Spec E_{\nu}}
  \Spec E^{ab}_{\nu}  \ar[r] &
  \Sh_{\mathbf{K}^{\bullet}}(G^{\bullet}, h\delta^{-1} \gamma)_{E_{\nu}}
  \times_{\Spec E_{\nu}} \Spec E^{ab}_{\nu}. 
   }
    \end{displaymath}
  The $\xi_{\mathfrak{p}_0}$-part of the datum ($d^t$) in
  Definition \ref{BZsApkt4d} is a function with values in
  $F_{\mathfrak{p}_0}^{\times}/O_{F_{\mathfrak{p}_0}}^{^{\times}}$.  
  Therefore $(\varphi^{-1}_{\mathfrak{p}_0}(e^{-1})_{\mid \xi_{\mathfrak{p}_0}}$ acts on
  this datum exactly like $(\pi_{\mathfrak{p}_0}^{- \ord e})_{\mid \xi_{\mathfrak{p}_0}}$.
  This shows that for $e \in O_{E_{\nu}}^{\times}$ the vertical arrow on the left 
  hand side in the above diagram is equal to $\id \times \sigma_c$. Therefore
  the horizontal arrow in this diagram is defined over $E_{\nu}^{nr}$. 
  The proposition follows.
  \end{proof}
We use  Proposition \ref{Sh_D1p} to define a model of
$\Sh_{\mathbf{K}^{\bullet}}(G^{\bullet}, h^{\bullet}_{D})_{E_{\nu}}$ over $O_{E_{\nu}}$.
The group 
$T^{\bullet}(\mathbb{A}_f)/(\mathbf{K}^{\bullet}\cap T^{\bullet}(\mathbb{A}_f))$ acts through a
finite quotient. Therefore the Hecke operator associated to $\dot{z}$ has finite order.
It follows that the field $E^{nr}_{\nu}$ in Proposition \ref{Sh_D1p} can be
replaced by a finite unramified extension $L/E_{\nu}$. We have extended
$\dot{z}$ to an automorphism of the functor 
$\tilde{\mathcal{A}}^{\bullet t}_{\mathbf{K}^{\bullet}}$ over $O_{E_{\nu}}$. 
\begin{definition}\label{tildeSh_D1d} 
    Let $\mathbf{K}^{\bullet}=\mathbf{K}^{\bullet}_p\mathbf{K}^{\bullet,p}\subset G^\bullet(\BA_f)$, with $\mathbf{K}^{\bullet}_p$ as in \eqref{BZKpPkt1e}, where
  $\mathbf{M}_{\mathfrak{p}_0} = O_{F_{\mathfrak{p}_0}}^{\times}$. We define 
  $\widetilde{\Sh}_{\mathbf{K}^{\bullet}}(G^{\bullet}, h^{\bullet}_{D})$ to be the
  $O_{E_{\nu}}$-scheme given by the descent datum $\dot{z} \times \tau_c$
  on $\tilde{\mathsf{A}}^{\bullet t}_{\mathbf{K}^{\bullet}} \times_{\Spec O_{E_{\nu}}} \Spec O_L$, where   $\tilde{\mathsf{A}}^{\bullet t}_{\mathbf{K}^{\bullet}}$ is the coarse moduli scheme of $\tilde{\mathcal{A}}^{\bullet t}_{\mathbf{K}^{\bullet}}$.
  \end{definition}
The diagram of Proposition \ref{Sh_D1p} becomes
\begin{equation}\label{tildeSh_D1e} 
  \begin{aligned}
  \xymatrix{
    \tilde{\mathcal{A}}^{\bullet t}_{\mathbf{K}^{\bullet}} \times_{\Spec O_{E_{\nu}}}
    \Spec O_{E^{nr}_{\nu}}   
  \ar[d]_{\dot{z}_{\mid \xi} \times \tau_c} \ar[r]
  & \widetilde{\Sh}_{\mathbf{K}^{\bullet}}(G^{\bullet}, h^{\bullet}_{D}) 
  \times_{\Spec O_{E_{\nu}}} \Spec O_{E^{nr}_{\nu}} \ar[d]^{\id \times \tau_c}\\   
  \tilde{\mathcal{A}}^{\bullet t}_{\mathbf{K}^{\bullet}} \times_{\Spec O_{E_{\nu}}}
  \Spec O_{E^{nr}_{\nu}} \ar[r] &
  \widetilde{\Sh}_{\mathbf{K}^{\bullet}}(G^{\bullet}, h^{\bullet}_{D}) 
  \times_{\Spec O_{E_{\nu}}} \Spec O_{E^{nr}_{\nu}} .\\   
   } 
   \end{aligned}
  \end{equation}

\begin{remark} Let us drop the assumption that
$\mathbf{M}_{\mathfrak{p}_0} = O^\times_{F_{\mathfrak{p}_0}}$. We can write the diagram 
at the end of the proof of Proposition \ref{Sh_D1p} in the form 
 \begin{displaymath}
  \xymatrix{
  \mathcal{A}^{\bullet t}_{\mathbf{K}^{\bullet},E_{\nu}}\times_{\Spec E_{\nu}}\Spec E^{ab}_{\nu}  
  \ar[d]_{(\pi_{\mathfrak{p}_0}^{-\ord e})_{\mid \xi_{\mathfrak{p}_0}} (p^{f_{\nu} \ord e})_{\mid \xi}}^{\times \sigma_c} \ar[r]
  & \Sh_{\mathbf{K}^{\bullet}}(G^{\bullet}, h^{\bullet}_D)_{E_{\nu}}
    \times_{\Spec E_{\nu}}\Spec E^{ab}_{\nu}
    \ar[d]_{(\pi_{\mathfrak{p}_0}^{-\ord e})_{\mid \xi_{\mathfrak{p}_0}} (\varphi_{\mathfrak{p}_0}^{-1}(e))_{\mid \xi_{\mathfrak{p}_0}}}^{\times \sigma_c}\\
  \mathcal{A}^{\bullet t}_{\mathbf{K}^{\bullet}, E_{\nu}} \times_{\Spec E_{\nu}}
  \Spec E^{ab}_{\nu}  \ar[r] & 
  \Sh_{\mathbf{K}^{\bullet}}(G^{\bullet}, h^{\bullet}_D)_{E_{\nu}}
  \times_{\Spec E_{\nu}} \Spec E^{ab}_{\nu}. 
   }
    \end{displaymath}
As before $e$ corresponds to $\sigma$ by local class field theory. 
 We define the Galois twist 
 $\Sh_{\mathbf{K}^{\bullet}}(G^{\bullet}, h^{\bullet}_D)_{E_{\nu}}(\pi_{\mathfrak{p}_0})$ of
  $\Sh_{\mathbf{K}^{\bullet}}(G^{\bullet}, h^{\bullet}_D)_{E_{\nu}}$ by the commutative
 diagram
  \begin{displaymath}
  \xymatrix{
  \Sh_{\mathbf{K}^{\bullet}}(G^{\bullet}, h^{\bullet}_D)_{E_{\nu}}
    \times_{\Spec E_{\nu}}\Spec E^{ab}_{\nu}
    \ar[d]^{(\pi_{\mathfrak{p}_0}^{-\ord e})_{\mid \xi_{\mathfrak{p}_0}} (\varphi_{\mathfrak{p}_0}^{-1}(e))_{\mid \xi_{\mathfrak{p}_0}} \times \sigma_c} \ar[r] & 
    \Sh_{\mathbf{K}^{\bullet}}(G^{\bullet}, h^{\bullet}_D)_{E_{\nu}}(\pi_{\mathfrak{p}_0})
    \times_{\Spec E_{\nu}}\Spec E^{ab}_{\nu}
    \ar[d]^{id \times \sigma_c}\\
 \Sh_{\mathbf{K}^{\bullet}}(G^{\bullet}, h^{\bullet}_D)_{E_{\nu}} \times_{\Spec E_{\nu}}
  \Spec E^{ab}_{\nu}  \ar[r] & 
  \Sh_{\mathbf{K}^{\bullet}}(G^{\bullet}, h^{\bullet}_D)_{E_{\nu}}(\pi_{\mathfrak{p}_0})
  \times_{\Spec E_{\nu}} \Spec E^{ab}_{\nu}. 
   }
    \end{displaymath}
  Then we obtain  
  a commutative diagram 
\begin{displaymath}
  \xymatrix{
  \mathcal{A}^{\bullet t}_{\mathbf{K}^{\bullet},E_{\nu}}\times_{\Spec E_{\nu}}\Spec E^{nr}_{\nu}  
  \ar[d]^{(\pi_{\mathfrak{p}_0}^{-\ord e})_{\mid \xi_{\mathfrak{p}_0}} (p^{f_{\nu} \ord e})_{\mid \xi} \times \sigma_c} \ar[r]
  & \Sh_{\mathbf{K}^{\bullet}}(G^{\bullet}, h^{\bullet}_D)_{E_{\nu}}(\pi_{\mathfrak{p}_0})
    \times_{\Spec E_{\nu}}\Spec E^{nr}_{\nu}
    \ar[d]^{(\id \times \sigma_c}\\
  \mathcal{A}^{\bullet t}_{\mathbf{K}^{\bullet}, E_{\nu}} \times_{\Spec E_{\nu}}
  \Spec E^{nr}_{\nu}  \ar[r] & 
  \Sh_{\mathbf{K}^{\bullet}}(G^{\bullet}, h^{\bullet}_D)_{E_{\nu}}(\pi_{\mathfrak{p}_0})
  \times_{\Spec E_{\nu}} \Spec E^{nr}_{\nu}. 
   }
    \end{displaymath}
In the same way as in Definition \ref{BZKpPkt1e} we obtain a model
$\widetilde{\Sh}_{\mathbf{K}^{\bullet}}(G^{\bullet}, h^{\bullet}_D)(\pi_{\mathfrak{p}_0})$ over
$O_{E_{\nu}}$. The diagram (\ref{tildeSh_D1e}) continues to hold for arbitrary
$\mathbf{K}^{\bullet}$ if we substitute
$\widetilde{\Sh}_{\mathbf{K}^{\bullet}}(G^{\bullet}, h^{\bullet}_D)(\pi_{\mathfrak{p}_0})$ for
$\widetilde{\Sh}_{\mathbf{K}^{\bullet}}(G^{\bullet}, h^{\bullet}_D)$. 
We note that the last two schemes are canonically identified if
$\mathbf{K}^{\bullet}$ is of the type
$\mathbf{M}_{\mathfrak{p}_0} = O_{F_{\mathfrak{p}_0}}^{\times}$. 

One could regard
$\Sh_{\mathbf{K}^{\bullet}}(G^{\bullet}, h^{\bullet}_D)_{E_{\nu}}(\pi_{\mathfrak{p}_0})$ as the
twist of $\Sh_{\mathbf{K}^{\bullet}}(G^{\bullet}, h^{\bullet}_D)_{E_{\nu}}$ by the
character of $\Gal(E_{\nu}^{ab}/E_{\nu})$ associated to the Lubin-Tate group
defined by $\pi_{\mathfrak{p}_0}$. 

\end{remark} 

Our next aim is to compare the functors ${\mathcal A}_{\mathbf K}$  and ${\mathcal A}_{\mathbf K^\bullet}$. For this we need  the following variant of a theorem of Chevalley \cite{Che}. 
\begin{proposition}\label{Chevalley1p}
  Let $F$ be a totally real number field. We set $[F: \mathbb{Q}] = d = 2^h d'$
  such that $d'$ is odd. Let $M \geq 2$ be a natural number and let $\ell$ be a prime number such
  that
    \begin{displaymath}
   \ell \equiv 2  \mod d', \qquad \ell \equiv 3 \mod 4.
  \end{displaymath}
For a natural number $N$, let $U_{N\ell}$ be the principal congruence subgroup of $(O_F \otimes \hat{\mathbb{Z}})^{\times}$,
  $$U_{N\ell} =\{u \equiv 1 \mod N \ell (O_F \otimes \hat{\mathbb{Z}}) \}.
  $$
For each natural number $m$ there is a power $N$ of $M$ with the following
  property:  for each element $f \in F^{\times}$ which is totally positive and such
  that $f \in U_{N\ell} \cdot \mathbb{A}_f^{\times}$, there is a unit $g \in O_F^{\times}$
  such that
  \begin{displaymath}
f = g^m q, \quad \text{for some} \; q \in \mathbb{Q}^{\times}, \; q >0. 
    \end{displaymath}
  \end{proposition}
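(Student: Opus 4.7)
The plan is to reduce the assertion to a statement about totally positive units of $O_F$ and then to combine a theorem of Chevalley \cite{Che} with a local analysis at $\ell$. Given $f \in F^\times$ totally positive with $f = u \cdot c$ for $u \in U_{N\ell}$ and $c \in \BA_f^\times$, I write $c = q' v$ with $q' \in \BQ^\times_{>0}$ and $v \in \widehat{\BZ}^\times$, using the standard decomposition $\BA_f^\times = \BQ^\times_{>0} \cdot \widehat{\BZ}^\times$ (class number one and totally positive units of $\BQ$). Setting $w := f/q' = uv$, we obtain $w \in F^\times \cap (O_F \otimes \widehat{\BZ})^\times = O_F^\times$; moreover $w$ is totally positive (since $f$ is and $q' > 0$), and the reduction of $w$ mod $N\ell(O_F \otimes \widehat{\BZ})$ equals that of $v \in \widehat{\BZ}^\times$, so $w \bmod N\ell$ lies in the image of $(\BZ/N\ell)^\times$ inside $(O_F/N\ell O_F)^\times$ under the inclusion $\BZ \hookrightarrow O_F$. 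Conversely, if $w = g^m$ for some $g \in O_F^\times$, then $f = g^m q'$ is the required decomposition. So the problem reduces to showing: for some power $N$ of $M$, every totally positive $w \in O_F^\times$ whose image in $(O_F/N\ell O_F)^\times$ lies in the image of $(\BZ/N\ell)^\times$ is an $m$-th power in $O_F^\times$.

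By Dirichlet, $O_F^\times$ is finitely generated of rank $d-1$, so $(O_F^\times)^m$ is a finite-index subgroup. Chevalley's theorem, applied to this subgroup, provides a nonzero integral ideal $\mathfrak{a}$ of $O_F$ such that every unit $\equiv 1 \pmod{\mathfrak{a}}$ lies in $(O_F^\times)^m$. Applying this directly is not enough, for two reasons: our condition is weaker (congruence to a rational modulo $N\ell$, rather than to $1$), and $\mathfrak{a}$ may involve prime divisors unrelated to $M$. The strategy is to take $N = M^k$ with $k$ large enough that $M^k$ absorbs all primes of $\mathfrak{a}$ dividing $M$, and to use the auxiliary prime $\ell$ --- together with the numerical conditions $\ell \equiv 3 \pmod 4$ and $\ell \equiv 2 \pmod{d'}$ --- to handle the remaining primes in $\mathfrak{a}$ and to compensate for the weakening of the congruence condition.

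The main obstacle is the local analysis at $\ell$. One must show that the natural map
\begin{equation*}
(O_F^\times)_+ \cap \bigl\{ w : w \bmod N\ell \in \iota_{N\ell}((\BZ/N\ell)^\times) \bigr\} \longrightarrow O_F^\times/(O_F^\times)^m
\end{equation*}
becomes trivial once $N = M^k$ is sufficiently large. This amounts to understanding the image of $\iota_\ell \colon \BF_\ell^\times \to (O_F/\ell O_F)^\times \cong \prod_{\mathfrak{l}\mid\ell} k(\mathfrak{l})^\times$ and its interaction with the $m$-th power subgroup. The conditions $\ell \equiv 3 \pmod 4$ and $\ell \equiv 2 \pmod{d'}$ (where $d = 2^h d'$) control the residue degrees $f(\mathfrak{l}/\ell)$ and the action of $\mathrm{Frob}_\ell$ on the embeddings $F \hookrightarrow \bar{\BQ}$; via a cyclotomic descent argument these ensure that the quotient $(O_F/\ell O_F)^\times / \iota_\ell(\BF_\ell^\times) \cdot ((O_F/\ell O_F)^\times)^m$ detects any non-$m$-th-power totally positive unit whose congruence at primes dividing $M$ has already been constrained by $M^k$. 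Combining this detection at $\ell$ with Chevalley's theorem at the primes dividing $M$ yields the required $N = M^k$.
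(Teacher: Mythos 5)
Your initial reduction --- clearing denominators by the positive rational to arrive at a totally positive unit $w \in O_F^\times$ congruent to a rational integer modulo $N\ell$ --- is correct and coincides with the paper's first step. From there, however, the two arguments diverge, and the gap in your proposal lies exactly at the point you yourself flag as ``the main obstacle.'' Your plan is to directly analyze the finite quotient $(O_F/\ell O_F)^\times / \bigl(\iota_\ell(\mathbb{F}_\ell^\times) \cdot ((O_F/\ell O_F)^\times)^m\bigr)$ and argue that, in combination with the congruence at $M^k$, it ``detects'' non-$m$-th-power units. That quotient is a single finite group, independent of $N$; there is no mechanism explained by which the congruence at primes dividing $M$ interacts with it to force $w$ into $(O_F^\times)^m$, and the phrase ``cyclotomic descent argument'' carries no checkable content. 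As written, the argument does not establish the reduced claim, and in particular it does not explain concretely how the hypotheses $\ell \equiv 2 \pmod{d'}$, $\ell \equiv 3 \pmod 4$ are consumed.

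The paper closes the gap with a device your write-up does not contain: a norm trick that eliminates the rational idele \emph{before} Chevalley is invoked. Writing $f = u\alpha$ with $u \in U_{N\ell}$ and $\alpha \in \hat{\mathbb{Z}}^\times$, one notes that $\Nm_{F/\mathbb{Q}}\alpha = \alpha^d$ (a rational idele is fixed by every embedding of $F$), hence $\alpha^d(\Nm_{F/\mathbb{Q}}\alpha)^{-1} = 1$ and
\[
f^d(\Nm_{F/\mathbb{Q}}f)^{-1} = u^d(\Nm_{F/\mathbb{Q}}u)^{-1} \in U_{N\ell}.
\]
Since $f$ is a totally positive unit, $\Nm_{F/\mathbb{Q}}f = 1$, so $f^d \in U_{N\ell}$ on the nose, and Chevalley's theorem is applied directly to produce, for suitable $N$, a unit $g$ with $f^d = g^{md}$. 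Raising to the $d$-th power introduces an ambiguity by a $d$-th root of unity $\zeta := f/g^m$, and the hypotheses on $\ell$ serve to kill \emph{this} $\zeta$ --- not to constrain $m$-th powers in residue rings as in your proposal. After replacing $m$ by a sufficiently large even multiple, one has $g^m \equiv 1$ and hence $\zeta \equiv \alpha \in \mathbb{F}_\ell^\times \pmod{\ell}$, so $\zeta^{\ell-1} \equiv 1 \pmod{\ell}$, while $\zeta^{2^h d'} = 1$; the conditions $\ell \equiv 2 \pmod{d'}$ (so $\gcd(\ell-1, d') = 1$) and $\ell \equiv 3 \pmod 4$ (so $(\ell-1)/2$ is odd, $\gcd(\ell-1, 2^h) \le 2$) then force $\zeta^2 \equiv 1 \pmod{\ell}$; Serre's lemma upgrades this to $\zeta^2 = 1$, and total positivity together with $m$ even gives $\zeta = 1$. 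You should internalize the norm trick: it is precisely what converts the weak input ``$f$ differs from a rational idele by an element of $U_{N\ell}$'' into the strong congruence ``$f^d \equiv 1 \pmod{N\ell}$'' that Chevalley's theorem requires, and it is what your proposed local analysis at $\ell$ was, in effect, an ineffective attempt to replace.
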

\begin{proof} Set $U=U_{N\ell}$, where $N$ will be determined in the proof. 
  We write $f = u \alpha$ with $u \in U$, $\alpha \in \mathbb{A}_f^{\times}$.
  We find $q$ such that $\alpha = q \beta$ and
  $\beta \in \hat{\mathbb{Z}}^{\times}$. Therefore we may assume
  that $f \in O_F^{\times}$ and hence $q = 1$. We obtain
  \begin{displaymath}
f = u \alpha, \quad u \in U, \; \alpha \in \hat{\mathbb{Z}}^{\times}.
    \end{displaymath}
  We note that $\Nm_{F/\mathbb{Q}} u \in U$. We find
  \begin{displaymath}
    f^d (\Nm_{F/\mathbb{Q}} f)^{-1} = u^d (\Nm_{F/\mathbb{Q}} u)^{-1}
    \alpha^d (\Nm_{F/\mathbb{Q}} \alpha)^{-1} = u^d (\Nm_{F/\mathbb{Q}} u)^{-1}
    \in U. 
    \end{displaymath}
  Since $f$ is a totally positive unit $\Nm_{F/\mathbb{Q}} f = 1$ and therefore
  $f^d \in U$. By Chevalley \cite{Che} there exists for a suitable $N$ a unit
  $g \in O_F^{\times}$ such that $f^d = g^{md}$. Replacing $m$ by a multiple, we may 
  assume that $m$ is even and that
    \begin{displaymath}
g^m \equiv 1 \; \mod \ell (O_F \otimes \hat{\mathbb{Z}}). 
  \end{displaymath}

  We consider the $d$-th root of unity 
  \begin{displaymath}
f/g^m = \zeta. 
  \end{displaymath}
  Since $f \equiv \alpha \mod \ell (O_F \otimes \hat{\mathbb{Z}})$
    we obtain
    \begin{displaymath}
\zeta \equiv \alpha  \mod \ell (O_F \otimes \hat{\mathbb{Z}}). 
    \end{displaymath}
The right hand side is in $\mathbb{Z}/ \ell \mathbb{Z} \subset O_F/ \ell O_F$.
This shows $\zeta^{\ell -1} \equiv 1$. Here and below, this is meant
$\!\!\mod \ell (O_F \otimes \hat{\mathbb{Z}})$. On the other hand,
  we have
  \begin{displaymath}
\zeta^{2^h d'} \equiv 1.
  \end{displaymath}
  Since $\ell -1 \equiv 1 \mod d'$, we obtain that
  $\zeta^{2^h} \equiv 1$. If $h = 0$ we conclude from Serre's lemma that
  $\zeta = 1$. Let $h > 0$. By our assumption $(\ell -1)/2$ is odd. Therefore
  the greatest common divisor of $\ell - 1$ and $2^{h}$ is $2$. We conclude
  that $\zeta^2 \equiv 1$ and by Serre's 
  lemma that $\zeta^2 = 1$. We obtain
  \begin{displaymath}
f/g^m = \pm 1.
    \end{displaymath}
  Since $m$ is even, the left hand side is totally positive by  assumption.
  This gives finally $f = g^m$. 
\end{proof}

Let $\mathbf{K}^{\bullet} \subset G^{\bullet}(\mathbb{A}_f)$ be an open and compact subgroup.
 We set
$\mathbf{K} = \mathbf{K}^{\bullet} \cap G(\mathbb{A}_f)$. For an open  compact subgroup
$U \subset (F \otimes \mathbb{A}_f)^{\times}$, 
we define
\begin{displaymath}
  \mathbf{K}^{\bullet}_U = \{ g \in \mathbf{K}^{\bullet} \; | \;
  \mu(g) \in U \hat{\mathbb{Z}}^{\times} \}. 
\end{displaymath}
Then 
\begin{equation}\label{BZ17e}
 \mathbf{K}= \mathbf{K}_U^{\bullet} \cap G(\mathbb{A}_f). 
\end{equation}

\begin{proposition}\label{BZ4p}
  We fix $M$ and $\ell$ as in Proposition \ref{Chevalley1p}. 
  Let $\mathbf{K}^{\bullet} \subset G^{\bullet}(\mathbb{A}_f)$ be an open  compact
  subgroup. Then there exists a power $N$ of $M$ such that for the principal congruence 
  subgroup $U =U_{N\ell}\subset (F \otimes \mathbb{A}_f)^{\times}$ of Proposition
  \ref{Chevalley1p}, the natural map  of functors
  \begin{equation}\label{BZ8e}
\mathcal{A}_{\mathbf{K}} \rightarrow \mathcal{A}^{\bullet}_{\mathbf{K}_U^{\bullet}}
    \end{equation}
  is a monomorphism. 
\end{proposition}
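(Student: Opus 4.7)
The plan is to verify the monomorphism property pointwise. Given two $S$-points of $\mathcal{A}_{\mathbf{K}}$ represented by tuples $(A_i, \iota_i, \bar{\lambda}_i, \bar{\eta}_i)$ for $i=1,2$, suppose they have the same image in $\mathcal{A}^{\bullet}_{\mathbf{K}_U^{\bullet}}(S)$: that is, there exists an isogeny $\alpha\colon A_1\to A_2$ over $S$ compatible with $\iota$, carrying $\bar{\lambda}_1$ to $\bar{\lambda}_2$ as $F^{\times}$-homogeneous classes, and $\bar{\eta}_1$ to $\bar{\eta}_2$ as $\mathbf{K}_U^{\bullet}$-classes. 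Transporting the first tuple by $\alpha$, I reduce to the case $A_1=A_2=A$, $\iota_1=\iota_2=\iota$, $\alpha=\id$. After a choice of representatives, the discrepancy is measured by a totally positive $f\in F^{\times}$ with $\lambda_1 = f\lambda_2$, and an element $k\in \mathbf{K}_U^{\bullet}$ with $\eta_2 = \eta_1 k$. The goal is to produce an automorphism $\beta\in \Aut_{\iota}(A)$ such that $\beta^{*}\bar{\lambda}_2 = \bar{\lambda}_1$ as $\mathbb{Q}^{\times}$-homogeneous classes and $\beta_{*}\bar{\eta}_1 = \bar{\eta}_2$ as $\mathbf{K}$-classes.

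The crucial constraint on $f$ comes from the similitude relations. Since both points lie in $\mathcal{A}_{\mathbf{K}}$, the similitude constants $\xi_i \in \mathbb{A}_f^{\times}(1)$ satisfy $\xi_i\psi = E^{\lambda_i}\circ\eta_i$, and a direct comparison using $\lambda_1 = f\lambda_2$ and $\eta_2 = \eta_1 k$ gives $\mu(k) = f\cdot(\xi_2/\xi_1)$ in $(F\otimes \mathbb{A}_f)^{\times}$. Since $\mu(k)\in U\hat{\mathbb{Z}}^{\times}$ and $\xi_2/\xi_1\in \mathbb{A}_f^{\times}$, this forces $f \in F^{\times}\cap U\cdot \mathbb{A}_f^{\times}$ inside $(F\otimes\mathbb{A}_f)^{\times}$. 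Applying Proposition \ref{Chevalley1p} with $m=2$ and $U = U_{N\ell}$ for $N$ a sufficiently large power of $M$, I obtain $f = g^2 q$ with $g\in O_F^{\times}$ and $q\in \mathbb{Q}_{>0}^{\times}$; moreover $g^2 \in (O_F\otimes\hat{\mathbb{Z}})^{\times}\cap U\cdot\mathbb{A}_f^{\times} = U\hat{\mathbb{Z}}^{\times}$.

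With this in hand, set $\beta := \iota(g)\in \Aut(A,\iota)$; this is a genuine automorphism since $g$ is a unit of $O_F$. Because $g$ is central in $B$ and fixed by the involution $\star$, one has $\beta^{*}\lambda_2 = g^2 \lambda_2 = q^{-1}\lambda_1$, so the polarization classes agree modulo $\mathbb{Q}^{\times}$. Moreover $\beta_{*}\eta_1 = \eta_1\cdot [g]$, where $[g]\in (F\otimes\mathbb{A}_f)^{\times}\subset G^{\bullet}(\mathbb{A}_f)$ denotes multiplication by $g$ on $V$, so the desired level-structure equality reduces to showing $[g]\cdot k^{-1}\in \mathbf{K} = \mathbf{K}_U^{\bullet}\cap G(\mathbb{A}_f)$.

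The main obstacle is this last membership, whose nontrivial content is the rationality of the multiplier $\mu([g]k^{-1}) = g^2\mu(k)^{-1} = q^{-1}(\xi_1/\xi_2)$, which a priori lies only in $\mathbb{A}_f^{\times}$. To extract $\mathbb{Q}^{\times}$-rationality one exploits the remaining freedom in the representatives: rescaling $\lambda_i$ within its $\mathbb{Q}^{\times}$-homogeneous class multiplies $\xi_i$ by a rational, and modifying $\eta_i$ within its $\mathbf{K}$-class multiplies $\xi_i$ by an element of $\mu(\mathbf{K})\subset \mathbb{Q}^{\times}$. Combining these with the decomposition $\mathbb{A}_f^{\times}=\mathbb{Q}_{>0}^{\times}\cdot\hat{\mathbb{Z}}^{\times}$ and the relation $g^2 \in U\hat{\mathbb{Z}}^{\times}$, one can normalize so that $q^{-1}\xi_1/\xi_2\in \mathbb{Q}^{\times}$, completing the verification. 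Interlocking this normalization with the preservation of the coarser $\mathbf{K}_U^{\bullet}$-class of $\bar{\eta}_2$ is the delicate technical heart of the argument.
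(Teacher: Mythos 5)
Your reduction to a single $(A,\iota)$, the derivation of the constraint $\mu(k)=f\,\xi_2/\xi_1$ (hence $f\in F^\times\cap U\cdot\mathbb{A}_f^\times$), and the idea of twisting by $\iota(g)$ for a unit square root $g$ are all in agreement with the paper's argument (its Lemmas \ref{BZ4l} and \ref{BZ42}). However, there are two problems with the final step, one of which is a genuine gap.

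First, you have misplaced the difficulty. You claim that the ``nontrivial content'' is showing $\mu([g]k^{-1})\in\mathbb{Q}^\times$, and you propose to achieve this by a normalization that uses ``$\mu(\mathbf{K})\subset\mathbb{Q}^\times$.'' Both statements are incorrect. Membership of $[g]k^{-1}$ in $G(\mathbb{A}_f)$ requires only $\mu([g]k^{-1})\in\mathbb{A}_f^\times$ (as $G(R)$ is, for any $\mathbb{Q}$-algebra $R$, the set of $g$ with $\mu(g)\in R^\times$), and this you have already verified: $\mu([g]k^{-1})=q^{-1}\xi_1/\xi_2\in\mathbb{Q}^\times\cdot\mathbb{A}_f^\times=\mathbb{A}_f^\times$. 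Moreover $\mu(\mathbf{K})$ is \emph{not} contained in $\mathbb{Q}^\times$; it is an open compact subgroup of $\mathbb{A}_f^\times$, and $\mathbb{Q}^\times\cap\hat{\mathbb{Z}}^\times=\{\pm1\}$. So your ``delicate technical heart'' is both unnecessary and, as stated, broken.

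Second, and more seriously, you have \emph{omitted} the actual delicate point. To conclude $[g]k^{-1}\in\mathbf{K}=\mathbf{K}_U^\bullet\cap G(\mathbb{A}_f)$ you need not only the multiplier condition but also $[g]k^{-1}\in\mathbf{K}^\bullet$, which (since $k\in\mathbf{K}^\bullet$) amounts to $[g]\in\mathbf{K}^\bullet$. You applied Proposition \ref{Chevalley1p} with the exponent fixed at $m=2$, obtaining $f=g^2q$ with $g\in O_F^\times$; but nothing forces this $g$ to lie in the open compact subgroup $\mathbf{K}^\bullet$. The paper handles this by exploiting the freedom in the exponent $m$ of Chevalley's theorem: after first normalizing $f$ to be a totally positive unit of $O_F$ (absorbing the rational factor), it applies the theorem with exponent $2m$ to write $f=g_m^{2m}$, and then chooses $m$ to be a multiple of the exponent of the finite group $O_F^\times/(O_F^\times\cap\mathbf{K}^\bullet)$, so that $g:=g_m^m$ lies in $\mathbf{K}^\bullet$ automatically and $f=g^2$. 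In your version this freedom has been discarded, and the crucial membership $g\in\mathbf{K}^\bullet$ remains unaddressed.
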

To show this, it is enough to check injectivity for points with values in
$S=\Spec R$, where $R$ is a noetherian $E_{\nu}${\rm -Alg}ebra and $\Spec R$ is
connected. We begin with two lemmas.  Since the meaning of the class in the notation $(A, \bar{\lambda}, \bar{\eta})$ depends on whether this is an object of $\mathcal{A}_{\mathbf{K}}$ or of $\mathcal{A}^{\bullet}_{\mathbf{K}^{\bullet}}$, we use the notation $(A, \tilde{\lambda}, \tilde{\eta})$ in the latter case. 
\begin{lemma}\label{BZ4l}
Let $(A, \bar{\lambda}, \bar{\eta}) \in \mathcal{A}_{\mathbf{K}}(R)$
with image 
$(A', \tilde{\lambda}', \tilde{\eta}') \in \mathcal{A}^{\bullet}_{\mathbf{K}_U^{\bullet}}(R)$.
Then there is a polarization $\lambda' \in \tilde{\lambda}'$ and level structure
$\eta' \in \tilde{\eta}'$ such that the point
$(A, \bar{\lambda}, \bar{\eta})$ may be represented in the form
$(A', \bar{\lambda}', \bar{\eta}')$.
\end{lemma}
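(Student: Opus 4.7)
The plan is to recognize that Lemma \ref{BZ4l} is essentially a bookkeeping tautology, making explicit what the map $\mathcal{A}_{\mathbf{K}} \to \mathcal{A}^{\bullet}_{\mathbf{K}^{\bullet}_U}$ does to the datum $(A,\bar\lambda,\bar\eta)$; it is meant to provide a normalization that will be fed into the subsequent monomorphism argument of Proposition \ref{BZ4p}. Accordingly, I would split the argument into two short steps: first unwind the definition of the forgetful map between functors, then produce the representatives $\lambda'$ and $\eta'$ by direct choice.

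First, I would spell out the image map. Comparing Definitions \ref{BZAK1d} and \ref{BZApkt3d}, a point $(A,\bar\lambda,\bar\eta)\in\mathcal{A}_{\mathbf{K}}(R)$ has underlying abelian scheme (up to $\mathbb{Q}$-isogeny) $A$, a $\mathbb{Q}^\times$-homogeneous polarization class $\bar\lambda$, and a $\mathbf{K}$-class $\bar\eta$ of $B\otimes\mathbb{A}_f$-linear symplectic similitudes. The image $(A',\tilde\lambda',\tilde\eta')$ keeps $A'=A$ and simply enlarges the equivalence classes: $\tilde\lambda'=F^\times\cdot\bar\lambda$ (as a class of $F$-homogeneous polarizations) and $\tilde\eta'=\bar\eta\cdot\mathbf{K}^\bullet_U$ (using $\mathbf{K}=\mathbf{K}^\bullet_U\cap G(\mathbb{A}_f)$ from \eqref{BZ17e}). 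The compatibility of the similitude scalars is automatic since $\mathbb{A}_f^\times(1)\subset (F\otimes\mathbb{A}_f)^\times(1)$: a scalar $\xi\in\mathbb{A}_f^\times(1)$ satisfying $\xi\psi(v_1,v_2)=E^\lambda(\eta(v_1),\eta(v_2))$ also satisfies $\psi(\xi v_1,v_2)=E^\lambda(\eta(v_1),\eta(v_2))$, which is exactly the condition in Definition \ref{BZApkt3d}. In particular the inclusions $\bar\lambda\subset\tilde\lambda'$ and $\bar\eta\subset\tilde\eta'$ hold by construction.

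Second, I would construct the representatives tautologically: choose any $\lambda'\in\bar\lambda$ and any $\eta'\in\bar\eta$. By the previous step $\lambda'\in\tilde\lambda'$ and $\eta'\in\tilde\eta'$. The $\mathbb{Q}^\times$-homogeneous class $\bar\lambda'$ of $\lambda'$ coincides with $\bar\lambda$ by definition of a $\mathbb{Q}^\times$-homogeneous class, and similarly the $\mathbf{K}$-class $\bar\eta'$ of $\eta'$ coincides with $\bar\eta$. Hence $(A,\bar\lambda,\bar\eta)=(A',\bar\lambda',\bar\eta')$, which is the desired representation.

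There is no genuine obstacle in the proof itself; the only thing to verify carefully is that the forgetful map is indeed given by the two class-enlargements described above, which requires checking that the $\xi$-compatibility in the source really does imply the $\xi$-compatibility in the target. The real work lies not in this lemma but in its use in Proposition \ref{BZ4p}: given two $\mathcal{A}_\mathbf{K}$-points with the same image in $\mathcal{A}^\bullet_{\mathbf{K}^\bullet_U}$, Lemma \ref{BZ4l} allows both to be realized using a common pair $(\lambda',\eta')$ lying in the shared classes $(\tilde\lambda',\tilde\eta')$, so that the discrepancy between them is governed by an element of $F^\times\cap\mathbb{Q}^\times_{>0}$-type data together with an element of $\mathbf{K}^\bullet_U/\mathbf{K}$, which is where Proposition \ref{Chevalley1p} (via a suitable choice of the level $U=U_{N\ell}$) enters to force the element to be rational.
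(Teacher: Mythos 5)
Your proof rests on a misreading of the statement. You take the representative $(A', \tilde{\lambda}', \tilde{\eta}')$ of the image to be the tautological one, with $A' = A$ and $\tilde{\lambda}', \tilde{\eta}'$ simply the enlarged equivalence classes $F^{\times}\bar\lambda$ and $\bar\eta\,\mathbf{K}^{\bullet}_U$. Under that reading the lemma is indeed a tautology, as you observe. But the lemma is stated — and crucially is \emph{used} — for an arbitrary representative of the image, in which the abelian variety $A'$ need not be isogenous to $A$ by the identity, or even equal to $A$. Look at how the lemma is applied in Lemma \ref{BZ42}: one starts with two points $(A_1, \bar{\lambda}_1, \bar{\eta}_1)$ and $(A_2, \bar{\lambda}_2, \bar{\eta}_2)$ with the same image, and Lemma \ref{BZ4l} is invoked with $(A,\bar\lambda,\bar\eta) = (A_2,\bar{\lambda}_2, \bar{\eta}_2)$ and $(A', \tilde{\lambda}', \tilde{\eta}') = (A_1, F^{\times}\bar{\lambda}_1, \bar{\eta}_1\mathbf{K}^{\bullet}_U)$, concluding that $(A_2,\bar{\lambda}_2,\bar{\eta}_2)$ may be rewritten as $(A_1, \overline{f\lambda_1}, \overline{\eta_1\dot c})$. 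Here $A_1$ and $A_2$ are genuinely different, so your reduction to $A'=A$ cannot be what is meant.

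The actual content, and the paper's argument, is the following. Since $(A,\bar\lambda,\bar\eta)$ and $(A',\tilde\lambda',\tilde\eta')$ represent the same point of $\mathcal{A}^{\bullet}_{\mathbf{K}^{\bullet}_U}(R)$, there is an isogeny $\alpha\colon A'\to A$ with $\alpha^{*}(\lambda) = f\lambda'$ for some totally positive $f\in F^{\times}$ (not $\mathbb{Q}^{\times}$!) and with $\alpha\circ\eta'\dot c = \eta$ for some $\dot c\in\mathbf{K}^{\bullet}_U$ (not $\mathbf{K}$!), once arbitrary representatives $\lambda,\lambda',\eta,\eta'$ are fixed. One then sets $\lambda'' := f\lambda'\in\tilde\lambda'$ and $\eta'' := \eta'\dot c\in\tilde\eta'$ and must \emph{check} that $(A', \overline{\lambda''}, \overline{\eta''})$ is a valid point of $\mathcal{A}_{\mathbf{K}}(R)$, i.e., that the similitude scalar for $\eta''$ relative to $E^{\lambda''}$ lies in $\mathbb{A}_f^{\times}(1)$ rather than merely in $(F\otimes\mathbb{A}_f)^{\times}(1)$. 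This is the nontrivial verification; it follows from the chain of equalities $E^{f\lambda'}(\eta'(\dot c x), \eta'(\dot c y)) = E^{\alpha^{*}\lambda}(\alpha^{-1}\eta(x), \alpha^{-1}\eta(y)) = E^{\lambda}(\eta(x),\eta(y)) = a\psi(x,y)$ with $a\in\mathbb{A}_f^{\times}(1)$, using that $(A,\bar\lambda,\bar\eta)\in\mathcal{A}_{\mathbf{K}}(R)$. Finally $\alpha$ exhibits $(A',\overline{\lambda''},\overline{\eta''})\cong (A,\bar\lambda,\bar\eta)$ in $\mathcal{A}_{\mathbf{K}}(R)$. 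Your write-up omits the isogeny $\alpha$, the choice of $f$ and $\dot c$, and the similitude verification, which together constitute the whole proof.
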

\begin{proof}
  We start with arbitrary polarizations $\lambda' \in \tilde{\lambda'}$ and
  $\lambda \in \bar{\lambda}$ and arbitrary level structures $\eta' \in \tilde{\eta'}$,
  $\eta \in \bar{\eta}$. Since we have the same point in
  $\mathcal{A}^{\bullet}_{\mathbf{K}_U^{\bullet}}(R)$, there is an isogeny
  $\alpha: A' \rightarrow A$ such that
  $\alpha^{*} (\lambda) =  f \lambda'$. Since we have chosen polarizations,
  $f \in F^{\times}$ must be totally positive. Moreover, $\alpha$ must respect
  $\eta$ and $\eta'$ up to a factor in $\mathbf{K}_U^{\bullet}$,
  \begin{displaymath}
\alpha \circ \eta' \dot{c} = \eta, \quad \dot{c} \in \mathbf{K}_U^{\bullet}. 
    \end{displaymath}
  We claim that $(A', \overline{f \lambda'}, \overline{\eta' \dot{c}})$ is
  a point of $\mathcal{A}_{\mathbf{K}}(R)$. We have to check that the isomorphism 
  \begin{displaymath}
\eta_1 \dot{c}: V \otimes \mathbb{A}_f \isoarrow \hat{V} (A')  
  \end{displaymath}
  respects the form $\psi$ and the Riemann form $E^{f \lambda'}$, up to a factor
  $a \in \mathbb{A}^{\times}_f(1)$. But  for
  $x, y \in V \otimes \mathbb{A}_f$ we have
   \begin{equation*}
    \begin{aligned}
    E^{f \lambda'}(\eta' (\dot{c} x), \eta' (\dot{c} y)) &=
    E^{\alpha^{*} (\lambda)}(\alpha^{-1}\circ \eta (x), \alpha^{-1}\circ \eta (y)) =
    E^{\lambda}(\eta (x), \eta (y))\\ 
    &= a \psi(x, y)     
      \end{aligned}
  \end{equation*}
  for some $a \in \mathbb{A}^{\times}_f(1)$, because
  $(A, \bar{\lambda}, \bar{\eta})$ is a point of
  $\mathcal{A}_{\mathbf{K}}(R)$.

  It is obvious that
  \begin{displaymath}
    \alpha: (A', \overline{f \lambda'}, \overline{\eta' \dot{c}})
    \rightarrow (A, \bar{\lambda}, \bar{\eta}) 
  \end{displaymath}
  is an isomorphism and therefore both sides give the same point of
  $\mathcal{A}_{\mathbf{K}}(R)$. 
\end{proof}
\begin{lemma}\label{BZ42}
  Let $(A_1, \bar{\lambda}_1, \bar{\eta}_1)$ and
  $(A_2, \bar{\lambda}_2, \bar{\eta}_2)$ be two points of
  $\mathcal{A}_{\mathbf{K}}(R)$ whose images in
  $\mathcal{A}^{\bullet}_{\mathbf{K}_U^{\bullet}}(R)$ by (\ref{BZ8e}) are the
  same. Then there exists a totally positive $f \in F^{\times}$ and an element
  $\dot{c} \in \mathbf{K}_U^{\bullet}$, such that
  \begin{displaymath}
f (\dot{c}' \dot{c}) \in \mathbb{A}_f^{\times}, 
  \end{displaymath}
  and such that $(A_2, \bar{\lambda}_2, \bar{\eta}_2)$ is isomorphic
  to $(A_1, f \bar{\lambda}_1, \bar{\eta}_1 \dot{c})$. 
  \end{lemma}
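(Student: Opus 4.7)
The approach is direct: unpack the hypothesis, choose compatible representatives, and read off the required compatibility from the scalar similitude conditions defining $\mathcal{A}_{\mathbf{K}}$. By hypothesis, the tuples $(A_i, \bar{\lambda}_i^{F}, \bar{\eta}_i^{\mathbf{K}_U^{\bullet}})$ are isomorphic in $\mathcal{A}^{\bullet}_{\mathbf{K}_U^{\bullet}}(R)$. This supplies a $B$-linear isomorphism in the isogeny category $\alpha : A_1 \to A_2$ together with, after choosing representatives $\lambda_i \in \bar{\lambda}_i$ and $\eta_i \in \bar{\eta}_i$, identities
\begin{displaymath}
\alpha^{*}(\lambda_2) = f \lambda_1, \qquad \alpha^{-1}\circ\eta_2 = \eta_1\dot{c},
\end{displaymath}
with $f \in F^{\times}$ totally positive (because both $\lambda_i$ are polarizations) and $\dot{c} \in \mathbf{K}_U^{\bullet}$.

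The crux is to derive $f(\dot{c}'\dot{c}) \in \mathbb{A}_f^{\times}$ from the scalar symplectic similitude conditions of Definition \ref{BZAK1d}. There exist $\xi_i \in \mathbb{A}_f^{\times}(1)$ with $E^{\lambda_i}(\eta_i v, \eta_i w) = \xi_i\,\psi(v,w)$ for all $v,w \in V \otimes \mathbb{A}_f$. Pulling $E^{\lambda_2}$ back via $\alpha$, using $\alpha^{*}\lambda_2 = f\lambda_1$ and the $F$-symmetry of $E^{\lambda_1}$ (a consequence of $\star$ restricting to the identity on $F\subset B$, together with the fact that $F$ is central in $B$), yields
\begin{displaymath}
\xi_2\,\psi(v,w) \;=\; E^{f\lambda_1}(\eta_1\dot{c}v,\, \eta_1\dot{c}w) \;=\; \xi_1\,\psi(f\dot{c}v,\, \dot{c}w) \;=\; \xi_1\,\psi\bigl(f(\dot{c}'\dot{c})v,\, w\bigr),
\end{displaymath}
where the last step uses the defining identity $\psi(\dot{c}v,\dot{c}w) = \psi(\mu(\dot{c})v,w)$ with $\mu(\dot{c}) = \dot{c}'\dot{c} \in (F \otimes \mathbb{A}_f)^{\times}$, cf. \eqref{Gpunkt1e}. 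Setting $c = f(\dot{c}'\dot{c}) \in (F\otimes\mathbb{A}_f)^{\times}$, the equation $\xi_1\,\psi(cv,w) = \xi_2\,\psi(v,w)$ for all $v,w$ forces $c$ to lie in the scalar subgroup $\mathbb{A}_f^{\times}$: writing $\psi = \Trace_{F/\mathbb{Q}} \circ \psi_F$ with $\psi_F = \Trace_{K/F}\varkappa$ a non-degenerate $F$-bilinear form, the equation rearranges to $\Trace_{F/\mathbb{Q}}\bigl((\xi_1 c - \xi_2)\,\psi_F(v,w)\bigr) = 0$ for all $v,w$; since $\psi_F(v,\cdot)$ exhausts $F \otimes \mathbb{A}_f$ and the trace pairing on $F \otimes \mathbb{A}_f$ is non-degenerate, this gives $c = \xi_2/\xi_1 \in \mathbb{A}_f^{\times}$.

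Finally, $\alpha$ realises the desired isomorphism $(A_2, \bar{\lambda}_2, \bar{\eta}_2) \cong (A_1, f\bar{\lambda}_1, \overline{\eta_1\dot{c}})$ in $\mathcal{A}_{\mathbf{K}}(R)$: the polarization identity of $\mathbb{Q}$-homogeneous polarizations is already built in, and the displayed calculation exhibits $\eta_1\dot{c}$ as a symplectic similitude for $\psi$ and $E^{f\lambda_1}$ with scalar multiplier $\xi_2 \in \mathbb{A}_f^{\times}(1)$, so the triple $(A_1, f\bar{\lambda}_1, \overline{\eta_1\dot{c}})$ lies in $\mathcal{A}_{\mathbf{K}}(R)$ rather than merely in $\mathcal{A}^{\bullet}_{\mathbf{K}_U^{\bullet}}(R)$. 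The only substantive step is the rigidity afforded by the non-degenerate trace pairing, which promotes the a priori $(F \otimes \mathbb{A}_f)^{\times}$-valued quantity $f(\dot{c}'\dot{c})$ to the scalar subgroup $\mathbb{A}_f^{\times}$; Proposition \ref{Chevalley1p} is not used in this lemma, but will enter the subsequent proof of Proposition \ref{BZ4p}, where the factor $f$ must be absorbed by an automorphism of $A_1$.
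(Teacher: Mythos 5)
Your proof is correct and follows essentially the same route as the paper: produce an identification $\alpha$ from the hypothesis, compare polarizations and level structures, and compute the similitude factor of $\eta_1\dot c$ against $E^{f\lambda_1}$ to extract $\xi_2\,\psi(v,w)=\xi_1\,\psi\bigl(f(\dot c'\dot c)v,w\bigr)$, forcing $f(\dot c'\dot c)\in\mathbb{A}_f^{\times}$. The only differences are organizational. The paper first invokes Lemma~\ref{BZ4l} to put both points on the \emph{same} underlying abelian scheme and then observes that $\overline{\eta_1\dot c}$ is a legitimate $\mathbf K$-level structure because $\mathbf K\triangleleft\mathbf K_U^\bullet$ makes the class $\pi_1$-invariant; you instead work through the isogeny $\alpha$ directly, which side-steps the normality remark since $\overline{\eta_1\dot c}=\alpha^{-1}\bar\eta_2$ is automatically $\pi_1$-invariant by transport of structure. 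You also make explicit (via the nondegenerate trace pairing on $F\otimes\mathbb{A}_f$) the step the paper compresses into ``the assertion follows,'' namely why the relation $\xi_1\psi(cv,w)=\xi_2\psi(v,w)$ with $c\in(F\otimes\mathbb{A}_f)^\times$ forces $c\in\mathbb{A}_f^\times$. These are reasonable elaborations rather than a different method.
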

\begin{proof}
  We choose  arbitrary polarizations $\lambda_1 \in \bar{\lambda}_1$ and
  $\lambda_2 \in \bar{\lambda}_2$ and arbitrary level structures $\eta_1 \in \bar{\eta}_1$ and
  $\eta_2 \in \bar{\eta}_2$. We remark that for each
  $\dot{c} \in \mathbf{K}_U^{\bullet}$ the class $\eta_1 \dot{c} \mathbf{K}$
  is invariant under the action of $\pi_1(\bar{s}, S)$ because
  $\mathbf{K} \subset \mathbf{K}_U^{\bullet}$ is a normal subgroup. 

  By the Lemma \ref{BZ4l} we may assume that
  $(A_2, \bar{\lambda}_2, \bar{\eta}_2) = (A_1, \overline{f \lambda_1}, \overline{\eta_1 \dot{c}})$.
  We have factors $a_1, a_2 \in \mathbb{A}_f(1)$ such that for all
  $x, y \in V \otimes \mathbb{A}_f$ 
  \begin{equation}\label{BZ18e} 
    \begin{aligned}
    a_2 \psi(x,y) &= E^{f \lambda_1}(\eta_1(\dot{c} x), \eta_1(\dot{c} y)) =
    E^{\lambda_1}(\eta_1(\dot{c} x), \eta_1(\dot{c} fy))= \\
    &=a_1 \psi(\dot{c} x, \dot{c} fy) = a_1 \psi (x, \dot{c}' \dot{c} fy). 
      \end{aligned}
  \end{equation}
  The assertion follows. 
  \end{proof}
\begin{proof}[Proof of Proposition \ref{BZ4p}]
  We may assume that $S=\Spec R$ is connected. We consider a point
  $(A, \bar{\lambda}, \bar{\eta}) \in \mathcal{A}_{\mathbf{K}}(R)$. Any
  other point with the same image by (\ref{BZ8e}) is of the form 
  \begin{equation}\label{BZ9e}
    (A, f \bar{\lambda}, \bar{\eta} \dot{c}), \quad \text{such that} \;
    f \in F^{\times}, \; \dot{c} \in \mathbf{K}^{\bullet}_U , \;
    f \dot{c}' \dot{c} = a \in \mathbb{A}^{\times}_f. 
  \end{equation}
  Moreover $f$ is totally positive. Replacing $f$ by $f q$ for some
  $q \in \mathbb{Q}^{\times}$, $q >0$, does not change the point (\ref{BZ9e}).
  Therefore we may assume that $a \in \hat{\mathbb{Z}}^{\times}$ and that
  $f$ is a unit.

  By Proposition \ref{Chevalley1p}, for each natural number $m$ we find
  $U=U_{N\ell}$ in such a way that $f = g_m^{2m}$ for some $g_m \in O_F^{\times}$.
  Since $\mathbf{K}^{\bullet} \cap (F \otimes \mathbb{A}_f)^{\times}$ is open
  in $(F \otimes \mathbb{A}_f)^{\times}$ 
  we may choose $m$ such that $g_m^m \in \mathbf{K}^{\bullet}$.
  We set $g = g_m^m$. Since $f = g^2$, the multiplication isomorphism
  by $g$ is an isomorphism 
  \begin{displaymath}
    g: (A, f \bar{\lambda}, \bar{\eta} \dot{c}) \isoarrow
    (A, \bar{\lambda}, \bar{\eta} g \dot{c}).
    \end{displaymath}
  We obtain
  \begin{displaymath}
    (g \dot{c})' \cdot (g \dot{c}) = g^2 \dot{c}' \dot{c} =
    f \dot{c}' \dot{c} = a \in \mathbb{A}_f^{\times}, 
    \end{displaymath}
  and therefore
  $g \dot{c} \in G(\mathbb{A}_f) \cap \mathbf{K}^{\bullet} = \mathbf{K}$.
  We see that $(A, f \bar{\lambda}, \bar{\eta} \dot{c})$ and
  $(A, \bar{\lambda}, \bar{\eta})$ define the same point of
  $\mathcal{A}_{\mathbf{K}}$. 
\end{proof}

We know that for $\mathbf{K} \subset G(\mathbb{A}_f)$ small enough the
functor $\mathcal{A}_{\mathbf{K}, E_{\nu}}$ is representable by the scheme
$\Sh(G,h)_{\mathbf{K}, E_{\nu}}$. In general the latter is a coarse moduli scheme.
\begin{proposition}\label{BZ7p} 
  Let $\mathbf{K}^{\bullet} \subset G^{\bullet}(\mathbb{A}_f)$ be an open and
  compact subgroup. We set
  $\mathbf{K} = G(\mathbb{A}_f) \cap \mathbf{K}^{\bullet}$. We
  assume that there is an $O_K$-lattice $\Gamma \subset V$ and 
  an integer $m \geq 3$ such that for each $u \in \mathbf{K}^{\bullet}$ we have
  $u \Gamma\otimes \hat{\mathbb{Z}}\subset \Gamma \otimes \hat{\mathbb{Z}}$
  and such that $u$ acts trivially on
  $\Gamma/m \Gamma$, so that $\mathcal{A}_{\mathbf{K}, E_{\nu}}$ is representable (this is the analogue of condition \eqref{BZneat1e} for $\mathbf{K}^{\bullet}$ instaed of $\mathbf{K}$).

  Let $U$ be as in Proposition \ref{BZ4p}. Then the \'etale sheafification of
  the presheaf $\mathcal{A}^{\bullet}_{\mathbf{K}^{\bullet}_{U}, E_{\nu}}$ on the big
  \'etale site is represented by
  $\Sh(G^{\bullet}, h)_{\mathbf{K}^{\bullet}_{U}, E_{\nu}}$. 
  \end{proposition}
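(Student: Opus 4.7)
The strategy is based on the commutative diagram
\begin{displaymath}
\xymatrix{
\mathcal{A}_{\mathbf{K}, E_{\nu}} \ar[r]^-{j} \ar[d]_{\cong} & \mathcal{A}^{\bullet}_{\mathbf{K}^{\bullet}_U, E_{\nu}} \ar[d]^{\pi} \\
\Sh(G, h)_{\mathbf{K}, E_{\nu}} \ar[r]^-{\iota} & \Sh(G^{\bullet}, h)_{\mathbf{K}^{\bullet}_U, E_{\nu}},
}
\end{displaymath}
in which the left vertical arrow is an isomorphism by the hypothesis on $\Gamma$ and $m$ (which is the form of (\ref{BZneat1e}) for $\mathbf{K}$), the top arrow $j$ is the monomorphism from Proposition~\ref{BZ4p}, the arrow $\pi$ is the canonical map to the coarse moduli, and $\iota$ is a closed immersion by \cite[Prop.~1.15]{D-TS} (since $\mathbf{K}=\mathbf{K}^{\bullet}_U\cap G(\mathbb{A}_f)$ by (\ref{BZ17e})). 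As the two Shimura varieties are smooth and equidimensional over $E_\nu$ (the Hodge cocharacter is the same on both sides), $\iota$ is in fact open and closed. To prove that $\pi$ exhibits its target as the \'etale sheafification of its source, the plan is to establish (i) \emph{local surjectivity} -- every $S$-point of $\Sh(G^{\bullet}, h)_{\mathbf{K}^{\bullet}_U, E_{\nu}}$ lifts to a point of $\mathcal{A}^{\bullet}_{\mathbf{K}^{\bullet}_U, E_{\nu}}$ on some \'etale cover $S'\to S$; and (ii) \emph{local injectivity} -- two such points with the same image in $\Sh(G^{\bullet}, h)$ coincide on some \'etale cover.

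For (i), the key geometric input is that $\iota(\Sh(G, h)_{\mathbf{K}, E_{\nu}})$, together with finitely many of its Hecke translates by $g_1,\ldots,g_r \in G^{\bullet}(\mathbb{A}_f)$, covers $\Sh(G^{\bullet}, h)_{\mathbf{K}^{\bullet}_U, E_{\nu}}$. This reduces to the finiteness of the double-coset set $G^{\bullet}(\mathbb{Q})\backslash G^{\bullet}(\mathbb{A}_f)/(G(\mathbb{A}_f)\cdot\mathbf{K}^{\bullet}_U)$, which at the level of similitudes becomes the finite generalized class group $(F\otimes\mathbb{A}_f)^{\times}/(F^{\times}\cdot\mathbb{A}_f^{\times}\cdot\mu(\mathbf{K}^{\bullet}_U))$. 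Shrinking $U$ (hence $\mathbf{K}^{\bullet}_U$) if necessary, Proposition~\ref{BZ4p} applies uniformly to each conjugate $\mathbf{K}_i = G(\mathbb{A}_f)\cap g_i\mathbf{K}^{\bullet}_U g_i^{-1}$, giving monomorphisms $j_i\colon \mathcal{A}_{\mathbf{K}_i,E_\nu}\hookrightarrow \mathcal{A}^{\bullet}_{g_i\mathbf{K}^{\bullet}_U g_i^{-1},E_\nu}$. A given $S$-point $y$ of the Shimura variety lies, after an \'etale base change trivializing the map to $\pi_0$, in some translate $g_i\cdot\iota(\Sh(G, h)_{\mathbf{K}_i})$; the representability of $\mathcal{A}_{\mathbf{K}_i,E_\nu}\cong \Sh(G, h)_{\mathbf{K}_i,E_\nu}$ furnishes a lift in $\mathcal{A}_{\mathbf{K}_i,E_\nu}$, which after the Hecke isomorphism $\mathcal{A}^{\bullet}_{g_i\mathbf{K}^{\bullet}_U g_i^{-1}}\cong \mathcal{A}^{\bullet}_{\mathbf{K}^{\bullet}_U}$ yields the desired lift of $y$.

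For (ii), given $\theta_1, \theta_2 \in \mathcal{A}^{\bullet}_{\mathbf{K}^{\bullet}_U, E_{\nu}}(S)$ with $\pi(\theta_1)=\pi(\theta_2)$, we apply the argument of (i) to bring both, after a common Hecke translate and an \'etale base change $S'\to S$, into the image of some $j_i$, so that they come from $\tilde\theta_1, \tilde\theta_2 \in \mathcal{A}_{\mathbf{K}_i,E_\nu}(S')$. The commutativity of the corresponding diagram together with the fact that $\iota$ is a monomorphism forces $\tilde\theta_1$ and $\tilde\theta_2$ to have equal images in $\Sh(G, h)_{\mathbf{K}_i, E_\nu}$, hence to be equal in the fine moduli space $\mathcal{A}_{\mathbf{K}_i, E_\nu}$. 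Applying $j_i$ and the Hecke isomorphism gives $\theta_1|_{S'}=\theta_2|_{S'}$.

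The main obstacle is the uniform choice of $U$ in step (i). The finite set of Hecke translates $\{g_i\}$ that covers $\Sh(G^{\bullet}, h)_{\mathbf{K}^{\bullet}_U, E_{\nu}}$ depends on $\mathbf{K}^{\bullet}_U$, but one must then verify that each conjugate $\mathbf{K}_i$ satisfies the hypotheses of Proposition~\ref{BZ4p}. Resolving this apparent circularity requires shrinking $U$ via Proposition~\ref{Chevalley1p} until all of the finitely many $\mathbf{K}_i$ become admissible simultaneously; once this is arranged, the formal conclusion that $\pi$ realizes the \'etale sheafification is the standard consequence of (i) and (ii).
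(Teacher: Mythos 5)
Your overall strategy — cover $\Sh(G^{\bullet},h)_{\mathbf{K}^{\bullet}_U,E_\nu}$ by Hecke translates of the clopen piece $\iota(\Sh(G,h)_{\mathbf{K}})$ and verify local surjectivity and local injectivity of $\pi$ — tracks the paper's argument, but step (ii) has a genuine gap. You claim that "applying the argument of (i)" brings $\theta_1,\theta_2\in\mathcal{A}^{\bullet}_{\mathbf{K}^{\bullet}_U,E_{\nu}}(S)$ into the image of $j_i$, but (i) only constructs \emph{some} lift of a given point of the coarse moduli scheme; it does not show that an \emph{arbitrary already given} point $\theta$ of $\mathcal{A}^{\bullet}_{\mathbf{K}^{\bullet}_U,E_{\nu}}(S)$ whose image $\pi(\theta)$ lands in $\iota(\Sh(G,h)_{\mathbf{K}_i})$ lies in the image of $j_i$, even \'etale-locally. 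This is the core difficulty: a point of $\mathcal{A}^{\bullet}_{\mathbf{K}^{\bullet}_U}$ carries an $F^{\times}$-homogeneous polarization $\tilde\lambda$ and a $\mathbf{K}^{\bullet}_U$-class $\tilde\eta$, and one must produce a $\mathbb{Q}^{\times}$-homogeneous $\bar\lambda\subset\tilde\lambda$ and a $\mathbf{K}$-class $\bar\eta\subset\tilde\eta$ compatible with $\psi$ and with each other. The paper does exactly this by reducing to a strictly Henselian local ring $R$ with algebraically closed residue field $\kappa_R$ (characteristic $0$), applying Lemma~\ref{BZ4l} over $\kappa_R$, and lifting the resulting refinement to $R$ using that reduction is a bijection on the class $\tilde\lambda$ and that a rigidification over $R$ and its reduction to $\kappa_R$ carry the same information. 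Without some version of this argument your (ii), and hence the "formal conclusion," remain unproven.

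Separately, the "main obstacle" you raise at the end is not an obstacle. The subgroup $U$ produced by Proposition~\ref{BZ4p} depends on $\mathbf{K}^{\bullet}$ only through $\mathbf{K}^{\bullet}\cap (F\otimes\mathbb{A}_f)^{\times}$, and this intersection is unchanged under conjugation by any $g\in G^{\bullet}(\mathbb{A}_f)$ because elements of the centre commute with $g$; hence the same $U$ works for all conjugates $g\mathbf{K}^{\bullet}g^{-1}$ simultaneously, as the paper notes parenthetically in its proof. Your proposed resolution by "shrinking $U$" is therefore unnecessary, and in any case inconsistent with the statement, which fixes $U$ once and for all; the finiteness of the set of Hecke translates (while true) is also not needed, since any open covering by representable subfunctors suffices to conclude representability of the sheaf.
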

\begin{proof}
  We begin with some general remarks on \cite[Prop. 1.15]{D-TS} in our case.
  The morphism of schemes (not of finite type) 
  $\Sh(G, h) \rightarrow \Sh (G^{\bullet}, h)$ is an open and closed immersion.
  More precisely, for any open compact subgroup
  $\mathbf{K}_1 \subset G(\mathbb{A}_f)$, there is an open compact subgroup 
  $\mathbf{K}^{\bullet}_1 \subset G^{\bullet}(\mathbb{A}_f)$ such that
  $\mathbf{K}^{\bullet}_1 \cap G(\mathbb{A}_f) = \mathbf{K}_1$ and such that
  $\Sh(G, h)_{\mathbf{K}_1} \subset \Sh(G^{\bullet}, h)_{\mathbf{K}^{\bullet}_1}$ is
  an open and closed   immersion. Indeed, it is a closed immersion by
  \cite[Prop. 1.15]{D-TS} and
  it is open because the local rings of these varieties are normal and have
  both the same constant dimension. If $Z$ is a connected component of
  $\Sh(G, h)_{\mathbb{C}}$, then its 
  image $Z^{\bullet}$ in $ \Sh(G^{\bullet}, h)_{\mathbb{C}}$ is a connected component.
  For arbitrary open compact subgroups $\mathbf{K} \subset G(\mathbb{A}_f)$
  resp. $\mathbf{K}^{\bullet} \subset G(\mathbb{A}_f)$ the image
  $Z_{\mathbf{K}}$ of $Z$ in $ \Sh(G, h)_{\mathbf{K}, \mathbb{C}}$, resp. the image
  $Z^{\bullet}_{\mathbf{K}^{\bullet}}$ of $Z^{\bullet}$ in 
  $\Sh(G^{\bullet}, h)_{\mathbf{K}^{\bullet}, \mathbb{C}}$, is a connected component.
  For $\mathbf{K}_1$ and $\mathbf{K}_1^{\bullet}$ as above, the map
$Z_{\mathbf{K}_1} \rightarrow Z^{\bullet}_{\mathbf{K}^{\bullet}_1}$ is an isomorphism. For
$g \in G^{\bullet}(\mathbb{A}_f)$, the multiplication by $g$ induces a map
 \begin{displaymath}
   g: \Sh(G^{\bullet}, h)_{g\mathbf{K}^{\bullet}_1g^{-1}} \rightarrow
   \Sh(G^{\bullet}, h)_{\mathbf{K}^{\bullet}_1}. 
   \end{displaymath}
Now $G^{\bullet}(\mathbb{A}_f)$ acts transitively on the connected components of 
 $\Sh(G^{\bullet}, h)_{\mathbb{C}}$, cf. \cite[Prop. 2.2]{D-TS}. Therefore the sets
 $g Z^{\bullet}_{g\mathbf{K}^{\bullet}_1 g^{-1}, \mathbb{C}}$ cover 
 $\Sh(G^{\bullet}, h)_{\mathbf{K}^{\bullet}_1, \mathbb{C}}$, as $g$ runs through all elements
 of $G^{\bullet}(\mathbb{A}_f)$. We note that
$g\mathbf{K}^{\bullet}_1g^{-1} \cap G(\mathbb{A}_f) = g\mathbf{K}_1g^{-1}$ because
$G(\mathbb{A}_f) \subset G^{\bullet}(\mathbb{A}_f)$ is a normal subgroup. 
We conclude that the images of the following composite maps  cover $\Sh(G^{\bullet},h)_{\mathbf{K}^{\bullet}_1}$, as $g$ varies in $G^{\bullet}(\mathbb{A}_f)$ ,
\begin{equation}\label{BZ30e}
  \varkappa_g: \Sh(G,h)_{g\mathbf{K}_1g^{-1}} \rightarrow
  \Sh(G^{\bullet}, h)_{g\mathbf{K}^{\bullet}_1g^{-1}} \overset{g}{\rightarrow} 
   \Sh(G^{\bullet}, h)_{\mathbf{K}^{\bullet}_1}.
  \end{equation}

Now we turn to the proof of the proposition. We show that the
morphism
  \begin{equation}\label{BZ10e} 
    \Sh(G,h)_{\mathbf{K}} \rightarrow 
    \Sh(G^{\bullet}, h)_{\mathbf{K}^{\bullet}_{U}} 
  \end{equation}
  is an open and closed immersion. Indeed by Proposition \ref{BZ4p} we know
  that (\ref{BZ10e}) induces an injection on the $\mathbb{C}$-valued points.
  By the remarks above there exists a open and compact subgroup
  $\mathbf{K}_1^{\bullet} \subset \mathbf{K}^{\bullet}_{U}$ such that
  $\mathbf{K}_1^{\bullet} \cap G(\mathbb{A}_f) = \mathbf{K}$ and such that
  $ \Sh(G,h)_{\mathbf{K}} \rightarrow \Sh(G^{\bullet}, h)_{\mathbf{K}_1^{\bullet}}$
  is an open and closed immersion. We consider the commutative diagram
  \begin{displaymath}
    \xymatrix{
      \Sh(G,h)_{\mathbf{K}} \ar[r] \ar[rd] & \Sh(G^{\bullet}, h)_{\mathbf{K}^{\bullet}_1}
      \ar[d]\\
      & \Sh(G^{\bullet}, h)_{\mathbf{K}^{\bullet}_{U}} .
 } 
  \end{displaymath}
  By our assumption on $\mathbf{K}^{\bullet}$, the vertical arrow is a finite
  \'etale morphism. Hence the same is true for the oblique arrow. Since by Proposition  \ref{BZ4p} its 
  geometric fibres contain at most one element, the claim follows.
  
   Let $Y$ be the \'etale sheafification of
  $\mathcal{A}^{\bullet}_{\mathbf{K}^{\bullet}_U, E_{\nu}}$.  We consider the preimage
  $Y^{o}$ of $\Sh(G,h)_{\mathbf{K}, E_{\nu}}$ by the natural morphism
  \begin{displaymath}
    Y \rightarrow
    \Sh(G^{\bullet}, h)_{\mathbf{K}^{\bullet}_{U}, E_{\nu}}. 
  \end{displaymath}
  Then $Y^{o} \subset Y$ is an open and closed subfunctor. We consider the
  natural morphism
  \begin{displaymath}
    \mathcal{A}_{\mathbf{K}, E_{\nu}} \rightarrow
    \mathcal{A}^{\bullet}_{\mathbf{K}^{\bullet}_U, E_{\nu}} \rightarrow  Y \rightarrow
    \Sh(G^{\bullet}, h)_{\mathbf{K}^{\bullet}_{U}, E_{\nu}}. 
  \end{displaymath}
  Since $Y^{o}$ is a fibre product we obtain a factorization
  \begin{equation}\label{et-sheaf1e} 
    \mathcal{A}_{\mathbf{K}, E_{\nu}} \rightarrow Y^{o} \rightarrow
    \Sh(G,h)_{\mathbf{K}, E_{\nu}}. 
  \end{equation} 
  We claim that both arrows are isomorphisms. Since their composite is an
  isomorphism the first arrow is a monomorphism. Therefore it suffices to
  show that the first arrow is a surjection of \'etale sheaves. 
  Since both functors $\mathcal{A}_{\mathbf{K}, E_{\nu}}$ and 
  $\mathcal{A}^{\bullet}_{\mathbf{K}^{\bullet}_U, E_{\nu}}$ commute with inductive limits,
  the stalks at a geometric point $\xi$ of $\Spec R$ of the sheafifications are
  the points of these functors with values in the strict henselization
  $R^{sh}_{\xi}$. Therefore it is enough to show that
  \begin{equation}\label{BZ11e}
\mathcal{A}_{\mathbf{K}, E_{\nu}}(R) \rightarrow Y^o(R) 
    \end{equation}
 is surjective for a
  strictly henselian local ring $R$. For an algebraically closed field
  $R$ both sides have the same coarse moduli space. Therefore the map
  is bijective in this case. In general the residue field $\kappa_R$ of $R$ is
  algebraically closed, since we are in characteristic $0$. 
   We consider a point
   $(A, \tilde{\lambda}, \tilde{\eta}) \in \mathcal{A}^{\bullet}_{\mathbf{K}_U^{\bullet}}(R) = Y(R)$  
   which is in $Y^{o}(R)$. Over $\kappa_R$ this point is in the image of
   (\ref{BZ11e}). By Lemma \ref{BZ4l}, the preimage by (\ref{BZ11e}) has the form
  $(A_{\kappa_R}, \bar{\lambda}, \bar{\eta})$ for some 
   $\lambda \in \tilde{\lambda}$ and $\eta \in \tilde{\eta}$. This is justified
   because the reduction to $\kappa_R$ defines a bijection between the class
   $\tilde{\lambda}$ on $A$ and its reduction on $A_{\kappa_R}$. The same applies
   to $\tilde{\eta}$. We must verify that
   $(A, \lambda, \eta)$ defines a point of $\mathcal{A}_{\mathbf{K}, E_{\nu}}(R)$. 
  Since there is no difference of a rigidification $\eta$ over $\kappa_R$ 
  or over $R$, this is already decided over $\kappa_R$. This proves that
  (\ref{BZ11e}) is bijective. Consequently the arrows of (\ref{et-sheaf1e})
  are isomorphism and therefore the functor $Y^o$ is representable.

  Now we deduce the representability of $Y$. Let
  $g \in G^{\bullet}(\mathbb{A}_f)$. We already noted
that $g\mathbf{K}^{\bullet}g^{-1} \cap G(\mathbb{A}_f) = g\mathbf{K}g^{-1}$. The
  multiplication by $g$ induces an isomorphism
  \begin{equation}\label{BZ12e}
    \mathcal{A}^{\bullet}_{g\mathbf{K}^{\bullet}_U g^{-1}, E_{\nu}}
    \overset{\sim}{\longrightarrow}
    \mathcal{A}^{\bullet}_{\mathbf{K}^{\bullet}_U, E_{\nu}} \rightarrow Y . 
  \end{equation}
  We have shown that $\Sh(G,h)_{g\mathbf{K}g^{-1}, E_{\nu}}$ is an open and closed
  subfunctor of the sheafification of the left hand side of (\ref{BZ12e}).
  (We note that the same $U=U_{N\ell}$ suffices for each $g\in G^{\bullet}(\mathbb{A}_f)$.)  
  Taking the composite with  (\ref{BZ12e}), we obtain an open and closed
  immersion
  \begin{equation}\label{BZ13e}
\Sh(G,h)_{g\mathbf{K}g^{-1}, E_{\nu}} \rightarrow Y.
    \end{equation}
 Its image is equal to the pullback of
 $\Sh(G,h)_{g\mathbf{K}g^{-1}, E_{\nu}} \overset{g}{\rightarrow} \Sh(G^{\bullet},h)_{\mathbf{K}^{\bullet}_U, E_{\nu}}$
 by the natural morphism
 $Y \rightarrow \Sh(G^{\bullet},h)_{\mathbf{K}^{\bullet}_U, E_{\nu}}$.
 Therefore (\ref{BZ13e}) gives, for varying   $g \in G^{\bullet}(\mathbb{A}_f)$, an open
 covering of $Y$ by representable subfunctors. 
\end{proof}  

For later use we formulate a variant of the last argument.

\begin{lemma}\label{BZCover1l} 
  Let $\mathbf{K}^{\bullet} \subset G^{\bullet}(\mathbb{A}_f)$ be an open  compact
  subgroup and let $\mathbf{K} = G(\mathbb{A}_f) \cap \mathbf{K}^{\bullet}$. 
  Assume that $U \subset (F \otimes \mathbb{A}_f)^{\times}$ is a principal congruence subgroup as constructed in the
  proof of Proposition \ref{BZ4p}. Then for all $g \in G^{\bullet}(\mathbb{A}_f)$ the canonical map
  \begin{equation}\label{BZ28e}
\Sh(G, h)_{g\mathbf{K}g^{-1}} \rightarrow \Sh(G^{\bullet}, h)_{g\mathbf{K}^{\bullet}_U g^{-1}} 
  \end{equation}
  is an open and closed immersion. The composite of this map with
$g\colon\Sh(G^{\bullet},h)_{g\mathbf{K}^{\bullet}_U g^{-1}}\rightarrow\Sh(G^{\bullet},h)_{\mathbf{K}^{\bullet}_U}$
  gives an open and closed immersion,
  \begin{displaymath}
    \varkappa_g: \Sh(G, h)_{g\mathbf{K}g^{-1}} \rightarrow
    \Sh(G^{\bullet}, h)_{\mathbf{K}^{\bullet}_U}  .
    \end{displaymath}
The maps $\varkappa_g$ for varying $g \in G^{\bullet}(\mathbb{A}^p_f)$ are an open
covering of $\Sh(G^{\bullet}, h)_{\mathbf{K}^{\bullet}_U}$. 

If the group $\mathbf{K}^{\bullet}$ satisfies the assumptions of Proposition \ref{BZ7p}, then 
the set of maps  
  \begin{displaymath}
    \{\varkappa_g: \Sh(G, h)_{g\mathbf{K}g^{-1}} \rightarrow
    \Sh(G^{\bullet}, h)_{\mathbf{K}^{\bullet}} \}_{g \in G^{\bullet}(\mathbb{A}_f^p)}  
  \end{displaymath}
  is an \'etale covering by finite \'etale maps. 
\end{lemma}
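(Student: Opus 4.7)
The plan is to extract from the proof of Proposition~\ref{BZ7p} the generalization to all conjugates of $\mathbf{K}$ and $\mathbf{K}^{\bullet}_U$. The crucial observation is that the principal congruence subgroup $U \subset (F\otimes\mathbb{A}_f)^{\times}$ produced by Proposition~\ref{BZ4p} is invariant under $G^{\bullet}(\mathbb{A}_f)$-conjugation of $\mathbf{K}^{\bullet}$: since the similitude $\mu\colon G^{\bullet} \to \Res_{F/\mathbb{Q}}\mathbb{G}_m$ takes values in an abelian group, one has $\mu(ghg^{-1}) = \mu(h)$, and therefore $g\mathbf{K}^{\bullet}_Ug^{-1} = (g\mathbf{K}^{\bullet}g^{-1})_U$. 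Since $G \subset G^{\bullet}$ is normal, $G(\mathbb{A}_f)\cap g\mathbf{K}^{\bullet}_Ug^{-1} = g\mathbf{K}g^{-1}$. Consequently Proposition~\ref{BZ4p} applied to $g\mathbf{K}^{\bullet}g^{-1}$ yields a monomorphism $\mathcal{A}_{g\mathbf{K}g^{-1}} \to \mathcal{A}^{\bullet}_{g\mathbf{K}^{\bullet}_Ug^{-1}}$, and hence injectivity of the induced map on $\mathbb{C}$-valued points.

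To upgrade this to an open and closed immersion of schemes I would repeat the argument used in the fourth paragraph of the proof of Proposition~\ref{BZ7p}: by \cite[Prop.~1.15]{D-TS} there exists a sufficiently small subgroup $\mathbf{K}^{\bullet}_1 \subset g\mathbf{K}^{\bullet}_Ug^{-1}$ with $\mathbf{K}^{\bullet}_1 \cap G(\mathbb{A}_f) = g\mathbf{K}g^{-1}$ such that $\Sh(G,h)_{g\mathbf{K}g^{-1}} \hookrightarrow \Sh(G^{\bullet},h)_{\mathbf{K}^{\bullet}_1}$ is a closed immersion. The projection $\Sh(G^{\bullet},h)_{\mathbf{K}^{\bullet}_1} \to \Sh(G^{\bullet},h)_{g\mathbf{K}^{\bullet}_Ug^{-1}}$ is finite \'etale, and combined with the fibrewise injectivity established above, this forces \eqref{BZ28e} to be open and closed. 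Postcomposing with the Hecke isomorphism $g\colon \Sh(G^{\bullet},h)_{g\mathbf{K}^{\bullet}_Ug^{-1}} \isoarrow \Sh(G^{\bullet},h)_{\mathbf{K}^{\bullet}_U}$ yields $\varkappa_g$.

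For the covering assertion, \cite[Prop.~2.2]{D-TS} gives transitivity of the action of $G^{\bullet}(\mathbb{A}_f)$ on $\pi_0(\Sh(G^{\bullet},h)_{\mathbb{C}})$, so the images of $\varkappa_g$ already cover when $g$ ranges over all of $G^{\bullet}(\mathbb{A}_f)$. The main obstacle is to reduce to $g \in G^{\bullet}(\mathbb{A}_f^p)$. I would argue that the image of $\varkappa_g$ depends only on the double coset $G^{\bullet}(\mathbb{Q})_+ \cdot G(\mathbb{A}_f) \cdot g \cdot \mathbf{K}^{\bullet}_U$: a left factor in $G(\mathbb{A}_f)$ only precomposes $\varkappa_g$ with a Hecke isomorphism on the source, while a right factor in $\mathbf{K}^{\bullet}_U$ does not alter the source at all. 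It therefore suffices to show
$$G^{\bullet}(\mathbb{A}_f) = G^{\bullet}(\mathbb{Q})_+ \cdot G(\mathbb{A}_f) \cdot G^{\bullet}(\mathbb{A}_f^p) \cdot \mathbf{K}^{\bullet}_{U,p}.$$
Applying $\mu$, this reduces to verifying $(F\otimes\mathbb{Q}_p)^{\times} = F^{\times}_{+}\cdot \mathbb{Q}_p^{\times}\cdot U_p$, where $U_p$ is the $p$-component of $U$. Since $\mathbb{Q}_p^{\times}U_p$ is open in $(F\otimes\mathbb{Q}_p)^{\times}$ and $F^{\times}_{+}$ is dense there by weak approximation on the central torus (using the freedom to adjust archimedean signs by totally positive elements close to $1$ at the primes over $p$), this surjectivity follows.

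Finally, under the hypothesis of Proposition~\ref{BZ7p}, the forgetful map $\Sh(G^{\bullet},h)_{\mathbf{K}^{\bullet}_U} \to \Sh(G^{\bullet},h)_{\mathbf{K}^{\bullet}}$ is a finite \'etale Galois cover with group $\mathbf{K}^{\bullet}/\mathbf{K}^{\bullet}_U$. Postcomposing each open and closed immersion $\varkappa_g$ with this projection yields a finite \'etale map $\Sh(G,h)_{g\mathbf{K}g^{-1}} \to \Sh(G^{\bullet},h)_{\mathbf{K}^{\bullet}}$, and the covering property transfers from the previous paragraph, completing the proof.
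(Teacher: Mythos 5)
Your treatment of the first two assertions (the open and closed immersions and the definition of $\varkappa_g$) matches the content of the paper: the key observation is the conjugation invariance of the congruence condition defining $\mathbf{K}^{\bullet}_U$, which — as you note — follows from the similitude factor $\mu$ being valued in an abelian group, so that $g\mathbf{K}^{\bullet}_Ug^{-1} = (g\mathbf{K}^{\bullet}g^{-1})_U$ and the same $U$ serves for all $g$. This lets you run the argument of Proposition \ref{BZ7p} verbatim for the conjugated groups. Your final paragraph on the \'etale covering over $\mathbf{K}^{\bullet}$ is also correct and parallels the paper.

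For the covering assertion you take a genuinely different route, and it has a gap. You reduce the group-theoretic identity $G^{\bullet}(\mathbb{A}_f) = G^{\bullet}(\mathbb{Q})_+ \cdot G(\mathbb{A}_f) \cdot G^{\bullet}(\mathbb{A}_f^p) \cdot \mathbf{K}^{\bullet}_{U,p}$ to a decomposition of $(F\otimes\mathbb{Q}_p)^{\times}$ by ``applying $\mu$''. The direction you tacitly use — that since $\ker(\mu)|_{G^\bullet} \subset G$ a torus decomposition of $\mu(g)$ lifts to a group decomposition of $g$ — is sound in principle, but it requires exhibiting $\mu$-preimages of each torus factor \emph{inside the corresponding subgroup}. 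In particular one needs $\mu(G^{\bullet}(\mathbb{Q})_+) \supset F^\times_+$, which is not formal: it rests on the Hasse--Schilling--Maass norm theorem for $D/F$, combined with the observation that a totally positive element can be realized as $\Nm_{D/F}$ of an element of $D^\times$ whose $\chi_0$-component has positive determinant. Your proof says nothing about this, nor about the surjectivity of $\mu$ on $G(\mathbb{A}_f)$ or $G^{\bullet}(\mathbb{A}_f^p)$. There is also a small conflation: $\mu(\mathbf{K}^{\bullet}_{U,p})$ is the chosen open subgroup $\mathbf{M}^{\bullet}$, not $U_p=(O_F\otimes\mathbb{Z}_p)^\times$; this is harmless for your density argument but should be said. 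The paper sidesteps all of these surjectivity questions by using $\nu = (\mu, \det)$ to the larger torus $\tilde T^{\bullet}$, whose kernel $\tilde G^{\bullet}_{\mathrm{der}}$ is simply connected; then Deligne's $\pi_0$-theorem \cite[Thm.~2.4]{D-TS} identifies $\pi_0(\Sh(G^{\bullet},h)_{\mathbf{K}^{\bullet}_U})$ directly with a double coset space in $\tilde T^{\bullet}(\mathbb{A}_F)$, weak approximation on $\tilde T^{\bullet}$ (which is split by the cyclic extension $K/F$) gives the required transitivity of $T^{\bullet}(\mathbb{A}_f^p)$, and the needed surjectivity $G^{\bullet}(\mathbb{A}_f^p)\to T^{\bullet}(\mathbb{A}_f^p)$ follows from triviality of $H^1$ for the simply connected kernel. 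Your alternative is likely repairable, but you must supply the norm-theoretic inputs; as written, the reduction step is asserted rather than justified.
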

\begin{proof}
  Only the last assertion remains to be proved. 
  Let $Z$ be a connected component of $\Sh(G,h)_{\mathbb{C}}$ and let 
  $Z^{\bullet} \in \Sh(G^{\bullet}, h)$ be its image as in the proof of Proposition
  \ref{BZ7p}. 
  As in that proof, it is enough to show that the sets
  $g Z^{\bullet}_{g \mathbf{K}^{\bullet}_U g^{-1}}$ cover $\Sh(G^{\bullet}, h)_{\mathbf{K}^{\bullet}_U}$, as 
   $g$ runs through all elements of $G^\bullet(\mathbb{A}_f^p)$. 

  We consider 
  $\tilde{G}^{\bullet} = \{b \in B^{{\rm opp}} \; | \; b'b \in F^{\times} \}$ as algebraic
  group over $F$. Then $\Res_{F/\mathbb{Q}} \tilde{G}^{\bullet} = G^{\bullet}$, cf.
  (\ref{Gpunkt2e}).  We consider the homomorphisms
  \begin{equation}\label{Gpunkt3e} 
    \begin{array}{rcc}
      \mu: \tilde{G}^{\bullet} & \rightarrow & F^{\times},\\
       b\; & \mapsto & b'b
    \end{array}
    \qquad 
      \begin{array}{rcc}
      \det: \tilde{G}^{\bullet} & \rightarrow & K^{\times}\\
      b\; & \mapsto & \Nm^o_{B/K}. 
      \end{array}
    \end{equation}
  Let $\tilde{T}^{\bullet}$ be the algebraic torus over $F$ given by
  \begin{displaymath}
    \tilde{T}^{\bullet}(F) = \{(f, k) \in F^{\times} \times K^{\times} \; | \;
    f^2 = k \bar{k}  \}. 
  \end{displaymath}
  By (\ref{Gpunkt3e}) we obtain a homomorphism
  $\nu:\tilde{G}^{\bullet}\rightarrow\tilde{T}^{\bullet}$, $b \mapsto (\mu (b),\det b)$.
  Let $\tilde{H}^{\bullet}$ be the kernel of this map. One can check that
$\tilde{H}^{\bullet} \times_{\Spec F}\Spec\mathbb{C}\cong\mathrm{SL}_2(\mathbb{C})$. 
  Therefore we obtain an exact sequence
  \begin{displaymath}
    0 \rightarrow  \tilde{G}^{\bullet}_{\rm der} \rightarrow \tilde{G}^{\bullet}
    \overset{\nu}{\longrightarrow} \tilde{T}^{\bullet} \rightarrow 0 ,
  \end{displaymath}
  where the derived group is simply connected. By \cite[Thm. 2.4]{D-TS}, we obtain a
  bijection
  \begin{equation}\label{BZ29e}
    \pi_0(\Sh(G^{\bullet}, h)_{\mathbf{K}^{\bullet}_U} \overset{\sim}{\longrightarrow}
    \nu(\mathbf{K}^{\bullet}_{\infty} \times \mathbf{K}^{\bullet}_U)\backslash
    \tilde{T}^{\bullet}(\mathbb{A}_{F})/\tilde{T}^{\bullet}(F). 
  \end{equation}
  The right hand side may also be written as 
  $\nu(\mathbf{K}^{\bullet}_{\infty} \times \mathbf{K}^{\bullet}_U)\backslash
    T^{\bullet}(\mathbb{A})/T^{\bullet}(\mathbb{Q})$. 
  
Because the cyclic extension $K/F$ splits the torus $\tilde{T}^{\bullet}$,
 weak approximation holds for $\tilde{T}^{\bullet}$, cf. \cite[Thm. 6.36]{V}.
In particular $\tilde{T}^{\bullet}(F)$ is dense in
  $\tilde{T}^{\bullet}(F \otimes_{\mathbb{Q}} \BR) \tilde{T}^{\bullet}(F \otimes_{\mathbb{Q}} \mathbb{Q}_p)$.  
  This implies that
  \begin{displaymath}
\tilde{T}^{\bullet}(F) \nu(\mathbf{K}^{\bullet}_{\infty} \times \mathbf{K}^{\bullet}_U)
    \tilde{T}^{\bullet}(\mathbb{A}^p_{F,f}) = \tilde{T}^{\bullet}(\mathbb{A}_{F}).
  \end{displaymath}
  Hence
  $\tilde{T}^{\bullet}(\mathbb{A}^p_{F,f}) = T^{\bullet}(\mathbb{A}^p_f)$
  acts transitively on the right hand side of (\ref{BZ29e}). Since
  $G^{\bullet}(\mathbb{A}^p_f) \rightarrow T^{\bullet}(\mathbb{A}^p_f)$ is surjective,
  the sets $g Z^{\bullet}_{g \mathbf{K}^{\bullet}_U g^{-1}}$ for
  $g \in G^{\bullet}(\mathbb{A}^p_f)$ cover $\Sh(G^{\bullet}, h)_{\mathbf{K}^{\bullet}_U}$.
  Therefore the last assertion of the proposition follows as in the proof of
  Proposition \ref{BZ7p}. 
\end{proof}

Let $\mathbf{K}^{\bullet}_p \subset G^{\bullet}(\mathbb{Q}_p)$ be the subgroup
associated to a choice of $\Lambda_p$, $\mathbf{M}^{\bullet}$ and
$\mathbf{K}_{\mathfrak{q}_i}$, for $i = 1, \ldots, s$, cf. (\ref{BZKpPkt1e}). 
 We set
$\mathbf{M} = \mathbb{Z}_p^{\times} \cap \mathbf{M}^{\bullet}$. We denote by
$\mathbf{K}_p \subset G(\mathbb{Q}_p)$ the subgroup associated to 
the choice of $\Lambda_p$, $\mathbf{M}$ and $\mathbf{K}_{\mathfrak{q}_i}$,
cf. (\ref{BZKp1e}). We see easily that
\begin{equation}\label{BZ34e}
\mathbf{K}_p = \mathbf{K}^{\bullet}_p \cap G(\mathbb{Q}_p).  
\end{equation}
Under these hypotheses, we have an integral version of Proposition \ref{BZ4p}. It concerns $ \widetilde{\mathcal{A}}^t_{\mathbf{K}}$ instead of $\CA_{\mathbf K}$ and $ \widetilde{\mathcal{A}}^{\bullet t}_{\mathbf{K}}$ instead of $\CA^\bullet_{\mathbf K}$. 
\begin{proposition}\label{BZ6p}
  We fix $M$ and $\ell$ as in Proposition \ref{Chevalley1p}, but we assume that
  both are prime to $p$. Let $\mathbf{K}^{\bullet}=\mathbf{K}^{\bullet}_p\mathbf{K}^{\bullet,p}\subset G^\bullet(\BA_f)$, with $\mathbf{K}^{\bullet}_p$ as in \eqref{BZKpPkt1e}, where
  $\mathbf{M}_{\mathfrak{p}_0} = O_{F_{\mathfrak{p}_0}}^{\times}$.  Let
  $\mathbf{K} = \mathbf{K}^{\bullet} \cap G^{\bullet}(\mathbb{A}_f)$. 
  
  Then there exists a power $N$ of $M$ such that for the open compact
  subgroup $U \subset (F \otimes \mathbb{A}_f)^{\times}$ of Proposition
  \ref{Chevalley1p},  the natural map
  \begin{equation}\label{BZ15e}
    \widetilde{\mathcal{A}}^t_{\mathbf{K}} \rightarrow
    \tilde{\mathcal{A}}^{\bullet t}_{\mathbf{K}_U^{\bullet}}
    \end{equation}
  is a monomorphism of functors. 
\end{proposition}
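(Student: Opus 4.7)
The plan is to follow the strategy of Proposition \ref{BZ4p}, adapted to the integral setting. I would first establish integral analogues of Lemmas \ref{BZ4l} and \ref{BZ42}, and then conclude by the same application of Proposition \ref{Chevalley1p}, using the assumption that $M$ and $\ell$ are prime to $p$.

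After reducing to the case where $S = \Spec R$ is connected Noetherian, let me consider a point $(A, \iota, \bar{\lambda}, \bar{\eta}^p, (\bar{\eta}_{\mathfrak{q}_j})_j, \xi_p) \in \widetilde{\mathcal{A}}^t_{\mathbf{K}}(R)$ and its image $(A', \iota', \bar{\lambda}', \bar{\eta}'^{p}, (\bar{\eta}'_{\mathfrak{q}_j})_j, (\xi_{\mathfrak{p}_i})_i) \in \tilde{\mathcal{A}}^{\bullet t}_{\mathbf{K}_U^{\bullet}}(R)$. By using prime-to-$p$ quasi-isogenies as in the proof of Proposition \ref{BZ11p}, the first point can be represented starting from data of the second with an appropriate choice of representatives; since $\mathbf{M}_{\mathfrak{p}_0}^{\bullet} = O_{F_{\mathfrak{p}_0}}^{\times}$, the relations \eqref{BZ17S1e} automatically ensure that the $\xi_p$ datum of $\widetilde{\mathcal{A}}^t_{\mathbf{K}}$ agrees with the $(\xi_{\mathfrak{p}_i})_i$ via restriction to $\BZ_p^\times / \mathbf{M}$.

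Next, given two points of $\widetilde{\mathcal{A}}^t_{\mathbf{K}}(R)$ having the same image, I would produce a quasi-isogeny $\alpha\colon A_2 \to A_1$ of degree prime to $p$ together with a totally positive $f \in F^\times$ and $\dot{c} \in \mathbf{K}_U^{\bullet}$ such that
\begin{equation*}
\alpha^{*}(\bar\lambda_1) = f \bar\lambda_2, \quad \alpha \circ \bar\eta_2 \dot c = \bar\eta_1, \quad f \dot c' \dot c \in \mathbb{A}_f^{\times},
\end{equation*}
exactly as in the analogues of \eqref{BZ18e}, \eqref{BZ9e}. Because both polarizations are required to be principal in $p$ and both $\bar\eta_{1,\mathfrak{q}_j}, \bar\eta_{2,\mathfrak{q}_j}$ take values in the integral Tate modules $T_{\mathfrak{q}_j}(A_i)$, the element $f$ must lie in $U_p(F)$ and $\dot c \in \mathbf{K}^\bullet_U$ must have components at every $\mathfrak{p}_i | p$ that respect the integral lattices $\Lambda_{\mathfrak{q}_i}$. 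Modifying $f$ by a positive rational as in the proof of \ref{BZ4p}, we may further assume $f \in O_F^\times$ and $f\dot c'\dot c \in \hat{\mathbb Z}^\times$.

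The proof is finished exactly as for Proposition \ref{BZ4p}: applying Proposition \ref{Chevalley1p} (with $M$, $\ell$ prime to $p$), I obtain, for suitable $U = U_{N\ell}$, a global unit $g \in O_F^\times$ with $f = g^2$ and $g \in \mathbf{K}^{\bullet}$. Multiplication by $g$ on $A_1$ gives an isomorphism
$(A_1, f\bar\lambda_1, \bar\eta_1 \dot c) \isoarrow (A_1, \bar\lambda_1, \bar\eta_1 g\dot c)$,
and $g\dot c \in G(\BA_f) \cap \mathbf{K}^\bullet_U = \mathbf{K}$. Because $g$ is a global unit, it is a unit in each $F_{\mathfrak{p}_i}$; hence the isomorphism respects the principal-in-$p$ condition on polarizations, the integral level structures $\bar\eta_{\mathfrak{q}_j}$, and multiplies the $\xi_p$-datum by $g^2 \in U_p(F)$, which is exactly the required ambiguity in datum $(d^t)$.

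The main technical obstacle will be bookkeeping: ensuring at each step that the $\xi_p$ and $\xi_{\mathfrak{p}_i}$ data, the principal-in-$p$ condition on $\bar\lambda$, and the integrality of the $\bar\eta_{\mathfrak{q}_j}$ for $j = 1, \ldots, s$ are all preserved. The crux is that the principal-in-$p$ condition forces $f \in U_p(F)$, so that the output $g$ of Proposition \ref{Chevalley1p}, being a global unit, automatically lies in all the integral groups in play; and that $g\dot c$ lies in $\mathbf{K}$ and not merely in $\mathbf{K}^\bullet_U$. Once these compatibilities are checked, the argument proceeds verbatim from that of Proposition \ref{BZ4p}.
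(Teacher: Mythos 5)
Your plan — establish integral analogues of Lemmas \ref{BZ4l} and \ref{BZ42}, then conclude via Proposition \ref{Chevalley1p} and multiplication by a global unit — is precisely the route the paper takes (Lemmas \ref{BZ7l} and \ref{Leminj} followed by the proof of Proposition \ref{BZ6p}), and the reduction to connected $\Spec R$, the tracking of $f$ and $\dot c$, and the observation that $g\dot c\in\mathbf{K}$ are all correct. However, the final compatibility check for the datum $(d^t)$ as you have written it would not go through.

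You assert that multiplication by $g$ ``multiplies the $\xi_p$-datum by $g^2\in U_p(F)$, which is exactly the required ambiguity in datum $(d^t)$.'' This is doubly incorrect. First, in Definition \ref{BZsAtK2d} the functional equation for the $\xi_p$-datum of $\widetilde{\mathcal{A}}^t_{\mathbf{K}}$ is $\xi_p(u\lambda)=u\xi_p(\lambda)$ for $u\in U_p(\mathbb{Q})$, not $U_p(F)$; the $U_p(F)$-ambiguity belongs to the target functor $\tilde{\mathcal{A}}^{\bullet t}$, not the source. Second, $\xi_p$ takes values in $\mathbb{Z}_p^\times/\mathbf{M}$, and the actual factor by which it changes after the isomorphism is not $g^2\in O_F^\times$ but the $p$-adic unit $a_p:=f\theta_p$, where $\theta_p$ is the $p$-part of $\dot c'\dot c$. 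The required check is that $a_p\in\mathbf{M}$. For this one needs two additional observations that your sketch omits: (i) the integral analogue of Lemma \ref{BZ42} must be proved in a form that records $\theta_p\in\prod_{i=0}^s\mathbf{M}^\bullet_{\mathfrak{p}_i}$ (this is the content of Lemma \ref{Leminj}); and (ii) one must arrange, by taking the exponent $m$ in Proposition \ref{Chevalley1p} large enough, that $g\in\mathbf{M}^\bullet$, so that $f=g^2\in\mathbf{M}^\bullet$. Then $a_p=f\theta_p\in\mathbf{M}^\bullet\cap\mathbb{Z}_p^\times=\mathbf{M}$ (see before (\ref{BZ34e})), and the displayed check $g^{*}(\xi_{1,p})(\lambda_1 f)=\xi_{1,p}(\lambda_1)=a_p\xi_{1,p}(\lambda_1)=\xi'_{1,p}(\lambda_1 f)$ holds in $\mathbb{Z}_p^\times/\mathbf{M}$. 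Your ``bookkeeping'' paragraph identifies the right spot but the proposed justification — that $g$ being a global unit automatically suffices — is not correct: a global unit need not lie in the chosen open subgroups $\mathbf{M}^\bullet_{\mathfrak{p}_j}$ for $j\geq 1$, and the factor $\theta_p$ must also be controlled.
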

By assumption we have 
$\mathbf{K}^{\bullet} = \mathbf{K}^{\bullet}_p \mathbf{K}^{\bullet,p}$ and
$\mathbf{K}_U^{\bullet} = \mathbf{K}^{\bullet}_p \mathbf{K}^{\bullet, p}_U$. The group
$\mathbf{K} = \mathbf{K}^{\bullet} \cap G(\mathbb{A}_f) = \mathbf{K}^{\bullet}_U \cap G(\mathbb{A}_f)$
has a similiar decomposition and therefore the functor
$\tilde{\mathcal{A}}^t_{\mathbf{K}}$ makes sense. Similarly to the proof of Proposition \ref{BZ4p}, we need two lemmas which are analogous to Lemmas \ref{BZ4l} and \ref{BZ42}.  
\begin{lemma}\label{BZ7l}
Let $(A, \bar{\lambda}, \bar{\eta}^p,  (\bar{\eta}_{\mathfrak{q}_j})_j, \xi_{ p})$ be a point
of $\tilde{\mathcal{A}}^t_{\mathbf{K}}(R)$, with image 
$(A', \tilde{\lambda}', \tilde{\eta'}^p,  (\bar{\eta'}_{\mathfrak{q}_j})_j, (\xi'_{ \mathfrak{p}_i})_i)$
 in $\tilde{\mathcal{A}}^{\bullet t}_{\mathbf{K}_U^{\bullet}}(R)$. Then there is a polarization $\lambda' \in \tilde{\lambda}'$ and a level structure 
$\eta'^p \in \tilde{\eta}'^p$ and an element
$\xi'_{p}(\lambda') \in \mathbb{Z}_p^{\times}$ such that for $i = 0, \ldots, s$ 
\begin{displaymath}
  \xi'_{p}(\lambda') \equiv \xi'_{ \mathfrak{p}_i} \; 
  \mod \mathbf{M}^{\bullet}_{\mathfrak{p}_i} 
  \end{displaymath}
and such that the point
$(A, \bar{\lambda}, \bar{\eta}^p, (\bar{\eta}_{\mathfrak{q}_j})_j,  \xi_{ p})$
is isomorphic to
$(A', \bar{\lambda}', \bar{\eta'}^p,  (\bar{\eta'}_{\mathfrak{q}_j})_j, \xi'_{p}(\lambda'))$.
The function $\xi'_{p}$ on $\bar{\lambda}'$ is given by
$\xi'_{p}(u \lambda') = u \xi'_{p}(\lambda')$ for $u \in U_p(\mathbb{Q})$.  
\end{lemma}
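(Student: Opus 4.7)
The plan is to adapt the proof of Lemma \ref{BZ4l} to the integral $t$-version, adding bookkeeping for the $\xi_p$-datum and for the $\mathfrak{q}_j$-level structures. First I would fix arbitrary representatives $\lambda \in \bar{\lambda}$ and $\lambda'_0 \in \tilde{\lambda}'$ (both principal in $p$), $\eta^p \in \bar{\eta}^p$ and $\eta'^p_0 \in \tilde{\eta'}^p$, and $\eta'_{\mathfrak{q}_j, 0} \in \bar{\eta'}_{\mathfrak{q}_j}$ for $j = 1, \ldots, s$. Because the two tuples name the same point of $\tilde{\mathcal{A}}^{\bullet t}_{\mathbf{K}^\bullet_U}(R)$, there exists a prime-to-$p$ quasi-isogeny $\alpha \colon A' \to A$ respecting the $O_{B,(p)}$-actions, together with a totally positive $f \in F^\times$, an element $\dot{c}^p \in \mathbf{K}^{\bullet, p}_U$, and elements $\dot{c}_{\mathfrak{q}_j} \in \mathbf{K}^\bullet_{\mathfrak{q}_j} = \mathbf{K}_{\mathfrak{q}_j}$, satisfying $\alpha^*\lambda = f \lambda'_0$, $\alpha \circ (\eta'^p_0 \dot{c}^p) = \eta^p$, and $\alpha \circ (\eta'_{\mathfrak{q}_j, 0} \dot{c}_{\mathfrak{q}_j}) = \eta_{\mathfrak{q}_j}$. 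Compatibility of the $\xi$-data, which is built into the definition of the map \eqref{BZ15e}, yields $\xi'_{\mathfrak{p}_i}(\alpha^*\lambda) \equiv \xi_p(\lambda) \pmod{\mathbf{M}^\bullet_{\mathfrak{p}_i}}$ for each $i = 0, \ldots, s$, where $\xi_p(\lambda) \in \mathbb{Z}_p^\times/\mathbf{M}$ is viewed in $O^\times_{F_{\mathfrak{p}_i}}/\mathbf{M}^\bullet_{\mathfrak{p}_i}$ via the inclusion $\mathbb{Z}_p \subset O_{F_{\mathfrak{p}_i}}$; this uses that $\mathbf{M} = \mathbb{Z}_p^\times \cap \mathbf{M}^\bullet$.

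Next, I would set $\lambda' := f \lambda'_0 = \alpha^*\lambda$, $\eta'^p := \eta'^p_0 \dot{c}^p$, $\eta'_{\mathfrak{q}_j} := \eta'_{\mathfrak{q}_j, 0} \dot{c}_{\mathfrak{q}_j}$, and choose any lift $\xi'_p(\lambda') \in \mathbb{Z}_p^\times$ of the class $\xi_p(\lambda) \in \mathbb{Z}_p^\times/\mathbf{M}$, extending to $\bar{\lambda}'$ by the required equivariance $\xi'_p(u\lambda') = u \xi'_p(\lambda')$ for $u \in U_p(\mathbb{Q})$. The polarization $\lambda'$ is principal in $p$ since $\alpha$ is prime to $p$ and $\lambda$ is, and the target congruence $\xi'_p(\lambda') \equiv \xi'_{\mathfrak{p}_i}(\lambda') \pmod{\mathbf{M}^\bullet_{\mathfrak{p}_i}}$ is immediate from the preceding paragraph.

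It remains to verify that $(A', \bar{\lambda}', \bar{\eta'}^p, (\bar{\eta'}_{\mathfrak{q}_j})_j, \xi'_p(\lambda'))$ is actually a point of $\tilde{\mathcal{A}}^t_{\mathbf{K}}(R)$ and that $\alpha$ realizes an isomorphism with the starting tuple. The essential check, mirroring \eqref{BZ18e} in the proof of Lemma \ref{BZ4l}, is that $\eta'^p$ is a symplectic $B \otimes \mathbb{A}^p_f$-similitude with scalar in $(\mathbb{A}^p_f)^\times(1)$ rather than only in $(F \otimes \mathbb{A}^p_f)^\times(1)$; this follows by transporting the corresponding property of $\eta^p$ through $\eta'^p = \alpha^{-1} \circ \eta^p$, and the same identity shows that the class of $\eta'^p$ modulo $\mathbf{K}^p$ (not merely modulo $\mathbf{K}^{\bullet, p}_U$) is $\pi_1$-invariant. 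By construction, $\alpha$ intertwines all the data, giving the required isomorphism. I foresee no substantial obstacle: the argument is structurally identical to that of Lemma \ref{BZ4l}, and the key new input is the simple observation that the compatibility $\mathbf{M} = \mathbb{Z}_p^\times \cap \mathbf{M}^\bullet$ allows a single element of $\mathbb{Z}_p^\times$ to serve as a simultaneous lift of the congruences at all primes $\mathfrak{p}_i \mid p$.
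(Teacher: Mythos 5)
Your proposal is correct and follows essentially the same route as the paper: fix representatives, invoke the definition of isomorphism in $\tilde{\mathcal{A}}^{\bullet t}_{\mathbf{K}^\bullet_U}$ to produce a prime-to-$p$ isogeny $\alpha\colon A'\to A$ intertwining the data up to elements $f\in U_p(F)$, $\dot{c}^p$, $\dot{c}_{\mathfrak{q}_j}$, transport the data back, and use $\mathbf{M}=\mathbb{Z}_p^\times\cap\mathbf{M}^\bullet$ to lift the class $\xi_p(\lambda)$ to a single element of $\mathbb{Z}_p^\times$ satisfying all the congruences simultaneously. You fill in the final verification step (the paper only says ``from this the assertion follows easily''), in particular the check that the transported $\eta'^p$ is a symplectic similitude with scalar in $(\mathbb{A}^p_f)^\times(1)$ — which mirrors \eqref{BZ18e} in Lemma~\ref{BZ4l}; this is consistent with the paper's intent.
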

\begin{proof}
  The proof is similar to that of Lemma \ref{BZ4l}. We may assume that
  $S = \Spec R$ is connected. Then we can argue over a geometric point $\bar{s}$ of
  $S$, as explained after Definition \ref{BZAK1d}. We choose arbitrarily polarizations 
  $\lambda' \in \tilde{\lambda}'$ and $\lambda \in \bar{\lambda}$, and prime-to-$p$ level structures
  $\eta'^{p} \in \tilde{\eta'}^p$ and  $\eta^p \in \bar{\eta}^p$, and $p$-level structures 
  $\eta'_{ \mathfrak{q}_j} \in \bar{\eta'}_{ \mathfrak{q}_j}$ and 
  $\eta_{ \mathfrak{q}_j} \in \bar{\eta}_{ \mathfrak{q}_j}$. By assumption there exists an isogeny $\alpha: A' \rightarrow A$ of order
  prime to $p$ such that
   \begin{equation*}
    \begin{aligned} 
      \alpha^{*} (\lambda) = f \lambda', \quad \alpha \circ \eta^p \dot{c}^p = \eta'^p, \quad
      \alpha \circ \eta'_{ \mathfrak{q}_j} c_{\mathfrak{q}_j} = \eta_{ \mathfrak{q}_j}, \quad
      \xi'_{\mathfrak{p}}(f \lambda') \varepsilon_{\mathfrak{p}} =
      \xi_{p}(\lambda), 
      \end{aligned}
  \end{equation*}
  where $f \in U_p(F)$ is totally positive,
  $\dot{c}^p \in G^{\bullet}(\mathbb{A}_f^p)$ and
  $c_{\mathfrak{q}_j} \in \mathbf{K}_{\mathfrak{q}_j}$ and
  $\varepsilon_{\mathfrak{p}} \in \mathbf{M}_{\mathfrak{p}}^{\bullet}$.
  From this the assertion follows easily. 
  \end{proof}

\begin{lemma}\label{Leminj}
  Let $\mathbf{K}^{\bullet} \subset G^{\bullet}(\mathbb{A}_f)$ and $U$ as in
  Proposition \ref{BZ6p}. Let
  \begin{displaymath}
    (A_1, \bar{\lambda}_1, \bar{\eta}_1^{p},(\bar{\eta}_{1,\mathfrak{q}_j})_j,\xi_{1,p}),
    \; 
    (A_2, \bar{\lambda}_2, \bar{\eta}_2^{p},  (\bar{\eta}_{2,\mathfrak{q}_j})_j,\xi_{2,p})
  \end{displaymath}
  be two points of $\tilde{\mathcal{A}}^t_{\mathbf{K}}(R)$ which have the same
  image in $\tilde{\mathcal{A}}^{\bullet t}_{\mathbf{K}^{\bullet}}(R)$. 
Then there exists a totally positive $f \in O_{F}^{\times}$, an element
  $\theta \in (O_F \otimes \hat{\mathbb{Z}})^{\times}$ such that $f \theta = a \in \hat{\mathbb{Z}}^{\times}$ and 
  $\theta_p \in \prod_{i=0}^s \mathbf{M}^{\bullet}_{\mathfrak{p}_i}$, and an element $\dot{c} \in \mathbf{K}_U^{\bullet,p}$ with
  $\theta^p = \dot{c}' \dot{c}$, such that the point
$(A_2, \bar{\lambda}_2, \bar{\eta}_2^{p},  (\bar{\eta}_{2,\mathfrak{q}_j})_j,\xi_{2,p})$ 
  is isomorphic to
  \begin{displaymath}
    (A_1, \bar{\lambda}_1 f, \bar{\eta}_1^{p} \dot{c}, 
    (\bar{\eta}_{1,\mathfrak{q}_j})_j,\xi'_{1,p}). 
  \end{displaymath}
  Here the function $\xi'_{1,p}$ on $\bar{\lambda}_1 f$ is defined by
  \begin{displaymath}
 \xi'_{1,p}(\lambda_1 f) = a \xi_{1,p}(\lambda_1). 
    \end{displaymath}
\end{lemma}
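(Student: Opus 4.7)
The plan is to proceed in direct analogy with the proof of Lemma \ref{BZ42}, now in the integral $t$-version setting, with Lemma \ref{BZ7l} playing the role of Lemma \ref{BZ4l}. Since everything is local on $S$, we may assume $S=\Spec R$ is connected and work at a geometric base point.

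First I would use the agreement of the two points in $\tilde{\mathcal{A}}^{\bullet t}_{\mathbf{K}_U^{\bullet}}(R)$ to produce a quasi-isogeny $\alpha\colon A_2 \to A_1$ of order prime to $p$, compatible with the $O_{B,(p)}$-actions. The polarization classes $\alpha^*(\bar{\lambda}_1)$ and $\bar{\lambda}_2$ on $A_2$ lie in the same $F^{\times}$-homogeneous class by hypothesis on the image; hence $\alpha^*(\bar{\lambda}_1)=f\bar{\lambda}_2$ for some totally positive $f\in F^{\times}$. Since both $\bar{\lambda}_i$ are principal in $p$, $f$ must be a unit at every place above $p$, i.e.\ $f\in U_p(F)$; combined with total positivity, $f\in O_F^{\times}$.

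For the level structures away from $p$, pick representatives $\eta_i^p\in\bar{\eta}_i^p$; then $\alpha\circ\eta_2^p=\eta_1^p\dot{c}$ for some $\dot{c}\in\mathbf{K}^{\bullet,p}_U$, well-defined modulo $\mathbf{K}^p$. The computation of (\ref{BZ18e}), applied to the symplectic similitudes $\eta_i^p$ with respect to $\psi$ and $E^{\lambda_i}$, gives the identity $f^{-1}\xi^{(p)}_1(\lambda_1)\cdot(\dot{c}'\dot{c})=\xi^{(p)}_2(\lambda_2)$ in $\mathbb{A}_f^{p,\times}(1)$, so that $\theta^p:=\dot{c}'\dot{c}\in(O_F\otimes\hat{\mathbb{Z}}^{(p)})^{\times}$ and $f\theta^p=a^{(p)}\in(\hat{\mathbb{Z}}^{(p)})^{\times}$. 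For $j=1,\ldots,s$, $\alpha$ identifies $\bar{\eta}_{2,\mathfrak{q}_j}$ with $\bar{\eta}_{1,\mathfrak{q}_j}$ as classes modulo $\mathbf{K}_{\mathfrak{q}_j}$, so no modification is needed at these places.

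For the $p$-part, the agreement of the two images in $\tilde{\mathcal{A}}^{\bullet t}_{\mathbf{K}_U^{\bullet}}(R)$ forces the equality, for each $i=0,\ldots,s$, of the images of $\xi_{1,p}(\lambda_1)$ and $f\cdot\xi_{2,p}(\lambda_2)$ in $O_{F_{\mathfrak{p}_i}}^{\times}/\mathbf{M}^{\bullet}_{\mathfrak{p}_i}$ (the factor $f$ appearing because of the polarization twist). This yields local factors $\theta_{p,i}:=(f\xi_{2,p}(\lambda_2))\cdot\xi_{1,p}(\lambda_1)^{-1}\in\mathbf{M}^{\bullet}_{\mathfrak{p}_i}$. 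Gluing $\theta^p$ with $\theta_p:=(\theta_{p,i})_i$ produces $\theta\in(O_F\otimes\hat{\mathbb{Z}})^{\times}$ with $\theta_p\in\prod_{i=0}^s\mathbf{M}^{\bullet}_{\mathfrak{p}_i}$ and $f\theta=a\in\hat{\mathbb{Z}}^{\times}$. Defining $\xi'_{1,p}(\lambda_1 f):=a\cdot\xi_{1,p}(\lambda_1)$ and extending by the rule $\xi'_{1,p}(u\lambda_1 f)=u\,\xi'_{1,p}(\lambda_1 f)$ for $u\in U_p(\mathbb{Q})$ gives a well-defined $\xi$-datum on the $U_p(\mathbb{Q})$-homogeneous class $\bar{\lambda}_1 f$, and one checks directly that $\alpha$ realizes the required isomorphism of tuples.

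The main obstacle is the $p$-bookkeeping: one must verify carefully that the local factors $\theta_{p,i}$ lie in the prescribed open subgroups $\mathbf{M}^{\bullet}_{\mathfrak{p}_i}$ and not merely in $O_{F_{\mathfrak{p}_i}}^{\times}$, which uses precisely the fact that the two images agree modulo $\mathbf{M}^{\bullet}$ rather than just modulo units; and one must check that the resulting function $\xi'_{1,p}$ on the shifted class $\bar{\lambda}_1 f$ is compatible with the $U_p(\mathbb{Q})$-equivariance required by Definition \ref{BZsAtK2d}, which is automatic from the construction.
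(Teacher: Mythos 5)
Your proposal follows essentially the same route as the paper's proof: reduce to connected $S$, use the agreement in the bullet functor to produce the prime-to-$p$ quasi-isogeny $\alpha$ and read off $f$, $\dot c$ and the $\xi$-congruences from the polarization, prime-to-$p$ level structure, and $\xi$-data respectively, then glue $\theta_p$ with $\theta^p$. The paper compresses the first half of this by citing Lemma~\ref{BZ7l} directly (applied to the second point, whose image is by hypothesis represented by $(A_1,\dots)$), whereas you rederive that step by hand; this is fine.

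One point to tighten: with your convention $\alpha^*(\bar\lambda_1)=f\bar\lambda_2$ (where $\alpha\colon A_2\to A_1$), the isomorphism $\alpha$ carries $(A_2,\bar\lambda_2,\dots)$ to $(A_1,\bar\lambda_1 f^{-1},\dots)$, not $(A_1,\bar\lambda_1 f,\dots)$; so the $f$ appearing in the lemma's conclusion is the \emph{inverse} of yours. With that relabelling your congruence $\xi_{1,p}(\lambda_1)\equiv f\,\xi_{2,p}(\lambda_2)$ is the lemma's $\xi_{2,p}(\lambda_2)\equiv f^{-1}\xi_{1,p}(\lambda_1)$, your $\theta_{p,i}$ is correct, and $f^{-1}\theta_{p,i}=\xi_{2,p}(\lambda_2)/\xi_{1,p}(\lambda_1)\in\mathbb{Z}_p^\times$ as required; as written, however, $f\theta_{p,i}=f^2\xi_{2,p}(\lambda_2)/\xi_{1,p}(\lambda_1)$ need not lie in $\mathbb{Z}_p^\times$, so the relation $f\theta=a\in\hat{\mathbb{Z}}^\times$ fails with your sign. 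Likewise the early assertion that $f\in U_p(F)$ together with total positivity already forces $f\in O_F^\times$ is not correct at that stage; it is the prime-to-$p$ relation $f\theta^p=a^{(p)}\in(\hat{\mathbb{Z}}^{(p)})^\times$ that you establish afterward which makes $f$ a unit away from $p$, and only then is $f\in O_F^\times$.
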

\begin{proof}
  We fix a polarization $\lambda_1 \in \bar{\lambda}_1$ and
  $\eta_1 \in \bar{\eta}_1$. By Lemma \ref{BZ7l}, the point
$(A_2, \bar{\lambda}_2, \bar{\eta}_2^{p}, (\bar{\eta}_{2,\mathfrak{q}_j})_j,\xi_{2,p})$
  is isomorphic to a point of the form
  \begin{displaymath}
    (A_1, \bar{\lambda}_1 f, \bar{\eta_1^{p} \dot{c}}, 
    (\bar{\eta}_{1,\mathfrak{q}_j})_j, \xi'_{1,p}). 
    \end{displaymath}
  The value $\xi'_{1,p}(f\lambda_1) \in \mathbb{Z}_p$ satisfies the following
  congruence in $O^{\times}_{F_{\mathfrak{p}_i}}$ for each $i = 0, \ldots s$,  
  \begin{displaymath}
    \xi'_{1,p}(f \lambda_1) \equiv f \xi_{1,p}(\lambda_1) 
    \mod \mathbf{M}^{\bullet}_{\mathfrak{p}_i} .  
    \end{displaymath}
  This implies that there is an element
  $\theta_p \in \prod_{i=1}^s \mathbf{M}^{\bullet}_{\mathfrak{p}_i}$ such that
  $f \theta_p = a_p \in \mathbb{Z}_p^{\times}$. Then we obtain
  \begin{displaymath}
    \xi'_{1,p}(f \lambda_1) \equiv a_p \xi_{1,p}(\lambda_1) 
    \mod \mathbf{M}^{\bullet}_{\mathfrak{p}_i} .  
    \end{displaymath}
  Moreover an $\mathbb{A}_f^{p}$-version of (\ref{BZ18e}) shows that there
  is an element $a^p \in (\BA_f^p)^{\times}$ such that
  $f \dot{c}' \dot{c} = a^p$. We have the right to multiply $f$ by an
  element of $U_p(\mathbb{Q})$. Therefore we may assume that
  $a^p \in  \hat{\mathbb{Z}}^{p}$. The result follows by setting
  $a = a_p a^p$.    
\end{proof}
\begin{proof}[Proof of Proposition \ref{BZ6p}]
  As in the proof of Proposition \ref{BZ4p}, it is enough to show that
    \begin{displaymath}
\tilde{\mathcal{A}}^t_{\mathbf{K}}(R) \rightarrow
\tilde{\mathcal{A}}^{\bullet t}_{\mathbf{K}_U^{\bullet}}(R) 
    \end{displaymath}
    is injective if $\Spec R$ is connected.
  Assume that we are given two points as in  Lemma \ref{Leminj} which are mapped
  to the same point of $\tilde{\mathcal{A}}^{\bullet}_{\mathbf{K}_U^{\bullet}}(R)$. 
  For suitable $U$ we conclude as in the proof of Proposition \ref{BZ4p} that
  $f = g^2$, for some $g \in O_{F}^{\times} \cap \mathbf{K}^{\bullet}$.
  If in the argument of that proof we choose $m$ big enough we may assume that
  $g \in \mathbf{M}^{\bullet}$. We obtain
$a_p = f \theta_p\in\mathbf{M}^{\bullet} \cap \mathbb{Z}_p^{\times} = \mathbf{M}$
  (see before (\ref{BZ34e})). The multiplication by $g$ induces an isomorphism
  \begin{equation}\label{BZ19e} 
g: (A_1, \bar{\lambda}_1 f, \bar{\eta}_1^{p} \dot{c}, (\bar{\eta}_{1,\mathfrak{q}_j})_j,  \xi'_{1,p}) \rightarrow
    (A_1, \bar{\lambda}_1, \bar{\eta}_1^{p},(\bar{\eta}_{1,\mathfrak{q}_j})_j,\xi_{1,p}).
  \end{equation}
  Indeed, we have
  $g^{*}(\lambda_1) = \lambda_1 g^2$ and the morphism (\ref{BZ19e}) respects the data
  $\bar{\eta}_1^{p} \dot{c}$ and $\bar{\eta}_1^{p}$, comp.   the proof of Proposition
  \ref{BZ4p}. Furthermore, for $\lambda_1 \in \bar{\lambda}_1$ we obtain
  \begin{displaymath}
    g^{*}( \xi_{1,p}(\lambda_1 f)) = g^{*} (\xi_{1,p})(g^{*} (\lambda_1)) :=
    \xi_{1,p}(\lambda_1) = a_p \xi_{1,p}(\lambda_1) = \xi'_{1,p}(\lambda_1 f). 
  \end{displaymath}
  The second to last equation holds because $a_p \in \mathbf{M}$. 
  \end{proof}

We recall that we assume that $D_{\mathfrak{p}_0}$ is a quaternion division
algebra, cf. (\ref{Dsplit-in0}).  In this case, we have the following integral version of Proposition \ref{BZ7p}. 

\begin{proposition}\label{BZ8p} 
   Let $\mathbf{K}^{\bullet}=\mathbf{K}^{\bullet}_p\mathbf{K}^{\bullet,p}\subset G^\bullet(\BA_f)$, with $\mathbf{K}^{\bullet}_p$ as in \eqref{BZKpPkt1e}, where
  $\mathbf{M}_{\mathfrak{p}_0} = O_{F_{\mathfrak{p}_0}}^{\times}$.   We set
  $\mathbf{K} = G(\mathbb{A}_f) \cap \mathbf{K}^{\bullet}$. We
  assume that there is an $O_K$-lattice $\Gamma\subset V$ and an integer
  $m \geq 3$ prime to $p$ such that for each $g \in \mathbf{K}^{\bullet}$ we have
  $g \Gamma \subset \Gamma$ and such that $g$ acts trivially on
  $\Gamma/m \Gamma$. (In this case  $\tilde{\mathcal{A}}^t_{\mathbf{K}}$ is
  representable.) Let $U$ be as in Proposition \ref{BZ4p}. 

  Let $\tilde{\mathsf{A}}^t_{\mathbf{K}} $ be the $\Spec O_{E_{\nu}}$-scheme which represents the functor $\tilde{\mathcal{A}}^t_{\mathbf{K}}$ and let
  $\tilde{\mathsf{A}}^{\bullet t}_{\mathbf{K}^{\bullet}_{U}}$
  be the coarse moduli scheme of
  $\tilde{\mathcal{A}}^{\bullet t}_{\mathbf{K}^{\bullet}_{U}}$. It is a normal scheme
  which is proper over $\Spec O_{E_{\nu}}$. 

  The  canonical map 
  $\tilde{\mathsf{A}}^t_{\mathbf{K}} \rightarrow \tilde{\mathsf{A}}^{\bullet t}_{\mathbf{K}^{\bullet}_{U}}$ 
  is an open and closed immersion. The arrow 
  $\tilde{\mathcal{A}}^{\bullet t}_{\mathbf{K}^{\bullet}_{U}} \rightarrow \tilde{\mathsf{A}}^{\bullet t}_{\mathbf{K}^{\bullet}_{U}}$ 
  is the \'etale sheafification of the presheaf 
  $\tilde{\mathcal{A}}^{\bullet t}_{\mathbf{K}^{\bullet}_{U}}$ on the big \'etale site.  
\end{proposition}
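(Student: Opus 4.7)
The overall strategy is to mimic the proof of Proposition \ref{BZ7p}, substituting the integral inputs (Proposition \ref{BZ6p} and Lemma \ref{BZ7l}) for their generic-fibre counterparts (Proposition \ref{BZ4p} and Lemma \ref{BZ4l}) and using an auxiliary smaller level structure that is representable. First I would establish properness of $\tilde{\mathcal{A}}^{\bullet t}_{\mathbf{K}^{\bullet}_{U}}$ over $\Spec O_{E_\nu}$ by the standard valuative criterion argument (N\'eron--Ogg--Shafarevich plus the uniqueness of the N\'eron model), exactly as for $\tilde{\mathcal{A}}^t_{\mathbf{K}}$. The coarse moduli scheme $\tilde{\mathsf{A}}^{\bullet t}_{\mathbf{K}^{\bullet}_{U}}$ then exists and is proper. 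To obtain normality, I would choose a normal open subgroup $\mathbf{K}^{\bullet}_1 \lhd \mathbf{K}^{\bullet}_U$ of finite index with the same $p$-part $\mathbf{K}^{\bullet}_p$ and with $\mathbf{K}^{\bullet}_1 \cap G(\mathbb{A}_f) = \mathbf{K}$, small enough that $\tilde{\mathcal{A}}^{\bullet t}_{\mathbf{K}^{\bullet}_{1}}$ is representable by a regular scheme via Grothendieck--Messing deformation theory (the Kottwitz condition cuts out a smooth local model, and $p$ is invertible in $m$). Then $\tilde{\mathsf{A}}^{\bullet t}_{\mathbf{K}^{\bullet}_{U}}$ is the quotient of a regular noetherian scheme by the finite group $\mathbf{K}^{\bullet}_U/\mathbf{K}^{\bullet}_1$ acting through Hecke operators, and is therefore normal.

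For the open and closed immersion, Proposition \ref{BZ6p} implies that $\tilde{\mathcal{A}}^t_{\mathbf{K}} \to \tilde{\mathcal{A}}^{\bullet t}_{\mathbf{K}^{\bullet}_{1}}$ is a monomorphism of representable functors, so $\tilde{\mathsf{A}}^t_{\mathbf{K}} \to \tilde{\mathsf{A}}^{\bullet t}_{\mathbf{K}^{\bullet}_{1}}$ is a monomorphism of regular proper $O_{E_\nu}$-schemes of the same relative dimension, hence an open and closed immersion (the same argument as in the opening paragraph of the proof of Proposition \ref{BZ7p} carries over to the integral base because both local rings are regular of the same dimension). Composing with the finite quotient map $\tilde{\mathsf{A}}^{\bullet t}_{\mathbf{K}^{\bullet}_{1}} \to \tilde{\mathsf{A}}^{\bullet t}_{\mathbf{K}^{\bullet}_{U}}$ gives a finite morphism $\tilde{\mathsf{A}}^t_{\mathbf{K}} \to \tilde{\mathsf{A}}^{\bullet t}_{\mathbf{K}^{\bullet}_{U}}$, which is radicial by Proposition \ref{BZ6p}. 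A finite radicial morphism into a normal target is a closed immersion, and equality of relative dimensions over $O_{E_\nu}$ (both fibres are curves) forces it to be open as well.

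For the \'etale sheafification, let $Y$ be the \'etale sheafification of $\tilde{\mathcal{A}}^{\bullet t}_{\mathbf{K}^{\bullet}_{U}}$. There is a natural map $Y \to \tilde{\mathsf{A}}^{\bullet t}_{\mathbf{K}^{\bullet}_{U}}$; let $Y^o \subset Y$ be the preimage of $\tilde{\mathsf{A}}^t_{\mathbf{K}}$. The factorization $\tilde{\mathcal{A}}^t_{\mathbf{K}} \to Y^o \to \tilde{\mathsf{A}}^t_{\mathbf{K}}$ has injective first arrow (Proposition \ref{BZ6p}) and bijective composite (by the already established open and closed immersion). Surjectivity of $\tilde{\mathcal{A}}^t_{\mathbf{K}}(R) \to Y^o(R)$ on strictly henselian local rings $R$ is checked by the argument of Proposition \ref{BZ6p}: starting from a point of $Y^o(R)$, represented after \'etale localization by a tuple in $\tilde{\mathcal{A}}^{\bullet t}_{\mathbf{K}^{\bullet}_{U}}(R)$, Lemma \ref{BZ7l} gives an isomorphic representative in $\tilde{\mathcal{A}}^{\bullet t}_{\mathbf{K}^{\bullet}_{1}}(R)$ and Proposition \ref{Chevalley1p} produces a totally positive unit $f = g^2$ with $g \in \mathbf{M}^{\bullet}$ modifying the polarization into a $\mathbf{K}$-class; the strict henselianity is used exactly to extract such units integrally from the residue field. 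Hence $Y^o \cong \tilde{\mathsf{A}}^t_{\mathbf{K}}$ is representable and is an open and closed subfunctor of $Y$. I would then cover $Y$ by the Hecke translates $g \cdot Y^o$ for $g \in G^{\bullet}(\mathbb{A}_f^p)$, each of which is representable by $\tilde{\mathsf{A}}^t_{g^{-1}\mathbf{K}g}$ via the analogous integral immersion $\varkappa_g$.

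The main obstacle I anticipate is the last covering step over the integral base. In the generic fibre this is Lemma \ref{BZCover1l} and rests on weak approximation for the torus $\tilde{T}^\bullet$. Integrally, each translate $g\cdot Y^o$ is open in $Y$ and the generic fibres of the translates cover the generic fibre, but this does not automatically imply they cover the special fibre. To close this gap, I would argue moduli-theoretically: given any geometric point over the residue field $\kappa$ of $O_{E_\nu}$, corresponding to a tuple $(A,\iota,\bar\lambda,\bar\eta^p,\ldots)$ for $G^\bullet$ over an algebraically closed field of characteristic $p$, one can always perform a prime-to-$p$ isogeny and choose $g \in G^\bullet(\mathbb{A}_f^p)$ so that the multiplier of the transformed polarization class lies in $\mathbb{Q}^\times$, i.e., the tuple descends to a point of $\tilde{\mathcal{A}}^t_{g^{-1}\mathbf{K}g}(\kappa)$. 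This yields the integral covering and completes the representability of $Y$ by $\tilde{\mathsf{A}}^{\bullet t}_{\mathbf{K}^{\bullet}_{U}}$.
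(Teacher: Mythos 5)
Your overall strategy tracks the paper's, but there are genuine gaps in three places. First, the claim that ``a finite radicial morphism into a normal target is a closed immersion'' is false: the relative Frobenius of $\mathbb{P}^1$ over $\mathbb{F}_p$ is finite, radicial, has normal target, and is not a closed immersion, and your base has positive residue characteristic so this matters. The paper's proof avoids this by establishing that on each connected component the morphism $\bar{Z}\to\bar{Z}^\bullet$ is \emph{birational} (because the generic fibre is, up to Galois twist, the open and closed immersion of Proposition~\ref{BZ7p}) and proper of normal integral schemes; combined with injectivity on geometric points (from Proposition~\ref{BZ6p} and the coarse-moduli property) this forces finiteness, and a finite birational morphism into a normal scheme is an isomorphism. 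Radicial alone does not suffice; you must use the birationality coming from the characteristic-zero fibre.

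Second, the sheafification step hides a real subtlety you do not address. When $L$ is the residue field of $\mathcal{O}^{sh}_{T,t}$, it is separably but not algebraically closed in characteristic $p$, so the coarse-moduli property only gives a bijection $\tilde{\mathcal{A}}^{\bullet t}_{\mathbf{K}^\bullet_U}(\bar L)\to\tilde{\mathsf{A}}^{\bullet t}_{\mathbf{K}^\bullet_U}(\bar L)$ and not over $L$ itself. The paper passes to a finite totally inseparable extension $N/L$ and uses the rigidity of abelian schemes applied to the nilimmersion $N\otimes_L N\to N$ to descend the identification back to $L$; this step is necessary to prove that $Y^o(L)\to\tilde{\mathsf{A}}^t_{\mathbf{K}}(L)$ is injective and hence that $\tilde{\mathcal{A}}^t_{\mathbf{K}}(L)\to Y^o(L)$ is bijective. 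Third, your proposed moduli-theoretic covering of $Y$ by Hecke translates at the special fibre (isogeny adjusting the polarization multiplier into $\mathbb{Q}^\times$) is not the right mechanism and would not obviously work. The paper's Lemma~\ref{Lemcover} does it differently: since $\tilde{\mathsf{A}}^{\bullet t}_{\mathbf{K}^\bullet_U}$ is locally integral and flat over $\Spec O_{E_\nu}$, every connected component is the closure $\bar Z$ of a connected component $Z$ of the generic fibre; the generic-fibre covering from Lemma~\ref{BZCover1l} (which rests on weak approximation for the torus $\tilde T^\bullet$) then automatically propagates to the whole scheme. Your worry about the special fibre was well-placed, but the fix is flatness plus local integrality, not a special-fibre moduli construction.
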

\begin{proof} 
  The scheme $\tilde{\mathsf{A}}^t_{\mathbf{K}}$ is regular and the
  morphism $\tilde{\mathsf{A}}^t_{\mathbf{K}} \rightarrow \Spec O_{E_{\nu}}$ is
  generically smooth and proper.
  Its special fibre is a divisor with normal crossings.
  This follows from deformation theory because the $p$-divisible group
  $X_{\mathfrak{q}_0}$ is a special formal $O_{B_{\mathfrak{q}_0}}$-module in the sense
  of Drinfeld. The properness follows from a standard argument using that
  $B$ is a division algebra, cf. \cite[Prop. 4.1]{Dr}.

For the proof  that 
a coarse moduli scheme $\tilde{\mathsf{A}}^{\bullet t}_{\mathbf{K}^{\bullet}_{U}}$
exists we refer to \cite[1.7 Satz]{Z-sR}. Because this moduli scheme is
obtained as a quotient of a normal scheme by a finite group, the coarse
moduli scheme is normal. Since
  $\tilde{\mathsf{A}}^t_{\mathbf{K},E_{\nu}}\subset\tilde{\mathsf{A}}^t_{\mathbf{K}}$
  is an open dense subset of a scheme which is locally integral we obtain
  a bijection between connected components
    \begin{equation*}
      \pi_0 (\tilde{\mathsf{A}}^t_{\mathbf{K}, E_{\nu}})  \rightarrow 
      \pi_0 (\tilde{\mathsf{A}}^t_{\mathbf{K}}), \quad\quad
      Z  \mapsto  \bar{Z} .
    \end{equation*}
  The same is true for the connected components of
  $\tilde{\mathsf{A}}^{\bullet t}_{\mathbf{K}^{\bullet}_{U},E_{\nu}}$ and
  $\tilde{\mathsf{A}}^{\bullet}_{\mathbf{K}^{\bullet t}_{U}}$. 
  We claim that 
  \begin{equation}\label{BZ22e}
    \tilde{\mathsf{A}}^t_{\mathbf{K}} \rightarrow
    \tilde{\mathsf{A}}^{\bullet t}_{\mathbf{K}^{\bullet}_{U}} 
  \end{equation}
  is an open and closed immersion. Indeed, the morphism (\ref{BZ22e}) is proper
  because $\tilde{\mathsf{A}}_{\mathbf{K}}^t$ is proper over $\Spec O_{E_{\nu}}$. The
  general fiber over $E_{\nu}$ of this morphism coincides up to a Galois twist
  with (\ref{BZ10e}) and is therefore an open and closed immersion. Let
  $Z \subset \tilde{\mathsf{A}}^t_{\mathbf{K},E_{\nu}}$
  be a connected component which we also regard as a connected component of
  $\tilde{\mathsf{A}}^{\bullet t}_{\mathbf{K}^{\bullet}_{U}, E_{\nu}}$. We consider the
  closures $\bar{Z} \subset \tilde{\mathsf{A}}^t_{\mathbf{K}}$ and
  $\bar{Z}^{\bullet} \subset \tilde{\mathsf{A}}^{\bullet t}_{\mathbf{K}^{\bullet}_{U}}$ of $Z$.
  These are connected components and the morphism (\ref{BZ22e}) induces
  an birational proper morphism $\bar{Z} \rightarrow \bar{Z}^{\bullet}$ of normal
  schemes. If we take the values in some algebraically closed field the last
  morphism becomes injective. This follows from Proposition \ref{BZ6p} and
  the definition of a coarse moduli problem. Therefore
  $\bar{Z} \rightarrow \bar{Z}^{\bullet}$ is an isomorphism, and the claim is proved.

  Let $Y$ be the \'etale sheafification of
  $\tilde{\mathcal{A}}^{\bullet t}_{\mathbf{K}^{\bullet}_{U}}$ on the big \'etale site.
  We consider the following commutative diagram
  \begin{equation}\label{BZ27e}
   \begin{aligned}
   \xymatrix{
     \widetilde{\mathcal{A}}^t_{\mathbf{K}} \ar[r] \ar[d] &
     \tilde{\mathcal{A}}^{\bullet t}_{\mathbf{K}^{\bullet}_{U}} \ar[d]\\
     Y^{o} \ar[r] \ar[d] & Y \ar[d]\\
     \tilde{\mathsf{A}}^t_{\mathbf{K}} \ar[r] &
     \tilde{\mathsf{A}}^{\bullet t}_{\mathbf{K}^{\bullet}_{U}} .\\
   }
   \end{aligned}
  \end{equation}
  Here $Y^{o}$ is defined to be the fiber product in the lower square. We know
  that the horizontal arrows are monomorphisms and the two lower ones are
  open and closed immersions.
  We will show that $\tilde{\mathcal{A}}^t_{\mathbf{K}} \rightarrow Y^{o}$ is an
  isomorphism of sheaves. 
  
  Let $T$ be a noetherian scheme over $\Spec O_{E_{\nu}}$ and
  $\mathcal{O}_{T,t}^{sh}$  the strict henselization at a geometric point $t$
  of $T$. We have to show that the induced map of stalks
  \begin{displaymath}
(\tilde{\mathcal{A}}^t_{\mathbf{K}})_{T,t} \rightarrow (Y^o)_{T,t}   
    \end{displaymath}
  is bijective. (The stalks are the same as the stalks of the restriction
  of both sheaves to the small \'etale site $T_{et}$.) Since
  $\tilde{\mathcal{A}}^t_{\mathbf{K}}$ commutes with direct limits, we obtain
  $(\tilde{\mathcal{A}}^t_{\mathbf{K}})_{T,t} = \widetilde{\mathcal{A}}^t_{\mathbf{K}}(\mathcal{O}_{T,t}^{sh})$.
  The same is true for the presheaf
  $\tilde{\mathcal{A}}^{\bullet t}_{\mathbf{K}^{\bullet}_{U}}$. Therefore we obtain 
$(Y^o)_{T,t}\subset Y_{T,t} =\tilde{\mathcal{A}}^{\bullet t}_{\mathbf{K}^{\bullet}_{U}}(\mathcal{O}_{T,t}^{sh})$.
  This subset consists of the points on the right hand side which are mapped to
  $\tilde{\mathsf{A}}^t_{\mathbf{K}}(\mathcal{O}_{T,t}^{sh})$.
 
  Let $L$ the residue class field of $\mathcal{O}_{T,t}^{sh}$ which is separably
  closed. We firstly show that
  \begin{equation}\label{BZ25e}
\tilde{\mathcal{A}}^t_{\mathbf{K}}(L) \rightarrow Y^o(L)  
  \end{equation}
  is bijective. Equivalently we may show that
  $Y^{o}(L) \rightarrow \tilde{\mathsf{A}}^t_{\mathbf{K}}(L)$ is bijective.
  Clearly this map is surjective because
  $\tilde{\mathcal{A}}^t_{\mathbf{K}} \cong \tilde{\mathsf{A}}^t_{\mathbf{K}}$.
  Let
  $\theta_1,\theta_2\in Y^o(L)\subset\tilde{\mathcal{A}}^{\bullet t}_{\mathbf{K}^{\bullet}_{U}}(L)$
  be two elements with the same image in
  $\tilde{\mathsf{A}}^t_{\mathbf{K}}(L)$. By the properties of a coarse moduli
  scheme, the map 
$\tilde{\mathcal{A}}^{\bullet t}_{\mathbf{K}^{\bullet}_{U}}(\bar{L}) \rightarrow \tilde{\mathsf{A}}^{\bullet t}_{\mathbf{K}^{\bullet}_{U}}(\bar{L})$
  is bijective. We conclude that $\theta_{1,\bar{L}} = \theta_{2,\bar{L}}$ holds for the
  base change. Therefore we find a finite totally inseparable extension $N$
  of $L$ such that $\theta_{1,N} = \theta_{2,N}$. If the last two points
  are represented by the data 
$(A_1,\tilde{\lambda}_1,\tilde{\eta}^p_1,(\bar{\eta}_{1,\mathfrak{q}_j})_j, (\xi_{1, \mathfrak{p}_i})_i)$
  and
$(A_2,\tilde{\lambda}_2,\tilde{\eta}^p_2,(\bar{\eta}_{2,\mathfrak{q}_j})_j, (\xi_{2, \mathfrak{p}_i})_i)$, 
  we conclude by the rigidity of abelian varieties, applied to the nilimmersion
  $N \otimes_{L} N \rightarrow N$, that $\theta_1 = \theta_2$.

  Now we consider the map
  \begin{displaymath}
    \widetilde{\mathcal{A}}^t_{\mathbf{K}}(\mathcal{O}_{T,t}^{sh}) \rightarrow
    Y^o(\mathcal{O}_{T,t}^{sh}). 
  \end{displaymath}
  This map is clearly injective. We show that it is surjective. We consider
  a point
  \begin{equation}\label{BZ26e} 
    (A_1, \tilde{\lambda}_1, \tilde{\eta}^p_1,     (\bar{\eta}_{1,\mathfrak{q}_j})_j, (\xi_{1, \mathfrak{p}_i})_i) \in Y^o(\mathcal{O}_{T,t}^{sh}) \subset
    \tilde{\mathcal{A}}^{\bullet t}_{\mathbf{K}^{\bullet}_{U}}(\mathcal{O}_{T,t}^{sh}).
  \end{equation}
  Over $L$ this point is in the image of (\ref{BZ25e}). By Lemma \ref{BZ7l},
  the preimage has the form
  $(A_{1,L}, \bar{\lambda}_1, \bar{\eta}^p_1, (\bar{\eta}_{1,\mathfrak{q}_j})_j, (\xi_{2, \mathfrak{p}_i})_i)$. 
  Since $\bar{\lambda}_1 \subset \tilde{\lambda}_1$, the polarizations in
  $\bar{\lambda}_1$ lift to polarizations of $A_1$ which are principal in $p$. 
  Since there is no difference between a rigidification over
  $\mathcal{O}_{T,t}^{sh}$ and over the residue class field $L$, we see that 
  $(A_{1}, \bar{\lambda}_1, \bar{\eta}^p_1,  (\bar{\eta}_{1,\mathfrak{q}_j})_j, (\xi_{2, \mathfrak{p}_i})_i)$
  is a point of $\tilde{\mathcal{A}}^t_{\mathbf{K}}(\mathcal{O}_{T,t}^{sh})$
  which is mapped to the point (\ref{BZ26e}). We have proved that the two
  vertical arrows on the left hand side of diagram (\ref{BZ27e}) are
  isomorphisms.

  To show that
$\tilde{\mathcal{A}}^{\bullet t}_{\mathbf{K}^{\bullet}_{U}}\rightarrow\tilde{\mathsf{A}}^{\bullet t}_{\mathbf{K}^{\bullet}_{U}}$ 
  is the \'etale sheafification, we can argue as in the proof of Proposition
\ref{BZ7p} if we substitute Lemma \ref{BZCover1l} by  Lemma \ref{Lemcover} below.   
\end{proof}

The group $G^{\bullet}(\mathbb{A}_f^p)$ acts on the projective system of the
functors $\tilde{\mathcal{A}}^{\bullet t}_{\mathbf{K}^{\bullet}}$ for varying
$\mathbf{K}^{\bullet,p} \subset G^{\bullet}(\mathbb{A}_f^p)$ via the datum
$\eta^p$ of Definition \ref{BZApkt4d}. More explicitly,  each
$g \in G^{\bullet}(\mathbb{A}_f^p)$ induces by multiplication an isomorphism
\begin{displaymath}
  g: \tilde{\mathcal{A}}^{\bullet t}_{g\mathbf{K}^{\bullet}g^{-1}} \rightarrow
  \tilde{\mathcal{A}}^{\bullet t}_{\mathbf{K}^{\bullet}}, 
\end{displaymath}
which induces an isomorphism of the coarse moduli spaces.
\begin{lemma}\label{Lemcover} 
  Let $\mathbf{K}^{\bullet}$ be as in Proposition \ref{BZ8p}. Then there is an
  open subgroup $U \subset (F \otimes \mathbb{A}_f)^{\times}$ such that for each
  $g \in G^{\bullet}(\mathbb{A}_f^p)$ the natural morphism
  \begin{equation}\label{BZ31e}
    \tilde{\mathsf{A}}^t_{g\mathbf{K}g^{-1}} \rightarrow 
    \tilde{\mathsf{A}}^{\bullet t}_{g\mathbf{K}^{\bullet}_{U}g^{-1}}  
  \end{equation}
  is an open and closed immersion. If we compose the immersion with the
  morphisms
  $g:\tilde{\mathsf{A}}^{\bullet t}_{g\mathbf{K}^{\bullet}_{U}g^{-1}} \rightarrow \tilde{\mathsf{A}}^{\bullet t}_{\mathbf{K}^{\bullet}_{U}}$,
  we obtain open and closed immersions
  \begin{equation}\label{BZ32e}
    \varkappa_g: \tilde{\mathsf{A}}^t_{g\mathbf{K}g^{-1}} \rightarrow
    \tilde{\mathsf{A}}^{\bullet t}_{\mathbf{K}^{\bullet}_{U}} .
    \end{equation}
  For varying $g \in G^{\bullet}(\mathbb{A}_f^p)$ the morphisms $\varkappa_g$
  are an open covering of $\tilde{\mathsf{A}}^{\bullet t}_{\mathbf{K}^{\bullet}_{U}}$. 
\end{lemma}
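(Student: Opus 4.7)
The plan is to reduce everything to the generic fibre, where Lemma \ref{BZCover1l} is available, and then lift back to the integral model using properness and the bijection between connected components of the generic and of the total integral model.

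First, I would show that for every fixed $g \in G^{\bullet}(\mathbb{A}_f^p)$, the morphism (\ref{BZ31e}) is an open and closed immersion. This follows by applying Proposition \ref{BZ8p} to the conjugate group $g\mathbf{K}^{\bullet}g^{-1}$ in place of $\mathbf{K}^{\bullet}$. The hypotheses there are preserved under such conjugation: the $p$-component $\mathbf{K}^{\bullet}_p$ is unchanged since $g$ is prime to $p$, while the lattice condition at $p$-prime level can be arranged by enlarging (or replacing by $g\Gamma$) if necessary. Crucially, the choice of $U$ can be taken independent of $g$: since $\mu\colon G^{\bullet}\to\Res_{F/\mathbb{Q}}\mathbb{G}_m$ has abelian target, one has $\mu(g h g^{-1}) = \mu(h)$ for every $h$, and therefore $g\mathbf{K}^{\bullet}_U g^{-1} = (g\mathbf{K}^{\bullet}g^{-1})_U$ with the same $U$ as for $\mathbf{K}^{\bullet}$. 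Composing this immersion with the Hecke isomorphism $g\colon\tilde{\mathsf{A}}^{\bullet t}_{g\mathbf{K}^{\bullet}_U g^{-1}} \isoarrow \tilde{\mathsf{A}}^{\bullet t}_{\mathbf{K}^{\bullet}_U}$ then yields $\varkappa_g$ as stated in (\ref{BZ32e}).

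Second, I would establish the covering property on the generic fibre. Over $\Spec E_\nu$ the scheme $\tilde{\mathsf{A}}^{\bullet t}_{\mathbf{K}^{\bullet}_U, E_\nu}$ is a Galois twist of the coarse moduli scheme of $\mathcal{A}^{\bullet}_{\mathbf{K}^{\bullet}_U, E_\nu}$, which, by Proposition \ref{BZ7p}, is isomorphic to $\Sh(G^{\bullet}, h)_{\mathbf{K}^{\bullet}_U, E_\nu}$ after unramified extension. The twist is compatible with the corresponding twist of $\Sh(G, h)_{g\mathbf{K}g^{-1}}$ occurring in the source of $\varkappa_g$. Hence Lemma \ref{BZCover1l} directly implies that the maps $\varkappa_{g,E_\nu}$, for $g$ varying in $G^{\bullet}(\mathbb{A}_f^p)$, form an open covering of the generic fibre of $\tilde{\mathsf{A}}^{\bullet t}_{\mathbf{K}^{\bullet}_U}$.

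Third, I would transport this covering to the integral model. The proof of Proposition \ref{BZ8p} establishes a bijection $\pi_0(\tilde{\mathsf{A}}^{\bullet t}_{\mathbf{K}^{\bullet}_U, E_\nu})\isoarrow \pi_0(\tilde{\mathsf{A}}^{\bullet t}_{\mathbf{K}^{\bullet}_U})$ via $Z \mapsto \bar Z$, obtained from the fact that the coarse moduli scheme is the quotient of a flat, proper, normal scheme by a finite group and therefore each of its connected components has dense generic fibre. Since every $\varkappa_g$ is an open and closed immersion, the preimage of its image is a union of connected components; because the $\varkappa_{g,E_\nu}$ cover the generic fibre, these components exhaust $\tilde{\mathsf{A}}^{\bullet t}_{\mathbf{K}^{\bullet}_U}$.

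The main obstacle is verifying that the choice of $U$ supplied by the proof of Proposition \ref{BZ6p} can really be made uniform in $g$; the reduction to conjugation-invariance of $\mu$ settles this, but one must also check that the auxiliary integer $m$ in Proposition \ref{Chevalley1p} can be chosen to accommodate all conjugates simultaneously, which is possible since the relevant $2m$-th power condition only sees the image of $\mu$, a prime-to-$g$ datum. A secondary subtlety is the bijection on $\pi_0$ for the coarse moduli scheme, which requires normality of the quotient and is already implicit in the proof of Proposition \ref{BZ8p}.
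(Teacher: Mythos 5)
Your proposal is correct and follows essentially the same route as the paper: the paper cites the open-and-closed-immersion claim directly from the intermediate step (\ref{BZ22e}) in the proof of Proposition \ref{BZ8p}, invokes Lemma \ref{BZCover1l} on the generic fibre (which already notes that the same $U$ works for all $g$), and then lifts the covering to the integral model via the $\pi_0$-bijection given by flatness and local integrality. The only difference is that you make explicit the conjugation-invariance of $\mu$ and the preservation of the lattice hypothesis under prime-to-$p$ conjugation, details the paper leaves implicit.
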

\begin{proof}
  We have already seen  that (\ref{BZ31e}) is an open and closed
  immersion, cf. (\ref{BZ22e}). Therefore the same is true of $\varkappa_g$. The general fibre
  of $\tilde{\mathsf{A}}^{\bullet t}_{\mathbf{K}^{\bullet}_{U}}$ is up to a Galois twist 
  $\Sh(G^{\bullet}, h)_{\mathbf{K}^{\bullet}_U, E_{\nu}}$. By Lemma \ref{BZCover1l}
  each connected component $Z$ of $\Sh(G^{\bullet}, h)_{\mathbf{K}^{\bullet}_U, E_{\nu}}$
  is in the image of $\varkappa_g$ for some $g \in G^{\bullet}(\mathbb{A}_f^p)$.
  Since $\tilde{\mathsf{A}}^{\bullet}_{\mathbf{K}^{\bullet}_{U}}$ is locally an integral
  scheme which is flat over $O_{E_{\nu}}$, each connected component  is of the form $\bar{Z}$. It is therefore in the image of the open
  and closed immersion $\varkappa_g$. Hence (\ref{BZ32e}) is indeed a
  covering. 
  \end{proof}
In the sequel, only the Shimura varieties ${\rm Sh}(G^\bullet, h)$ and ${\rm Sh}(G^\bullet, h^\bullet_D)$ attached to the group $G^\bullet$ will play a role. These are \emph{ramified} abelian Galois twists of each other, cf. Proposition \ref{BZ3c}. The following table summarizes the Shimura varieties and their relations to moduli functors.

$$
\vcenter{
\begin{Small}
\begin{tabular}{|c|c|c|c|}
\hline
Shimura variety&Moduli problem&First occurrence&Relation  \\
\hline
${\rm Sh}(G^\bullet, h)$ &$\CA^\bullet$, resp. $\CA^{\bullet, bis}$ &Def. \ref{BZApkt3d}/\ref{BZApkt3altd}, resp. Def. \ref{BZApkt4d} & coarse moduli scheme \\

\hline
${\rm Sh}(G^\bullet, h^\bullet_D)$  &$\CA^{\bullet, t}$  &  Def. \ref{BZsApkt4d} &coarse moduli sch. of unram. twist \\
\hline
\end{tabular}
\end{Small}
}
$$

\medskip

The integral model  $\widetilde{\Sh}_{\mathbf{K}^{\bullet}}(G^{\bullet}, h^{\bullet}_{D})$ of  ${\Sh}_{\mathbf{K}^{\bullet}}(G^{\bullet}, h^{\bullet}_{D})$ over $O_{E_\nu}$ (defined if   $\mathbf{K}^{\bullet}=\mathbf{K}^{\bullet}_p\mathbf{K}^{\bullet,p}\subset G^\bullet(\BA_f)$, with $\mathbf{K}^{\bullet}_p$ as in \eqref{BZKpPkt1e}, where
  $\mathbf{M}_{\mathfrak{p}_0} = O_{F_{\mathfrak{p}_0}}^{\times}$) is defined by twisting back the integral extension $\tilde{\CA}^{\bullet, t}$ of $\CA^{\bullet, t}$.

\section{The $\mathrm{RZ}$-spaces}\label{s:RZ}

In this section, we discuss the $\mathrm{RZ}$-spaces 
needed for the $p$-adic uniformization of the Shimura varieties of the last
section.  

We first discuss the banal case of a prime ideal  $\mathfrak{p}_i$, for $i \neq 0$. 
\begin{definition}
  Let $S$ be an $O_{E_{\nu}}$-scheme. The category $\mathcal{P}_{\mathfrak{p}_i}(S)$
  is the category of all triples $(Y, \iota, \bar{\lambda})$, where
  $Y$ is a $p$-divisible group of height $8[F_{\mathfrak{p}_i} : \mathbb{Q}_p]$
  over $S$, where $\iota: O_{B_{\mathfrak{p}_i}} \rightarrow \End Y $
      is a $\mathbb{Z}_p${\rm -Alg}ebra homomorphism, and where $\bar{\lambda}$ is a
    $O_{F_{\mathfrak{p}_i}}^{\times}$-homogeneous polarization of $Y$ such that each
    $\lambda \in \bar{\lambda}$ is principal. We demand that the Rosati involution associated to
    $\lambda \in \bar{\lambda}$ is compatible with the involution
    $b\mapsto b^\star$ on $B_{\mathfrak{p}_i}$ with respect to $\iota$. 
    The decomposition
    $O_{B_{\mathfrak{p}_i}} = O_{B_{\mathfrak{q}_i}} \times O_{B_{\bar{\mathfrak{q}}_i}}$
induces a composition $Y = Y_{\mathfrak{q}_i} \times Y_{\bar{\mathfrak{q}}_i}$.
  We demand moreover that the $p$-divisible group $Y_{\mathfrak{q}_i}$
  is \'etale. 
\end{definition}

The definition implies that
$\lambda = \lambda_{\mathfrak{q}_i} \oplus \lambda_{\bar{\mathfrak{q}}_i}$ where
$\lambda_{\mathfrak{q}_i}:Y_{\mathfrak{q}_i}\rightarrow(Y_{\bar{\mathfrak{q}}_i})^{\wedge}$ and  
$\lambda_{\bar{\mathfrak{q}}_i}: Y_{\bar{\mathfrak{q}}_i} \rightarrow (Y_{\mathfrak{q}_i})^{\wedge}$ 
are isomorphisms to the dual $p$-divisible groups such that
$\lambda_{\bar{\mathfrak{q}}_i} = - (\lambda_{\mathfrak{q}_i})^{\wedge}$ and such that
for each $b_1 \in O_{B_{\mathfrak{q}_i}}$ and $b_2 \in O_{B_{\bar{\mathfrak{q}}_i}}$ the
following diagrams are commutative,
\begin{equation}\label{RZ4e}
  \begin{aligned}
   \xymatrix{
     Y_{\bar{\mathfrak{q}}_i} \ar[r]^{\iota(b_1^{\star})} \ar[d]_{\lambda_{\bar{\mathfrak{q}}_i}}
     & Y_{\bar{\mathfrak{q}}_i} \ar[d]^{\lambda_{\bar{\mathfrak{q}}_i}}\\
     (Y_{\mathfrak{q}_i})^{\wedge} \ar[r]_{\iota(b_1)^{\wedge}} &
     (Y_{\mathfrak{q}_i})^{\wedge}, \\ 
   }  \hspace{2cm} 
   \xymatrix{
     Y_{\mathfrak{q}_i} \ar[r]^{\iota(b_2^{\star})} \ar[d]_{\lambda_{\mathfrak{q}_i}}
     & Y_{\mathfrak{q}_i} \ar[d]^{\lambda_{\mathfrak{q}_i}}\\
     (Y_{\bar{\mathfrak{q}}_i})^{\wedge} \ar[r]_{\iota(b_2)^{\wedge}} &
     (Y_{\bar{\mathfrak{q}}_i})^{\wedge} .\\ 
   } 
   \end{aligned}
\end{equation}
Since one of these diagrams is the dual of the other it is enough to require the
commutativity of one of these diagrams.

We construct an object of $\mathcal{P}_{\mathfrak{p}_i}(\Spec\bar{\kappa}_{E_{\nu}})$
as follows. Let us denote the action of the Frobenius endomorphism on
$W(\bar{\kappa}_{E_{\nu}})$ by $\sigma$. Recall from (\ref{BZLambda1e}) the lattices $\Lambda_{\mathfrak{q}_i} $ and $\Lambda_{\bar{\mathfrak{q}}_i}$. We endow
$\Lambda_{\mathfrak{q}_i} \otimes_{\mathbb{Z}_p} W(\bar{\kappa}_{E_{\nu}})$ with
the structure of a Dieudonn\'e module by defining the action of the Frobenius
$F$ on this module by
\begin{displaymath}
  F(u_1 \otimes \xi_1) = p u_1 \otimes \sigma(\xi_1), \quad u_1 \in
  \Lambda_{\mathfrak{q}_i}, \; \xi_1 \in W(\bar{\kappa}_{E_{\nu}}),  
\end{displaymath}
and we endow
$\Lambda_{\bar{\mathfrak{q}}_i} \otimes_{\mathbb{Z}_p} W(\bar{\kappa}_{E_{\nu}})$
with a structure of a Dieudonn\'e module by defining the action of the
Frobenius on this module by 
\begin{displaymath}
  F(u_2 \otimes \xi_2) = u_2 \otimes \sigma(\xi_2), \quad u_2 \in
  \Lambda_{\bar{\mathfrak{q}}_i}, \; \xi_2 \in W(\bar{\kappa}_{E_{\nu}}). 
\end{displaymath}
The direct sum of these Dieudonn\'e modules defines a Dieudonn\'e module
structure on
$\Lambda_{\mathfrak{p}_i}\otimes_{\mathbb{Z}_p} W(\bar{\kappa}_{E_{\nu}})$.
We consider the perfect alternating $W(\bar{\kappa}_{E_{\nu}})$-bilinear form
\begin{equation}\label{RZ21e}  
\psi_W: \Lambda_{\mathfrak{p}_i}\otimes_{\mathbb{Z}_p} W(\bar{\kappa}_{E_{\nu}}) 
\times \Lambda_{\mathfrak{p}_i}\otimes_{\mathbb{Z}_p} W(\bar{\kappa}_{E_{\nu}})
\rightarrow W(\bar{\kappa}_{E_{\nu}}), 
\end{equation} 
cf. (\ref{BZLambda1e}).  
One checks easily that this is a bilinear form of Dieudonn\'e modules.
By covariant Dieudonn\'e theory,
$(\Lambda_{\mathfrak{p}_i}\otimes_{\mathbb{Z}_p} W(\bar{\kappa}_{E_{\nu}}), \psi_W)$
corresponds to a principally polarized $p$-divisible group
$(\Lambda_{\mathfrak{p}_i}^{pd}, \lambda_{\psi})$. We have the decomposition
$\Lambda_{\mathfrak{p}_i}^{pd} = \Lambda_{\mathfrak{q}_i}^{et} \oplus \Lambda_{\bar{\mathfrak{q}}_i}^{mult}$,
where the first factor is an \'etale $p$-divisible group and the second factor
is multiplicative. The action of $O_{B_{\mathfrak{p}_i}}$ 
on $\Lambda_{\mathfrak{p}_i}$ defines an action of $O_{B_{\mathfrak{p}_i}}$ on 
$\Lambda_{\mathfrak{p}_i}^{pd}$. We see that
$(\Lambda_{\mathfrak{p}_i}^{pd}, \bar{\lambda}_{\psi})$ is an object of the category
$\mathcal{P}_{\mathfrak{p}_i}(\bar{\kappa}_{E_{\nu}})$ and that each other object in
this category is isomorphic to it.

Recall the group $G^\bullet_{\mathfrak{p}_i}$, cf. (\ref{BZGpi1e}). An element $g \in G^\bullet_{\mathfrak{p}_i}$ induces a
quasi-isogeny of the $p$-divisible $O_{B_{\mathfrak{p}_i}}$-module 
$\Lambda_{\mathfrak{p}_i}^{pd}$ which respects the polarization $\lambda_{\psi}$
up to a factor in $F_{\mathfrak{p}_i}^{\times}$. In particular we conclude that 
\begin{displaymath}
  \mathbf{K}^{\bullet}_{\mathfrak{p}_i} \subset
  \Aut_{\mathcal{P}_{\mathfrak{p}_i}} (\Lambda_{\mathfrak{p}_i}^{pd}, \bar{\lambda}_{\psi}). 
  \end{displaymath}

We consider schemes $S$ over $\Spf O_{E_{\nu}}$ or equivalently
$O_{E_{\nu}}$-schemes $S$ where $p$ is locally nilpotent. We set
$\bar{S} = S \times_{\Spec O_{E_{\nu}}} \Spec \kappa_{E_{\nu}}$. 
\begin{definition}\label{RZ1d}
  Let $S$ be a scheme over $\Spf O_{\breve{E}_{\nu}}$ so that $\bar{S}$
  is a scheme over $\bar{\kappa}_{E_{\nu}}$. We denote by
  $\Lambda_{\mathfrak{q}_i, \bar{S}}^{et} = \Lambda_{\mathfrak{q}_i}^{et}
  \times_{\Spec \bar{\kappa}_{E_{\nu}}} \bar{S}$
  the base change. The unique lift to an \'etale $p$-divisible group over
  $S$ is denoted by $\Lambda_{\mathfrak{q}_i, S}^{et}$. This is a constant
  \'etale $p$-divisible group. 
  
  A rigidification of an object
  $(Y, \iota, \bar{\lambda}) \in \mathcal{P}_{\mathfrak{p}_i}(S)$ modulo 
  $\mathbf{K}^{\bullet}_{\mathfrak{p}_i}$ consists of a class
  $\bar{\eta}_{\mathfrak{q}_i}$ of isomorphisms of $p$-divisible
  $O_{B_{\mathfrak{q}_i}}$-modules 
  \begin{displaymath}
    \eta_{\mathfrak{q}_i}: \Lambda_{\mathfrak{q}_i, S}^{et} \isoarrow Y_{\mathfrak{q}_i}
    \quad \mod \mathbf{K}^{\bullet}_{\mathfrak{q}_i}   
  \end{displaymath}
  and a class $\bar{\xi}_{\mathfrak{p}_i}$ of maps 
  $\xi_{\mathfrak{p}_i}: \bar{\lambda} \rightarrow O_{F_{\mathfrak{p}_i}}^{\times}/\mathbf{M}_{\mathfrak{p}_i}$ such that
  $\xi_{\mathfrak{p}_i}(\lambda u) = \xi_{\mathfrak{p}_i}(\lambda) u$, for 
  $u \in O_{F_{\mathfrak{p}_i}}^{\times}$, $\lambda \in \bar{\lambda}$. 
\end{definition}
Equivalently we could replace $\bar{\eta}_{\mathfrak{q}_i}$ by a class of
$O_{B_{\mathfrak{q}_i}}$-module homomorphisms of $p$-adic \'etale sheaves
$\eta_{\mathfrak{q}_i}:\Lambda_{\mathfrak{q}_i, S}^{et}\rightarrow T_p(Y_{\mathfrak{q}_i})$ 
modulo $\mathbf{K}^{\bullet}_{\mathfrak{q}_i}$. We will use this definition
only in the case where $Y_{\mathfrak{q}_i}$ is a constant \'etale $p$-divisible
group. We denote the category of objects of $\mathcal{P}_{\mathfrak{p}_i}(S)$
with an rigidification by
$\mathcal{P}_{\mathfrak{p}_i}(S)_{\mathbf{K}^{\bullet}_{\mathfrak{p}_i}}$. 

We reformulate the definition of an rigidified object
$(Y, \iota, \bar{\lambda}, \bar{\eta}_{\mathfrak{q}_i}, \bar{\xi}_{\mathfrak{p}_i})$. 
To each $\lambda \in \bar{\lambda}$ we associate an
$O_{B_{\bar{\mathfrak{q}}_i}}$-module isomorphism of $p$-divisible groups
$\eta_{\bar{\mathfrak{q}}_i}$ by the following commutative
diagram 
\begin{equation}\label{diagreta}
\begin{aligned}
\xymatrix{
  \Lambda_{\mathfrak{q}_i,S}^{et} \ar[r]^{\eta_{\mathfrak{q}_i}} 
  \ar[d]_{\xi_{\mathfrak{p}_i}(\lambda) \lambda_{\psi}} 
     & Y_{\mathfrak{q}_i} \ar[d]^{\lambda_{\bar{\mathfrak{q}}_i}}\\
     (\Lambda_{\bar{\mathfrak{q}}_i}^{mult})_{\; S}^{\wedge}  & 
     (Y_{\bar{\mathfrak{q}}_i})^{\wedge} \ar[l]_{\eta_{\bar{\mathfrak{q}}_i}^{\wedge}} . \\ 
   }
   \end{aligned}
  \end{equation} 
Then a rigidification modulo $\mathbf{K}^{\bullet}_{\mathfrak{p}_i}$ of
$(Y, \iota, \bar{\lambda})$ is equivalently described by a class
$\bar{\eta}_{\mathfrak{p}_i}$ 
of isomorphisms in the category $\mathcal{P}_{\mathfrak{p}_i}(S)$:  
\begin{equation}\label{RZ2e}   
  \eta_{\mathfrak{p}_i}: (\Lambda_{\mathfrak{p}_i}^{pd},\bar{\lambda}_{\psi})_S
  \isoarrow
  (Y,\iota,\bar{\lambda}) \quad \mod \mathbf{K}^{\bullet}_{\mathfrak{p}_i}. 
\end{equation}
We see that, for $S$ connected, $\eta_{\mathfrak{p}_i}$ is given by its value at a geometric 
point $\omega$ of $S$, where  $(\eta_{\mathfrak{p}_i})_{\omega}$ is invariant
modulo $\mathbf{K}^{\bullet}_{\mathfrak{p}_i}$. This makes sense because, by the
diagram \eqref{diagreta} above, everything comes down to a morphism between $p$-adic \'etale
sheaves. 

We indicate how this allows to extend a Hecke operator
$g \in G^{\bullet}_{\mathfrak{p}_i} \subset G^{\bullet}(\mathbb{A}_f)$ from the
generic fiber $\tilde{\mathcal{A}}^{\bullet t}_{E_{\nu}}$ to the
whole functor $\tilde{\mathcal{A}}^{\bullet t}$. We consider a congruence
subgroup $\mathbf{K}^{\bullet} \subset G^{\bullet}(\mathbb{A}_f)$ such that
$k g \Lambda_{\mathfrak{p}_i} = g \Lambda_{\mathfrak{p}_i}$ for
$k \in \mathbf{K}^{\bullet}_{\mathfrak{p}_i}$. We consider a point   
\begin{displaymath}
(A, \iota, \bar{\lambda}, \bar{\eta}^p, (\bar{\eta}_{\mathfrak{q}_j})_j,
(\bar{\xi}_{\mathfrak{p}_j})_j) \in \tilde{\mathcal{A}}^{\bullet t}_{\mathbf{K}^{\bullet}}(S). 
  \end{displaymath}
 Note here that, by $\mathbf{M}^{\bullet}_{\mathfrak{p}_0} = O_{F_{\mathfrak{p}_0}}^{\times}$, the choice of $\xi_{\mathfrak{p}_0}$ is redundant, so we drop it from the notation. Let $Y_{\mathfrak{p}_i}$ be the $\mathfrak{p}_i$-part of the $p$-divisible
group of $A$. It inherits the structure of an rigidified object
$(Y_{\mathfrak{p}_i},\iota,\bar{\lambda},\bar{\eta}_{\mathfrak{q}_i},\bar{\xi}_{\mathfrak{p}_i})$ 
of $\mathcal{P}_{\mathfrak{p}_i}(S)$. We choose 
$\eta_{\mathfrak{q}_i} \in \bar{\eta}_{\mathfrak{q}_i}$ and write a commutative
diagram
\begin{displaymath}
\xymatrix{
  (\Lambda_{\mathfrak{p}_i,S}^{pd}, \bar{\lambda}_{\psi})_S
  \ar[r]^{\eta_{\mathfrak{p_i}}} 
       & (Y_{\mathfrak{p}_i}, \iota, \bar{\lambda})\\
  (\Lambda_{\mathfrak{p}_i,S}^{pd}, \bar{\lambda}_{\psi})_S \ar[r]^{\eta'_{\mathfrak{p_i}}}
 \ar[u]_g  & (Y'_{\mathfrak{p}_i}, \iota', \bar{\lambda}') \ar[u]_{a} .
   }
\end{displaymath}
Here the maps $a$ and $g$ are quasi-isogenies and $\eta_{\mathfrak{p}_i}$ and
$\eta'_{\mathfrak{p}_i}$ are understood as explained after (\ref{RZ2e}).
For $\lambda \in \bar{\lambda}$ we define
$\lambda' = \mu_{\mathfrak{p}_i}^{-1}(g) a^{*} (\lambda)$. This is a principal
polarization because
$g^{*} (\lambda_{\psi}) = \mu_{\mathfrak{p}_i}(g) \lambda_{\psi}$. We define
$\xi^{'}(a^{*} (\lambda)) = \mu_{\mathfrak{p}_i}(g) \xi(\lambda)$. This gives a
rigidified object
\begin{equation}\label{RZ20e}
  (Y'_{\mathfrak{p}_i},\iota', \bar{\lambda}', \bar{\eta}'_{\mathfrak{q}_i},
  \bar{\xi}'_{\mathfrak{p}_i}) .
  \end{equation}

We find a quasi-isogeny of abelian varieties
$\alpha: (A',\iota') \rightarrow (A, \iota)$ which induces on the
$\mathfrak{p}_j$-parts of the $p$-divisible groups an isomorphism
for $j \neq i$ and the map $a$ on the $\mathfrak{p}_i$-parts. Then
$(A',\iota')$ inherits the data $(\bar{\eta}^p)'$, $\bar{\eta}'_{\mathfrak{q}_j}$, 
$\bar{\xi}'_{\mathfrak{p}_j}$ for $j \neq i$ by pull back via $\alpha$. 
The data $\bar{\eta}'_{\mathfrak{q}_i}$, $\bar{\xi}'_{\mathfrak{p}_i}$ are inherited
from (\ref{RZ20e}). The $U_p(F)$-homogeneous  polarization of $A'$ consists
of all $\lambda' \in \alpha^{*} (\bar{\lambda})$ which are principal in $p$.
We define the image by the Hecke operator $g$ as
\begin{displaymath}
(A', \iota', \bar{\lambda}', (\bar{\eta}^p)', (\bar{\eta}'_{\mathfrak{q}_j})_j,
  (\bar{\xi}'_{\mathfrak{p}_j})_j) \in
  \tilde{\mathcal{A}}^{\bullet t}_{g^{-1}\mathbf{K}^{\bullet}g}(S). 
  \end{displaymath}
It follows from the discussion after the proof of Proposition \ref{BZ11p} that
this defines an extension of the Hecke operators over the generic fiber 
$\tilde{\mathcal{A}}^{\bullet t}_{E_{\nu}}$. 

We fix an object $(\mathbb{X}, \iota_{\mathbb{X}}, \bar{\lambda}_{\mathbb{X}})$ 
of the category $\mathcal{P}_{\mathfrak{p}_i}(\bar{\kappa}_{E_{\nu}})$, e.g.
$(\Lambda_{\mathfrak{p}_i}^{pd},\bar{\lambda}_{\psi})$. We choose
$\lambda_{\mathbb{X}} \in \bar{\lambda}_{\mathbb{X}}$ and call
$(\mathbb{X}, \iota_{\mathbb{X}}, \lambda_{\mathbb{X}})$ the framing object.
We define
$(\mathbb{X}, \iota_{\mathbb{X}}, \bar{\lambda}_{\mathbb{X}})_S$ in the same way as
in Definition \ref{RZ1d}. The $RZ$-space is defined as follows:
\begin{definition}\label{RZ2d} 
 We denote by  ${\rm RZ}_{\mathfrak{p}_i, \mathbf{K}^{\bullet}_{\mathfrak{p}_i}}$  the functor on the
  category of schemes $S$ over $\Spf O_{\breve{E}_{\nu}}$, where a point of
  ${\rm RZ}_{\mathfrak{p}_i, \mathbf{K}^{\bullet}_{\mathfrak{p}_i}}(S)$ is given by the
  following data up to isomorphism:
  \begin{enumerate}
  \item[({1})] an object $(Y, \iota, \bar{\lambda}) \in
    \mathcal{P}_{\mathfrak{p}_i}(S)$, 
  \item[({2})] a rigidification $(\bar{\eta}_{\mathfrak{q}_i} \text{ modulo }\mathbf{K}^{\bullet}_{\mathfrak{p}_i},
    \bar{\xi}_{\mathfrak{p}_i}  \text{ modulo }\mathbf{M}^{\bullet}_{\mathfrak{p}_i}$ ) of 
    $(Y, \iota, \bar{\lambda})$, 
  \item[({3})] a quasi-isogeny of $p$-divisible $O_{B_{\mathfrak{p}_i}}$-modules 
    $\rho: (Y, \iota) \rightarrow (\mathbb{X}, \iota_{\mathbb{X}})_S$ which
    respects
    the polarizations on both sides up to a factor in $F_{\mathfrak{p}_i}^{\times}$. 
  \end{enumerate}
\end{definition}
It follows from (\ref{RZ2e}) that we can represent a point of 
${\rm RZ}_{\mathfrak{p}_i, \mathbf{K}^{\bullet}_{\mathfrak{p}_i}}(S)$ by a class 
$\bar{\rho}$ of quasi-isogenies 
\begin{equation}\label{RZ3e}
  \rho: (\Lambda_{\mathfrak{p}_i}^{pd},\bar{\lambda}_{\psi})_S \rightarrow
  (\mathbb{X}, \iota_{\mathbb{X}}, \bar{\lambda}_{\mathbb{X}}) \text{ modulo $\mathbf{K}^{\bullet}_{\mathfrak{p}_i}$ },
  \end{equation}
 which respects the polarizations
of both sides up to a factor in $F_{\mathfrak{p}_i}^{\times}$. 

We choose an isomorphism
\begin{equation}\label{RZ1e} 
  (\mathbb{X},\iota_{\mathbb{X}},\lambda_{\mathbb{X}}) \isoarrow
  (\Lambda_{\mathfrak{p}_i}^{pd}, \lambda_{\psi})
\end{equation}
which respects the polarizations.
Then we see from (\ref{RZ3e}) that a point of
${\rm RZ}_{\mathfrak{p}_i, \mathbf{K}^{\bullet}_{\mathfrak{p}_i}}(S)$ is (locally)
represented by an element $g \in G^{\bullet}_{\mathfrak{p}_i}$. 
Therefore we obtain 
\begin{proposition}\label{RZ6p}
  The choice of an isomorphism (\ref{RZ1e}) defines an isomorphism
  \begin{displaymath}
    {\rm RZ}_{\mathfrak{p}_i, \mathbf{K}^{\bullet}_{\mathfrak{p}_i}}
    \overset{\sim}{\longrightarrow}
    G^{\bullet}_{\mathfrak{p}_i}/\mathbf{K}^{\bullet}_{\mathfrak{p}_i}, 
  \end{displaymath}
  where the right hand side denotes the constant sheaf. 
  \end{proposition}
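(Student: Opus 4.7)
The plan is to reduce the moduli description to a computation with a single framing object, and then to show that the relevant space of self quasi-isogenies is locally constant.

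First, I would use the reformulation (\ref{RZ3e}): a point of ${\rm RZ}_{\mathfrak{p}_i,\mathbf{K}^{\bullet}_{\mathfrak{p}_i}}(S)$ is a class modulo $\mathbf{K}^{\bullet}_{\mathfrak{p}_i}$ of quasi-isogenies $\rho\colon (\Lambda_{\mathfrak{p}_i}^{pd},\bar\lambda_\psi)_S\to (\mathbb X,\iota_{\mathbb X},\bar\lambda_{\mathbb X})$ compatible with the $O_{B_{\mathfrak{p}_i}}$-action and preserving the polarizations up to a scalar in $F_{\mathfrak{p}_i}^\times$. Applying the chosen isomorphism (\ref{RZ1e}) on the target identifies this with the sheaf whose sections are classes modulo $\mathbf{K}^{\bullet}_{\mathfrak{p}_i}$ of self quasi-isogenies of $(\Lambda_{\mathfrak{p}_i}^{pd},\bar\lambda_\psi)_S$ in the category $\mathcal{P}_{\mathfrak{p}_i}(S)$ (with the similitude factor relaxed to $F_{\mathfrak{p}_i}^\times$).

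Second, I would exploit the decomposition $\Lambda_{\mathfrak{p}_i}^{pd}=\Lambda_{\mathfrak{q}_i}^{et}\oplus \Lambda_{\bar{\mathfrak{q}}_i}^{mult}$, which is forced on any quasi-isogeny by the $O_{B_{\mathfrak{p}_i}}\cong O_{B_{\mathfrak{q}_i}}\times O_{B_{\bar{\mathfrak{q}}_i}}$-action: the central idempotents separate the two summands. Thus any $\rho$ is a sum $\rho_{\mathfrak{q}_i}\oplus \rho_{\bar{\mathfrak{q}}_i}$ of quasi-isogenies of the individual factors. Over the base $\Spf O_{\breve E_\nu}$, the étale $p$-divisible group $\Lambda_{\mathfrak{q}_i,S}^{et}$ lifts uniquely and is a constant étale sheaf, so quasi-isogenies of $\Lambda_{\mathfrak{q}_i,S}^{et}$ with itself are locally constant sections of the constant sheaf $\Aut_{B_{\mathfrak{q}_i}}(V_{\mathfrak{q}_i})=G^\bullet_{\mathfrak{q}_i}$. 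By Cartier duality, the same holds for the multiplicative factor, and in view of \eqref{diagreta} its contribution is determined by the first factor together with the similitude datum $\xi_{\mathfrak{p}_i}$.

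Third, I would assemble these pieces: on a connected $S$, evaluation at any geometric point gives a well-defined element $(g_{\mathfrak{q}_i},\mu_{\mathfrak{p}_i}(g))\in G^\bullet_{\mathfrak{q}_i}\times F_{\mathfrak{p}_i}^\times$, which by (\ref{BZGdot1e}) is precisely an element of $G^\bullet_{\mathfrak{p}_i}$. The preservation of $\psi_W$ up to a scalar in $F^\times_{\mathfrak{p}_i}$ is automatic since the $\mathfrak{q}_i$- and $\bar{\mathfrak{q}}_i$-parts of $\Lambda^{pd}_{\mathfrak{p}_i}$ are maximal isotropic with respect to $\psi_W$ and the pairing between them is perfect. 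Passing to classes modulo $\mathbf{K}^{\bullet}_{\mathfrak{p}_i}$ gives the desired map ${\rm RZ}_{\mathfrak{p}_i,\mathbf{K}^{\bullet}_{\mathfrak{p}_i}}\to G^\bullet_{\mathfrak{p}_i}/\mathbf{K}^{\bullet}_{\mathfrak{p}_i}$, and the inverse is defined by sending a class $g\mathbf{K}^{\bullet}_{\mathfrak{p}_i}$ to the self-quasi-isogeny of $(\Lambda_{\mathfrak{p}_i}^{pd},\bar\lambda_\psi)_S$ given by (the constant) $g$.

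The main (minor) obstacle is verifying local constancy of the $\Hom$ sheaf in the second step; the point to check carefully is that the $B_{\mathfrak{p}_i}$-action indeed forces the splitting of $\rho$ into étale and multiplicative components, so that no deformations of $\rho$ survive — this uses the hypothesis that $\mathfrak{p}_i$ is split in $K/F$, so that the two central idempotents of $O_{B_{\mathfrak{p}_i}}$ lie in the algebra itself and not merely after base change.
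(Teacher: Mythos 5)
Your proposal is correct and follows essentially the same route the paper takes: it reads off from (\ref{RZ3e}) that a point is a $\mathbf{K}^{\bullet}_{\mathfrak{p}_i}$-class of self quasi-isogenies of $(\Lambda^{pd}_{\mathfrak{p}_i},\bar\lambda_\psi)_S$, splits such a quasi-isogeny via the idempotents of $O_{B_{\mathfrak{p}_i}}=O_{B_{\mathfrak{q}_i}}\times O_{B_{\bar{\mathfrak{q}}_i}}$, uses that the \'etale factor is constant over $\Spf O_{\breve E_\nu}$ (and its dual determines the multiplicative one via the polarization), and identifies the resulting data $(g_{\mathfrak{q}_i},\mu_{\mathfrak{p}_i}(g))$ with an element of $G^\bullet_{\mathfrak{p}_i}$ by (\ref{BZGdot1e}). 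This is precisely the content compressed into the paper's one-line argument and the discussion of (\ref{RZ2e})--(\ref{RZ3e}) and (\ref{diagreta}) preceding the Proposition.
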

Let $g \in G^{\bullet}_{\mathfrak{p}_i}$. If we represent a point of  
${\rm RZ}_{\mathfrak{p}_i, \mathbf{K}^{\bullet}_{\mathfrak{p}_i}}(S)$ in the form
(\ref{RZ3e}), the assignement $\rho \mapsto \rho g$ defines a functor
morphism 
\begin{equation}\label{Hecke11e}
g: {\rm RZ}_{\mathfrak{p}_i, \mathbf{K}^{\bullet}_{\mathfrak{p}_i}} \rightarrow
  {\rm RZ}_{\mathfrak{p}_i, g^{-1}\mathbf{K}^{\bullet}_{\mathfrak{p}_i}g}.  
\end{equation}
We call this a Hecke operator. Note that (\ref{Hecke11e}) is only defined
if $\mathbf{K}^{\bullet}_{\mathfrak{p}_i}$ is sufficiently small with respect to
$g$, i.e. if
$g^{-1}\mathbf{K}^{\bullet}_{\mathfrak{p}_i}g \Lambda_{\mathfrak{p}_i} \subset
\Lambda_{\mathfrak{p}_i}$. 
We could define ${\rm RZ}_{\mathfrak{p}_i, \mathbf{K}^{\bullet}_{\mathfrak{p}_i}}$ for
an arbitrary open compact subgroup
$\mathbf{K}^{\bullet}_{\mathfrak{p}_i} \subset G_{\mathfrak{p}_0}$ by making the
Hecke operators part of the definition. 

\smallskip

Now we discuss the case of the prime ideal $\mathfrak{p}_0$. 

We first define the category $\mathcal{P}_{\mathfrak{p}_0}(S)$ for a scheme $S$
over $\Spf O_{E_{\nu}}$. Because of the isomorphism
$\varphi_0: O_{F_{\mathfrak{p}_0}} \rightarrow O_{E_{\nu}}$ it makes sense to speak
of a special formal $O_{B_{\mathfrak{q}_0}}$-module in the sense of Drinfeld over $S$, cf. \cite{Dr} or \cite[\S 5.1]{KRZ}. 
We consider $p$-divisible $O_{B_{\mathfrak{p}_0}}$-modules $(Y,\iota)$. The
decomposition $O_{B_{\mathfrak{p}_0}} = O_{B_{\mathfrak{q}_0}} \times O_{B_{\bar{\mathfrak{q}}_0}}$
induces a decomposition
\begin{displaymath}
Y = Y_{\mathfrak{q}_0} \times Y_{\bar{\mathfrak{q}}_0}. 
\end{displaymath}
We consider principal polarizations $\lambda$ on $Y$ which induce on
$O_{B_{\mathfrak{p}_0}}$ the given involution $\star$. They are given by two
isomorphisms
$\lambda_{\mathfrak{q}_0}:Y_{\mathfrak{q}_0}\rightarrow(Y_{\bar{\mathfrak{q}}_0})^{\wedge}$ 
and
$\lambda_{\bar{\mathfrak{q}}_0}:Y_{\bar{\mathfrak{q}}_0}\rightarrow(Y_{\mathfrak{q}_0})^{\wedge}$ 
such that $\lambda_{\bar{\mathfrak{q}}_0} = - \lambda_{\mathfrak{q}_0}^{\; \wedge}$. 
Moreover we have  commutative diagrams like (\ref{RZ4e}) for $i=0$. 
\begin{definition}\label{RZ3d}
  An object $(Y, \iota, \bar{\lambda})$ of the category
  $\mathcal{P}_{\mathfrak{p}_0}(S)$ consists of the following data:
  \begin{enumerate}
  \item[(1)] A $p$-divisible $O_{B_{\mathfrak{p}_0}}$-module $(Y, \iota)$ over $S$
    such that $Y_{\mathfrak{q}_0}$ is a special formal $O_{B_{\mathfrak{q}_0}}$-module.
  \item[(2)] An $O_{F_{\mathfrak{p}_0}}^{\times}$-homogeneous polarization
    $\bar{\lambda}$ of $Y$, such that each $\lambda \in \bar{\lambda}$ is
    principal and such that the Rosati involution of $\lambda$ induces on
    $O_{B_{\mathfrak{p}_0}}$ the involution $\star$. 
    \end{enumerate}
  \end{definition}
We note that the functor $(Y,\iota, \bar{\lambda})$ from  
$\mathcal{P}_{\mathfrak{p}_0}(S)$ to the category of special formal
$O_{B_{\mathfrak{q}_0}}$-modules over $S$ it not faithful. But it would be
an equivalence of categories is we replace $\mathcal{P}_{\mathfrak{p}_0}(S)$
by the category of triples $(Y, \iota, \lambda)$ with a given polarization
$\lambda$ as in the Definition above. We fix an object
$(\mathbb{X}, \iota_{\mathbb{X}}, \lambda_{\mathbb{X}})$ over
$\Spec \bar{\kappa}_{E_{\nu}}$. We call this the framing object. 
We keep the notation of Definition \ref{RZ1d}.  
\begin{definition}\label{RZ4d}
  We denote by ${\rm RZ}_{\mathfrak{p}_0}$  the functor on the
  category of schemes $S$ over $\Spf O_{\breve{E}_{\nu}}$ such that a
   point of ${\rm RZ}_{\mathfrak{p}_0}(S)$ consists of the following data
  up to isomorphism: 
  \begin{enumerate}
  \item[(1)] an object 
    $(Y,\iota, \bar{\lambda}) \in \mathcal{P}_{\mathfrak{p}_0}(S)$,
  \item[(2)] a quasi-isogeny of $p$-divisible $O_{B_{\mathfrak{p}_0}}$-modules  
    \begin{displaymath}
      \rho: (Y,\iota)_{\bar{S}} \rightarrow (\mathbb{X}, \iota_{\mathbb{X}})
      \times_{\Spec \bar{\kappa}_{E_{\nu}}} \bar{S}
    \end{displaymath}
    which respects the polarizations on both sides up to a factor in
    $F_{\mathfrak{p}_0}^{\times}$. 
    \end{enumerate}
\end{definition}
Let $(G,\iota)$ be a $p$-divisible
$O_{F_{\mathfrak{p}_0}}$-module over an $O_{F_{\mathfrak{p}_0}}${\rm -Alg}ebra $R$. Let
$F_{\mathfrak{p}_0}^t \subset F_{\mathfrak{p}_0}$ be the maximal subfield
which is unramified over $\mathbb{Q}_p$ and let 
$\sigma \in \Gal(F_{\mathfrak{p}_0}^t/\mathbb{Q}_p)$ be the Frobenius. Let
$\varepsilon: O_{F_{\mathfrak{p}_0}} \rightarrow R$ be the structure morphism.
There is the natural decomposition
\begin{displaymath}
  O_{F_{\mathfrak{p}_0}} \otimes_{\mathbb{Z}_p} R \cong \prod_{i=0}^{f-1}
  O_{F_{\mathfrak{p}_0}} \otimes_{O_{F^t_{\mathfrak{p}_0}}, \sigma^i\varepsilon} R,  
\end{displaymath}
where $f = [F^t_{\mathfrak{p}_0}: \mathbb{Q}_p]$ is the index of inertia of
$F_{\mathfrak{p}_0}$. This decomposition induces  a corresponding decomposition of the
$R$-module given by the Lie algebra of $G$. We set  
\begin{displaymath}
  \height_{F_{\mathfrak{p}_0}} G := \height_{F_{\mathfrak{p}_0}} (\pi \mid G) =
         [F_{\mathfrak{p}_0} : \mathbb{Q}_p]^{-1} \height G, 
\end{displaymath}
where $\pi$ is a prime element of $F_{\mathfrak{p}_0}$. 

Let $\alpha: G_1 \rightarrow G_2$
be an isogeny which is an $O_{F_{\mathfrak{p}_0}}$-module homomorphism such that 
$\rank_R \Lie_i G_1 = \rank_R \Lie_i G_2$ for $i = 0, \ldots, f-1$. 
Then $\height \alpha$ is divisible by $[F^t:\mathbb{Q}_p]$. In this case
we define
\begin{displaymath}
  \height_{F_{\mathfrak{p}_0}} \alpha = [F_{\mathfrak{p}_0}^t:\mathbb{Q}_p]^{-1}
  \height \alpha, 
\end{displaymath}

If $\alpha: X \rightarrow X'$ is a quasi-isogeny of special formal
$O_{B_{\mathfrak{p}_0}}$-modules, the integer $\height_{F_{\mathfrak{p}_0}} \alpha$
is divisible by $2$, because there is a quadratic unramified extension
of $F_{\mathfrak{p}_0}$ which is contained in $B_{\mathfrak{p}_0}$. 

We consider a point of ${\rm RZ}_{\mathfrak{p}_0}(S)$ as in Definition
\ref{RZ4d}. The quasi-isogeny $\rho$ is the direct sum of two quasi-isogenies
\begin{equation}
  \rho_{\mathfrak{q}_0} : Y_{\mathfrak{q}_0, \bar{S}} \rightarrow
  \mathbb{X}_{\mathfrak{q}_0} \times_{\Spec \bar{\kappa}_{E_{\nu}}} \bar{S}, \quad 
  \rho_{\bar{\mathfrak{q}}_0} : Y_{\bar{\mathfrak{q}}_0, \bar{S}} \rightarrow
  \mathbb{X}_{\bar{\mathfrak{q}}_0} \times_{\Spec \bar{\kappa}_{E_{\nu}}} \bar{S}. 
  \end{equation}

For each pair of integers $(a,b)$ such that $a + b \equiv 0 \mod 2$, we
define an open and closed subfunctor of
${\rm RZ}_{\mathfrak{p}_0}$
\begin{equation}
  {\rm RZ}_{\mathfrak{p}_0}(a,b)(S) = \{(Y,\iota, \bar{\lambda}, \rho) \; | \;
  \height_{F_{\mathfrak{p}_0}} \rho_{\mathfrak{q}_0} = 2a, \;
  \height_{F_{\mathfrak{p}_0}} \rho_{\bar{\mathfrak{q}}_0} = 2b \}
\end{equation}
We remark that for a point
$(Y,\iota, \bar{\lambda}, \rho) \in {\rm RZ}_{\mathfrak{p}_0}(S)$ the sum
$(\height_{F_{\mathfrak{p}_0}} \rho_{\mathfrak{q}_0} + \height_{F_{\mathfrak{p}_0}} \rho_{\bar{\mathfrak{q}}_0})$
is always divisible by $4$. Indeed, by definition there is an
$f \in F_{\mathfrak{p}_0}^{\times}$ such that the following diagram of
quasi-isogenies is commutative,
\begin{equation}\label{RZ14e}
\xymatrix{
     Y_{\mathfrak{q}_0, \bar{S}} \ar[rr]^{\rho_{\mathfrak{q}_0}} \ar[d]_{\lambda f} & 
     & \mathbb{X}_{\mathfrak{q}_0} \times_{\Spec \bar{\kappa}_{E_{\nu}}} \bar{S}
     \ar[d]^{\lambda_{\mathbb{X}}}\\
     (Y_{\bar{\mathfrak{q}}_0, \bar{S}})^{\wedge}  
     & & \ar[ll]_{\rho_{\bar{\mathfrak{q}}_0}^{\;\wedge}}
     (\mathbb{X}_{\bar{\mathfrak{q}}_0})^{\wedge}
     \times_{\Spec \bar{\kappa}_{E_{\nu}}} \bar{S} .\\ 
   }
  \end{equation}
Since $\lambda$ and $\lambda_{\mathbb{X}}$ are isomorphisms, we obtain that
\begin{displaymath}
  \height_{F_{\mathfrak{p}_0}} \rho_{\mathfrak{q}_0} +
  \height_{F_{\mathfrak{p}_0}} \rho_{\bar{\mathfrak{q}}_0} = \height_{F_{\mathfrak{p}_0}}(f
  \mid Y_{\mathfrak{q}_0, \bar{S}}). 
\end{displaymath} 
The right hand side is divisible by $4$ because
$\height_{F_{\mathfrak{p}_0}} Y_{\mathfrak{q}_0, \bar{S}} = 4$. 

We conclude that
\begin{equation}\label{RZ12e}
  {\rm RZ}_{\mathfrak{p}_0} = \coprod_{a+b\equiv 0\,{\rm mod}\, 2}
  {\rm RZ}_{\mathfrak{p}_0}(a,b) .
  \end{equation}
We introduce Hecke operators acting on ${\rm RZ}_{\mathfrak{p}_0}$.
Let ($Y_{\mathfrak{q}_0}, \iota_{\mathfrak{q}_0})$ be a $p$-divisible
$O_{B_{\mathfrak{q}_0}}$-module.  For $u_1 \in B_{\mathfrak{q}_0}^{\times}$ we
define
\begin{displaymath}
  \iota_{\mathfrak{q}_0}^{u_1}: O_{B_{\mathfrak{q}_0}} \rightarrow \End Y_{\mathfrak{q}_0},
  \quad \iota_{\mathfrak{q}_0}^{u_1}(b) = \iota_{\mathfrak{q}_0}(u_1^{-1} b u_1). 
\end{displaymath}
We set $Y_{\mathfrak{q}_0}^{u_1} = Y_{\mathfrak{q}_0}$ and write
$(Y_{\mathfrak{q}_0}^{u_1}, \iota_{\mathfrak{q}_0}^{u_1})$. 
We use the same definition for a $p$-divisible $O_{B_{\bar{\mathfrak{q}}_0}}$-module.

Let
$u =(u_1,u_2) \in B_{\mathfrak{p}_0}^{\times} = B_{\mathfrak{q}_0}^{\times} \times B_{\bar{\mathfrak{q}}_0}^{\times}$.
   For a $p$-divisible $O_{B_{\mathfrak{p}_0}}$-module $(Y,\iota)$ we set
$\iota^{u}(b) = \iota(u^{-1}b u)$, $b \in O_{B_{\mathfrak{p}_0}}$.

\begin{lemma}\label{RZ4l} 
  Let $(Y, \iota, \bar{\lambda}) \in \mathcal{P}_{\mathfrak{p}_0}(S)$. 
  Let $u = (u_1, u_2) \in B_{\mathfrak{p}_0}^{\times}$ such that
  $u_2^{\star} u_1 \in F_{\mathfrak{p}_0}^{\times}$.
Then for each $\lambda \in \bar{\lambda}$ the Rosati involution of $\lambda$ 
  on $\End Y^u$ induces via
  \begin{displaymath}
\iota^u: O_{B_{\mathfrak{p}_0}} \rightarrow \End Y^u  
  \end{displaymath}
  the involution $\star$ on $O_{B_{\mathfrak{p}_0}}$. In particular $(Y^u, \iota^u, \bar{\lambda}) \in \mathcal{P}_{\mathfrak{p}_0}(S)$. 
  \end{lemma}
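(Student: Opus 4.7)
The plan is to reduce the claim to showing that $uu^{\star}$ lies in the center of $B_{\mathfrak{p}_0}$, and then verify this centrality by a direct computation using the splitting $B_{\mathfrak{p}_0} = B_{\mathfrak{q}_0}\times B_{\bar{\mathfrak{q}}_0}$ together with the hypothesis $u_2^{\star} u_1 \in F_{\mathfrak{p}_0}^{\times}$.

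First I would observe that $Y^u$ coincides with $Y$ as a $p$-divisible group; only the $O_{B_{\mathfrak{p}_0}}$-action is twisted by conjugation by $u$. In particular the polarization $\bar{\lambda}$ is unchanged, hence remains an $O_{F_{\mathfrak{p}_0}}^{\times}$-homogeneous principal polarization, and the Rosati involution $'$ on $\End Y = \End Y^u$ attached to a $\lambda \in \bar{\lambda}$ is the same as before. The Drinfeld-special property of $Y_{\mathfrak{q}_0}$ is also preserved, since conjugation by $u_1\in B_{\mathfrak{q}_0}^{\times}$ leaves the action of the central subring $O_{F^t_{\mathfrak{p}_0}}$ untouched and only permutes the two embeddings of the unramified quadratic extension of $O_{F_{\mathfrak{p}_0}}$ inside $O_{B_{\mathfrak{q}_0}}$, a permutation to which the specialness condition is insensitive. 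Thus the only nontrivial point is the compatibility of the Rosati involution with $\star$ via the new action $\iota^u$.

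For any $b \in O_{B_{\mathfrak{p}_0}}$, unraveling definitions one computes
\[
\iota^u(b)' = \iota(u^{-1} b u)' = \iota\bigl((u^{-1} b u)^{\star}\bigr) = \iota\bigl(u^{\star} b^{\star} (u^{\star})^{-1}\bigr),
\]
whereas $\iota^u(b^{\star}) = \iota(u^{-1} b^{\star} u)$. So the required identity $\iota^u(b)' = \iota^u(b^{\star})$ holds (in $\End Y \otimes \BQ$, which is faithful for the $\mathfrak{p}_0$-part) if and only if $u^{\star} b^{\star} (u^{\star})^{-1} = u^{-1} b^{\star} u$ for all such $b$, equivalently $uu^{\star}$ commutes with $b^{\star}$ for every $b$. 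Hence it suffices to show $uu^{\star}$ lies in the center of $B_{\mathfrak{p}_0}$.

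The remaining work is the computation of $uu^{\star}$. Since the conjugation of $K/F$ interchanges $\mathfrak{q}_0$ and $\bar{\mathfrak{q}}_0$, the involution $\star$ on the center $K_{\mathfrak{p}_0} = K_{\mathfrak{q}_0}\times K_{\bar{\mathfrak{q}}_0}$ is the swap; consequently $\star$ on $B_{\mathfrak{p}_0}= B_{\mathfrak{q}_0}\times B_{\bar{\mathfrak{q}}_0}$ interchanges the two factors. Writing $u=(u_1,u_2)$, this gives $u^{\star} = (u_2^{\star}, u_1^{\star})$ and $uu^{\star} = (u_1 u_2^{\star}, u_2 u_1^{\star})$. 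Setting $c := u_2^{\star} u_1 \in F_{\mathfrak{p}_0}^{\times}$, centrality of $c$ in $B_{\mathfrak{q}_0}$ yields $u_1 u_2^{\star} = (u_2^{\star})^{-1} c\, u_2^{\star} = c$; applying $\star$ to this identity and using that $\star$ fixes $F$ pointwise, we likewise obtain $u_2 u_1^{\star} = c$ in $B_{\bar{\mathfrak{q}}_0}$. Thus $uu^{\star} = (c,c)$ is the diagonal image of $c$ in $K_{\mathfrak{p}_0}$, which is precisely $F_{\mathfrak{p}_0}^{\times}$ sitting in the center of $B_{\mathfrak{p}_0}$. The main obstacle throughout is purely bookkeeping — making sure the swap induced by $\star$ on the two factors is correctly tracked — after which the argument is forced by the hypothesis.
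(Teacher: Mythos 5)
Your argument is correct and essentially the same as the paper's: both rest on the centrality of $u_2^{\star}u_1 \in F_{\mathfrak{p}_0}^{\times}$, which you package invariantly as ``$uu^{\star}$ is central'' while the paper verifies the componentwise Rosati diagram $\lambda^{-1}\iota^{u_2}(b_2)^{\wedge}\lambda = \iota^{u_1}(b_2^{\star})$ directly via the same manipulation. Your extra remark that the Drinfeld-special property of $Y^u_{\mathfrak{q}_0}$ is preserved under conjugation by $u_1$ is a point the paper leaves implicit and is worth recording.
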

\begin{proof}
  We must verify the commutativity of the following diagram,
  \begin{displaymath}
  \xymatrix{
     Y_{\mathfrak{q}_0}^{u_1} \ar[r]^{\iota^{u_1}(b_2^{\star})} \ar[d]_{\lambda}
     & Y_{\bar{\mathfrak{q}}_0}^{u_1} \ar[d]^{\lambda}\\
     (Y_{\bar{\mathfrak{q}}_0}^{u_2})^{\wedge} \ar[r]_{\iota^{u_2}(b_2)^{\wedge}} &
     (Y_{\bar{\mathfrak{q}}_0}^{u_2})^{\wedge} .\\ 
   }
    \end{displaymath}
  Indeed,
  \begin{displaymath} 
      \lambda^{-1} \iota^{u_2}(b_2)^{\wedge} \lambda =
      \lambda^{-1} \iota(u_2^{-1}b_2u_2)^{\wedge} \lambda = 
      \iota(u_2^{\star} b_2^{\star} (u_2^{\star})^{-1}) 
      = \iota(u_1^{-1} b_2^{\star} u_1) = \iota^{u_1}(b_2^{\star}). 
    \end{displaymath}
  The second equation holds because the Rosati involution of $\lambda$ induces
  via $\iota$ the involution $\star$ on $O_{B_{\mathfrak{p}_0}}$. The third equation
  holds because $u_2^{\star} u_1 \in F_{\mathfrak{p}_0}$ implies that
  \begin{displaymath}
    u_1^{-1} b_2^{\star} u_1 = u_2^{\star} u_1 u_1^{-1} b_2^{\star} u_1
    (u_2^{\star} u_1)^{-1} = u_2^{\star} b_2^{\star} (u_2^{\star})^{-1}. 
    \end{displaymath}
\end{proof}

Let  $(Y, \iota, \bar{\lambda}) \in \mathcal{P}_{\mathfrak{p}_0}(S)$. Then
\begin{equation}\label{RZ5e}  
\iota(u): (Y^u, \iota^u) \rightarrow (Y, \iota)  
\end{equation}
is a quasi-isogeny of $p$-divisible $O_{B_{\mathfrak{p}_0}}$-modules. 
\begin{lemma}\label{RZ5l}
  Let $\lambda \in \bar{\lambda}$. We assume that for $u = (u_1, u_2)$
  we have $u_2^{\star}u_1 \in F_{\mathfrak{p}_0}^{\times}$.
Then the quasi-isogeny (\ref{RZ5e}) respects the polarization $\lambda$ on
  both sides of (\ref{RZ5e}) up to a factor in $F_{\mathfrak{p}_0}^{\times}$.
  \end{lemma}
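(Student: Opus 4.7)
The plan is to reduce to an explicit computation on the two components of the decomposition $Y = Y_{\mathfrak{q}_0} \times Y_{\bar{\mathfrak{q}}_0}$. Since the underlying $p$-divisible group of $Y^u$ is $Y$ itself, the pullback $\iota(u)^{*}\lambda := \iota(u)^{\wedge} \circ \lambda \circ \iota(u)$ is a polarization of $Y$, and I wish to exhibit an $f \in F_{\mathfrak{p}_0}^{\times}$ with $\iota(u)^{*}\lambda = f \cdot \lambda$. Writing $\lambda = \lambda_{\mathfrak{q}_0} \oplus \lambda_{\bar{\mathfrak{q}}_0}$ and $\iota(u) = \iota(u_1) \oplus \iota(u_2)$ as in the paragraph preceding Definition \ref{RZ3d}, this is equivalent to the two identities
\begin{equation*}
\iota(u_2)^{\wedge} \circ \lambda_{\mathfrak{q}_0} \circ \iota(u_1) = f \cdot \lambda_{\mathfrak{q}_0}, \qquad
\iota(u_1)^{\wedge} \circ \lambda_{\bar{\mathfrak{q}}_0} \circ \iota(u_2) = f \cdot \lambda_{\bar{\mathfrak{q}}_0},
\end{equation*}
for a common $f$.

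The key input is the compatibility of $\lambda$ with the involution $\star$ via $\iota$, expressed by the diagrams (\ref{RZ4e}). Reading off the right diagram of (\ref{RZ4e}) for $b_2 = u_2$ yields
\begin{equation*}
\iota(u_2)^{\wedge} \circ \lambda_{\mathfrak{q}_0} = \lambda_{\mathfrak{q}_0} \circ \iota(u_2^{\star}).
\end{equation*}
Substituting and using that $\iota$ is a ring homomorphism,
\begin{equation*}
\iota(u_2)^{\wedge} \circ \lambda_{\mathfrak{q}_0} \circ \iota(u_1) = \lambda_{\mathfrak{q}_0} \circ \iota(u_2^{\star} u_1) = f \cdot \lambda_{\mathfrak{q}_0},
\end{equation*}
where $f := u_2^{\star} u_1 \in F_{\mathfrak{p}_0}^{\times}$ by hypothesis and the last equality uses that $F_{\mathfrak{p}_0}$ is central in $B_{\mathfrak{p}_0}$. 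The analogous identity on the $\bar{\mathfrak{q}}_0$-component follows either by the symmetric use of the left diagram of (\ref{RZ4e}), or simply by taking duals and invoking $\lambda_{\bar{\mathfrak{q}}_0} = -\lambda_{\mathfrak{q}_0}^{\,\wedge}$; the same constant $f$ appears.

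There is essentially no obstacle here: the only thing one must keep track of is that $\iota(u)^{*}\lambda$ is a priori only a quasi-polarization of $Y$, and that the constant $f$ comes out the same on both components. The latter is automatic from the symmetry of (\ref{RZ4e}), and the former is harmless since we only claim equality up to a factor in $F_{\mathfrak{p}_0}^{\times}$. Note also that the hypothesis $u_2^{\star} u_1 \in F_{\mathfrak{p}_0}^{\times}$ is exactly the condition used in Lemma \ref{RZ4l} to ensure that $(Y^u, \iota^u, \bar{\lambda}) \in \mathcal{P}_{\mathfrak{p}_0}(S)$, so the present lemma is the natural companion statement identifying $f$ explicitly.
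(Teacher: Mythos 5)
Your proof is correct and follows essentially the same route as the paper: both reduce to the compatibility diagrams (\ref{RZ4e}) and the observation that $\iota(u_2^{\star}u_1)$ is the central scalar $u_2^{\star}u_1$. Your version identifies the factor directly as $f = u_2^{\star}u_1$ on the pullback, whereas the paper phrases it as choosing $f$ with $f u_2^{\star}u_1 = 1$ so that the diagram with $\lambda\iota(f)$ commutes; these are the same statement, and you are slightly more careful in also checking the $\bar{\mathfrak{q}}_0$-component explicitly, which the paper leaves implicit via $\lambda_{\bar{\mathfrak{q}}_0} = -\lambda_{\mathfrak{q}_0}^{\wedge}$.
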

\begin{proof}
  We must show that there exists $f \in O_{F_{\mathfrak{p}_0}}^{\times}$ such that the
  following diagram in commutative.
  \begin{displaymath}
  \xymatrix{
     Y_{\mathfrak{q}_0} \ar[r]^{\lambda \iota(f)\;} 
     & (Y_{\bar{\mathfrak{q}}_0})^{\wedge} \ar[d]^{\iota(u_2)^{\wedge}}\\
     Y_{\mathfrak{q}_0}^{u_1} \ar[r]_{\lambda \quad} \ar[u]^{\iota(u_1)} &
     (Y_{\bar{\mathfrak{q}}_0}^{u_2})^{\wedge} .\\ 
   }
    \end{displaymath}
  The commutativity is equivalent to the first of the following equations, 
  \begin{displaymath}
    \lambda = \iota(u_2)^{\wedge} \lambda \iota(f) \iota(u_1) = 
    \lambda \lambda^{-1} \iota(u_2)^{\wedge} \lambda \iota(f) \iota(u_1) =
    \lambda \iota(u_2^{\star}) \iota(f) \iota(u_1) =
    \lambda \iota(f u_2^{\star}u_1) .
  \end{displaymath}
  Therefore we obtain a commutative diagram if we choose $f u_2^{\star}u_1 =1$. 
\end{proof}

We define the group
\begin{displaymath}
  \mathcal{H}_{\mathfrak{p}_0} = \{u \in B_{\mathfrak{p}_0} \; |
  u^{\star} u \in F^{\times}_{\mathfrak{p}_0} \; \} \subset B^{\times}_{\mathfrak{p}_0}
\end{displaymath}
Note that for $u = (u_1, u_2)$ as above, the conditions
$u^{\star} u \in F^{\times}_{\mathfrak{p}_0}$, resp.
$u_2^{\star} u_1 \in F^{\times}_{\mathfrak{p}_0}$, resp. 
$u_1 u_2^{\star} \in F^{\times}_{\mathfrak{p}_0}$ are equivalent.

The group $\mathcal{H}_{\mathfrak{p}_0}$ acts from the left on the functor
${\rm RZ}_{\mathfrak{p}_0}$. Let
$(Y,\iota,\bar{\lambda}, \rho) \in {\rm RZ}_{\mathfrak{p}_0}(S)$.
For $u \in \mathcal{H}_{\mathfrak{p}_0}$ we define the Hecke correspondence 
\begin{equation}\label{HOat0}
  \mathfrak{h}(u): {\rm RZ}_{\mathfrak{p}_0} \rightarrow
  {\rm RZ}_{\mathfrak{p}_0}, \quad  
  \mathfrak{h}(u)((Y,\iota,\bar{\lambda}, \rho) )=
  (Y^u,\iota^{u},\bar{\lambda}, \rho \iota(u)). 
\end{equation} 
This definition makes sense because of  Lemmas \ref{RZ4l} and
\ref{RZ5l}. We note that for $v \in \mathcal{H}_{\mathfrak{p}_0}$ we obtain
$(Y^{u})^{v} = Y^{vu}$, $\iota(u) \iota^{u}(v) = \iota(vu)$.
The map $\mathfrak{h}(u)$ is the identity on ${\rm RZ}_{\mathfrak{p}_0}$
if $u \in O_{B_{\mathfrak{p}_0}}^{\times}$ because 
$\iota(u): (Y^u,\iota^u,\bar{\lambda}) \rightarrow (Y,\iota,\bar{\lambda})$
is then an isomorphism.

If $u = (u_1, u_2) \in \mathcal{H}_{\mathfrak{p}_0}$, then the Hecke operator
induces maps
\begin{equation}\label{RZ13e}
  \mathfrak{h}((u_1,u_2)): {\rm RZ}_{\mathfrak{p}_0}(a, b) \rightarrow
  {\rm RZ}_{\mathfrak{p}_0}(a + \ord_{B_{\mathfrak{q}_0}} u_1, \; 
  b + \ord_{B_{\bar{\mathfrak{q}}_0}} u_2). 
  \end{equation}
We conclude that $\mathcal{H}_{\mathfrak{p}_0}$ acts transitively on the
set of subspaces $\{ {\rm RZ}_{\mathfrak{p}_0}(a,b) \}$ in the decomposition
(\ref{RZ12e}). Indeed, if $c + d$ is an even sum of integers we can find
$u_1 \in B_{\mathfrak{q}_0}^{\times}$ and $f \in F_{\mathfrak{p}_0}^{\times}$ such that
$\ord_{B_{\mathfrak{q}_0}} u_1 = c$ and $\ord_{B_{\mathfrak{q}_0}} f = c + d$. We define
$u_2$ by the equation $u_2^{\star} u_1 = f$. Then the right hand side of
(\ref{RZ13e}) becomes ${\rm RZ}_{\mathfrak{p}_0}(a +c, b + d)$.  

If we use the right action of $B_{\mathfrak{p}_0}$ on $V_{\mathfrak{p}_0}$, 
we can write 
\begin{displaymath}
  G^\bullet_{\mathfrak{p}_0} = \{g \in B_{\mathfrak{p}_0}^{{\rm opp}} \mid
   g g' \in F^{\times}_{\mathfrak{p}_0}\; \} .
  \end{displaymath}
The anti-isomorphism
\begin{equation}\label{Hecke12e}
  \begin{array}{ccc} 
    B_{\mathfrak{q}_0} \times B_{\bar{\mathfrak{q}}_0} & \rightarrow & 
    B^{{\rm opp}}_{\mathfrak{q}_0} \times B^{{\rm opp}}_{\bar{\mathfrak{q}}_0}\\
    (b_1,b_2) & \mapsto & (b_1, (b_2^{\star})')  
    \end{array}
\end{equation}
defines an anti-isomorphism
$\mathcal{H}_{\mathfrak{p}_0} \rightarrow G^{\bullet}_{\mathfrak{p}_0}$.
Therefore $G^{\bullet}_{\mathfrak{p}_0}$ acts from the right on
${\rm RZ}_{\mathfrak{p}_0}$. We write this action
\begin{equation}\label{Heckep0e}  
  (Y,\iota,\bar{\lambda}, \rho) \mapsto (Y,\iota,\bar{\lambda}, \rho)_{\mid g},
  \quad g \in G^{\bullet}_{\mathfrak{p}_0}. 
  \end{equation}
From the properties of the action of $\mathcal{H}_{\mathfrak{p}_0}$ above,
we conclude that
$\mathbf{K}^{\bullet}_{\mathfrak{p}_0} \subset G^{\bullet}_{\mathfrak{p}_0}$ acts trivially
on ${\rm RZ}_{\mathfrak{p}_0}$. Therefore the group 
$G^{\bullet}_{\mathfrak{p}_0} / \mathbf{K}^{\bullet}_{\mathfrak{p}_0}$ acts
on ${\rm RZ}_{\mathfrak{p}_0}$. This group isomorphic to $\mathbb{Z}^2$ and
acts simply transitively on the set of subspaces
$\{ {\rm RZ}_{\mathfrak{p}_0}(a,b) \}$ in the decomposition (\ref{RZ12e}).

We denote by $\hat{\Omega}_{F_{\mathfrak{p}_0}}^2$ the Drinfeld upper half plane
over $\Spf O_{F_{\mathfrak{p}_0}}$. This is a regular formal scheme of dimension $2$,
comp. \cite{Dr}, \cite{RZ}, \cite[\S 5.1]{KRZ}. The formal scheme
$\hat{\Omega}_{F_{\mathfrak{p}_0}}^2\times_{\Spf O_{F_{\mathfrak{p}_0}},\varphi_0}\Spf O_{\breve{E}_\nu}$ 
represents  the Drinfeld functor $\mathcal{M}_{\mathrm{Dr}}(0)$ whose points with values in
a scheme $S$ over $\Spf O_{\breve{E}_\nu}$ are given by pairs
$(Y_{\mathfrak{q}_0},\rho_{\mathfrak{q}_0})$, where $Y_{\mathfrak{q}_0}$ is a special
formal $O_{B_{\mathfrak{q}_0}}$-module over $S$ and $\rho$  a quasi-isogeny of
special formal $O_{B_{\mathfrak{q}_0}}$-modules of height zero,
\begin{displaymath}
\rho_{\mathfrak{q}_0} : Y_{\mathfrak{q}_0, \bar{S}} \rightarrow
  \mathbb{X}_{\mathfrak{q}_0} \times_{\Spec \bar{\kappa}_{E_{\nu}}} \bar{S}. 
\end{displaymath}
We denote by $\tilde{\mathcal{M}}_{\mathrm{Dr}}$ the functor whose points are
given by pairs $(Y_{\mathfrak{q}_0},\rho_{\mathfrak{q}_0})$ where $\rho_{\mathfrak{q}_0}$
is allowed to have arbitrary height. Note that
$\mathrm{height}_{O_{F_{\mathfrak{p}_0}}} \rho_{\mathfrak{q}_0} = 2a$ is automatically
even. We obtain the decomposition
\begin{displaymath}
  \tilde{\mathcal{M}}_{\mathrm{Dr}} = \coprod_{a \in \mathbb{Z}}
  \mathcal{M}_{\mathrm{Dr}}(a), 
\end{displaymath}
cf. \cite{KRZ} \S 5.1. Let $\mathbf{J}_{\mathfrak{q}_0}$ be the group of all
quasi-isogenies 
$\delta \in \End^{o}_{B_{\mathfrak{q}_0}} \mathbb{X}_{\mathfrak{q}_0}$. Then
$\mathbf{J}_{\mathfrak{q}_0} \cong \GL_{2}(F_{\mathfrak{p}_0})$, cf. \cite{Dr}. 
This group acts from the left on $\tilde{\mathcal{M}}_{\mathrm{Dr}}$ by
changing $\rho_{\mathfrak{q}_0}$ to $\delta \rho_{\mathfrak{q}_0}$.

Let $\Pi \in O_{B_{\mathfrak{q}_0}}$ be a prime element. The Hecke operator
 $\mathfrak{h}(\Pi)$  in the sense of \cite{KRZ} (5.1.16)  
acts on $\tilde{\mathcal{M}}_{\mathrm{Dr}}$ as 
\begin{equation}\label{Hecke14e}
  \mathfrak{h}(\Pi): \mathcal{M}_{\mathrm{Dr}}(a) \overset{\sim}{\longrightarrow}
  \mathcal{M}_{\mathrm{Dr}}(a+1), \quad   
  (Y_{\mathfrak{q}_0},\rho_{\mathfrak{q}_0}) \mapsto 
  (Y_{\mathfrak{q}_0}^{\Pi}, \iota_{\mathbb{X}_{\mathfrak{q}_0}}(\Pi) \circ
  \rho^{\Pi}_{\mathfrak{q}_0}).   
\end{equation}
We define an action of $\mathbf{J}_{\mathfrak{q}_0}$ on
$\mathcal{M}_{\mathrm{Dr}}(0)$. For $\delta \in \mathbf{J}_{\mathfrak{q}_0}$, we set

\begin{equation}\label{PGL-O1e}
  \mathrm{pr}(\delta) (Y_{\mathfrak{q}_0},\rho_{\mathfrak{q}_0}) =
  \mathfrak{h}(\Pi)^{-\ord_{\mathfrak{p}_0} \det \delta} \circ 
  \delta(Y_{\mathfrak{q}_0},\rho_{\mathfrak{q}_0}). 
\end{equation}
Because the Hecke operators commute with the action of 
$\mathbf{J}_{\mathfrak{q}_0}$ this is an action of the group
$\mathbf{J}_{\mathfrak{q}_0}$. One can easily see that this action of
$\mathcal{M}_{\mathrm{Dr}}(0)$ factors through
$\mathbf{J}_{\mathfrak{q}_0} = \GL_{2}(F_{\mathfrak{p}_0}) \rightarrow
\PGL_{2}(F_{\mathfrak{p}_0})$.
Using (\ref{Hecke14e}) as identifications we obtain an isomorphism
\begin{equation}\label{PGL-O2e}
  \tilde{\mathcal{M}}_{\mathrm{Dr}} \cong \mathcal{M}_{\mathrm{Dr}}(0) \times
  \mathbb{Z} \cong (\hat{\Omega}_{F_{\mathfrak{p}_0}}^2
  \times_{\Spf O_{F_{\mathfrak{p}_0}},\varphi_0} \Spf O_{\breve{E}_\nu}) \times \mathbb{Z}. 
\end{equation}
\begin{proposition}
  The isomorphism (\ref{PGL-O2e}) does not depend on the choice of the prime
  element $\Pi$. 
  The action of $\mathbf{J}_{\mathfrak{q}_0}$ on the left hand side induces on the 
  right hand side
  \begin{displaymath}
    \delta (\omega, m) = (\mathrm{pr}(\delta) \omega, \; 
    \ord_{\mathfrak{p}_0} \det \delta+m), \quad \delta\in \mathbf{J}_{\mathfrak{q}_0},
    \; \omega \in \mathcal{M}_{\mathrm{Dr}}(0), \; m \in \mathbb{Z},  
    \end{displaymath}
  cf. (\ref{PGL-O1e}).

  In the next section we will write
  $\delta \omega := \mathrm{pr}(\delta) \omega$. 
  \end{proposition}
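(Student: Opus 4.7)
The plan is to exploit two general facts about the Hecke correspondence $\mathfrak{h}$ of (\ref{HOat0}), (\ref{Hecke14e}): the multiplicativity $\mathfrak{h}(vu) = \mathfrak{h}(v)\circ\mathfrak{h}(u)$, which is immediate from the identity $\iota(u)\circ\iota^u(v) = \iota(vu)$ together with $(Y^u)^v = Y^{vu}$; and the triviality $\mathfrak{h}(u) = \mathrm{id}$ for $u \in O_{B_{\mathfrak{q}_0}}^\times$, already recorded after (\ref{HOat0}). Both extend without change from the setting of ${\rm RZ}_{\mathfrak{p}_0}$ to that of $\tilde{\mathcal{M}}_{\mathrm{Dr}}$, since they only involve the $\mathfrak{q}_0$-part.

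Independence of $\Pi$ follows at once: a second prime element has the form $\Pi' = \Pi u$ with $u \in O_{B_{\mathfrak{q}_0}}^\times$, and then $(\Pi')^n = \Pi^n u_n$ with $u_n \in O_{B_{\mathfrak{q}_0}}^\times$ for every $n \in \mathbb{Z}$; applying the two facts, one obtains $\mathfrak{h}(\Pi')^n = \mathfrak{h}(\Pi)^n$ as isomorphisms $\mathcal{M}_{\mathrm{Dr}}(0) \overset{\sim}{\rightarrow} \mathcal{M}_{\mathrm{Dr}}(n)$, so the identifications defining (\ref{PGL-O2e}) do not depend on the chosen prime element.

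For the action formula, fix $(Y_{\mathfrak{q}_0},\rho_{\mathfrak{q}_0}) \in \mathcal{M}_{\mathrm{Dr}}(a)$ and let $(\omega,a)$ be its image on the right hand side, so that $\omega = \mathfrak{h}(\Pi)^{-a}(Y_{\mathfrak{q}_0},\rho_{\mathfrak{q}_0})$. The left action of $\delta \in \mathbf{J}_{\mathfrak{q}_0}$ leaves $Y_{\mathfrak{q}_0}$ untouched and replaces $\rho_{\mathfrak{q}_0}$ by $\delta\rho_{\mathfrak{q}_0}$, so the new connected component index is $a + n$ with $n := \ord_{\mathfrak{p}_0}\det\delta$. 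The essential point is that $\delta$ and $\mathfrak{h}(\Pi)$ commute on $\tilde{\mathcal{M}}_{\mathrm{Dr}}$: indeed $\mathfrak{h}(\Pi)$ postcomposes the rigidification with $\iota_{\mathbb{X}_{\mathfrak{q}_0}}(\Pi)$, while $\delta$ belongs to the $B_{\mathfrak{q}_0}$-commutant $\End^{o}_{B_{\mathfrak{q}_0}}\mathbb{X}_{\mathfrak{q}_0}$, and these two endomorphisms of $\mathbb{X}_{\mathfrak{q}_0}$ commute because $\Pi \in O_{B_{\mathfrak{q}_0}}$. Granted this, the first coordinate of $\delta(Y_{\mathfrak{q}_0},\rho_{\mathfrak{q}_0})$ equals
\begin{equation*}
\mathfrak{h}(\Pi)^{-a-n}\,\delta\,(Y_{\mathfrak{q}_0},\rho_{\mathfrak{q}_0})
= \delta\,\mathfrak{h}(\Pi)^{-n}\,\mathfrak{h}(\Pi)^{-a}(Y_{\mathfrak{q}_0},\rho_{\mathfrak{q}_0})
= \mathfrak{h}(\Pi)^{-n}\,\delta\,\omega = \mathrm{pr}(\delta)\omega,
\end{equation*}
which is precisely the formula stated. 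The only real content of the proof is thus the commutativity of $\mathbf{J}_{\mathfrak{q}_0}$ and $\mathfrak{h}(\Pi)$, and its verification reduces to the commutation between $\End^o_{B_{\mathfrak{q}_0}}\mathbb{X}_{\mathfrak{q}_0}$ and $\iota_{\mathbb{X}_{\mathfrak{q}_0}}(O_{B_{\mathfrak{q}_0}})$, which is built into the definitions.
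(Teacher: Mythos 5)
Your argument is correct and is the natural unpacking of what the paper dismisses as ``This is clear'': the three structural observations you isolate (multiplicativity of $\mathfrak{h}$, triviality on $O_{B_{\mathfrak{q}_0}}^\times$, and commutativity of $\mathfrak{h}(\Pi)$ with the $\mathbf{J}_{\mathfrak{q}_0}$-action on the rigidification, the last being a consequence of $\delta\in\End^o_{B_{\mathfrak{q}_0}}\mathbb{X}_{\mathfrak{q}_0}$) are exactly what makes the proposition hold. The only simplification available is in the independence-of-$\Pi$ step, where $\mathfrak{h}(\Pi')=\mathfrak{h}(\Pi u)=\mathfrak{h}(\Pi)\circ\mathfrak{h}(u)=\mathfrak{h}(\Pi)$ already gives the result without passing through the powers $(\Pi')^n$.
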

\begin{proof}
This is clear. 
  \end{proof}

\begin{lemma}\label{RZ7l} 
  There is a canonical isomorphism of functors over $\Spf O_{\breve{E}_\nu}$ 
  \begin{displaymath}
    {\rm RZ}_{\mathfrak{p}_0}(0,0) \isoarrow \hat{\Omega}_{F_{\mathfrak{p}_0}}^2
    \times_{\Spf O_{F_{\mathfrak{p}_0}},\varphi_0}\Spf O_{\breve{E}_\nu}. 
    \end{displaymath}
\end{lemma}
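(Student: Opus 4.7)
The plan is to exhibit mutually inverse morphisms between the two functors by using the polarization to dualize back and forth between the $\mathfrak{q}_0$-component and the $\bar{\mathfrak{q}}_0$-component.

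First, I would normalize the framing object $(\mathbb{X}, \iota_{\mathbb{X}}, \lambda_{\mathbb{X}})$. Since $\lambda_{\mathbb{X}}$ is principal, the component $\lambda_{\mathbb{X}, \mathfrak{q}_0} \colon \mathbb{X}_{\mathfrak{q}_0} \to (\mathbb{X}_{\bar{\mathfrak{q}}_0})^\wedge$ is an isomorphism, so up to isomorphism in $\mathcal{P}_{\mathfrak{p}_0}(\bar\kappa_{E_\nu})$ we may assume $\mathbb{X}_{\bar{\mathfrak{q}}_0}=(\mathbb{X}_{\mathfrak{q}_0})^\wedge$, that $\lambda_{\mathbb{X}, \mathfrak{q}_0}=\id$ (hence $\lambda_{\mathbb{X}, \bar{\mathfrak{q}}_0}=-\id^\wedge$), and that the $O_{B_{\bar{\mathfrak{q}}_0}}$-action on $(\mathbb{X}_{\mathfrak{q}_0})^\wedge$ is the unique one making the first diagram in \eqref{RZ4e} commute, namely $\iota_{\mathbb{X}}(b^\star)=\iota_{\mathbb{X}, \mathfrak{q}_0}(b)^\wedge$ for $b\in O_{B_{\mathfrak{q}_0}}$.

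The forward morphism $\Psi\colon{\rm RZ}_{\mathfrak{p}_0}(0,0)\to\mathcal{M}_{\mathrm{Dr}}(0)$ is then defined by forgetting: send $(Y,\iota,\bar\lambda,\rho)$ to the pair $(Y_{\mathfrak{q}_0},\rho_{\mathfrak{q}_0})$, which is a special formal $O_{B_{\mathfrak{q}_0}}$-module together with a quasi-isogeny of height zero by the very definition of the $(0,0)$-component. For the inverse, given $(Y_{\mathfrak{q}_0},\rho_{\mathfrak{q}_0})\in\mathcal{M}_{\mathrm{Dr}}(0)(S)$, I will set $Y_{\bar{\mathfrak{q}}_0}:=(Y_{\mathfrak{q}_0})^\wedge$ and define the $O_{B_{\bar{\mathfrak{q}}_0}}$-action on $Y_{\bar{\mathfrak{q}}_0}$ by $b\mapsto \iota_{\mathfrak{q}_0}(b^\star)^\wedge$, so that \eqref{RZ4e} holds tautologically with $\lambda_{\mathfrak{q}_0}=\id$ and $\lambda_{\bar{\mathfrak{q}}_0}=-\id^\wedge$. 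Taking $\rho_{\bar{\mathfrak{q}}_0}:=(\rho_{\mathfrak{q}_0})^{-\wedge}$ gives a quasi-isogeny to $\mathbb{X}_{\bar{\mathfrak{q}}_0}=(\mathbb{X}_{\mathfrak{q}_0})^\wedge$ whose $F_{\mathfrak{p}_0}$-height equals $-\operatorname{height}_{F_{\mathfrak{p}_0}}\rho_{\mathfrak{q}_0}=0$. Finally one declares $\bar\lambda$ to be the $O_{F_{\mathfrak{p}_0}}^\times$-orbit of $\lambda=\lambda_{\mathfrak{q}_0}\oplus\lambda_{\bar{\mathfrak{q}}_0}$.

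It remains to check that $\Psi$ and the above construction $\Phi$ are mutually inverse on objects and on isomorphisms. In one direction this is immediate: $\Psi\circ\Phi=\id$ holds by construction. For $\Phi\circ\Psi=\id$, starting from $(Y,\iota,\bar\lambda,\rho)$ and choosing any representative $\lambda\in\bar\lambda$, the principality of $\lambda$ and Lemma \ref{RZ4l}-type compatibilities force the $\bar{\mathfrak{q}}_0$-data to be canonically isomorphic to the one built from $(Y_{\mathfrak{q}_0},\rho_{\mathfrak{q}_0})$; the residual ambiguity from rescaling $\lambda$ by a unit $u\in O_{F_{\mathfrak{p}_0}}^\times$ is precisely what is killed by passing to the homogeneous class $\bar\lambda$, and the rescaling $\rho\leadsto \rho\circ\iota(u)$ required by diagram \eqref{RZ14e} lands in the same isomorphism class. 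Checking compatibility with morphisms is formal once one uses that a morphism of the special formal module $Y_{\mathfrak{q}_0}$ induces by duality a morphism of $Y_{\bar{\mathfrak{q}}_0}$.

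The step I expect to require the most care is the second one above: verifying that the equivalence relation on the target (arbitrary isomorphisms of the pair $(Y_{\mathfrak{q}_0},\rho_{\mathfrak{q}_0})$) precisely matches the equivalence relation on the source (isomorphisms of the full quadruple, where the polarization is only given up to $O_{F_{\mathfrak{p}_0}}^\times$). The key point is that once one chooses a representative of $\bar\lambda$, the pair $(\lambda_{\mathfrak{q}_0},\lambda_{\bar{\mathfrak{q}}_0})$ is determined with $\lambda_{\bar{\mathfrak{q}}_0}=-\lambda_{\mathfrak{q}_0}^\wedge$, and the ambiguity $\lambda\leftrightarrow u\lambda$ corresponds exactly to absorbing a unit scaling into the identification $\mathbb{X}_{\bar{\mathfrak{q}}_0}\cong(\mathbb{X}_{\mathfrak{q}_0})^\wedge$, which does not affect the isomorphism class in $\mathcal{M}_{\mathrm{Dr}}(0)$.
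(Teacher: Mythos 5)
Your proposal is correct and takes essentially the same approach as the paper, using the chosen polarization to dualize away the $\bar{\mathfrak{q}}_0$-component and reduce a point of ${\rm RZ}_{\mathfrak{p}_0}(0,0)$ to the pair $(Y_{\mathfrak{q}_0},\rho_{\mathfrak{q}_0})$. The precise statement behind the step you flag is the one the paper records: on the $(0,0)$ component the similitude factor $f$ in diagram \eqref{RZ14e} is forced to lie in $O_{F_{\mathfrak{p}_0}}^{\times}$ because both $\rho_{\mathfrak{q}_0}$ and $\rho_{\bar{\mathfrak{q}}_0}$ have height zero, so there is a \emph{unique} $\lambda\in\bar\lambda$ with $f=1$, and it is this $\lambda$ that yields the isomorphism of the quadruple with $\Phi(Y_{\mathfrak{q}_0},\rho_{\mathfrak{q}_0})$.
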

\begin{proof}
  We begin with a general remark which is useful later on. We consider the
  isomorphism of rings 
  \begin{equation}\label{RZ9e} 
    \begin{array}{ccc}  
    B_{\mathfrak{p}_0} = B_{\mathfrak{q}_0} \times B_{\bar{\mathfrak{q}}_0} & 
    \overset{\sim}{\longrightarrow} & B_{\mathfrak{q}_0} \times
    B_{\mathfrak{q}_0}^{\mathrm{opp}}. \\
    \quad (b,c) & \mapsto & (b, c^{\star}) 
      \end{array}
  \end{equation}
  The involution $\star$ on $B_{\mathfrak{p}_0}$ induces on the right hand side
  the involution $(b_1, b_2) \mapsto (b_2, b_1)$. The maximal orders defined on each
  side (cf.  (\ref{Dsplit-in0})) are mapped isomorphically to each other.
  Consider a point $(Y, \iota, \bar{\lambda}) \in \mathcal{P}_{\mathfrak{p}_0}(S)$. We
  choose $\lambda \in \bar{\lambda}$. It defines an isomorphism
  $\lambda: Y_{\mathfrak{q}_0} \rightarrow (Y_{\bar{\mathfrak{q}}_0})^{\wedge}$.    
  It becomes an isomorphism of $O_{B_{\mathfrak{q}_0}}$-modules  if we consider
  $(Y_{\bar{\mathfrak{q}}_0})^{\wedge}$ as an $O_{B_{\mathfrak{q}_0}}$-module via the
  isomorphism (\ref{RZ9e}), cf. (\ref{RZ4e}).
  By the choice of $\lambda$ we may identify
  $Y_{\bar{\mathfrak{q}}_0}$ with the $p$-divisible
  $O_{B_{\mathfrak{q}_0}}^{\mathrm{opp}}$-module $(Y_{\mathfrak{q}_0})^{\wedge}$. The
  $O_{F_{\mathfrak{p}_0}}^{\times}$-homogeneous polarization on
  $Y \cong Y_{\mathfrak{q}_0} \oplus (Y_{\mathfrak{q}_0})^{\wedge}$ becomes the
  polarization induced by
  $\id: Y_{\mathfrak{q}_0} \rightarrow ((Y_{\mathfrak{q}_0})^{\wedge})^{\wedge}$.
  We call this the canonical polarization.
  Let us denote by $B^{o}_{\mathfrak{p}_0}$ the right hand side of (\ref{RZ9e})
  with its involution. Then we may describe an object of
  $\mathcal{P}_{\mathfrak{p}_0}(S)$ as a triple 
  $(Y = Y_{\mathfrak{q}_0} \oplus Y_{\mathfrak{q}_0}, \iota, \bar{\lambda})$,
  where $Y_{\mathfrak{q}_0}$ is a special formal $O_{B_{\mathfrak{q}_0}}$-module, where 
  $\iota: O_{B^o_{\mathfrak{q}_0}} \rightarrow \End Y$ is the natural action
  and where $\bar{\lambda}$ is the $O_{F_{\mathfrak{p}_0}}^{\times}$-homogeneous class
  of the canonical polarization. 

  Now we remark that for a point $(Y, \iota, \bar{\lambda}, \rho)$ of 
  ${\rm RZ}_{\mathfrak{p}_0}(0,0) (S)$ there is a unique
  $\lambda \in \bar{\lambda}$ which makes the diagram (\ref{RZ14e}) commute
  with $f = 1$. Indeed, one notes that in this diagram it follows that 
  $f \in O_{F_{\mathfrak{p}_0}}^{\times}$ if $\rho_{\mathfrak{q}_0}$ and
  $\rho_{\bar{\mathfrak{q}}_0}$ are quasi-isogenies of height zero.

  Therefore the point is uniquely determined by
  $(Y_{\mathfrak{q}_0}, \iota_{\mathfrak{q}_0}, \rho_{\mathfrak{q}_0})$. This proves
  the Lemma. 
  \end{proof}

We introduce the group $\mathbf{I}_{\mathfrak{p}_0}$ of all quasi-isogenies 
$\gamma\colon (\mathbb{X},\iota_{\mathbb{X}})\rightarrow (\mathbb{X},\iota_{\mathbb{X}})$
which respect the polarization
$\lambda_{\mathbb{X}}$ up to a factor in $F_{\mathfrak{p}_0}^{\times}$. If we denote
by $\gamma \mapsto \gamma'$ the involution on $\End^{o} \mathbb{X}$ induced by
$\lambda_{\mathbb{X}}$,  we obtain
\begin{equation}\label{RZ17e}
  \mathbf{I}_{\mathfrak{p}_0} = \{\gamma \in \End^{o}_{B_{\mathfrak{p}_0}} 
\mathbb{X} \; | \; \gamma' \gamma \in F_{\mathfrak{p}_0}^{\times} \} .
\end{equation}
The group $\mathbf{I}_{\mathfrak{p}_0}$ acts from the left on the functor
${\rm RZ}_{\mathfrak{p}_0}$,
\begin{displaymath}
  (Y,\iota, \bar{\lambda},\rho) \mapsto (Y,\iota, \bar{\lambda}, \gamma\rho),
  \qquad \gamma \in \mathbf{I}_{\mathfrak{p}_0}. 
  \end{displaymath}
This action commutes with the action of $G^{\bullet}_{\mathfrak{p}_0}$.

We make this action more explicit by using the description of the category
$\mathcal{P}_{\mathfrak{p}_0}(S)$ given in the proof of Lemma \ref{RZ7l}. 
Recall the group $\mathbf{J}_{\mathfrak{q}_0}$ of all quasi-isogenies
$\delta \in \End^{o}_{B_{\mathfrak{q}_0}} \mathbb{X}_{\mathfrak{q}_0}$. Then an 
element $\delta_2 \in \mathbf{J}^{\mathrm{opp}}_{\mathfrak{q}_0}$ acts on
$\mathbb{X}^{\wedge}_{\mathfrak{q}_0}$ by
$\iota_{\mathbb{X}_{\mathfrak{q}_0}}(\delta_2)^{\wedge}$. 
We conclude that 
\begin{displaymath}
  \mathbf{I}_{\mathfrak{p}_0} = \{(\delta_1, \delta_2) \in
  \mathbf{J}_{\mathfrak{q}_0} \times \mathbf{J}_{\mathfrak{q}_0}^{{\rm opp}} \; | \;
  \delta_1 \delta_2 \in F_{\mathfrak{p}_0}^{\times} \} .
  \end{displaymath}

If we replace in (\ref{RZ13e}) $(u_1, u_2)$ by any other
$(v_1,v_2) \in \mathcal{H}_{\mathfrak{p}_0}$ such that
$\ord_{B_{\mathfrak{q}_0}} u_i = \ord_{B_{\mathfrak{q}_0}} v_i$ for $i=1,2$, we obtain the
same morphism. Using this morphism as an identification of both sides of
(\ref{RZ13e}), we obtain an isomorphism
\begin{equation}\label{RZ18e}
  \mathrm{RZ}_{\mathfrak{p}_0} = \mathrm{RZ}_{\mathfrak{p}_0}(0,0) \times \Lambda,   
  \end{equation}
where $\Lambda = \{(a,b) \in \mathbb{Z}^2 \; | a+b \equiv 0 \mod 2 \; \}$.
Combining this with Lemma \ref{RZ7l}, we obtain

\begin{proposition}\label{RZ7p} 
  There is an isomorphism of functors
  \begin{displaymath}
    (\hat{\Omega}_{F_{\mathfrak{p}_0}}^2 \times_{\Spf O_{F_{\mathfrak{p}_0}},\varphi_0}
    \Spf O_{\breve{E}_\nu}) \times
    G^{\bullet}_{\mathfrak{p}_0} / \mathbf{K}^{\bullet}_{\mathfrak{p}_0} 
     \overset{\sim}{\longrightarrow}
{\rm RZ}_{\mathfrak{p}_0} 
  \end{displaymath}
  which is equivariant with respect to the actions of
  $G^{\bullet}_{\mathfrak{p}_0} / \mathbf{K}^{\bullet}_{\mathfrak{p}_0}$
  on both sides. 

  The right hand side of (\ref{RZ18e}) can be written as 
  $\mathcal{M}_{\mathrm{Dr}}(0) \times \Lambda$. An element
  $(\delta_1, \delta_2) \in \mathbf{I}_{\mathfrak{p}_0}$ then sends 
  a point $(\omega, (m_1,m_2))$ to 
$(\mathrm{pr}(\delta_1) \omega,\; (m_1 +\ord_{\mathfrak{p}_0} \det \delta_1, \; m_2 + \ord_{\mathfrak{p}_0} \det \delta_2 ))$. 

  As noted above we will write in the next section
 $\delta \omega := \mathrm{pr}(\delta) \omega$. 
\end{proposition}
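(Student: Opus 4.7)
The first assertion is a reorganization of what has already been built up. The plan is to combine Lemma~\ref{RZ7l}, which identifies $\mathrm{RZ}_{\mathfrak{p}_0}(0,0) \cong \hat{\Omega}^2_{F_{\mathfrak{p}_0}} \times_{\Spf O_{F_{\mathfrak{p}_0}},\varphi_0} \Spf O_{\breve{E}_\nu}$, with the decomposition (\ref{RZ12e}) and the simply transitive right action of $G^{\bullet}_{\mathfrak{p}_0}/\mathbf{K}^{\bullet}_{\mathfrak{p}_0} \cong \mathbb{Z}^2$ on the set of components $\{\mathrm{RZ}_{\mathfrak{p}_0}(a,b)\}_{a+b\equiv 0\,\mathrm{mod}\,2}$ recorded after (\ref{Heckep0e}). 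Explicitly, I define the isomorphism by sending $(\omega, g\mathbf{K}^{\bullet}_{\mathfrak{p}_0})$ to $\omega_{|g}$, where $\omega$ is viewed as a point of $\mathrm{RZ}_{\mathfrak{p}_0}(0,0)$ via Lemma~\ref{RZ7l} and $\omega_{|g}$ is formed by the right $G^{\bullet}_{\mathfrak{p}_0}$-action (\ref{Heckep0e}). Well-definedness is the triviality of the $\mathbf{K}^{\bullet}_{\mathfrak{p}_0}$-action on $\mathrm{RZ}_{\mathfrak{p}_0}$, bijectivity is the simple transitivity on components, and equivariance for the right $G^{\bullet}_{\mathfrak{p}_0}$-action is immediate from the construction.

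For the second assertion, take a point $(Y,\iota,\bar{\lambda},\rho)$ of $\mathrm{RZ}_{\mathfrak{p}_0}(m_1,m_2)$ corresponding to $(\omega,(m_1,m_2)) \in \mathcal{M}_{\mathrm{Dr}}(0) \times \Lambda$, using the identification (\ref{RZ18e}) obtained by transporting along any Hecke operator $\mathfrak{h}(u_1,u_2)$ with $\ord_{B_{\mathfrak{q}_0}} u_1 = m_1$ and $\ord_{B_{\bar{\mathfrak{q}}_0}} u_2 = m_2$ (independent of the choice, as noted just before (\ref{RZ18e})). Applying $(\delta_1,\delta_2) \in \mathbf{I}_{\mathfrak{p}_0}$ replaces $\rho = \rho_{\mathfrak{q}_0} \oplus \rho_{\bar{\mathfrak{q}}_0}$ by $\delta_1\rho_{\mathfrak{q}_0} \oplus \delta_2\rho_{\bar{\mathfrak{q}}_0}$, leaving $(Y,\iota,\bar\lambda)$ untouched; reading off heights yields the $\Lambda$-shift $(m_1 + \ord_{\mathfrak{p}_0}\det \delta_1, m_2 + \ord_{\mathfrak{p}_0}\det \delta_2)$. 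Transporting this new point back to $\mathrm{RZ}_{\mathfrak{p}_0}(0,0)$ by a Hecke operator and reading off the $Y_{\mathfrak{q}_0}$-factor via Lemma~\ref{RZ7l}, one finds the successive action of $\delta_1 \in \mathbf{J}_{\mathfrak{q}_0}$ on $\mathcal{M}_{\mathrm{Dr}}$ followed by the height-correcting Hecke operator $\mathfrak{h}(\Pi)^{-\ord_{\mathfrak{p}_0}\det \delta_1}$, which is precisely $\mathrm{pr}(\delta_1)$ as defined in (\ref{PGL-O1e}).

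The key point why no $\delta_2$-contribution survives in the $\mathcal{M}_{\mathrm{Dr}}(0)$-coordinate is the uniqueness statement in the proof of Lemma~\ref{RZ7l}: a point of $\mathrm{RZ}_{\mathfrak{p}_0}(0,0)$ is determined by $(Y_{\mathfrak{q}_0},\rho_{\mathfrak{q}_0})$ alone, since the second summand $(Y_{\bar{\mathfrak{q}}_0},\rho_{\bar{\mathfrak{q}}_0})$ together with the unique principal polarization making (\ref{RZ14e}) commute with $f=1$ is recovered by duality. Any $\delta_2$-twist of $\rho_{\bar{\mathfrak{q}}_0}$ is therefore absorbed into a change of representative within $\bar{\lambda}$ and contributes only to the $\Lambda$-component.

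The main obstacle in this argument is the careful bookkeeping of heights and Hecke-operator normalizations: one must verify that the correction Hecke operator appears with the precise exponent $-\ord_{\mathfrak{p}_0}\det \delta_1$ so as to match the formula (\ref{PGL-O1e}) for $\mathrm{pr}(\delta_1)$, and that the various independence-of-choice claims (for $(u_1,u_2)$ and for $\lambda \in \bar\lambda$) mesh consistently with the $\mathbf{I}_{\mathfrak{p}_0}$-action on rigidifications.
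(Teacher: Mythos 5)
Your proposal is correct and follows essentially the same path as the paper. The paper's own proof is extremely terse — it notes that only the last assertion requires proof, and that the image of $(x,(m_1,m_2))$ under $(\delta_1,\delta_2)$ is "computed by looking at $Y_{\mathfrak{q}_0}$ only," invoking the description of $\mathrm{pr}(\delta_1)$. Your write-up unpacks that remark: you correctly identify that the isomorphism of Lemma~\ref{RZ7l} is projection onto the $\mathfrak{q}_0$-factor, that the $\mathfrak{q}_0$-component of the Hecke operators $\mathfrak{h}(u_1,u_2)$ depends only on $\ord u_1$, and that the conjugation of $\delta_1$ by the transport Hecke operators yields exactly the correction factor $\mathfrak{h}(\Pi)^{-\ord_{\mathfrak{p}_0}\det\delta_1}$ defining $\mathrm{pr}(\delta_1)$.
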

\begin{proof}
Only the last assertion needs a proof. 
We consider a point $(x, (m_1,m_2))$ from the right hand
side of (\ref{RZ18e}), where $x$ corresponds to
$\omega = (Y_{\mathfrak{q}_0} \iota_{\mathfrak{q}_0}, \rho_{\mathfrak{q}_0}) \in \mathcal{M}_{\mathrm{Dr}}(0)$.
The image of a point $(x, (m_1,m_2))$ under the action of
$(\delta_1,\delta_2)$ is computed by looking at $Y_{\mathfrak{q}_0}$ only.
By the description of $\mathrm{pr}(\delta_1)$, this shows the result.
\end{proof}

\section{The $p$-adic uniformization of Shimura curves}\label{s:uniform}
In this section, we prove the $p$-adic uniformization of the integral model $\tilde{\rm Sh}_{\mathbf{K}^\bullet}(G^\bullet, h^\bullet_D)$, cf. Definition \ref{tildeSh_D1d}. Here $\mathbf{K}^{\bullet}=\mathbf{K}^{\bullet}_p\mathbf{K}^{\bullet,p}$, with 
$\mathbf{K}^{\bullet}_p \subset G^{\bullet}(\mathbb{Q}_p)$ defined by 
(\ref{BZKpPkt1e}), where
$\mathbf{M}^{\bullet}_{\mathfrak{p}_0} = O_{F_{\mathfrak{p}_0}}^{\times}$. From this,  Cherednik uniformization, i.e., Theorem \ref{MainIntro} will follow. We stress that the Shimura varieties ${\rm Sh}(G, h)$ and ${\rm Sh}(G^\bullet, h)$ from the previous sections will not reappear. 

We consider the functor
\begin{displaymath}
  {\rm RZ}_{p,\mathbf{K}^{\bullet}_p} = {\rm RZ}_{\mathfrak{p}_0} \times \prod_{i=1}^s
  {\rm RZ}_{\mathfrak{p}_i,\mathbf{K}^{\bullet}_{\mathfrak{p}_i}}, 
\end{displaymath}
cf. Definitions \ref{RZ4d} and \ref{RZ2d}.  
Each of these factors is defined by a choice of a framing object
which we denote by $(\mathbb{X}_i, \iota_{\mathbb{X}_i}, \lambda_{\mathbb{X}_i})$, for $i=0,\ldots,s$.
We choose the framing objects as follows. We fix a point
\begin{equation}\label{uniform2e} 
  (A_{o}, \iota_{o}, \bar{\lambda}_{o},
  \bar{\eta}^p_{o}, (\bar{\eta}_{\mathfrak{q}_j,o})_j, 
 ( \xi_{\mathfrak{p}_j, o})_j) \in
  \tilde{\mathcal{A}}^{\bullet t}_{\mathbf{K}^{\bullet}}(\bar{\kappa}_{E_{\nu}}). 
  \end{equation}
The last two data of the point are for $j = 1, \ldots, s$. Indeed, by $\mathbf{M}^{\bullet}_{\mathfrak{p}_0} = O_{F_{\mathfrak{p}_0}}^{\times}$, the choice of $\xi_{\mathfrak{p}_0}$ is redundant. We choose
an element $\eta_{o}^p \in \bar{\eta}_{o}^p$. 
We denote by $\mathbb{X}$ the $p$-divisible group of $A_{o}$. We set
$\mathbb{X}_i = \mathbb{X}_{\mathfrak{p}_i}$, with its action
$\iota_{\mathbb{X}_i}$ from $\iota_{o}$ and a polarization
$\lambda_{\mathbb{X}_i}$ from some element of $\bar{\lambda}_{o}$.

We denote by $\tilde{\mathcal{A}}^{\bullet t}_{\mathbf{K}^{\bullet} / \Spf O_{E_{\nu}}}$, 
resp. $\tilde{\mathcal{A}}^{\bullet t}_{\mathbf{K}^{\bullet} / \Spf O_{\breve{E}_{\nu}}}$ the
restriction of the functor $\tilde{\mathcal{A}}^{\bullet t}_{\mathbf{K}^{\bullet}}$ to
the category of schemes over $\Spf O_{E_{\nu}}$, resp. $\Spf O_{\breve{E}_{\nu}}$. 
We define the uniformization morphism of functors on the category of schemes
$S$ over $\Spf O_{\breve{E}_{\nu}}$ (the definition depends on the choice of
the tuple \eqref{uniform2e}),
\begin{equation}\label{unimorph1e} 
\tilde\Theta^\bullet :  {\rm RZ}_{p,\mathbf{K}^{\bullet}_p} \times
  G^{\bullet}(\mathbb{A}_f^p)/(\mathbf{K}^{\bullet})^p \rightarrow  
 \tilde{\mathcal{A}}^{\bullet t}_{\mathbf{K}^{\bullet} {/ \Spf O_{\breve{E}_{\nu}}}}. 
  \end{equation}
For the definition we recall that a point with values in $S$ of the functor
on the left hand side consists the following data
\begin{enumerate}
\item[({1})] a point $(Y_0, \iota_0, \bar{\lambda}_0, \rho_0)$ of 
  ${\rm RZ}_{\mathfrak{p}_0}(S)$, cf. Definition \ref{RZ4d}, 
\item[({2})] a point
$(Y_j,\iota_j, \bar{\lambda}_j, \bar{\eta}_{\mathfrak{q}_j},\bar{\xi}_{\mathfrak{p}_j}, \rho_j)$
  of ${\rm RZ}_{\mathfrak{p}_j, \mathbf{K}^{\bullet}_{\mathfrak{p}_j}}(S)$ for
  $j = 1, \ldots s$, cf. Definition \ref{RZ2d}, 
\item[({3})] an element $g \in G^{\bullet}(\mathbb{A}_f^p)$. 
\end{enumerate}
Here 
\begin{equation}\label{RZ7e}
\rho_i: (Y_i,\iota_i)_{\bar{S}} \rightarrow (\mathbb{X}_i, \iota_{\mathbb{X}_i})
      \times_{\Spec \bar{\kappa}_{E_{\nu}}} \bar{S} 
\end{equation}
is a quasi-isogeny for $i = 0, \ldots, s$ which respects the polarizations on
both sides up to a factor in $F_{\mathfrak{p}_i}^{\times}$. 
We define as follows an abelian scheme $(\bar{A}, \bar{\iota}_A)$ over $\bar{S}$ and
an isogeny
\begin{equation}\label{RZ8e}
  (\bar{A}, \iota_{\bar{A}}) \rightarrow (A_{o}, \iota_{o})
  \times_{\Spec \bar{\kappa}_{E_{\nu}}} \bar{S}  .
\end{equation}
 Let us denote by $\bar{Y}$ the $p$-divisible group of $\bar{A}$.
Then $(\bar{Y}_{\mathfrak{p}_i}, \iota_i)$ is identified with
$(Y_{i, \bar{S}}, \iota_{i,\bar{S}})$ and the quasi-isogeny (\ref{RZ8e}) induces
on the $p$-divisible groups the quasi-isogenies (\ref{RZ7e}). We choose 
$\lambda_{o} \in \bar{\lambda}_{o}$ and consider the inverse
image $\theta$ of $\lambda_{o}$ on $\bar{A}$ by (\ref{RZ8e}). Let
$\bar{\theta} = F^{\times} \theta$ be the $F$-homogeneous polarization it generates.
By the definition of the $RZ$-spaces, the polarization induced by $\theta$ on
the $p$-divisible group $Y_{i, \bar{S}}$ differs from $\lambda_{i, \bar{S}}$ by
a factor in $F_{\mathfrak{p}_i}^{\times}$. We then define $\bar{\lambda}_{\bar{A}}$ to be the
$U_p(F)$-homogeneous polarization on $\bar{A}$ which consists of all elements
of $\bar{\theta}$ which on the $p$-divisible groups $\bar{Y}_i$ differ
from $\lambda_{i, \bar{S}}$ by a factor in $O_{F_{\mathfrak{p}_i}}^{\times}$. 

Since a lifting
of the $p$-divisible group of $\bar{A}$ with these extra structures is given
by the data $({ 1})$ and $({ 2})$  above, we obtain by the Serre-Tate theorem a lifting $A$  of $\bar A$ over $S$ with extra
structures $\iota_A$ and $\bar{\lambda}_A$. We obtain a point
\begin{displaymath}
  (A, \iota_A, \bar{\lambda}_A, \bar{\eta}^p_A, (\bar{\eta}_{\mathfrak{q}_j,A})_j,
  (\bar{\xi}_{\mathfrak{p}_j})_j) ,
\end{displaymath}
where the last three items are defined as follows. We take the inverse image
of $\eta^p_{o}$ by (\ref{RZ8e}) to obtain $\eta^p_{\bar{A}}$. Since we
have \'etale sheaves, this gives $\eta^p_A$ and then its class $\bar{\eta}^p_A$.
The last two items are deduced directly from the item $(2)$ above.
Indeed $T_{\mathfrak{q}_j}(A) = T_p(Y_{j, \mathfrak{q}_j})$ and a rigidification in the
sense of Definition \ref{RZ1d} is equivalent to
\begin{displaymath}
  \eta_{\mathfrak{q}_j}: \Lambda_{\mathfrak{q}_j} \isoarrow T_p(Y_{j, \mathfrak{q}_j}) 
  \mod \mathbf{K}^{\bullet}_{\mathfrak{q}_j},  
\end{displaymath}
and a function 
\begin{displaymath}
  \xi_{\mathfrak{p}_j}: \bar{\lambda}_A \rightarrow
  O_{F_{\mathfrak{p}_j}}^{\times} \mod \mathbf{M}^{\bullet}_{\mathfrak{p}_j}. 
\end{displaymath}
This last function is induced by the injection
$\bar{\lambda}_A\rightarrow\bar{\lambda}_j$ from $({ 2})$. 

We define the image  under $\tilde\Theta^\bullet$ in (\ref{unimorph1e}) of the point given by
the data $({1})$, $({ 2})$, $({ 3})$ to be
\begin{equation}\label{unimorph4e}
  (A, \iota_A, \bar{\lambda}_A, \eta^p_A g, (\bar{\eta}_{\mathfrak{q}_j,A})_j,
  (\bar{\xi}_{\mathfrak{p}_j})_j) .
\end{equation}
For this we have used our choice of $\eta_{o}^p$. 

\begin{proposition}
  Recall from (\ref{Heckep0e}) the  Hecke operator action of $G^{\bullet}_{\mathfrak{p}_0}$ on
  ${\rm RZ}_{\mathfrak{p}_0}$   and from (\ref{Hecke11e}) the  Hecke operator action of
  $G^{\bullet}_{\mathfrak{p}_i}$ on 
  ${\rm RZ}_{\mathfrak{p}_i, \mathbf{K}^{\bullet}_{\mathfrak{p}_i}}$. Together we obtain an action of $G^\bullet(\mathbb{A}_f)$ by
  Hecke operators on the left hand side of (\ref{unimorph1e}). 
  
  There is an extension of the Hecke operators $G^{\bullet}(\mathbb{A}_f)$
  from the tower $\mathcal{A}^{\bullet t}_{\mathbf{K}^{\bullet}}$ to the tower 
  $\tilde{\mathcal{A}}^{\bullet t}_{\mathbf{K}^{\bullet}}$ such that the uniformization
  morphism $\tilde\Theta^\bullet$ is compatible with the actions of Hecke operators
  on both sides. 
\end{proposition}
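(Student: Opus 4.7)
The plan is to decompose the Hecke action using the factorization
\begin{displaymath}
G^{\bullet}(\mathbb{A}_f) = G^{\bullet}(\mathbb{A}_f^p) \times G^{\bullet}_{\mathfrak{p}_0} \times \prod_{j=1}^{s} G^{\bullet}_{\mathfrak{p}_j},
\end{displaymath}
extend and verify compatibility for each factor separately, and then assemble. For $g \in G^{\bullet}(\mathbb{A}_f^p)$ there is nothing to do: the usual Hecke operator on $\mathcal{A}^{\bullet t}_{\mathbf{K}^{\bullet}}$ acts only on the datum $\bar\eta^p$, which is defined over $O_{E_{\nu}}$, so it extends tautologically to $\tilde{\mathcal{A}}^{\bullet t}_{\mathbf{K}^{\bullet}}$; compatibility with $\tilde\Theta^{\bullet}$ is immediate from (\ref{unimorph4e}), since on the RZ-tower $g$ acts by right multiplication on $G^{\bullet}(\mathbb{A}_f^p)/(\mathbf{K}^{\bullet})^p$, and under $\tilde\Theta^{\bullet}$ this is transferred to $\eta^p_A \mapsto \eta^p_A g$.

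For $g \in G^{\bullet}_{\mathfrak{p}_j}$ with $j \geq 1$, I would carry out the construction already sketched after Definition \ref{RZ1d}: for a point $(A,\iota,\bar\lambda,\bar\eta^p,(\bar\eta_{\mathfrak{q}_k})_k,(\bar\xi_{\mathfrak{p}_k})_k)$ of $\tilde{\mathcal{A}}^{\bullet t}_{\mathbf{K}^{\bullet}}(S)$, one extracts the rigidified object $(Y_{\mathfrak{p}_j},\iota,\bar\lambda,\bar\eta_{\mathfrak{q}_j},\bar\xi_{\mathfrak{p}_j}) \in \mathcal{P}_{\mathfrak{p}_j}(S)_{\mathbf{K}^{\bullet}_{\mathfrak{p}_j}}$, applies $g$ to produce a new rigidified object as in (\ref{RZ20e}), and pulls back via an isogeny $\alpha\colon A' \to A$ which realises the modification only at $\mathfrak{p}_j$ and is an isomorphism on all other $p$-divisible components. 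This is well-defined over $O_{E_{\nu}}$ because $Y_{\mathfrak{q}_j}$ is \'etale, so $\bar\eta_{\mathfrak{q}_j}$ and $\bar\xi_{\mathfrak{p}_j}$ are genuine \'etale-sheaf data. Compatibility with $\tilde\Theta^{\bullet}$ reduces to the fact that on ${\rm RZ}_{\mathfrak{p}_j,\mathbf{K}^{\bullet}_{\mathfrak{p}_j}}$ the Hecke operator (\ref{Hecke11e}) acts by the same modification of $\bar\eta_{\mathfrak{q}_j}, \bar\xi_{\mathfrak{p}_j}$ (under Proposition \ref{RZ6p}), while the Serre-Tate step in (\ref{unimorph1e}) commutes with this operation factor by factor.

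For $g \in G^{\bullet}_{\mathfrak{p}_0}$ the extension is built from the description in Variant \ref{varia} together with the recipe (\ref{Heckevar1e})--(\ref{Heckevar3e}): given a point of $\tilde{\mathcal{A}}^{\bullet t}_{\mathbf{K}^{\bullet}}(S)$, one produces a quasi-isogeny of abelian schemes $\alpha\colon (A_1,\iota_1) \to (A_0,\iota_0)$ that is an isomorphism away from $\mathfrak{p}_0$ and whose $\mathfrak{p}_0$-component realises the Hecke modification; the target lifts to $O_{E_{\nu}}$ by the Serre-Tate theorem, and the resulting polarization is still principal in $p$ because the condition after (\ref{RZ16e}) is preserved (the $\xi_{\mathfrak{p}_0}$-function is vacuous in our situation since $\mathbf{M}^{\bullet}_{\mathfrak{p}_0} = O_{F_{\mathfrak{p}_0}}^{\times}$, so no new data need be produced at $\mathfrak{p}_0$). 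The remaining data $\bar\eta^p$, $\bar\eta_{\mathfrak{q}_j}$, $\bar\xi_{\mathfrak{p}_j}$ ($j\ge 1$) are transported by $\alpha$.

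The principal obstacle is to verify that this extended operator at $\mathfrak{p}_0$ matches, under $\tilde\Theta^{\bullet}$, the right action of $g$ on ${\rm RZ}_{\mathfrak{p}_0}$ given by (\ref{Heckep0e}), i.e.\ the left action $\mathfrak{h}(u)$ of (\ref{HOat0}) transported by the anti-isomorphism (\ref{Hecke12e}). Concretely one has to compare two constructions that both produce an abelian scheme over $S$: on the source side, apply (\ref{HOat0}) on $(Y_0,\iota_0,\bar\lambda_0,\rho_0)$ and then run the Serre-Tate construction of $\tilde\Theta^{\bullet}$; on the target side, first run $\tilde\Theta^{\bullet}$ and then apply the extended $\mathfrak{p}_0$-Hecke operator just defined. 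Both produce a new abelian scheme whose $p$-divisible group at $\mathfrak{p}_0$ is $Y_0^{u}$ (with $u$ the image of $g$ under (\ref{Hecke12e})) and which is linked to the original abelian scheme by the same quasi-isogeny $\iota(u)$; the remaining data $\bar\eta^p$, $(\bar\eta_{\mathfrak{q}_j})_{j\ge 1}$, and $(\bar\xi_{\mathfrak{p}_j})_{j\ge 1}$ agree on the nose because the Hecke action at $\mathfrak{p}_0$ leaves all of these untouched. The Serre-Tate theorem then forces the two lifts to coincide, and Lemmas \ref{RZ4l}--\ref{RZ5l} ensure that the polarization bookkeeping matches up to the required factor in $F_{\mathfrak{p}_0}^{\times}$, completing the proof.
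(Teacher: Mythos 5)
Your proposal is correct and follows essentially the same route as the paper: decompose $G^\bullet(\mathbb{A}_f)$ into the prime-to-$p$, $\mathfrak{p}_j$ ($j\ge1$), and $\mathfrak{p}_0$ factors, and in each case construct the extended operator via a quasi-isogeny of abelian schemes that matches the corresponding RZ-operator, tracking the polarization multiplier. The paper likewise writes out only the $\mathfrak{p}_0$-case, where the one explicit calculation it records is $\iota_0(u)^{*}(\lambda)=\lambda^u\circ\iota_0(u_2^{\star}u_1)=\lambda^u\circ\mu(g)$, which pins down the multiplier and then confirms, via (\ref{RZ16e}) and (\ref{BZGdot5e}), that the extension really reproduces the Hecke operator over $E_\nu$ --- the step you summarize through Lemma \ref{RZ5l} and ``the condition after (\ref{RZ16e}) is preserved.''
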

\begin{proof}
  This is trivial for the action of $G^{\bullet}(\mathbb{A}_f^{p})$.
  The proof for elements in $G^\bullet(\BQ_p)$ is based on the description of the Hecke operators after the 
  proof of Proposition \ref{BZ11p}, comp. Remark \ref{remHecke}. For $j=1,\ldots, s$, the local component at $\mathfrak{p}_j$  of a Hecke correspondence is described  after (\ref{RZ2e}). 
  The local component at $\mathfrak{p}_0$  of a Hecke correspondence is described  by \eqref{HOat0}. We write here the argument only for the action of 
  $g \in G^{\bullet}_{\mathfrak{p}_0} \subset G(\mathbb{A}_f)$. We consider a point
  on the left hand side of (\ref{unimorph1e}) defined by the data
  (1), (2), (3). We may assume that the element of
  (3) is $1$.   We take the image by $\tilde\Theta^\bullet$,  
  \begin{equation}\label{Hecke13e}
    (A, \iota, \bar{\lambda}, \bar{\eta}^p, (\bar{\eta}_{\mathfrak{q}_j})_j,
    (\bar{\xi}_{\mathfrak{p}_j})_j) \in
    \tilde{\mathcal{A}}^{\bullet t}_{\mathbf{K}^{\bullet}}(S),  
  \end{equation}
  cf. (\ref{unimorph4e}). Let $(Y_0, \iota_0, \bar{\lambda}_0, \rho_0)$
  be the datum ({1}). Then $Y_{\mathfrak{p}_0}$ is the $\mathfrak{p}_0$-part
  of the $p$-divisible group of $A$ with the induced action $\iota_0$ 
  and polarization $\bar{\lambda}_0$. Let
  $u= (u_1, u_2)\in B_{\mathfrak{q}_0}\times B_{\bar{\mathfrak{q}}_0}=B_{\mathfrak{p}_0}$ 
  be the element which corresponds to $g$ by the anti-isomorphism
  (\ref{Hecke12e}). The Hecke operator $g$ on the left hand side of \eqref{unimorph1e} is
  given by $(Y^u_0, \iota^u_0, \bar{\lambda}^u_0, \rho_0 \circ\iota_0(u))$. Note that for the underlying  polarized $p$-divisble groups we have 
  $(Y^u_0, \lambda^u_0) = (Y_0,\lambda_0)$. We consider the quasi-isogeny
  \begin{equation}\label{Hecke15e}
    \iota_0(u) : (Y^u_0, \iota^u_0, \bar{\lambda}^u_0) \rightarrow
    (Y_0, \iota_0, \bar{\lambda}_0).
  \end{equation}
  We note that
  $\iota_0(u)^{*}(\lambda) =\lambda^u\circ\iota_0(u_2^{\star} u_1) =\lambda^u\circ\mu(g)$.  
  Therefore, applying $g$ on the left hand side of (\ref{unimorph1e}) leads
  on the right hand side to the following point. There is a quasi-isogeny of
  abelian varieties 
  \begin{displaymath}
\alpha: (A', \iota') \rightarrow (A, \iota)  
  \end{displaymath}
  which induces on the $\mathfrak{p}_0$-parts of the $p$-divisible groups the map 
  (\ref{Hecke15e}) and  is an isomorphism on the $\mathfrak{p}_j$-parts
  for $j > 0$. Looking at the $p$-divisible groups, we see that, for the given 
  $p$-principal polarization $\lambda$ on $A$,  the polarization
  $\mu_{\mathfrak{p}_0}(g)^{-1} \alpha^{*} (\lambda)$ on the $\mathfrak{p}_0$-part
  of the $p$-divisible group is principal and for $j > 0$ the
  $\mathfrak{p}_j$-parts of $\alpha^{*} (\lambda)$ is principal. We define
  $\bar{\lambda}'$ on $A'$ as the class of all polarizations in
  $F^{\times} \alpha^{*} (\lambda)$ which are $p$-principal. We define
  all other data $(\bar{\eta}')^p$, $\bar{\eta}'_{\mathfrak{q}_j}$, $\bar{\xi}_j$
  for $j > 0$ by pull back via $\alpha$. Then $A'$ with the extra structure
  just introduced is a point of
  $\tilde{\mathcal{A}}^{\bullet t}_{\mathbf{K}^{\bullet}}(S)$. To see that this point
  represents for an $E_{\nu}$-scheme $S$ the Hecke operator, we need the
  functions $\xi_{\mathfrak{p}_0}$ and $\xi'_{\mathfrak{p}_0}$ with values in
  $F^{\times}_{\mathfrak{p}_0}/ O_{F_{\mathfrak{p}_0}}^{\times}$. So far it was not necessary
  to mention them because they have value $1$ for a polarization which is
  principal in $p$. Therefore we have
  \begin{displaymath}
    1 = \xi_{\mathfrak{p}_0}(\lambda) =
    \xi'_{\mathfrak{p}_0}(\mu_{\mathfrak{p}_0}(g)^{-1} \alpha^{*} (\lambda)). 
    \end{displaymath}
  But then (\ref{BZGdot5e}) shows that $A'$ gives the Hecke operator of 
  $g \in G^{\bullet}_{\mathfrak{p}_0}$.
   \end{proof}

By \cite[6.29]{RZ} the following maps are isomorphisms,
\begin{equation}\label{uniform1e}
  \begin{array}{ccc}  
\End^{o}_B(A_{o}) \otimes_{\mathbb{Q}}  \mathbb{A}_f^p & 
  \overset{\sim}{\longrightarrow} & 
  \End^{o}_{B \otimes \mathbb{A}_f^{p}} \mathrm{V}^p(A_{o})\\[2mm] 
  \End^{o}_B(A_{o}) \otimes_{\mathbb{Q}}  \mathbb{Q}_p & 
  \overset{\sim}{\longrightarrow} & 
  \End^{o}_{B \otimes \mathbb{Q}_p} \mathbb{X} .\\ 
  \end{array}
\end{equation}
Here, as above, $\mathbb{X} = \prod_{i=0}^{s} \mathbb{X}_i$ is the $p$-divisible group
of $A_{o}$. We obtain
\begin{displaymath}
\End^{o}_{B \otimes \mathbb{Q}_p} \mathbb{X} \cong \prod_{i=0}^{s}
\End^o_{B_{\mathfrak{p}_i}} \mathbb{X}_i
  \end{displaymath}
We denote by $\mathbf{I}$ the algebraic group of all $B$-linear quasi-isogenies
$A_{o} \rightarrow A_{o}$ which respect the polarization
$\lambda_{o}$ up to a constant in $F^{\times}$. Let
$\gamma \mapsto \gamma'$ be the Rosati involution on 
$\End_B^o A_{o}$ induced by $\lambda_{o}$. We can write
\begin{equation}\label{groupI1e}
  \mathbf{I}(\mathbb{Q}) = \{\alpha \in \End_B^o A_{o} \; | \;
  \alpha' \alpha \in F^{\times} \}. 
\end{equation}
In the framing object (\ref{uniform2e}) we choose
$\eta^p_{o} \in \bar{\eta}^p_{o}$. This isomorphism
$\eta^p_{o}:\mathrm{V}(A_{o})\isoarrow V\otimes\mathbb{A}_f^p$
induces by (\ref{uniform1e}) an isomorphism
\begin{equation}
  \mathbf{I}(\mathbb{A}_f^p) \overset{\sim}{\longrightarrow}
\{\gamma \in \End^{o}_{B\otimes \mathbb{A}_f^p} \mathrm{V}^p(A_{o})  
\; | \; \gamma' \gamma \in (F\otimes\BA_f)^\times \}
  \overset{\sim}{\longrightarrow}
    G^{\bullet}(\mathbb{A}_f^p). 
  \end{equation}
We also denote by $\gamma \mapsto \gamma'$ the involution on
$\End^{o}_{B \otimes \mathbb{Q}_p} \mathbb{X}$ induced by the polarization
$\lambda_{\mathbb{X}}$. We define
\begin{displaymath}
  \mathbf{I}_{\mathfrak{p}_0} = \{\gamma \in \End^{o}_{B_{\mathfrak{p}_0}}
\mathbb{X}_0 \; | \; \gamma' \gamma \in F_{\mathfrak{p_0}}^{\times} \},  
\end{displaymath}
cf. (\ref{RZ17e}). If $j> 0$  we can take
$(\Lambda_{\mathfrak{p}_j}^{pd}, \lambda_{\psi})$ for the framing object
$\mathbb{X}_j$, cf. (\ref{RZ1e}). Using
the definition (\ref{RZ21e}) of $\lambda_{\psi}$ we obtain an isomorphism
\begin{equation}\label{groupI2e}
  \begin{array}{ll}  
    \mathbf{I}_{\mathfrak{p}_j} & \cong \{\gamma\in
  \End_{B_{\mathfrak{p}_j}} V_{\mathfrak{p}_i}
\; | \; \psi(\gamma v_1, \gamma v_2) = \psi (f v_1, v_2), \;
\text{for some} \; f \in F_{\mathfrak{p}_j}^{\times}  \} \\ 
& = G^{\bullet}_{\mathfrak{p}_j} . 
    \end{array}
\end{equation}

By (\ref{uniform1e}) we obtain
\begin{displaymath}
\mathbf{I}(\mathbb{Q}_p) = \prod_{i=0}^{s} \mathbf{I}_{\mathfrak{p}_i}. 
  \end{displaymath}
The group $\mathbf{I}_{\mathfrak{p}_i}$ acts from the left on the functor
${\rm RZ}_{\mathfrak{p}_i}$, for $i = 0, \ldots, s$ by
\begin{displaymath}
 \alpha_{\mathfrak{p}_i} \in \mathbf{I}_{\mathfrak{p}_i}:\quad (Y_{i}, \iota_{i}, \bar{\lambda}_{i}, \bar{\eta}_{\mathfrak{q}_i},
  \bar{\xi}_{\mathfrak{p}_i} , \rho_{i}) \mapsto (Y_{i}, \iota_{i},
  \bar{\lambda}_{i} \bar{\eta}_{\mathfrak{q}_i},
  \bar{\xi}_{\mathfrak{p}_i} , \alpha_{\mathfrak{p}_i} \rho_{i})  .
  \end{displaymath}
 This makes sense
because
$\alpha_{\mathfrak{p}_i}: (\mathbb{X}_i,\iota_i)\rightarrow (\mathbb{X}_i,\iota_i)$
is a quasi-isogeny which respects the polarization $\lambda_{\mathbb{X}_i}$ up to
constant. Note that for $i = 0$ the data
$\bar{\eta}_{\mathfrak{q}_i}, \bar{\xi}_{\mathfrak{p}_i}$ are absent. 

The group $\mathbf{I}(\mathbb{Q})$ acts on the left hand side of
(\ref{unimorph1e}). If $\alpha \in \mathbf{I}(\mathbb{Q})$, with components
$\alpha_{\mathfrak{p}_i} \in \mathbf{I}_{\mathfrak{p}_i}$ and
$\alpha^p \in \mathbf{I}(\mathbb{A}^p_f) \cong G^{\bullet}(\mathbb{A}^p_f)$, then 
a point of the left hand side given by the data {1}, {2}, {3}
is mapped to the data
\begin{equation}\label{unimorph2e}
\big((Y_{i}, \iota_{i}, \bar{\lambda}_{i} \bar{\eta}_{\mathfrak{q}_i},
  \bar{\xi}_{\mathfrak{p}_i} , \alpha_{\mathfrak{p}_i} \rho_{i}), \,\, \alpha^p g\big). 
  \end{equation}
With respect to this action the morphism (\ref{unimorph1e}) is equivariant.
To see this, we consider the morphism derived from (\ref{RZ8e}) 
\begin{equation}
(\bar{A}, \iota_{\bar{A}}) \rightarrow (A_{o}, \iota_{o})
  \times_{\Spec \bar{\kappa}_{E_{\nu}}} \bar{S} \overset{\alpha}{\longrightarrow}
  (A_{o}, \iota_{o}) \times_{\Spec \bar{\kappa}_{E_{\nu}}} \bar{S}  
  \end{equation}
This composite may be used to compute the image of the data (\ref{unimorph2e}) 
by (\ref{unimorph1e}). One can easily see that this gives the same point in 
$\tilde{\mathcal{A}}^{\bullet t}_{\mathbf{K}^{\bullet}}(S)$ as for the original data. 
\begin{proposition}
  Let $\mathbf{K}^{\bullet}=\mathbf{K}^{\bullet}_p\mathbf{K}^{\bullet,p}\subset G^\bullet(\BA_f)$, with $\mathbf{K}^{\bullet}_p$ as in \eqref{BZKpPkt1e}, where
  $\mathbf{M}_{\mathfrak{p}_0} = O_{F_{\mathfrak{p}_0}}^{\times}$.  
  Let $\tilde{{\mathsf A}}^{\bullet t}_{\mathbf{K}^{\bullet}}$ be the coarse moduli
  scheme of $\tilde{\mathcal{A}}^{\bullet t}_{\mathbf{K}^{\bullet}}$.  
  The morphism (\ref{unimorph1e}) induces an isomorphism of formal schemes 
  \begin{equation}\label{unimorph5e}  
    \Theta^\bullet: \mathbf{I}(\mathbb{Q}) \backslash ({\rm RZ}_{p,\mathbf{K}^{\bullet}_p}
    \times G^{\bullet}(\mathbb{A}_f^p)/(\mathbf{K}^{\bullet})^p) \rightarrow   
 \tilde{{\mathsf A}}^{\bullet t}_{\mathbf{K}^{\bullet}{/ \Spf O_{\breve{E}_{\nu}}}}. 
  \end{equation}
  The morphism is compatible with the Weil descent data on both sides as
  spelled out in the proof. 
\end{proposition}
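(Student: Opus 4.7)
The plan is to deduce this from the general $p$-adic uniformization theorem of Rapoport-Zink in \cite[Thm.~6.30]{RZ}, applied to the PEL-type moduli problem presented by $\tilde{\mathcal{A}}^{\bullet t}_{\mathbf{K}^{\bullet}}$. I would proceed in four steps: (i) checking that $\tilde\Theta^\bullet$ factors through the $\mathbf{I}(\mathbb{Q})$-quotient; (ii) bijectivity on geometric points of the special fiber; (iii) upgrading to an isomorphism of formal schemes via Serre-Tate; (iv) Weil descent compatibility.

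For step (i), given $\alpha \in \mathbf{I}(\mathbb{Q})$ with components $(\alpha_{\mathfrak{p}_i})_i$ and $\alpha^p$, the effect on the tuple $((Y_i, \iota_i, \bar{\lambda}_i, \bar{\eta}_{\mathfrak{q}_i}, \bar{\xi}_{\mathfrak{p}_i}, \rho_i), g)$ is to replace $\rho_i$ by $\alpha_{\mathfrak{p}_i}\rho_i$ and $g$ by $\alpha^p g$. I would show the induced abelian variety $A$ over $S$ is identified with the original one by composing the defining isogeny $(\bar A, \iota_{\bar A}) \to (A_o, \iota_o)_{\bar S}$ with the quasi-isogeny $\alpha$ of $A_o$, and invoking the Serre-Tate theorem to get that the lift is unchanged. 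The change in the level structure $\bar\eta^p_A$ is compensated by the action of $\alpha^p$.

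For step (ii), the key input is that $D_{\mathfrak{p}_0}$ is a quaternion division algebra, which forces every special formal $O_{B_{\mathfrak{q}_0}}$-module over $\bar\kappa_{E_\nu}$ to be isogenous to $\mathbb{X}_{\mathfrak{q}_0}$ by Drinfeld's classification; the components $Y_{\mathfrak{p}_j}$ for $j \geq 1$ are \'etale and hence (after trivializing their Tate modules) uniquely isogenous to $\mathbb{X}_{\mathfrak{p}_j}$. So every geometric point of $\tilde{\mathcal{A}}^{\bullet t}_{\mathbf{K}^{\bullet}}$ lies in the ``basic locus''. Surjectivity of $\Theta^\bullet$ on $\bar\kappa_{E_\nu}$-points then amounts to the existence of a $B$-linear quasi-isogeny $A \to A_o$ compatible with the other structures up to $F^\times$, which is constructed by assembling local quasi-isogenies via (\ref{uniform1e}). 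Injectivity follows because two points with the same image yield, via (\ref{uniform1e}), an element of $\mathbf{I}(\mathbb{Q})$ carrying one tuple to the other; this uses (\ref{groupI2e}) to identify the relevant components.

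For step (iii), I would pass to the completions of the coarse moduli scheme along the special fiber. The discreteness of $\mathbf{I}(\mathbb{Q}) \subset \mathbf{I}(\mathbb{A}_f)$ combined with the compactness of stabilizers (which rests on the fact that $\check D^\times$ modulo center is anisotropic away from $\mathfrak{p}_0$) ensures that the action of $\mathbf{I}(\mathbb{Q})$ on the source of (\ref{unimorph5e}) is properly discontinuous, so the quotient is a formal scheme. The Serre-Tate theorem identifies the formal deformation theory on both sides (deformations of $A$ with its extra structure correspond bijectively to deformations of its $p$-divisible group with induced structure), and the infinitesimal bijection combined with bijectivity of $\bar\kappa_{E_\nu}$-points upgrades $\Theta^\bullet$ to an isomorphism of formal schemes.

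The main obstacle will be step (iv), the Weil descent compatibility. The descent datum on the right comes from the $O_{E_\nu}$-structure of $\tilde{\mathcal{A}}^{\bullet t}_{\mathbf{K}^{\bullet}}$ as inherited from $\widetilde{\Sh}_{\mathbf{K}^\bullet}(G^\bullet, h^\bullet_D)$ (Definition \ref{tildeSh_D1d}), while on the left it is built from the Frobenius action on the framing object $\mathbb{X}$ and the natural descent of the RZ-spaces. Tracing through the identification requires recording the Frobenius twist of $(A_o, \iota_o, \bar\lambda_o, \bar\eta^p_o)$ relative to $A_o$ itself; this Frobenius differs from the identity by an element of $\mathbf{I}(\mathbb{Q}_p)$ whose $\mathfrak{p}_0$-component is characterized by the slope condition, and by an element of $G^\bullet(\mathbb{A}_f^p)$ on the prime-to-$p$ part. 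The exact descent datum will then be read off by combining the intrinsic Frobenius on $\hat\Omega^2_{F_{\mathfrak{p}_0}}$ (which acts trivially on $\mathcal{M}_{\mathrm{Dr}}(0)$ but shifts $\mathcal{M}_{\mathrm{Dr}}$ by a generator of $\Lambda$) with the $\dot z_{\mid\xi}$-twist introduced in the definition of $\widetilde{\Sh}_{\mathbf{K}^\bullet}(G^\bullet, h^\bullet_D)$ in diagram (\ref{tildeSh_D1e}). The bookkeeping is delicate but follows the pattern of \cite[\S6.25--6.30]{RZ}.
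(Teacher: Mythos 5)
Your overall strategy mirrors the paper's: invoke the Rapoport--Zink uniformization theorem, check equivariance of $\tilde\Theta^\bullet$ under $\mathbf{I}(\mathbb{Q})$, establish bijectivity on $\bar\kappa_{E_\nu}$-points, upgrade via Serre--Tate, and trace the Weil descent datum. However, your step (ii) contains a genuine gap in the surjectivity argument. You write that surjectivity on geometric points ``amounts to the existence of a $B$-linear quasi-isogeny $A\to A_o$ compatible with the other structures up to $F^\times$, which is constructed by assembling local quasi-isogenies via (\ref{uniform1e}).'' But (\ref{uniform1e}) only computes $\End^o_B(A_o)\otimes\BA_f^p$ and $\End^o_B(A_o)\otimes\BQ_p$; it says nothing about how to glue compatible local isogenies at each place into a single global $B$-linear polarized quasi-isogeny. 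That gluing is obstructed in general, and ruling out the obstruction is exactly the content of the Hasse principle for $G^\bullet$. This is why the paper cites \cite[Prop.~7.1.11, Prop.~7.3.2]{KRZ} at precisely this point. Drinfeld's classification tells you the $\mathfrak{p}_0$-component of the $p$-divisible group is isoclinic and lies in one isogeny class, but that alone does not yield the global quasi-isogeny you need.

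A second, smaller divergence: in step (iii) you pass directly to formal completions and appeal to proper discontinuity, whereas the paper must first invoke Proposition~\ref{BZ8p} to know that $\tilde{\mathcal{A}}^{\bullet t}_{\mathbf{K}^{\bullet}}\to\tilde{\mathsf{A}}^{\bullet t}_{\mathbf{K}^{\bullet}}$ is the \'etale sheafification for $\mathbf{K}^\bullet$ small enough, then run the deformation-theoretic argument as in \cite[6.23]{RZ}, and finally handle general $\mathbf{K}^\bullet$ by dividing by the finite group action. Your argument implicitly treats $\tilde{\mathsf{A}}^{\bullet t}_{\mathbf{K}^\bullet}$ as if it were a fine moduli scheme; for coarse moduli spaces the deformation-theoretic comparison requires exactly the \'etale-sheafification input. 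Your outline of step (iv), while vague, does point at the right ingredients: the Frobenius of the framing object, the intrinsic descent datum of $\hat\Omega^2_{F_{\mathfrak{p}_0}}$, and the $\dot z_{\mid\xi}$-twist built into Definition~\ref{tildeSh_D1d}. The paper carries this out explicitly by computing the degree of $\omega_{\mathfrak{p}_0}\mathfrak{h}(u)$ and factoring it through the Hecke operator $\mid_\Pi$, which you should do as well before claiming the diagram commutes.
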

\begin{proof}
  This is a variant of the general uniformization theorem \cite[6.30]{RZ}. By  Proposition \ref{BZ8p}, if
  $\mathbf{K}^{\bullet}$ is small enough,  the morphism
  \begin{displaymath}
    \tilde{\mathcal{A}}^{\bullet t}_{\mathbf{K}^{\bullet}} \rightarrow
    \tilde{{\mathsf A}}^{\bullet t}_{\mathbf{K}^{\bullet}} 
    \end{displaymath}
  is the etale sheafification. Therefore we can use deformation theory as
  in \cite[6.23]{RZ} to show that $\Theta^\bullet$ is \'etale. For this one needs that
  the action of $\mathbf{I}(\BQ)$ is fixpoint free, if $\mathbf{K}^{\bullet, p}$ is
  small enough. We refer to the argument in loc.cit. for details. 
  In addition to the arguments given in \cite{RZ},  one needs that $\Theta^\bullet$ is
  surjective on the $\bar{\kappa}_{E_{\nu}}$-valued points. This follows from 
  the Hasse principle for $G^{\bullet}$ as explained in \cite[Prop. 7.1.11, Prop. 7.3.2]{KRZ}. We omit the details.
  If we drop the smallness assumption on  $\mathbf{K}^{\bullet, p}$, it follows from Proposition \ref{BZ8p}
  that $\Theta^\bullet$ is an isomorphism for the normal subgroup
  $\mathbf{K}^{\bullet}_U \subset \mathbf{K}^{\bullet}$. Dividing by the
  action of $\mathbf{K}^{\bullet}$ we obtain that $\Theta^\bullet$ is an isomorphism.

Both sides of (\ref{unimorph5e}) are endowed with a Weil descent datum
relative to the extension $O_{\breve{E}_{\nu}} / O_{E_{\nu}}$. Because the right
hand side is obtained by a base change
$\Spf O_{\breve{E}_{\nu}} \rightarrow \Spf O_{E_{\nu}}$, we have there the
natural Weil descent datum. On the left hand side the Weil descent
datum is induced by a Weil descent datum of ${\rm RZ}_{p,\mathbf{K}^{\bullet}_p}$. It is
defined on each ${\rm RZ}_{\mathfrak{p}_i}$  as follows. Let
$\tau \in \Gal(\breve{E}_{\nu}/E_{\nu})$ be the Frobenius. Let $R$ be
$O_{\breve{E}_{\nu}}${\rm -Alg}ebra such that $p$ is nilpotent in $R$. We give
the descent datum as a functorial map 
\begin{equation}\label{uniform3e}
  \omega_{\mathfrak{p}_i}(R): {\rm RZ}_{\mathfrak{p}_i}(R) \rightarrow
  {\rm RZ}_{\mathfrak{p}_i}(R_{[\tau]}).  
\end{equation}
For this we write $\bar{R} = R \otimes_{O_{\breve{E}_{\nu}}} \bar{\kappa}_{E_{\nu}}$ and
we denote by $\varepsilon: \bar{\kappa}_{E_{\nu}} \rightarrow \bar{R}$ the
structure morphism. Then (\ref{uniform3e}) maps a point  
$(Y_i,\iota_i, \bar{\lambda}_i, \bar{\eta}_{\mathfrak{q}_i},\bar{\xi}_{\mathfrak{p}_i}, \rho_i)$
from the right hand side of (\ref{uniform3e}) to the point given by the
same data except the $\rho_i$ is replaced by the composite 
\begin{displaymath}
Y_i \times_{\Spec R} \Spec \bar{R} \overset{\rho_i}{\longrightarrow}
\varepsilon_{*} \mathbb{X}_i
\overset{\varepsilon_{*} F_{\mathbb{X}, \tau}}{\longrightarrow}
\varepsilon_{*} \tau_{*} \mathbb{X}_i. 
\end{displaymath}
We denote here by $F_{\mathbb{X}, \tau}$ the Frobenius morphism of the
$p$-divisible group relative to $\kappa_{E_{\nu}}$.
The compatibility of the Weil descent data is explained in the proof
of \cite[Lem. 7.3.1]{KRZ}. 
\end{proof}
For the following one should keep in mind that in (\ref{uniform3e}) we have
used $\tau$ to describe the Weil descent datum and not $\tau^{-1}$ as e.g.
in Proposition \ref{Sh_D1p}. 
Recall from Definition \ref{tildeSh_D1d} that the  scheme $\widetilde{\Sh}_{\mathbf{K}^{\bullet}}(G^{\bullet}, h^{\bullet}_{D})$
over $O_{E_{\nu}}$ is a Galois twist of
$\tilde{{\mathsf A}}^{\bullet t}_{\mathbf{K}^{\bullet}}$ according to the following
diagram 
\begin{equation}\label{uniform5e}
\begin{aligned}
\xymatrix{
  \tilde{\mathsf A}^{\bullet t}_{\mathbf{K}^{\bullet}} \times_{\Spec O_{E_{\nu}}}
  \Spec O_{E^{nr}_{\nu}}
  \ar[d]_{\dot{z}_{\mid \xi} \times \tau_c} \ar[r] & 
 \widetilde{\Sh}_{\mathbf{K}^{\bullet}}(G^{\bullet}, h^{\bullet}_{D})
  \times_{\Spec O_{E_{\nu}}} \Spec O_{E^{nr}_{\nu}} \ar[d]^{\id \times \tau_c}\\
  \tilde{\mathsf A}^{\bullet t}_{\mathbf{K}^{\bullet}} \times_{\Spec O_{E_{\nu}}}
  \Spec O_{E^{nr}_{\nu}} \ar[r] &
  \widetilde{\Sh}_{\mathbf{K}^{\bullet}}(G^{\bullet}, h^{\bullet}_{D}) \times_{\Spec O_{E_{\nu}}}
  \Spec O_{E^{nr}_{\nu}} ,
     }
     \end{aligned}
\end{equation}
where the horizontal arrows are  isomomorphisms. 
Indeed, the morphism (\ref{BZGdot10e}) becomes an isomorphism if we replace on the
left hand side $\mathcal{A}$ by $\mathsf A$. Therefore it follows
from Proposition \ref{Sh_D1p} that
\begin{equation}
  \widetilde{\Sh}_{\mathbf{K}^{\bullet}}(G^{\bullet}, h^{\bullet}_{D}) \times_{\Spec O_{E_{\nu}}} 
  \Spec E_{\nu} \cong \Sh_{\mathbf{K}^{\bullet}}(G^{\bullet}, h^{\bullet}_{D})_{E_{\nu}}.
    \end{equation}

\begin{theorem}\label{4epeg1t}
    Let $\mathbf{K}^{\bullet}=\mathbf{K}^{\bullet}_p\mathbf{K}^{\bullet,p}\subset G^\bullet(\BA_f)$, with $\mathbf{K}^{\bullet}_p$ as in \eqref{BZKpPkt1e}, where
  $\mathbf{M}_{\mathfrak{p}_0} = O_{F_{\mathfrak{p}_0}}^{\times}$. 
  Let $\Pi \in D_{\mathfrak{p}_0}$ be a prime element of this division
  algebra. It acts on
  $V_{\mathfrak{p}_0} = D^{{\rm opp}}_{\mathfrak{p}_0} \otimes_{F_{\mathfrak{p}_0}} K_{\mathfrak{p}_0}$ 
  by multiplication with $\Pi \otimes 1$ from the right. This defines an
  element of $G^{\bullet}_{\mathfrak{p}_0}$ which we denote simply by $\Pi$ and
  we use this notation also for its image by
  $G^{\bullet}_{\mathfrak{p}_0} \subset G^{\bullet}(\mathbb{A}_f)$. The action of
  the Hecke operator is denoted by $\mid_{\Pi}$.  

  Let $\widetilde{\Sh}_{\mathbf{K}^{\bullet}}(G^{\bullet}, h^{\bullet}_{D})$ be the integral model over 
  $O_{E_{\nu}}$ of Definition \ref{tildeSh_D1d}
  of the Shimura variety $\Sh_{\mathbf{K}^{\bullet}}(G^{\bullet}, h^{\bullet}_{D})$. We denote by
  $\widetilde{\Sh}_{\mathbf{K}^{\bullet}}(G^{\bullet}, h^{\bullet}_{D})^\wedge_{/ \Spf O_{E_{\nu}}}$ the
  $p$-adic completion and we set  
  \[ 
  \widetilde{\Sh}_{\mathbf{K}^{\bullet}}(G^{\bullet}, h^{\bullet}_{D})^\wedge_{/ \Spf O_{\breve{E}_{\nu}}} =  \widetilde{\Sh}_{\mathbf{K}^{\bullet}}(G^{\bullet}, h^{\bullet}_{D})^\wedge_{/ \Spf O_{E_{\nu}}} \times_{\Spf O_{E_{\nu}}} \Spf O_{\breve{E}_{\nu}}.
   \]
Then there is an isomorphism of formal schemes
  \begin{equation}\label{unimorph6e} 
    \mathbf{I}(\mathbb{Q}) \backslash (\hat{\Omega}^2_{E_{\nu}} \times
    G^{\bullet}(\mathbb{A}_f)/\mathbf{K}^{\bullet})
    \times_{\Spf O_{E_{\nu}}} \Spf O_{\breve{E}_{\nu}} \overset{\sim}{\longrightarrow}
    \widetilde{\Sh}_{\mathbf{K}^{\bullet}}(G^{\bullet}, h^{\bullet}_{D})^\wedge_{/ \Spf O_{\breve{E}_{\nu}}} 
  \end{equation}
  For varying $\mathbf{K}^{\bullet}$ this morphism is compatible with the action
  of $G^{\bullet}(\mathbb{A}_f)$ by Hecke operators on both sides. 
  
  Let $\tau \in \Gal(\breve{E}_{\nu}/ E_{\nu})$ the Frobenius and 
  $\tau_c = \Spf\tau^{-1}: \Spf O_{\breve{E}_{\nu}}\rightarrow\Spf O_{\breve{E}_{\nu}}$.
  The canonical Weil descent datum on the right hand side of (\ref{unimorph6e})
  is given on the left hand side by the commutative diagram 
  \begin{displaymath}
\xymatrix{
  \mathbf{I}(\mathbb{Q}) \backslash (\hat{\Omega}^2_{E_{\nu}} \times
  G^{\bullet}(\mathbb{A}_f)/\mathbf{K}^{\bullet})
    \times_{\Spf O_{E_{\nu}}} \Spf O_{\breve{E}_{\nu}}
  \ar[d]_{  \mid_{\Pi^{-1}}\times\id  \times \tau_c} \ar[r] & 
 \widetilde{\Sh}_{\mathbf{K}^{\bullet}}(G^{\bullet}, h^{\bullet}_{D})^\wedge_{/ \Spf O_{\breve{E}_{\nu}}}
  \ar[d]^{\id \times \tau_c}\\
  \mathbf{I}(\mathbb{Q}) \backslash (\hat{\Omega}^2_{E_{\nu}} \times
  G^{\bullet}(\mathbb{A}_f)/\mathbf{K}^{\bullet})
    \times_{\Spf O_{E_{\nu}}} \Spf O_{\breve{E}_{\nu}}
  \ar[r] & 
  \widetilde{\Sh}_{\mathbf{K}^{\bullet}}(G^{\bullet}, h^{\bullet}_{D})^\wedge_{/ \Spf O_{\breve{E}_{\nu}}}
     }
    \end{displaymath}
\end{theorem}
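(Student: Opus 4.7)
The plan is to combine the uniformization isomorphism $\Theta^\bullet$ of \eqref{unimorph5e} with the local descriptions of the Rapoport-Zink spaces from Propositions~\ref{RZ6p} and~\ref{RZ7p}, and then transfer the result across the Galois twist of Definition~\ref{tildeSh_D1d}. The main substantive point is the identification of the Weil descent datum.

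First I would rewrite the source of $\Theta^\bullet$. Proposition~\ref{RZ6p} identifies each banal space ${\rm RZ}_{\mathfrak{p}_i,\mathbf{K}^\bullet_{\mathfrak{p}_i}}$, for $i=1,\ldots,s$, with the constant sheaf $G^\bullet_{\mathfrak{p}_i}/\mathbf{K}^\bullet_{\mathfrak{p}_i}$, and Proposition~\ref{RZ7p} identifies ${\rm RZ}_{\mathfrak{p}_0}$ with $(\hat\Omega^2_{F_{\mathfrak{p}_0}}\times_{\Spf O_{F_{\mathfrak{p}_0}},\varphi_0}\Spf O_{\breve E_\nu})\times G^\bullet_{\mathfrak{p}_0}/\mathbf{K}^\bullet_{\mathfrak{p}_0}$, whose Drinfeld factor is $\hat\Omega^2_{E_\nu}\times_{\Spf O_{E_\nu}}\Spf O_{\breve E_\nu}$ via $\varphi_0$. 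Taking the product over $i$ and absorbing $G^\bullet(\BA_f^p)/\mathbf{K}^{\bullet,p}$, the source of $\Theta^\bullet$ becomes the source of \eqref{unimorph6e}, and after quotienting by $\mathbf{I}(\BQ)$ it is identified with $\tilde{\mathsf A}^{\bullet t}_{\mathbf{K}^\bullet/\Spf O_{\breve E_\nu}}$. Composing with the isomorphism from diagram \eqref{tildeSh_D1e} (base-changed to $O_{\breve E_\nu}$ and $p$-adically completed) converts the right-hand side into $\widetilde{\Sh}_{\mathbf{K}^\bullet}(G^\bullet,h^\bullet_D)^\wedge_{/\Spf O_{\breve E_\nu}}$, yielding the isomorphism \eqref{unimorph6e}. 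Hecke equivariance under $G^\bullet(\BA_f)$ is inherited at once from the equivariance of $\Theta^\bullet$ together with the Hecke-compatibilities built into Propositions~\ref{RZ6p} and~\ref{RZ7p}.

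It remains to identify the Weil descent datum. Diagram \eqref{tildeSh_D1e} reduces this to the natural descent on $\tilde{\mathsf A}^{\bullet t}_{\mathbf{K}^\bullet/\Spf O_{\breve E_\nu}}$ twisted by $\dot z_{\mid\xi}$, which in turn is transported across $\Theta^\bullet$ to the factor-wise action of the maps $\omega_{\mathfrak{p}_i}$ of \eqref{uniform3e}, namely postcomposition of $\rho_i$ with the relative Frobenius $F_{\mathbb X_i,\tau}$ of the framing object over $\kappa_{E_\nu}$. For $i\geq 1$, the explicit Dieudonn\'e module $\Lambda^{pd}_{\mathfrak{p}_i}$ of the framing object shows that $F_{\mathbb X_i,\tau}$ matches the $\mathfrak{p}_i$-component $p^{f_\nu}$ of $\dot z$ under Proposition~\ref{RZ6p}, so these banal Frobenius contributions are killed by the corresponding components of the $\dot z_{\mid\xi}$-twist. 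For $i=0$, the Frobenius of the special formal $O_{B_{\mathfrak{q}_0}}$-module $\mathbb X_{0,\mathfrak{q}_0}$ is, up to units, a prime element, and by \eqref{PGL-O1e}--\eqref{PGL-O2e} the action $\omega_{\mathfrak{p}_0}$ splits as $\id\times\tau_c$ on the Drinfeld factor together with a translation on the Hecke factor $G^\bullet_{\mathfrak{p}_0}/\mathbf{K}^\bullet_{\mathfrak{p}_0}$. Combining this translation with the $\mathfrak{p}_0$-component $\pi_{\mathfrak{p}_0}^{-1}p^{f_\nu}$ of $\dot z$ and translating via the anti-isomorphism \eqref{Hecke12e} between $\mathcal H_{\mathfrak{p}_0}$ and $G^\bullet_{\mathfrak{p}_0}$ identifies the net Hecke operator as $\mid_{\Pi^{-1}}$, giving the commutative square in the theorem.

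The hardest part will be this last bookkeeping: the sign and normalization conventions of \eqref{PGL-O1e}--\eqref{PGL-O2e}, of the anti-isomorphism \eqref{Hecke12e}, and of the direction in which $\dot z_{\mid\xi}$ twists the descent datum must all be threaded through correctly in order to obtain $\mid_{\Pi^{-1}}$ rather than $\mid_\Pi$. The element $\dot z$ was chosen precisely so that its $p^{f_\nu}$-components cancel the banal Frobenius contributions, and its $\pi_{\mathfrak{p}_0}^{-1}$-component at $\mathfrak{p}_0$ combines with the special-formal-module Frobenius to produce exactly the required $\Pi^{-1}$-translation.
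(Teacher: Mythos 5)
Your plan follows the paper's argument closely: the paper likewise obtains \eqref{unimorph6e} by composing $\Theta^\bullet$ of \eqref{unimorph5e} with the twist of diagram \eqref{uniform5e} and the identifications of Propositions~\ref{RZ6p} and~\ref{RZ7p}, and then computes the Weil descent datum factor by factor, combining the $\omega_{\mathfrak{p}_i}$ with the components of $\dot z$. One small imprecision to flag: $\omega_{\mathfrak{p}_0}$ by itself does not ``split as $\id\times\tau_c$ on the Drinfeld factor plus a translation''; the paper instead shows that the twisted descent datum $\omega_{\mathfrak{p}_0}\mathfrak{h}(u)$ (with $u=(1,\pi_{\mathfrak{p}_0}p^{-f_\nu})$) has bidegree $(1,1)$, factors it as $\mid_\Pi$ times a bidegree-$(0,0)$ descent datum, restricts the latter to ${\rm RZ}_{\mathfrak{p}_0}(0,0)$, and identifies it with the natural descent on $\hat\Omega^2_{E_\nu}\times_{\Spf O_{E_\nu}}\Spf O_{\breve E_\nu}$ by the explicit computation in the proof of \cite[Prop.~5.1.7]{KRZ}.
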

\begin{proof}
  From the morphism $\Theta^\bullet$ in (\ref{unimorph5e}) and the definition
  and the horizontal line of the diagram (\ref{uniform5e}) 
we obtain an isomorphism of formal schemes over $\Spf O_{\breve{E}_{\nu}}$ 
  \begin{equation}\label{4epeg1e} 
\mathbf{I}(\mathbb{Q}) \backslash ({\rm RZ}_{p,\mathbf{K}^{\bullet}_p}
\times G^{\bullet}(\mathbb{A}_f^p)/(\mathbf{K}^{\bullet})^p)
\overset{\sim}{\longrightarrow}
\widetilde{\Sh}_{\mathbf{K}^{\bullet}}(G^{\bullet}, h^{\bullet}_{D})^\wedge_{/ \Spf O_{\breve{E}_{\nu}}}. 
  \end{equation} 
We obtain the isomorphism (\ref{unimorph6e}) if we rewrite the left hand side
using the Propositions \ref{RZ6p} and \ref{RZ7p}. 
We consider an $R$ and $\varepsilon$
  as in the definition (\ref{uniform3e}) of $\omega_{\mathfrak{p}_i}$.  
  Let us denote the functor on the left hand side of (\ref{4epeg1e})
  by $\mathcal{F}$. It is endowed with its natural Weil descent datum
  $\mathcal{F}(R) \rightarrow \mathcal{F}(R_{[\tau]})$ given by the 
  $\omega_{\mathfrak{p}_i}$, $i = 0, \ldots, s$. By \eqref{uniform5e}, the 
  morphism (\ref{4epeg1e}) becomes compatible with the Weil descent 
  data if we multiply the natural Weil descent datum on $\mathcal{F}$  by the
  operator 
  $\dot{z}_{\mid \xi}^{-1}$. The exponent $-1$ appears because we use here
  $\tau$ instead of $\tau^{-1}$ as in the statement of the theorem. By the
  explanation after (\ref{xi-action1e}) 
  this means that we have to replace $\omega_{\mathfrak{p}_o}$ by
  $\omega_{\mathfrak{p}_0}(1, \pi_{\mathfrak{p}_0}p^{-f_{\nu}})$, resp.
  $\omega_{\mathfrak{p}_j}$ by $\omega_{\mathfrak{p}_j}(1, p^{-f_{\nu}})$, where
  $(1, \pi_{\mathfrak{p}_0}p^{-f_{\nu}}) \in G^{\bullet}_{\mathfrak{p}_0}$ and
  $(1, p^{-f_{\nu}}) \in G^{\bullet}_{\mathfrak{p}_j}$, cf. Proposition \ref{Sh_D1p}.
  
  We first check what this modified Weil descent datum does on
  ${\rm RZ}_{\mathfrak{p}_0}$. Let 
  $(Y,\iota, \bar{\lambda}, \rho) \in {\rm RZ}_{\mathfrak{p}_0}(R)$. The
  action of the Hecke operator $(1, \pi_{\mathfrak{p}_0}p^{-f_{\nu}})$ is the
  same as $\mathfrak{h}((1, \pi_{\mathfrak{p}_0}p^{-f_{\nu}}))$, where we regard
  $u:= (1,\pi_{\mathfrak{p}_0}p^{-f_{\nu}})$ as an element of
  $\mathcal{H}_{\mathfrak{p}_0}$, cf. \eqref{HOat0}. We note that
  $\iota^u = \iota$ because $u$ lies in
  the center. Therefore the action of the Hecke operator $\mathfrak{h}(u)$ is
  \begin{displaymath}
(Y,\iota, \bar{\lambda}, \rho) \mapsto (Y,\iota, \bar{\lambda}, \rho\circ\iota(u)). 
    \end{displaymath}
  The image of $(Y,\iota, \bar{\lambda}, \rho)$ by the map

  \begin{displaymath}
    \omega_{\mathfrak{p}_0} \mathfrak{h}(u): {\rm RZ}_{\mathfrak{p}_0}(R) \rightarrow
    {\rm RZ}_{\mathfrak{p}_0}(R[\tau]) 
    \end{displaymath}
  is $(Y,\iota, \bar{\lambda}, \rho')$, where $\rho'$ is given by
    \begin{displaymath}
\xymatrix{
  (Y_{\mathfrak{q}_0})_{\bar{R}} \times (Y_{\bar{\mathfrak{q}}_0})_{\bar{R}}
  \ar[r]^{\rho_{\mathfrak{q}_0} \times \rho_{\bar{\mathfrak{q}}_0}} & 
  \varepsilon_{*} \mathbb{X}_{\mathfrak{q}_0} \times 
  \varepsilon_{*} \mathbb{X}_{\bar{\mathfrak{q}}_0}  
  \ar[rrr]^{\varepsilon_{*} F_{\mathbb{X}_{\mathfrak{q}_0},\tau} \times
    \varepsilon_{*} \pi_{\mathfrak{p}_0}
    p^{-f_{\nu}} F_{\mathbb{X}_{\bar{\mathfrak{q}}_0},\tau}}
  & & & \qquad 
   \varepsilon_{*} \tau_{*} \mathbb{X}_{\mathfrak{q}_0} \times
   \varepsilon_{*} \tau_{*} \mathbb{X}_{\bar{\mathfrak{q}}_0} .
}
    \end{displaymath}
We note that the Weil descent datum commutes with all Hecke operators.
It is straightforward to compute the following heights,
\begin{displaymath}
  \height_{F_{\mathfrak{p}_0}}  F_{\mathbb{X}_{\mathfrak{q}_0},\tau} = 2, \quad
  \height_{F_{\mathfrak{p}_0}} 
  \pi_{\mathfrak{p}_0}  p^{-f_{\nu}} F_{\mathbb{X}_{\bar{\mathfrak{q}}_0},\tau} = 2. 
  \end{displaymath}
Therefore $\omega_{\mathfrak{p}_0} \mathfrak{h}(u)$ is of degree $(1,1)$, 
\begin{displaymath}
\omega_{\mathfrak{p}_0} \mathfrak{h}(u): {\rm RZ}_{\mathfrak{p}_0}(a, b) \rightarrow
  {\rm RZ}_{\mathfrak{p}_0}(a + 1, b +1) .
  \end{displaymath}
The Hecke operator $\mid_{\Pi}$ has also degree $(1,1)$. We write 
$\omega_{\mathfrak{p}_0} \mathfrak{h}(u) = \mid_{\Pi} (\mid_{\Pi})^{-1} \omega_{\mathfrak{p}_0} \mathfrak{h}(u)$. 
The Weil descent datum $(\mid_{\Pi})^{-1} \omega_{\mathfrak{p}_0} \mathfrak{h}(u)$
is of degree $(0,0)$ and defines therefore a Weil descent datum on
${\rm RZ}_{\mathfrak{p}_0}(0,0)$. By the isomorphism of Lemma \ref{RZ7l}, it
induces a Weil descent datum on
\begin{equation}\label{uniform10e} 
\hat{\Omega}_{F_{\mathfrak{p}_0}}^2\times_{\Spf O_{F_{\mathfrak{p}_0}},\varphi_0}\Spf O_{\breve{E}_\nu}.
  \end{equation}

But this isomorphism is just the projection to the $\mathfrak{q}_0$-part.
Therefore the induced Weil descent datum maps an $R$-valued point
$(Y_{\mathfrak{q}_0},\iota_{\mathfrak{q}_0}, \rho_{\mathfrak{q}_0})$ to the point
$(Y_{\mathfrak{q}_0}^{\Pi^{-1}},\iota_{\mathfrak{q}_0}^{\Pi^{-1}}, \rho'_{\mathfrak{q}_0})$ 
where $\rho'_{\mathfrak{q}_0}$ is the following composite 
\begin{displaymath}
  (Y_{\mathfrak{q}_0}^{\Pi^{-1}}) \overset{\rho_{\mathfrak{q}_0}}{\longrightarrow}
  \varepsilon_{*} \mathbb{X}_{\mathfrak{q}_0}^{\Pi^{-1}} 
  \overset{\varepsilon_* F_{\mathbb{X}_{\mathfrak{q}_0},\tau}}{\longrightarrow}
  \varepsilon_{*} \tau_{*} \mathbb{X}_{\mathfrak{q}_0}^{\Pi^{-1}} 
  \overset{\iota(\Pi^{-1})}{\longrightarrow}
  \varepsilon_{*} \tau_{*} \mathbb{X}_{\mathfrak{q}_0}. 
\end{displaymath}
But by the proof of \cite[Prop. 5.1.7]{KRZ}, this is exactly the natural
Weil descent datum on the base change 
$\hat{\Omega}_{F_{\mathfrak{p}_0}}^2\times_{\Spf O_{F_{\mathfrak{p}_0}},\varphi_0}\Spf O_{\breve{E}_\nu}$.
Therefore the Weil descent datum on this factor is as claimed.

Now we consider the Weil descent data $\omega_{\mathfrak{p}_j} (1,p^{-f_{\nu}})$ on
${\rm RZ}_{\mathfrak{p}_j, \mathbf{K}^{\bullet}_{\mathfrak{p}_j}}$.
We have to show that under the isomorphism of Proposition \ref{RZ6p}  
\begin{equation}\label{uniform8e}
    {\rm RZ}_{\mathfrak{p}_j, \mathbf{K}^{\bullet}_{\mathfrak{p}_j}}
    \overset{\sim}{\longrightarrow}
    G^{\bullet}_{\mathfrak{p}_j}/\mathbf{K}^{\bullet}_{\mathfrak{p}_j}
    \times_{\Spf O_{E_{\nu}},\varphi_0}\Spf O_{\breve{E}_\nu} ,
\end{equation}
the Weil descent datum $\omega_{\mathfrak{p}_j}(1, p^{-f_{\nu}})$ induces on the
right hand side the natural descent datum.
The formal group $(\mathbb{X}_j, \iota_j)$ which was used to define
(\ref{uniform8e}) is defined over
the field $\kappa_{E_{\nu}}$, as we see from (\ref{RZ1e}). Therefore
${\rm RZ}_{\mathfrak{p}_i, \mathbf{K}^{\bullet}_{\mathfrak{p}_i}}$ is defined over
$\kappa_{E_{\nu}}$ and Proposition \ref{RZ6p} holds over that field.  
We obtain a natural isomorphism $\mathbb{X}_i \cong \tau_{*} \mathbb{X}_i$.
With this identification, the morphism
\begin{displaymath}
\xymatrix{
   \mathbb{X}_{\mathfrak{q}_i} \times \mathbb{X}_{\bar{\mathfrak{q}}_i}   
  \ar[rrr]^{ F_{\mathbb{X}_{\mathfrak{q}_i},\tau} \times
        p^{-f_{\nu}} F_{\mathbb{X}_{\bar{\mathfrak{q}}_i},\tau} \qquad}
  & & & \quad 
  \tau_{*} \mathbb{X}_{\mathfrak{q}_i} \times
  \tau_{*} \mathbb{X}_{\bar{\mathfrak{q}}_i}  \cong 
  \mathbb{X}_{\mathfrak{q}_i} \times \mathbb{X}_{\bar{\mathfrak{q}}_i} 
}
\end{displaymath}
becomes the identity. We see that $\omega_{\mathfrak{p}_i}(1, p^{-f_{\nu}})$ induces
on the right hand side of (\ref{uniform8e}) the natural descent datum. 
This proves the commutativity of the last diagram in the theorem. 
\end{proof}
We now turn to the $p$-adic uniformization of the quaternionic Shimura curve. We first recall the following well-known fact. 
\begin{lemma}
  Let $K/F$ be separable quadratic extension of fields. Let $L$ be a
  quaternion algebra with center $K$ and let $l \mapsto l'$ be an involution
  of the second kind of $L$. Then there exist a quaternion algebra $C$ with
  center   $F$ and an isomorphism of $K$-algebras
  \begin{displaymath}
L \cong C \otimes_F K,  
  \end{displaymath}
  such that the involution $l \mapsto l'$ induces on the right hand side the map $c \otimes k \mapsto c^{\iota} \otimes \sigma(k)$, where
  $c^{\iota}$ is the main involution and $\sigma$ is the nontrivial element in
  $\Gal(K/F)$. 
\end{lemma}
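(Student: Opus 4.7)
\smallskip

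My plan is to produce $C$ as the fixed subalgebra of $L$ under a well-chosen $\sigma$-semilinear involution, and then invoke Galois descent.

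Let $\iota\colon L\to L$ be the main involution of the quaternion $K$-algebra $L$; it is a $K$-linear anti-involution characterised by $l+l^\iota=\mathrm{Trd}_{L/K}(l)\in K$ and $l\cdot l^\iota=\mathrm{Nrd}_{L/K}(l)\in K$. I would define
\[
\tau\colon L\longrightarrow L,\qquad \tau(l)=(l')^\iota .
\]
Since $\iota$ and $'$ are both anti-automorphisms, $\tau$ is an $F$-algebra automorphism of $L$; since $\iota$ is $K$-linear and $'$ restricts to $\sigma$ on $K$, the map $\tau$ is $\sigma$-semilinear, and in particular restricts to $\sigma$ on the centre $K$.

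The key step is to check that $\tau^{2}=\mathrm{id}_{L}$, equivalently that $\iota$ and $'$ commute, equivalently that $\mathrm{Trd}_{L/K}(l')=\sigma(\mathrm{Trd}_{L/K}(l))$ for every $l\in L$. I would argue as follows: the map $l\mapsto\sigma(\mathrm{Trd}_{L/K}(l'))$ is $K$-linear (using $\sigma$-semilinearity of $'$ and $K$-linearity of $\mathrm{Trd}_{L/K}$), and it vanishes on every commutator $[l_1,l_2]$ because $[l_1,l_2]'=-[l_1',l_2']$ and the reduced trace kills commutators. Hence it factors through $L/[L,L]$, which is a $1$-dimensional $K$-vector space generated by the class of $1$. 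Since $1'=1$ (because $1'$ acts as a two-sided identity), both linear forms take the value $2$ at $1$, so they coincide. This proves $\tau^{2}=\mathrm{id}$, and $\tau$ is an $F$-algebra involution of $L$ that restricts to $\sigma$ on $K$.

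Set $C=L^{\tau}=\{l\in L : l'=l^\iota\}$. By Galois descent for $K/F$ (here one needs only that $\tau$ is a $\sigma$-semilinear involution commuting with the $K$-structure on $L$ via $\sigma$), the multiplication map
\[
C\otimes_{F} K\;\longrightarrow\; L,\qquad c\otimes k\longmapsto ck,
\]
is an isomorphism of $K$-algebras. Consequently $C$ is a central simple $F$-algebra of dimension $4$, i.e.\ a quaternion algebra over $F$. It remains to identify the involutions. Because $\mathrm{Trd}_{L/K}=\mathrm{Trd}_{C/F}\otimes_F\mathrm{id}_K$ under the identification $L\cong C\otimes_F K$, the restriction of $\iota$ to $C$ coincides with the main involution of $C$, which I would denote by the same symbol $\iota$. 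Finally, for $c\in C$ and $k\in K$,
\[
(c\otimes k)' \;=\; (ck)' \;=\; k'c' \;=\; \sigma(k)\,c^{\iota} \;=\; c^{\iota}\otimes\sigma(k),
\]
where the third equality uses $c\in C$ so $c'=c^{\iota}$, and the last uses that $C$ and $K$ commute inside $L$. This is the formula in the statement, completing the proof.

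The only real obstacle is the verification that $\iota$ and $'$ commute; once this is established everything else is formal. All other steps (the descent, dimension count, and compatibility of involutions) are routine consequences.
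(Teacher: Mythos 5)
Your proof takes essentially the same route as the paper: the paper also defines the $\sigma$-semilinear map $\rho(l)=(l^\iota)'$, observes that $\iota$ and $'$ commute so that $\rho^2=\mathrm{id}$, and produces $C$ by Galois descent as the $\rho$-invariants. Where the paper dismisses the commutativity $(l')^\iota=(l^\iota)'$ with "one verifies easily," you supply a clean argument: reduce to $\mathrm{Trd}_{L/K}(l')=\sigma(\mathrm{Trd}_{L/K}(l))$, note both sides are $K$-linear in $l$, kill commutators (using that an anti-automorphism sends $[l_1,l_2]$ to $-[l_1',l_2']$), and evaluate at $1$; this is a sound filling-in of that step. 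The only caveat is that identifying $L/[L,L]$ with the $K$-line spanned by the class of $1$ uses $\mathrm{Trd}(1)=2\neq 0$, so your verification tacitly assumes $\mathrm{char}\,F\neq 2$ — harmless here, since the ambient setting is a CM field, but worth flagging if you want the lemma in full generality.
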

\begin{proof}
  We consider the main involution $l \mapsto l^{\iota}$ of $L$ over $K$. It is
  characterized by $l + l^{\iota} = \Trace^{o}_{L/K}$. Since the reduced trace
  is respected by an isomorphism of $F$-algebras, one verifies easily that  
  \begin{displaymath}
(l')^{\iota} = (l^{\iota})'. 
  \end{displaymath}
  Therefore $\rho(l) := (l^{\iota})'$ is a $\sigma$-linear isomorphism
  $L \rightarrow L$ such that $\rho^2 = \id_{L}$.
  The invariants define $C$ by Galois descent. 
\end{proof}
As an example we consider
$(A_{o}, \iota_{o}, \bar{\lambda}_{o})$, cf. \eqref{uniform2e}. The ring $L = \End^{o}_B A_{o}$ is by
(\ref{uniform1e}) a quaternion algebra with center $K$. Let $l \mapsto l'$
be the Rosati involution induced by $\lambda_{o}$. We define
the Cherednik twist of $D$, 
\begin{equation}\label{Cheredniktwist} 
\check{D} = \{l \in \End^{o}_B A_{o}\; | \; l' = l^{\iota}\}. 
  \end{equation}
This is a quaternion algebra over $F$. Since $l l^{\iota} \in F$, we obtain
by (\ref{groupI1e}) that
\begin{equation}\label{breveD1e}
\check{D}^{\times} \subset \mathbf{I}(\mathbb{Q}). 
  \end{equation} 
Because the Rosati involution is
positive, the main involution is positive on $\check{D}$. It follows that
at each infinite place of $F$, the localization of $\check{D}$ is isomorphic
to the Hamilton quaternions. By (\ref{uniform1e}) we find
\begin{displaymath}
\End^{o}_B A_{o} \otimes \mathbb{A}^p_f \cong
B^{{\rm opp}} \otimes \mathbb{A}^p_f \cong D \otimes_{F} K. 
  \end{displaymath}
Since the Riemann form of $\lambda_{o}$ induces via
$\eta^p_{o}$ on $V \otimes \mathbb{A}_f^p$ the form $\psi$
(up to a constant), we see that the induced involution on
$B^{{\rm opp}} \cong D \otimes_{F} K$ is by (\ref{BZpsi3e}) the involution
$d \otimes k \mapsto d^{\iota} \otimes \bar{k}$. We obtain the isomorphism
\begin{equation}\label{breveD2e} 
\check{D} \otimes \mathbb{A}^p_f \cong D \otimes \mathbb{A}^p_f. 
\end{equation}
The data $\eta_{\mathfrak{q}_j, o}$ for $j = 1, \ldots, s$ in
 Definition \ref{BZsApkt4d} provide isomorphisms
\begin{displaymath}
  (\Lambda_{\mathfrak{p}_j}^{pd},\bar{\lambda}_{\psi}) \isoarrow
  (\mathbb{X}_j, \iota_{\mathbb{X}_j}, \bar{\lambda}_{\mathbb{X}_j}). 
\end{displaymath}
From this we obtain
\begin{displaymath}
\End^{o}_{B} A_{o} \otimes_{F} F_{\mathfrak{p}_j} =
\End_{B_{\mathfrak{p}_j}} V_{\mathfrak{p}_j} =
B_{\mathfrak{q}_j}^{{\rm opp}} \times B_{\bar{\mathfrak{q}}_j}^{{\rm opp}}.   
\end{displaymath}
The Rosati involution of $\lambda_{o}$ induces on the right hand side the map 
$(b_1, b_2) \mapsto (b'_2, b'_1)$. We obtain
\begin{equation}\label{breveD3e}
  \check{D}_{\mathfrak{p}_j} =
  \{(b_1,b_2)\in B_{\mathfrak{q}_j}^{{\rm opp}} \times B_{\bar{\mathfrak{q}}_j}^{{\rm opp}} \; | \;
  (b'_2, b'_1) = (b_1^{\iota}, b_2^{\iota})\} \cong B_{\mathfrak{q}_j}^{{\rm opp}} =
  D_{\mathfrak{p}_j} ,
\end{equation}
where the last isomorphism is given by the projection. Comparing with 
(\ref{BZGpi2e})  gives the embedding
\begin{equation}\label{uniform18e}
  \check{D}^{\times}_{\mathfrak{p}_j} \subset G^{\bullet}_{\mathfrak{p}_j}, \quad
   \; j = 1, \ldots, s,  
\end{equation}
which coincides with the inclusion 
$\check{D}^{\times}_{\mathfrak{p}_j}\subset\mathbf{I}_{\mathfrak{p}_j} = G^{\bullet}_{\mathfrak{p}_j}$ via (\ref{groupI2e}). 

We obtain in the same way 
\begin{displaymath}
\End^{o}_{B} A_{o} \otimes_{F} F_{\mathfrak{p}_0} \cong 
\End_{B_{\mathfrak{p}_0}} \mathbb{X}_0 \cong
\End_{B_{\mathfrak{q}_0}} \mathbb{X}_{\mathfrak{q}_0} \times 
\End_{B_{\bar{\mathfrak{q}}_0}} \mathbb{X}_{\bar{\mathfrak{q}}_0} ,
\end{displaymath}
and therefore 
\begin{equation}\label{uniform20e}
  \check{D}_{\mathfrak{p}_0} = \{(\gamma_1, \gamma_2) \in
  \End_{B_{\mathfrak{q}_0}} \mathbb{X}_{\mathfrak{q}_0} \times 
\End_{B_{\bar{\mathfrak{q}}_0}} \mathbb{X}_{\bar{\mathfrak{q}}_0}\mid
\gamma'_2 = \gamma_1^{\iota} \; \}. 
  \end{equation}
The projection to the first factor defines an isomorphism
\begin{equation}\label{uniform30e} 
  \check{D}_{\mathfrak{p}_0} \cong \End_{B_{\mathfrak{q}_0}} \mathbb{X}_{\mathfrak{q}_0}
\cong {\rm M}_2(F_{\mathfrak{p}_0}). 
\end{equation}
Altogether we find that the quaternion algebras $D$ and $\check{D}$ over
$F$ have the same invariants for all places except for $\mathfrak{p}_0$
and the infinite place $\chi_0 ={\varphi_0}_{\mid F}$. For the last two places
they have {\rm opp}osite invariants. 

We denote by $H$ the multiplicative group of $D$ considered as an algebraic
group over $\mathbb{Q}$. The Shimura curve ${\Sh}(H, h_{D})$ is 
defined over $E(H, h_D) = \chi_0(F)$. We have $E(H, h_D) \subset E$ 
and $E(H, h_D) \rightarrow E_{\nu}$ is a $p$-adic place of $E(H, h_D)$, cf.  (\ref{BZE1e}), resp.   (\ref{BZ4e}).

Let
$\mathbf{K}_{\mathfrak{p}_0} = O_{D_{\mathfrak{p}_0}}^{\times}\subset D_{\mathfrak{p}_0}^{\times}$
be the unique maximal compact subgroup. We choose for $i = 1,\ldots, s$ arbitrary open compact
subgroups $\mathbf{K}_{\mathfrak{p}_i} \subset D^\times_{\mathfrak{p}_i}$ and set $\mathbf{K}_{p}=\prod_{i=0}^s\mathbf{K}_{\mathfrak{p}_i}$. Finally, we choose an arbitrary open compact subgroup 
 $\mathbf{K}^{p}\subset (D\otimes \mathbb{A}_f^p)^{\times}$ and set
\begin{equation}\label{uniform25e} 
  \mathbf{K} = \mathbf{K}_{p} \mathbf{K}^p \subset
  H(\mathbb{A}_f). 
  \end{equation}
(Note that, since the group $G$ plays no role anymore, we can recycle the notation used in \eqref{BZ7e}.) The natural embedding $D \rightarrow D \otimes_F K = B^{{\rm opp}}$ induces by
(\ref{Gpunkt2e}) an embedding $H \subset G^{\bullet}$. In terms of the exact
sequence of Lemma \ref{BZ1l} this embedding is obtained from
$D^{\times}\rightarrow D^{\times}\times K^{\times}\rightarrow G^{\bullet}(\mathbb{Q})$.

We find isomorphisms
\begin{equation}\label{uniform21e}
  \begin{aligned} 
    D_{\mathfrak{p}_i}^{\times} \cong (B^{{\rm opp}}_{\mathfrak{q}_i})^{\times} &=
    \Aut_{B_{\mathfrak{q}_i}} V_{\mathfrak{q}_i} \\ 
    D^{\times}_{\mathfrak{p}_i} \cong (B^{{\rm opp}}_{\bar{\mathfrak{q}}_i})^{\times} &= 
    \Aut_{B_{\bar{\mathfrak{q}}_i}} V_{\bar{\mathfrak{q}}_i} .
  \end{aligned}
  \end{equation}
We define $\mathbf{K}_p^{\bullet} \subset G^{\bullet}(\mathbb{Q}_p)$ in the form
(\ref{BZKpPkt1e}) using the isomorphism above such that
\begin{equation}\label{uniform27e}
  \mathbf{K}^{\bullet}_{\mathfrak{q}_i} = \mathbf{K}_{\mathfrak{p}_i}, \quad
  \mathbf{M}^{\bullet}_{\mathfrak{p}_i} = O_{F_{\mathfrak{p}_i}}^{\times}, \quad
 i= 0, \cdots, s. 
  \end{equation}
Then $\mathbf{K}_p = H(\mathbb{Q}_p) \cap \mathbf{K}^{\bullet}_p$.  
By Proposition \ref{Chevalley2p} below, there is an open compact
subgroup $\mathbf{K}^{\bullet, p} \subset G^{\bullet}(\mathbb{A}_f^p)$ such that
for $\mathbf{K}^{\bullet} = \mathbf{K}^{\bullet}_p \mathbf{K}^{\bullet, p}$ we have 
\begin{displaymath}
\mathbf{K} = H(\mathbb{A}_f) \cap \mathbf{K}^{\bullet} ,
  \end{displaymath}
and the natural morphism
\begin{displaymath}
  \Sh_{\mathbf{K}}(H, h_{D}) \times_{\Spec E(H, h_D)} \Spec E
  \rightarrow \Sh_{\mathbf{K}^{\bullet}}(G^{\bullet}, h^{\bullet}_{D})  
  \end{displaymath}
is an open and closed immersion.  
It follows for example by Theorem \ref{4epeg1t} that
$\widetilde{\Sh}_{\mathbf{K}^{\bullet}}(G^{\bullet}, h^{\bullet}_{D})$ is a flat
and integral scheme over $O_{E_{\nu}}$. Therefore the inclusion of the generic fiber 
\begin{equation}\label{uniform22e} 
  \Sh_{\mathbf{K}^{\bullet}}(G^{\bullet}, h^{\bullet}_{D}) \times_{\Spec E} \Spec E_{\nu}
  \subset \widetilde{\Sh}_{\mathbf{K}^{\bullet}}(G^{\bullet}, h^{\bullet}_{D})
\end{equation}
induces a bijection between the sets of connected components of these schemes. 
The connected components of the right hand side are the closures of the
connected components of the left hand side.
\begin{definition}\label{uniform3d}
  We define the scheme $\widetilde{\Sh}_{\mathbf{K}}(H, h_{D})$ over
  $\Spec O_{E_{\nu}}$ as the Zariski closure of the
  open and closed subscheme
  $\Sh_{\mathbf{K}}(H, h_{D}) \times_{\Spec E(G, h_D)} \Spec E_{\nu}$ of the
 left hand side of (\ref{uniform22e}) in 
$\widetilde{\Sh}_{\mathbf{K}^{\bullet}}(G^{\bullet}, h^{\bullet}_{D})$.  
Hence $\widetilde{\Sh}_{\mathbf{K}}(H, h_{D})$ is a union of connected
components of $\widetilde{\Sh}_{\mathbf{K}^{\bullet}}(G^{\bullet}, h^{\bullet}_{D})$. 
\end{definition}
We consider the diagonal embedding 
\begin{displaymath}
  D_{\mathfrak{p}_0}^{\times} \subset G^{\bullet}_{\mathfrak{p}_0} \subset
  B^{{\rm opp},\times}_{\mathfrak{q}_0} \times B^{{\rm opp}, \times}_{\bar{\mathfrak{q}}_0}, 
  \end{displaymath}
cf. (\ref{BZGpi2e}), (\ref{uniform21e}). It defines an open and closed
embedding 
    \begin{equation*}
  \begin{aligned}
    (\hat{\Omega}_{F_{\mathfrak{p}_0}}^2 \times_{\Spf O_{F_{\mathfrak{p}_0}},\varphi_0}
    \Spf O_{\breve{E}_\nu}) \times D^{\times}_{\mathfrak{p}_0}/\mathbf{K}_{\mathfrak{p}_0}
    \subset  
    (\hat{\Omega}_{F_{\mathfrak{p}_0}}^2 \times_{\Spf O_{F_{\mathfrak{p}_0}},\varphi_0}
    \Spf O_{\breve{E}_\nu}) \times
    G^{\bullet}_{\mathfrak{p}_0} / \mathbf{K}^{\bullet}_{\mathfrak{p}_0} =
    {\rm RZ}_{\mathfrak{p}_0} .
    \end{aligned}
  \end{equation*}
In terms of the decomposition (\ref{RZ12e}), the left hand side is
\begin{displaymath}
\coprod_{a \in \mathbb{Z}} {\rm RZ}_{\mathfrak{p}_0}(a,a). 
\end{displaymath}
This is invariant by the action of
$\check{D}^{\times}_{\mathfrak{p}_0} \subset \mathbf{I}_{\mathfrak{p}_0}$
on ${\rm RZ}_{\mathfrak{p}_0}$. Therefore $\check{D}^{\times}_{\mathfrak{p}_0}$
acts on
\begin{equation}\label{uniform23e}
(\hat{\Omega}_{F_{\mathfrak{p}_0}}^2 \times_{\Spf O_{F_{\mathfrak{p}_0}},\varphi_0}
    \Spf O_{\breve{E}_\nu}) \times D^{\times}_{\mathfrak{p}_0}/\mathbf{K}_{\mathfrak{p}_0} .
\end{equation}
We now combine the actions of $\check{D}^\times$ in its prime-to-$p$ component (\ref{breveD2e}), its $\mathfrak{p}_0$-component (\ref{uniform23e}) and its $\mathfrak{p}_j$-components (\ref{breveD3e}).
\begin{proposition}\label{uniform4l}
  Let $\mathbf{K}_p \subset (D \otimes \mathbb{Q}_p)^{\times}$ as in
  (\ref{uniform25e}).
  By (\ref{uniform27e}) this defines a subgroup
  $\mathbf{K}^{\bullet}_p \subset G^{\bullet}(\mathbb{Q}_p)$. Let
  $\mathbf{K}^{\bullet,p} \subset G^{\bullet}(\mathbb{A}_f^p)$ be a sufficiently
  small open compact subgroup. We set
  $\mathbf{K}^{p} = \mathbf{K}^{\bullet, p} \cap (D\otimes\mathbb{A}_f^p)^{\times}$.
  Let $\mathbf{K} = \mathbf{K}_{p} \mathbf{K}^p$ and
  $\mathbf{K}^{\bullet} = \mathbf{K}^{\bullet}_{p} \mathbf{K}^{\bullet,p}$ as above. 
  Then the morphism
  \begin{displaymath}
    \begin{array}{l} 
    \check{D}^{\times} \backslash ((\hat{\Omega}_{F_{\mathfrak{p}_0}}^2
    \times_{\Spf O_{F_{\mathfrak{p}_0}},\varphi_0} \Spf O_{\breve{E}_\nu})\times
    D^{\times}(\mathbb{A}_f)/\mathbf{K}) \longrightarrow \\[2mm]  \hspace{5cm}   
    \mathbf{I}(\mathbb{Q}) \backslash ((\hat{\Omega}^2_{E_{\nu}}
    \times_{\Spf O_{E_{\nu}}} \Spf O_{\breve{E}_{\nu}}) \times
    G^{\bullet}(\mathbb{A}_f)/\mathbf{K}^{\bullet}) 
      \end{array} 
  \end{displaymath}
  is an open and closed immersion.
\end{proposition}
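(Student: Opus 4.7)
The plan is to factor the morphism through a pre-quotient open and closed immersion, verify $\check D^\times$-equivariance, and then descend to quotients. First, combining Propositions \ref{RZ6p} and \ref{RZ7p}, I would rewrite the ambient space of the target of the morphism in the proposition as
\[
{\rm RZ}_{p,\mathbf{K}^{\bullet}_p} \times G^{\bullet}(\mathbb{A}_f^p)/(\mathbf{K}^{\bullet})^p \cong (\hat{\Omega}^2_{F_{\mathfrak p_0}} \times_{\Spf O_{F_{\mathfrak p_0}},\varphi_0} \Spf O_{\breve{E}_{\nu}}) \times G^{\bullet}(\mathbb{A}_f)/\mathbf{K}^{\bullet}.
\]
By (\ref{uniform27e}) we have $\mathbf K_p = D^{\times}(\mathbb{Q}_p) \cap \mathbf K^{\bullet}_p$, and by Proposition \ref{Chevalley2p} (invoked in the setup preceding the proposition) the group $\mathbf K^{\bullet,p}$ is chosen so that $\mathbf K^p = D^{\times}(\mathbb{A}_f^p) \cap \mathbf K^{\bullet,p}$. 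Hence $\mathbf K = \mathbf K^{\bullet} \cap D^{\times}(\mathbb{A}_f)$, and the natural map of locally profinite sets $D^{\times}(\mathbb{A}_f)/\mathbf K \hookrightarrow G^{\bullet}(\mathbb{A}_f)/\mathbf{K}^{\bullet}$ is an injection with open and closed image. Taking the product with $\hat{\Omega}^2_{F_{\mathfrak p_0}} \times_{\Spf O_{F_{\mathfrak p_0}},\varphi_0} \Spf O_{\breve{E}_{\nu}}$ yields an open and closed immersion of formal schemes at the pre-quotient level.

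I would then verify that this pre-quotient immersion is $\check D^{\times}$-equivariant with respect to the inclusion $\check D^{\times} \subset \mathbf I(\mathbb{Q})$ of (\ref{breveD1e}), checking the action place by place. At a finite place $v \neq \mathfrak{p}_0$, the isomorphisms (\ref{breveD2e}) and (\ref{breveD3e}) identify $\check D_v^{\times}$ with $D_v^{\times}$ inside the local similitude group $\mathbf I_v = G^\bullet_v$, and the action is simply left multiplication. At $\mathfrak p_0$, Proposition \ref{RZ7p} shows that $c \in \check D^{\times}_{\mathfrak p_0} \cong \GL_2(F_{\mathfrak p_0})$ (via (\ref{uniform30e})) acts on $\hat{\Omega}^2_{F_{\mathfrak p_0}}$ through $\PGL_2(F_{\mathfrak p_0})$ and translates the $\mathbb{Z} \times \mathbb{Z}$-factor of $G^{\bullet}_{\mathfrak p_0}/\mathbf K^{\bullet}_{\mathfrak p_0}$ by $(\ord \det \delta_1, \ord \det \delta_2) = (n,n)$ where $n = \ord \det c$; since the diagonal embedding $D^{\times}_{\mathfrak p_0}/\mathbf K_{\mathfrak p_0} \cong \mathbb{Z} \hookrightarrow \mathbb Z \times \mathbb Z$, $m \mapsto (m,m)$, is preserved by these translations, this matches the $\check D^{\times}$-action on the left hand side specified in the statement of Theorem \ref{MainIntro}.

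The hard part will be injectivity of the induced morphism on quotients. Applying Lemma \ref{BZ1l} to $L = \End^o_B A_o = \check D \otimes_F K$ with its involution $'$ (the lemma applies since $L$ is a quaternion $K$-algebra with an involution of the second kind over $F$) yields a parallel exact sequence
\[
1 \to F^{\times} \to \check D^{\times} \times K^{\times} \xrightarrow{\;\kappa\;} \mathbf I(\mathbb{Q}) \to 1.
\]
Suppose $\gamma \in \mathbf I(\mathbb{Q})$ sends $(x, d\mathbf K^{\bullet})$ to $(x', d'\mathbf K^{\bullet})$ with $d, d' \in D^{\times}(\mathbb{A}_f)$, and write $\gamma = \kappa(c,k)$. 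The central torus $K^{\times}$ maps locally at $\mathfrak p_0$ into the center of $\mathbf J_{\mathfrak q_0} = \GL_2(F_{\mathfrak p_0})$, which acts trivially via $\PGL_2$ on $\hat{\Omega}^2$; thus $cx = x'$. The $G^{\bullet}$-equation $ckd \in d' \mathbf K^{\bullet}$ combined with the place-by-place analysis of step two constrains $k$ locally to $F_v^{\times} \cdot (K_v^{\times} \cap \mathbf K^{\bullet}_v)$, and for $\mathbf K^{\bullet,p}$ sufficiently small, a Chevalley-type argument as in the proof of Proposition \ref{BZ4p} globalizes this to $k \in F^{\times} \cdot (K^{\times} \cap \mathbf K^{\bullet})$. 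Exploiting the $F^{\times}$-kernel of $\kappa$, we may then assume $k \in K^{\times} \cap \mathbf K^{\bullet}$, and deduce $\gamma d \equiv cd \pmod{\mathbf K^{\bullet} \cap D^{\times}(\mathbb{A}_f) = \mathbf K}$, exhibiting the two points as lying in a single $\check D^{\times}$-orbit. Since both source and target are regular formal schemes of the same relative dimension over $\Spf O_{\breve E_\nu}$ (inherited from the regularity of $\hat{\Omega}^2_{F_{\mathfrak p_0}}$), the resulting injective and equivariant morphism is automatically an open and closed immersion.
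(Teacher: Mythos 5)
Your overall strategy — reduce to the pre-quotient picture, track the exact sequence $1 \to F^\times \to \check D^\times \times K^\times \to \mathbf I(\mathbb{Q}) \to 1$, and use a Chevalley-type argument to control the $K^\times$-part — is essentially the route the paper takes, but the critical middle step is left as a sketch and, as sketched, does not close.

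First, the claim that the relation $c k d \in d'\mathbf K^{\bullet}$ ``constrains $k$ locally to $F_v^{\times}\cdot(K_v^{\times}\cap\mathbf K^{\bullet}_v)$'' is not a consequence of ``$\mathbf K^{\bullet,p}$ sufficiently small'' alone: it is only transparent for $\mathbf K^{\bullet,p}$ of a very specific shape. The paper constructs $\mathbf K^{\bullet,p}_C$ as the \emph{image} of $\mathbf K^p\times C_K^p$ under $(\check D\otimes_F\mathbb A^p_{F,f})^\times\times (K\otimes_F\mathbb A^p_{F,f})^\times\to\tilde{\mathbf I}(\mathbb A^p_{F,f})$, so that every $k^\bullet\in\mathbf K^{\bullet,\mathfrak p_0}_C$ splits as $k^\bullet=kc$ with $k\in\mathbf K^{\mathfrak p_0}$ and $c\in C_K^{\mathfrak p_0}$; only then can one peel off the $K^\times$-contribution to conclude $\lambda c\in(\mathbb A^{\mathfrak p_0}_{F,f})^\times$. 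Your proposal never isolates this $C_K$-factor, so the local constraint on $k$ is unsupported. You also need to feed the $\Lambda=\{(a,b):a+b\equiv0\bmod 2\}$ versus diagonal-$\mathbb Z$ structure at $\mathfrak p_0$ \emph{into the injectivity argument}, not just the equivariance check: it is precisely the condition that the image stays in the diagonal that yields $\ord_{\mathfrak q_0}\lambda=\ord_{\bar{\mathfrak q}_0}\lambda$, i.e.\ that $\lambda$ is a unit of $S=K^\times/F^\times$ at $\mathfrak p_0$.

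Second, the Chevalley argument you invoke is the wrong one. Proposition \ref{BZ4p} rests on Proposition \ref{Chevalley1p}, which is about totally positive units of $F^\times$. What is actually needed here is the statement for the torus $S=K^\times/F^\times$: $S$ is compact at all archimedean places, so $S(F)$ is discrete in $S(\mathbb A_{F,f})$ and has finite unit group, and the abstract Chevalley theorem in \S\ref{s:shimvar} (applied with $T=S$) shows that for $C_K^p$ small enough the image of $\lambda$ in $S(F)$ is trivial, whence $\lambda\in F^\times$. This is a materially different input, and citing \ref{BZ4p} does not supply it.

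Finally, ``injective, equivariant, regular of the same relative dimension $\Rightarrow$ open and closed immersion'' is not a valid implication in general. The paper instead writes both sides as disjoint unions $\coprod_i\bar\Gamma_{g_i}\backslash\mathcal Z$ and $\coprod_j\bar\Gamma_{h_j}\backslash\mathcal Z$ with the $\bar\Gamma$'s acting freely and properly discontinuously, and deduces from injectivity on $\bar\kappa(\mathfrak p_0)$-points together with fixed-point-freeness that the groups $\bar\Gamma_{g_i}$, $\bar\Gamma_{h_j}$ coincide, so that each component map $\bar\Gamma_{g_i}\backslash\mathcal Z\to\bar\Gamma_{h_j}\backslash\mathcal Z$ is an isomorphism. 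You would need to supply an argument of this type (surjectivity onto the component, or local-isomorphism plus bijectivity), not merely a dimension count.
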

\begin{proof}
  The obvious sequence of algebraic groups over $\mathbb{Q}$ which is
  induced by (\ref{breveD1e}) 
  \begin{equation}\label{BZ1l-var} 
  0 \rightarrow F^{\times} \rightarrow \check{D}^{\times} \times K^{\times}
    \overset{\kappa}{\rightarrow}
    \mathbf{I} \rightarrow 0 
  \end{equation}
  is exact. By Hilbert 90 the sequence remains exact if we take the
  $\mathbb{Q}$-valued points
  $0\rightarrow F^{\times}\rightarrow\check{D}^{\times}\times K^{\times}\overset{\kappa}{\rightarrow} \mathbf{I}(\mathbb{Q}) \rightarrow 0$. 
  To check the exactness, we can make a base change from $\mathbb{Q}$
to an algebraically closed field where there is no difference to Lemma 
\ref{BZ1l}. We will regard (\ref{BZ1l-var}) also as a sequence of algebraic
groups over $F$. Note that
$\mathbf{I}$ is the Weil restriction $\Res_{F/\mathbb{Q}} \tilde{\mathbf{I}}$
where $\tilde{\mathbf{I}}$ is an algebraic group over $F$ which is defined
in terms of the $F$-algebra $\End_B^o A_{o}$, cf. (\ref{groupI1e}). Recall that
$\tilde{\mathbf{I}}(F) = \mathbf{I}(\mathbb{Q})$. 
For $i = 0, \ldots s$ we write the group $\tilde{\mathbf{I}}(F_{\mathfrak{p}_i})$
as follows
\begin{equation}\label{Le66-1e} 
  \begin{array}{ccccc} 
\check{D}_{\mathfrak{p}_i}^{\times} \times \check{D}_{\mathfrak{p}_i}^{\times} & \supset 
&  \check{D}_{\mathfrak{p}_i}^{\times} \times^{F_{\mathfrak{p}_i}} (K_{\mathfrak{q}_i} \times
  K_{\bar{\mathfrak{q}}_i}) & \overset{\sim}{\longrightarrow} & 
  \check{D}_{\mathfrak{p}_i}^{\times} \times F_{\mathfrak{p}_i}^{\times} = 
\tilde{\mathbf{I}}(F_{\mathfrak{p}_i})  \\
  (df_1, d f_2) & \hookleftarrow & (d, (f_1,f_2)) &\longmapsto &
  (df_1, f_1^{-1}f_2)\\
\end{array} 
\end{equation}
The canonical embedding
$\check{D}_{\mathfrak{p}_i}^{\times} \subset \tilde{\mathbf{I}}(F_{\mathfrak{p}_i})$
becomes
$\check{D}_{\mathfrak{p}_i}^{\times} \rightarrow \check{D}_{\mathfrak{p}_i}^{\times} \times F_{\mathfrak{p}_i}^{\times}$, 
$d \mapsto (d,1)$. 

For $j \geq 1$, 
$\mathbf{K}_{\mathfrak{p}_j}\subset D^{\times}_{\mathfrak{p}_j} =\check{D}^{\times}_{\mathfrak{p}_j}$
is the arbitrary given compact open subgroup and
\begin{equation}\label{KPJ}
  \mathbf{K}_{\mathfrak{p}_j}^{\bullet} = \mathbf{K}_{\mathfrak{p}_j} \times
  O_{F_{\mathfrak{p}_j}}^{\times} \subset
  D_{\mathfrak{p}_j}^{\times} \times F_{\mathfrak{p}_j}^{\times} = 
  \check{D}_{\mathfrak{p}_j}^{\times} \times F_{\mathfrak{p}_j}^{\times} =
  \tilde{\mathbf{I}}(F_{\mathfrak{p}_j}).  
\end{equation}
We see again that
$\mathbf{K}_{\mathfrak{p}_j}^{\bullet} \cap \check{D}_{\mathfrak{p}_j}^{\times} = \mathbf{K}_{\mathfrak{p}_j}$.
We introduce the groups
$C_{F, \mathfrak{p}_j} = \mathbf{K}_{\mathfrak{p}_j} \cap F_{\mathfrak{p}_j}^{\times}$ 
and $C_{K, \mathfrak{p}_j} = C_{F, \mathfrak{p}_j} \times O_{\mathfrak{p}_j}^{\times} \subset K_{\mathfrak{p}_j}^{\times}$  
in the sense of the right hand side of \eqref{KPJ}. Then we obtain
\begin{equation}\label{Le66-2e}
  \mathbf{K}_{\mathfrak{p}_j}^{\bullet} = \mathbf{K}_{\mathfrak{p}_j}
  \times^{C_{F, \mathfrak{p}_j}} C_{K, \mathfrak{p}_j} \subset
  \check{D}^{\times}_{\mathfrak{p}_j} \times^{F_{\mathfrak{p}_j}^{\times}}
  K_{\mathfrak{p}_j}^{\times}. 
  \end{equation}

For the proof we may assume that $\mathbf{K}^{\bullet,p}$ is of the following
type. Let $C_F^p = \mathbf{K}^p \cap (\mathbb{A}^p_{F,f})^{\times}$. We choose
an arbitrary open compact subgroup
$C_K^{p} \subset (K \otimes_F \mathbb{A}^p_{F,f})^{\times}$ such that
$C_K^{p} \cap (\mathbb{A}^p_{F,f})^{\times} = C_F^p$. We define
$\mathbf{K}^{\bullet,p}_C$ as the image of $\mathbf{K}^p \times C^p_K$
by the homomorphism
\begin{equation}\label{Le66-3e}
  (\check{D} \otimes_F \mathbb{A}^p_{F,f})^{\times} \times
  (K \otimes_F \mathbb{A}^p_{F,f})^{\times} \rightarrow
  \tilde{\mathbf{I}}(\mathbb{A}^p_{F,f}).  
  \end{equation}
We set
\begin{displaymath}
  \mathbf{K}^{\mathfrak{p}_0} = (\prod_{j=1}^s \mathbf{K}_{\mathfrak{p}_j})
  \mathbf{K}^p 
  \subset (\check{D} \otimes_F \mathbb{A}^{\mathfrak{p}_0}_{F,f})^{\times}, \quad
  \mathbf{K}^{\bullet, \mathfrak{p}_0}_C =
  (\prod_{j=1}^s \mathbf{K}^{\bullet}_{\mathfrak{p}_j}) \mathbf{K}_C^{\bullet, p}
  \subset \tilde{\mathbf{I}}(\mathbb{A}^{\mathfrak{p}_0}_{F,f}). 
  \end{displaymath}
Moreover we set
$\mathcal{Z} = \hat{\Omega}_{F_{\mathfrak{p}_0}}^2 \times_{\Spf O_{F_{\mathfrak{p}_0}},\varphi_0} \Spf O_{\breve{E}_\nu}$ 
and $\Lambda = \{(a,b) \in \mathbb{Z}^2 \mid a+b \equiv 0 \mod 2 \}$. 
Then we may write the morphism of the Proposition as follows,
\begin{equation}\label{DIPfeil1e} 
  \check{D}^{\times} \backslash (\mathcal{Z} \times \mathbb{Z}) \times
  (\check{D} \otimes_F \mathbb{A}^{\mathfrak{p}_0}_{F,f})^{\times}/\mathbf{K}^{\mathfrak{p}_0}
  \longrightarrow
  \tilde{\mathbf{I}}(F) \backslash (\mathcal{Z} \times \Lambda) \times
  \tilde{\mathbf{I}}(\mathbb{A}^{\mathfrak{p}_0}_{F,f})
  /\mathbf{K}_C^{\bullet, \mathfrak{p}_0}. 
\end{equation}
The group $\check{D}^{\times}$ acts on $\mathcal{Z}$ via
$\check{D}^{\times} \rightarrow \check{D}^{\times}_{\mathfrak{p}_0} \rightarrow \PGL_2(F_{\mathfrak{p}_0})$, 
as we have explained at the end of section 5. We will denote the image in
$\PGL_2(F_{\mathfrak{p}_0})$ of an element $g$ by the last map by $\bar{g}$.
An element $g \in \check{D}^{\times}_{\mathfrak{p}_0}$ acts on
$\mathcal{Z} \times \mathbb{Z}$ by
\begin{displaymath}
  g[\omega, m] = [\bar{g} \omega, m + \ord_{\mathfrak{p}_0} \det g], \quad
  [\omega, m] \in \mathcal{Z} \times \mathbb{Z}.  
\end{displaymath}
An element
$(g,f) \in \check{D}_{\mathfrak{p}_0}^{\times} \times F_{\mathfrak{p}_0}^{\times} = \tilde{\mathbf{I}}(F_{\mathfrak{p}_0})$ acts on $\mathcal{Z} \times \Lambda$ by
\begin{displaymath}
  (g,f)[\omega, (a,b)] = [\bar{g} \omega, (a + \ord_{\mathfrak{p}_0} \det g,
    b + \ord_{\mathfrak{p}_0} \det g + 2 \ord_{\mathfrak{p}_0} f], \quad
  [\omega, (a,b)] \in \mathcal{Z} \times \Lambda.  
\end{displaymath} 
The morphism (\ref{DIPfeil1e}) induces the diagonal
$\mathbb{Z} \rightarrow \Lambda \subset \mathbb{Z}^2$.

We fix an element
$m \times g \in \mathbb{Z} \times (\check{D} \otimes_F \mathbb{A}^{\mathfrak{p}_0}_{F,f})^{\times}$.
The image of $\mathcal{Z} \times m \times g$ in the left hand side of
(\ref{DIPfeil1e}) is of the form 
\begin{displaymath}
\bar{\Gamma}_g \backslash\mathcal{Z}, 
\end{displaymath}
where $\bar{\Gamma}_g$ is the image of the group
$\{d \in \check{D} \cap g\mathbf{K}^{\mathfrak{p}_0}g^{-1} \; | \; \ord_{\mathfrak{p}_0} \det d = 0 \}$
in $\PGL_2(F_{\mathfrak{p}_0})$.  
Here $\det d \in F^{\times}$ denotes the reduced norm of $d$.

Let $J$ be the projective group of inner automorphisms of the $F$-algebra
$\check{D}$ considered as an algebraic group over $F$.
We denote by $\bar{\mathbf{K}}^{\mathfrak{p}_0}$ the image of
$\mathbf{K}^{\mathfrak{p}_0}$ in $J(\mathbb{A}_{F,f}^{\mathfrak{p}_0})$. This is an open
and compact subgroup. To see this one notes that for each place $w$ of $F$
the map $(\check{D} \otimes_F F_w)^{\times} \rightarrow J(F_w)$ is open because
$\check{D}^{\times} \rightarrow J$ is a smooth morphism of algebraic varieties.  
Because $J$ is compact at each archimedian places of $F$ the subgroup
$J(F) \subset J(\mathbb{A}_{F,f})$ is discrete. It follows that
$\Gamma'_g := J(F) \cap \bar{g} \bar{\mathbf{K}}^{\mathfrak{p}_0} \bar{g}^{-1} \subset J(F_{\mathfrak{p}_0})$
is a discrete subgroup. If $\mathbf{K}^p$ is sufficiently small, $\Gamma'_g$ acts  
without fixed points on the Bruhat-Tits building of $\PGL_2(F_{\mathfrak{p}_0})$.
Then each point of $\mathcal{Z}(\bar{\kappa}(\mathfrak{p}_0))$ has a Zariski 
neighbourhood $U$ such that $\gamma U \cap U = \emptyset$ for
$\gamma \in \Gamma'_g$ and $\gamma \neq 1$. This also holds for
$\bar{\Gamma}_g \subset \Gamma'_g$.
By our considerations, we can write the left hand side of (\ref{DIPfeil1e}) as 
\begin{displaymath}
\coprod_{i} \bar{\Gamma}_{g_i} \backslash \mathcal{Z} ,
\end{displaymath}
for a suitable choice of elements
$g_i \in (\check{D} \otimes_F \mathbb{A}^{\mathfrak{p}_0}_{F,f})^{\times}$. In the
same way we can write the right hand side of (\ref{DIPfeil1e}) as 
\begin{displaymath}
\coprod_{j} \bar{\Gamma}_{h_j} \backslash \mathcal{Z},  
  \end{displaymath}
with a suitable choice of elements 
$h_j \in \tilde{\mathbf{I}}(\mathbb{A}^{\mathfrak{p}_0}_{F,f})$.
To show the Proposition, it is sufficient to show that (\ref{DIPfeil1e}) 
induces an injection on the $\bar{\kappa}(\mathfrak{p}_0)$-valued points. 
Indeed, the restriction of (\ref{DIPfeil1e}) to
$\bar{\Gamma}_{g_i} \backslash \mathcal{Z}$ induces a morphism
\begin{equation}\label{Le66-7e} 
  \bar{\Gamma}_{g_i} \backslash \mathcal{Z} \rightarrow
  \bar{\Gamma}_{h_j} \backslash \mathcal{Z}    
  \end{equation}
for a suitable $j$, which is injective on 
$\bar{\kappa}(\mathfrak{p}_0)$-valued points.  Up to isomorphism we obtain the
same map if we replace $h_j$ by the image 
\begin{displaymath}
  g_i \times 1 \in (\check{D} \otimes_F \mathbb{A}^{\mathfrak{p}_0}_{F,f})^{\times}
  \times (K \otimes_F \mathbb{A}^{\mathfrak{p}_0}_{F,f})^{\times} 
  \end{displaymath}
in $\tilde{\mathbf{I}}(\mathbb{A}^{\mathfrak{p}_0}_{F,f})$. By the injectivity of
(\ref{Le66-7e}) and from the fact that  the actions of the $\Gamma$-groups
on both sides of (\ref{Le66-7e}) are fixed point free on the
sets of $\bar{\kappa}(\mathfrak{p}_0)$-valued points, one obtains that the groups
$\bar{\Gamma}_{g_i}$ and $\bar{\Gamma}_{h_j}$ coincide
on both sides of (\ref{Le66-7e}) and that this morphism is an isomorphism. 
From this the Proposition easily follows. 

It remains to prove the injectivity. We consider two elements
$[\omega_i, h_i] \in \mathcal{Z} \times \mathbb{Z} \times (\check{D} \otimes_F \mathbb{A}^{\mathfrak{p}_0}_{F,f})^{\times}$, 
$i = 1,2$, with $\omega_i \in \mathcal{Z}(\bar{\kappa}(\mathfrak{p}_0))$ and
$h_i \in \mathbb{Z} \times (\check{D} \otimes_F \mathbb{A}^{\mathfrak{p}_0}_{F,f})^{\times}$,  
which represent the same element on the right hand side of (\ref{DIPfeil1e}).
We will show that they also represent the same element on the left hand
side. 

By assumption there exists $g^{\bullet} \in \tilde{\mathbf{I}}(F)$ and
$k^{\bullet} \in \mathbf{K}^{\bullet, \mathfrak{p}_0}_C$ such that
\begin{displaymath}
  [\omega_1, h_1] = g^{\bullet} [\omega_2, h_2] k^{\bullet}. 
\end{displaymath}
We write
$g^{\bullet} = g \lambda \in \check{D}^{\times} \times^{F^{\times}} K^{\times}$ with
$g \in \check{D}^{\times}$ and $\lambda \in K^{\times}$. We set
$C_K^{\mathfrak{p}_0} = (\prod_{j=1}^s C_{K,\mathfrak{p}_j}) C_{K}^{p}$. By
(\ref{Le66-2e}) and (\ref{Le66-3e}) we can write
$k^{\bullet} = k c$ with $k \in \mathbf{K}^{\mathfrak{p}_0}$ and
$c \in C_K^{\mathfrak{p}_0}$. Replacing $[\omega_2, h_2]$ by $g[\omega_2, h_2]k$
we may assume that
\begin{equation}\label{Le66-8e} 
  [\omega_1, h_1] = \lambda [\omega_2, h_2] c. 
\end{equation}
This implies $\omega_1 = \omega_2$ and
\begin{equation}\label{Le66-4e}
h_2^{-1}h_1 = \lambda c.
  \end{equation}
This equation takes place in
$\Lambda \times \tilde{\mathbf{I}}(\mathbb{A}^{\mathfrak{p}_0}_{F,f})$.
The $\Lambda$-part of (\ref{Le66-4e}) is equivalent with
\begin{equation}\label{Le66-5e} 
\ord_{\mathfrak{q}_0} \lambda = \ord_{\bar{\mathfrak{q}}_0} \lambda.  
  \end{equation}
Next we consider the $\tilde{\mathbf{I}}(\mathbb{A}^{\mathfrak{p}_0}_{F,f})$-part
of (\ref{Le66-4e}). We obtain
\begin{equation}\label{Le66-6e}
  \lambda c = h_2^{-1} h_1 \in
  (\check{D} \otimes_F \mathbb{A}^{\mathfrak{p}_0}_{F,f})^{\times} \cap
  (K \otimes_{F} \mathbb{A}^{\mathfrak{p}_0}_{F,f})^{\times} =
  (\mathbb{A}^{\mathfrak{p}_0}_{F,f})^{\times}. 
\end{equation}
We consider the torus $S = K^{\times}/F^{\times}$ over $F$. This torus is
compact at all infinite places of $F$. Therefore
$S(F) \subset S(\mathbb{A}_{F,f})$ is discrete and the group of units of $S$
is finite. The equation (\ref{Le66-6e}) tells us that $\lambda$ is a unit
in $S(F_w)$ for all finite places $w \neq \mathfrak{p}_0$ of $F$, because
$\lambda$ is in the image $\bar{C}_K^{\mathfrak{p}_0}$
of the compact open subgroup $C_K^{\mathfrak{p}_0}$ 
by the morphism
$(K \otimes_{F} \mathbb{A}^{\mathfrak{p}_0}_{F,f})^{\times} \rightarrow S(\mathbb{A}^{\mathfrak{p}_0}_{F,f})$.
On the other the equation (\ref{Le66-5e}) tells us that $\lambda$ is
a unit in $S(F_{\mathfrak{p}_0})$ because by this equation there exists an
element $\alpha \in F_{\mathfrak{p}_0}^{\times}$ such that $\alpha \lambda$
is a unit in $K_{\mathfrak{p}_0}^{\times}$. Therefore the image of $\lambda$ in
$S(F)$ is a unit. If we choose $C_K^{p}$ sufficiently small,
the Theorem of Chevalley implies that the image is $1$. We conclude that
$\lambda \in F^{\times}$. Going back to (\ref{Le66-6e}) we obtain that
\begin{displaymath}
  c \in C_K^{\mathfrak{p}_0} \cap (\mathbb{A}^{\mathfrak{p}_0}_{F,f})^{\times} \subset
  \mathbf{K}^{\mathfrak{p}_0}. 
\end{displaymath}
This shows that the right hand side of (\ref{Le66-8e}) represents the same 
element on the left hand side of (\ref{DIPfeil1e})  as $[\omega_2,h_2]$. 
\end{proof}
We can now prove our main result, the Cherednik uniformization of quaternionic Shimura curves.
\begin{theorem}\label{4epeg2t} 
  Let $\mathbf{K} \subset D^{\times}(\mathbb{A}_f)$ be of the form $\mathbf{K}=\mathbf{K}_p\mathbf{K}^p$, where $\mathbf{K}_p$ is chosen as in
  (\ref{uniform27e}). Let $\widetilde{\Sh}_{\mathbf{K}}(H, h_{D})$ be the model
  over $\Spec O_{E_{\nu}}$ of the Shimura curve associated to $D$, cf. Definition \ref{uniform3d}. Then there
  is a uniformization isomorphism of formal schemes
  \begin{equation}\label{unimorph7e}
  \Theta\colon  \check{D}^{\times} \backslash ((\hat{\Omega}_{F_{\mathfrak{p}_0}}^2 
    \times_{\Spf O_{F_{\mathfrak{p}_0}},\varphi_0} \Spf O_{\breve{E}_\nu})\times
    D^{\times}(\mathbb{A}_f)/\mathbf{K}) 
\overset{\sim}{\longrightarrow}
    \widetilde{\Sh}_{\mathbf{K}}(H, h_{D})^\wedge_{\, / \Spf O_{\breve{E}_{\nu}}}
  \end{equation}
  For varying $\mathbf{K}$ this uniformization isomorphism is compatible
  with the action of the Hecke operators in $D^{\times}(\mathbb{A}_f)$ on both sides.

  Let $\Pi \in D_{\mathfrak{p}_0}$ be a prime element in this division algebra
  over $F_{\mathfrak{p}_0}$. We denote also by $\Pi$ the image by the canonical
  embedding $D_{\mathfrak{p}_0} \subset (D \otimes \mathbb{A}_f)^{\times}$.
  Let $\tau \in \Gal(\breve{E}_{\nu}/ E_{\nu})$ be the Frobenius automorphism and 
  $\tau_c = \Spf\tau^{-1}\colon \Spf O_{\breve{E}_{\nu}}\rightarrow\Spf O_{\breve{E}_{\nu}}$. 
  The natural Weil descent datum with respect to  
  $O_{\breve{E}_\nu}/O_{E_{\nu}}$ on the right hand side of (\ref{unimorph7e})
  induces on the
  left hand side the Weil descent datum given by the following commutative diagram
  \begin{displaymath}
\xymatrix{
    \check{D}^{\times} \backslash ((\hat{\Omega}_{F_{\mathfrak{p}_0}}^2 
    \times_{\Spf O_{F_{\mathfrak{p}_0}},\varphi_0} \Spf O_{\breve{E}_\nu})\times
    D^{\times}(\mathbb{A}_f)/\mathbf{K})
  \ar[d]_{ \id \times \mid_{\Pi^{-1}} \times \tau_c} \ar[r] & 
 \widetilde{\Sh}_{\mathbf{K}}(H, h_{D})^\wedge_{\, / \Spf O_{\breve{E}_{\nu}}}
   \ar[d]^{\id \times \tau_c}\\
   \check{D}^{\times} \backslash ((\hat{\Omega}_{F_{\mathfrak{p}_0}}^2 
    \times_{\Spf O_{F_{\mathfrak{p}_0}},\varphi_0} \Spf O_{\breve{E}_\nu})\times
    D^{\times}(\mathbb{A}_f)/\mathbf{K})
  \ar[r] & 
 \widetilde{\Sh}_{\mathbf{K}}(H, h_{D})^\wedge_{\, / \Spf O_{\breve{E}_{\nu}}}
     }
    \end{displaymath}
\end{theorem}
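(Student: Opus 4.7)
The plan is to deduce Theorem \ref{4epeg2t} by combining the uniformization of Theorem \ref{4epeg1t} for $\widetilde{\Sh}_{\mathbf{K}^\bullet}(G^\bullet, h^\bullet_D)$ with the open and closed immersion of Proposition \ref{uniform4l}, and then restricting along the open and closed subscheme $\widetilde{\Sh}_\mathbf{K}(H, h_D) \subset \widetilde{\Sh}_{\mathbf{K}^\bullet}(G^\bullet, h^\bullet_D)$ of Definition \ref{uniform3d}. Concretely, composing the uniformization isomorphism of Theorem \ref{4epeg1t} with the open and closed immersion of Proposition \ref{uniform4l} yields an open and closed immersion of formal schemes over $\Spf O_{\breve{E}_\nu}$,
\begin{displaymath}
\iota\colon \check{D}^{\times} \backslash \bigl((\hat{\Omega}_{F_{\mathfrak{p}_0}}^2 \times_{\Spf O_{F_{\mathfrak{p}_0}},\varphi_0} \Spf O_{\breve{E}_\nu})\times D^{\times}(\mathbb{A}_f)/\mathbf{K}\bigr) \hookrightarrow \widetilde{\Sh}_{\mathbf{K}^\bullet}(G^\bullet, h^\bullet_D)^\wedge_{/\Spf O_{\breve{E}_\nu}}.
\end{displaymath}
The remaining task is to identify the image of $\iota$ with $\widetilde{\Sh}_\mathbf{K}(H, h_D)^\wedge_{/\Spf O_{\breve{E}_\nu}}$ in a way compatible with the Weil descent datum.

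Since $\widetilde{\Sh}_{\mathbf{K}^\bullet}(G^\bullet, h^\bullet_D)$ is flat and locally integral over $O_{E_\nu}$ (as it is built from $\tilde{\mathcal{A}}^{\bullet t}_{\mathbf{K}^\bullet}$, cf.\ Proposition \ref{BZ8p}), its connected components correspond bijectively to those of its generic fiber, as noted after \eqref{uniform22e}. The image of $\iota$ is open and closed, hence a union of connected components, and by Definition \ref{uniform3d} so is $\widetilde{\Sh}_\mathbf{K}(H, h_D)^\wedge_{/\Spf O_{\breve{E}_\nu}}$, whose generic fiber is $\Sh_\mathbf{K}(H, h_D)_{\breve{E}_\nu}$. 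So it suffices to verify that both give the same open and closed subscheme of $\Sh_{\mathbf{K}^\bullet}(G^\bullet, h^\bullet_D)_{\breve{E}_\nu}$. This generic-fiber match can be made by comparing complex points: on the Shimura-variety side via the classical complex uniformization $\Sh_\mathbf{K}(H, h_D)(\mathbb{C}) = H(\mathbb{Q})\backslash(\mathcal{H}^\pm \times D^\times(\mathbb{A}_f)/\mathbf{K})$; on the uniformization side via the explicit connected-component description from the proof of Proposition \ref{uniform4l}. The matching relies on $\check{D} \otimes_F \mathbb{A}_{F,f}^{\mathfrak{p}_0} \cong D \otimes_F \mathbb{A}_{F,f}^{\mathfrak{p}_0}$ from \eqref{breveD2e} and $\check{D}_{\mathfrak{p}_0} \cong \mathrm{M}_2(F_{\mathfrak{p}_0})$ from \eqref{uniform30e}, together with the fact that $D$ and $\check{D}$ have opposite invariants precisely at $\mathfrak{p}_0$ and $\chi_0$.

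Compatibility of the Weil descent datum will then follow from the corresponding statement in Theorem \ref{4epeg1t}. The crucial observation is that the element $\Pi \in G^\bullet_{\mathfrak{p}_0}$ of Theorem \ref{4epeg1t}, which acts on $V_{\mathfrak{p}_0} = D^{\rm opp}_{\mathfrak{p}_0} \otimes_{F_{\mathfrak{p}_0}} K_{\mathfrak{p}_0}$ by right multiplication by $\Pi \otimes 1$, is precisely the image of $\Pi \in D^\times_{\mathfrak{p}_0}$ under the diagonal embedding $D^\times_{\mathfrak{p}_0} \hookrightarrow G^\bullet_{\mathfrak{p}_0}$ used in Proposition \ref{uniform4l}, namely via the identifications \eqref{uniform21e}. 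Consequently, the Hecke correspondence $\mid_{\Pi^{-1}}$ on $G^\bullet(\mathbb{A}_f)/\mathbf{K}^\bullet$ restricts, under the inclusion $D^\times(\mathbb{A}_f)/\mathbf{K} \hookrightarrow G^\bullet(\mathbb{A}_f)/\mathbf{K}^\bullet$, to the Hecke correspondence $\mid_{\Pi^{-1}}$ on $D^\times(\mathbb{A}_f)/\mathbf{K}$ with $\Pi$ now regarded in $(D \otimes \mathbb{A}_f)^\times$, which is the desired formula. The Hecke compatibility for varying $\mathbf{K}$ follows likewise by restricting the action of $G^\bullet(\mathbb{A}_f)$ from Theorem \ref{4epeg1t} to $D^\times(\mathbb{A}_f)$.

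The main obstacle will be the generic-fiber identification in the second paragraph: one must check that the open-and-closed subscheme of $\Sh_{\mathbf{K}^\bullet}(G^\bullet, h^\bullet_D)_{\breve{E}_\nu}$ cut out by the image of $\iota$ coincides with the one defined by the embedding of Shimura data $(H, h_D) \hookrightarrow (G^\bullet, h^\bullet_D)$. This reduces to verifying that the set of connected components $\check{D}^\times \backslash D^\times(\mathbb{A}_f)/\mathbf{K}$ produced by Cherednik's uniformization is consistent with the connected-component description of $\Sh_\mathbf{K}(H, h_D)_\mathbb{C}$ inherited from $\Sh_{\mathbf{K}^\bullet}(G^\bullet, h^\bullet_D)_\mathbb{C}$; this in turn should follow from a reduced-norm computation entirely parallel to the one carried out at the end of the proof of Proposition \ref{uniform4l}, applied to the connected components at the archimedean places, using once more the coincidence of invariants of $D$ and $\check{D}$ away from $\{\mathfrak{p}_0, \chi_0\}$.
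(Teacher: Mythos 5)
Your overall strategy matches the paper's: compose the uniformization isomorphism of Theorem \ref{4epeg1t} for $\widetilde{\Sh}_{\mathbf{K}^{\bullet}}(G^{\bullet}, h^{\bullet}_{D})$ with the open and closed immersion of Proposition \ref{uniform4l}, then argue that the image is $\widetilde{\Sh}_{\mathbf{K}}(H, h_{D})^\wedge$. The descent and Hecke compatibility claims at the end are also handled correctly. But there is a genuine gap in the crucial identification step.

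The uniformization morphism $\tilde\Theta^\bullet$ in \eqref{unimorph1e}, hence the isomorphism in Theorem \ref{4epeg1t} and the immersion $\iota$, all depend on a choice of framing object \eqref{uniform2e} together with a choice of representative $\eta^p_o \in \bar\eta^p_o$; different choices of that framing point and its level-structure representatives move the image of $\iota$ among unions of connected components of $\widetilde{\Sh}_{\mathbf{K}^{\bullet}}(G^{\bullet}, h^{\bullet}_{D})^\wedge$. You never pin down this choice, so ``the image of $\iota$'' is not actually determined in your argument, and the proposed ``generic-fiber match by comparing complex points'' cannot get started: you would be comparing a \emph{specific} union of components (depending on an unspecified framing object) against $\Sh_{\mathbf{K}}(H,h_D)_{\breve E_\nu}$, and there is no reduced-norm computation that resolves this ambiguity without first fixing the framing object. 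Moreover the compatibility claim for varying $\mathbf{K}$ silently requires the framing objects to be chosen coherently along the tower, which is again not free.

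The paper's proof addresses precisely this. It first builds a cofinal chain $\mathbf{K}_1\supset\mathbf{K}_2\supset\cdots$ (with companion groups $\mathbf{K}^\bullet_s$ satisfying conditions (a)--(c)), picks a connected component $Z$ of $\varprojlim \Sh_{\mathbf{K}_t}(H,h_D)_{\breve E_\nu}$, and chooses a compatible system of geometric points $z_{\mathbf{K}_t}\in\tilde Z_{\mathbf{K}_t}(\bar\kappa_{E_\nu})$ in the \emph{closures inside the integral model} $\widetilde{\Sh}_{\mathbf{K}_t}(H,h_D)\subset\widetilde{\Sh}_{\mathbf{K}^\bullet_s}(G^\bullet,h_D^\bullet)$. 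By a compactness argument on the classes $\bar\eta^p(\mathbf{K}^\bullet_s)$, one extracts a single tuple $(A,\iota,\bar\lambda,\eta^p,(\eta_{\mathfrak{q}_j})_j)$ that represents all the $z^\bullet_{\mathbf{K}^\bullet_s}$ simultaneously. Using \emph{this} tuple as the framing data for \eqref{unimorph1e} forces the component $\hat\Omega^2_{E_\nu}\times 1$ to land exactly in the formal completion of $\tilde Z_{\mathbf{K}_t}$, and then transitivity of the $D^\times(\mathbb{A}_f)$-Hecke action on the components of $\widetilde{\Sh}_{\mathbf{K}}(H,h_D)^\wedge$ gives surjectivity, hence (by Proposition \ref{uniform4l}) the desired isomorphism, for all $\mathbf K_t$ and then for arbitrary $\mathbf K$. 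You should incorporate this construction of a coherent framing object; without it the identification of the image of $\iota$, and a fortiori the compatibility along the tower, does not go through.
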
 
The following Corollary provides us with an intrinsic characterization of
the integral model $\widetilde{\Sh}_{\mathbf{K}}(H, h_{D})$  of
${\Sh}_{\mathbf{K}}(H, h_{D})$. 
\begin{corollary}\label{remstab}
  If $\mathbf{K}^p$ is sufficiently small, the integral model
  $\widetilde{\Sh}_{\mathbf{K}}(H, h_{D})$ is a stable relative curve over
  $\Spec O_{E_\nu}$, in the sense of \cite{DM}. In addition, it has semi-stable
  reduction, i.e.,  it is regular and the special fiber is a reduced divisor
  with normal crossings.
\end{corollary}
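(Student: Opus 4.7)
The plan is to deduce both assertions from the uniformization isomorphism of Theorem \ref{4epeg2t} together with the known local structure of $\hat{\Omega}^2_{F_{\mathfrak{p}_0}}$. First, I would decompose the left-hand side of \eqref{unimorph7e} into its connected components. Writing $\mathbf{K} = \mathbf{K}_{\mathfrak{p}_0}\mathbf{K}^{\mathfrak{p}_0}$ and choosing coset representatives $g \in (D \otimes_{F} \mathbb{A}_{F,f}^{\mathfrak{p}_0})^\times$, the quotient decomposes as the disjoint union of the formal schemes
$\mathfrak{X}_{\Gamma_g} = \bar{\Gamma}_g \backslash \bigl(\hat{\Omega}^2_{F_{\mathfrak{p}_0}} \times_{\Spf O_{F_{\mathfrak{p}_0}},\varphi_0} \Spf O_{\breve{E}_\nu}\bigr)$ introduced after Theorem \ref{MainIntro}, where $\bar{\Gamma}_g$ is a discrete cocompact subgroup of $\PGL_2(F_{\mathfrak{p}_0})$ as analysed in the proof of Proposition \ref{uniform4l}.

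Next, I would recall from \cite{Dr} that $\hat{\Omega}^2_{F_{\mathfrak{p}_0}}$ is a regular formal scheme of relative dimension $1$ over $\Spf O_{F_{\mathfrak{p}_0}}$ whose special fiber is a reduced strict normal crossings divisor: a union of copies of $\mathbb{P}^1_{\bar\kappa_{F_{\mathfrak{p}_0}}}$ indexed by the vertices of the Bruhat--Tits tree of $\PGL_2(F_{\mathfrak{p}_0})$, meeting transversally at ordinary double points indexed by the edges, with local equation $xy = \pi$ at each node (where $\pi$ is a uniformizer of $F_{\mathfrak{p}_0}$). Since $E_\nu = \varphi_0(F_{\mathfrak{p}_0})$ by \eqref{BZ4e} and $\breve{E}_\nu/F_{\mathfrak{p}_0}$ is unramified, all of these properties are preserved by the base change to $\Spf O_{\breve{E}_\nu}$.

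The key step is to show that for $\mathbf{K}^p$ sufficiently small the action of each $\bar{\Gamma}_g$ on the base change $\hat{\Omega}^2_{F_{\mathfrak{p}_0}} \times_{\Spf O_{F_{\mathfrak{p}_0}},\varphi_0} \Spf O_{\breve{E}_\nu}$ is free. The argument is essentially contained in the proof of Proposition \ref{uniform4l}: passing through the projective group $J$ of inner automorphisms of $\check{D}$, a sufficiently small $\mathbf{K}^p$ guarantees that $\bar{\Gamma}_g$ acts without fixed points on the Bruhat--Tits tree. Since the irreducible components and nodes of the special fiber correspond to the vertices and edges of this tree, and since $\bar{\Gamma}_g$ acts through $\PGL_2(F_{\mathfrak{p}_0})$ which preserves the Drinfeld moduli interpretation, one deduces that the stabilizer of every geometric point of $\hat{\Omega}^2_{F_{\mathfrak{p}_0}} \times_{\Spf O_{F_{\mathfrak{p}_0}}} \Spf O_{\breve{E}_\nu}$ in $\bar{\Gamma}_g$ is trivial. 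Consequently $\mathfrak{X}_{\Gamma_g}$ is \'etale-locally isomorphic to $\hat{\Omega}^2_{F_{\mathfrak{p}_0}} \times_{\Spf O_{F_{\mathfrak{p}_0}}} \Spf O_{\breve{E}_\nu}$; it is therefore a regular formal $\Spf O_{\breve{E}_\nu}$-scheme whose special fiber is a reduced strict normal crossings divisor, each of whose components is a smooth projective line meeting the remaining components in at least $q+1 \geq 3$ points, where $q$ is the size of the residue field of $F_{\mathfrak{p}_0}$. By \cite{Mum}, $\mathfrak{X}_{\Gamma_g}$ algebraizes to a proper $O_{\breve{E}_\nu}$-scheme $\mathfrak{X}_{\Gamma_g}^{\mathrm{alg}}$, which is then a stable curve with semi-stable reduction.

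Finally, to descend from $O_{\breve{E}_\nu}$ to $O_{E_\nu}$ I would use the Weil descent datum of Theorem \ref{4epeg2t}: the disjoint union of the $\mathfrak{X}_{\Gamma_g}^{\mathrm{alg}}$ algebraizes the completion of $\widetilde{\Sh}_{\mathbf{K}}(H,h_D)\times_{O_{E_\nu}} O_{\breve{E}_\nu}$ along its special fiber, and because $\widetilde{\Sh}_{\mathbf{K}}(H,h_D)$ is proper over $O_{E_\nu}$ (being closed in the proper scheme $\widetilde{\Sh}_{\mathbf{K}^\bullet}(G^\bullet,h^\bullet_D)$) this completion determines it. The properties of being regular, flat, and having a reduced normal crossings special fiber, as well as the local structure of a stable curve at its nodes, are all preserved under faithfully flat descent along $O_{E_\nu} \to O_{\breve{E}_\nu}$, so they transfer to $\widetilde{\Sh}_{\mathbf{K}}(H,h_D)$ itself; uniqueness of the stable model then follows from \cite[Lem.~1.12]{DM}. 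The main technical obstacle will be the passage from freeness of the $\bar{\Gamma}_g$-action on the Bruhat--Tits tree to freeness on the full formal scheme, together with checking that Mumford's algebraization is compatible with the Weil descent datum so as to reproduce the integral model of Definition \ref{uniform3d}.
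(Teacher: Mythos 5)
Your overall strategy coincides with the paper's: use Theorem~\ref{4epeg2t} to reduce to the quotients $\bar\Gamma_g\backslash(\hat\Omega^2_{F_{\mathfrak{p}_0}}\times_{\Spf O_{F_{\mathfrak{p}_0}},\varphi_0}\Spf O_{\breve E_\nu})$, invoke freeness of the $\bar\Gamma_g$-action for sufficiently small $\mathbf K^p$ (so that the quotient map is a Zariski-local isomorphism), and then read off regularity and normal crossings from the known structure of $\hat\Omega^2$.

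However, there is a genuine gap at the central step. You write that, as a consequence of the local isomorphism, $\mathfrak X_{\Gamma_g}$ has special fiber ``each of whose components is a smooth projective line meeting the remaining components in at least $q+1\geq 3$ points.'' This is exactly what has to be proved, and it does not follow from the local isomorphism alone. Concretely, let $C$ be a component of the special fiber of $\hat\Omega^2$ with image $\bar C$, and let $z_1,\ldots,z_t$ ($t=q+1\geq 3$) be its intersection points with the neighboring components $E_1,\ldots,E_t$. Two separate things can go wrong: (i) some $\bar E_i$ could equal $\bar C$, giving a self-node of $\bar C$ rather than an intersection with \emph{another} component, in which case $\bar C$ is not smooth and the Deligne--Mumford condition as you stated it no longer applies directly; and (ii) two of the images $\bar z_i$ could coincide, so that $\bar C$ meets other components in fewer than $t$ distinct points. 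Freeness of the action forbids fixed points, but it does not forbid identifications of distinct points of $C$; one needs a further argument. The paper's proof handles (i) via the local-isomorphism observation (a neighborhood of $z_i$ maps isomorphically, so the two branches at $\bar z_i$ must come from two distinct components of $\Omega_{\bar\kappa_\nu}$), and then handles (ii) by a combinatorial argument: if $\gamma z_1 = z_2$ for $\gamma\in\bar\Gamma$, then $C$, $\gamma C$, $\gamma E_1$ all pass through the ordinary double point $z_2$, so two of them coincide; since $\gamma E_1\neq\gamma C$ and $\gamma E_1\neq C$ (the latter by (i)), one gets $\gamma C = C$, hence a fixed point of $\gamma$ on $C\cong\mathbb P^1$, contradicting freeness. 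You would need to supply this argument (or a substitute, e.g.\ the observation that freeness makes $\mathcal T\to\bar\Gamma\backslash\mathcal T$ a graph covering and then carefully tracking valences and self-loops) to close the gap.

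Two smaller points: the final descent paragraph is fine in spirit, but the paper does not bother with it because the relevant properties (stability, regularity, reduced normal crossings special fiber) are already fppf-local on the base, so it is immediate; and your explanation of why $\bar\Gamma_g$ acts freely on the formal scheme (via freeness on the Bruhat--Tits tree) is correct but you should note that freeness on the tree must hold for the action on the tree \emph{as a graph} (no edge inversions), not just on the vertex set.
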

\begin{proof} By the Theorem the formal scheme
  $\widetilde{\Sh}_{\mathbf{K}}(H, h_{D})_{/ \Spf O_{\breve{E}_{\nu}}}$ a union of
  connected components which are isomorphism to
  $\bar{\Gamma}\backslash (\hat{\Omega}_{F_{\mathfrak{p}_0}}^2 \times_{\Spf O_{F_{\mathfrak{p}_0}},\varphi_0} \Spf O_{\breve{E}_\nu})$
  where $\bar{\Gamma} \subset \PGL_2(F_{\mathfrak{p}_0})$ is a disrete subroup.
  It is known \cite{Mum} that $\mathbf{K}^p$ can be chosen such that
  $\bar{\Gamma}$ acts without fixed point and such that 
  \begin{equation}\label{remstab1e} 
    \hat{\Omega}_{F_{\mathfrak{p}_0}}^2
    \times_{\Spf O_{F_{\mathfrak{p}_0}},\varphi_0} \Spf O_{\breve{E}_\nu} \rightarrow
    \bar{\Gamma}\backslash (\hat{\Omega}_{F_{\mathfrak{p}_0}}^2
    \times_{\Spf O_{F_{\mathfrak{p}_0}},\varphi_0} \Spf O_{\breve{E}_\nu})
    \end{equation} 
  is a local isomorphisms for the Zariski topology. We denote by
  $\Omega_{\bar{\kappa}_\nu}$ the special fibre over $\Spf O_{\breve{E}_{\nu}}$ of
  the left hand side.
  All components of $\bar{\Gamma} \backslash \Omega_{\bar{\kappa}_\nu}$ are
  rational curves. We show that each of these components $\bar{C}$ is
  met by other components in at least $3$ different points. This proves
  that the right hand side of (\ref{remstab1e}) is a stable curve.

  $\bar{C}$ is the image of a component $C \subset \Omega_{\bar{\kappa}_\nu}$
  Let $E_1, \ldots, E_t$ all different components of $\Omega_{\bar{\kappa}_\nu}$
  which meet $C$ properly. Each $E_i$ meets $C$ in a single point $z_i$.
  The points $z_1, \ldots, z_t$ are all different. We know that $t \geq 3$.
  We denote by $\bar{E}_1, \ldots, \bar{E}_t$ the images in
  $\bar{\Gamma} \backslash \Omega_{\bar{\kappa}_\nu}$. We note the that
  $\bar{E}_i \neq \bar{C}$ for $i=1, \ldots t$. Indeed, let $\bar{z}_i$ be
  the image of $z_i$ in $\bar{\Gamma} \backslash \Omega_{\bar{\kappa}_\nu}$. The
  inequality follows because a neighbourhood of $z_i$ is isomorphically
  mapped to a neighbourhood of $\bar{z}_i$.
  We will show that the points $\bar{z}_1, \ldots, \bar{z}_t$ are all different.
  If not we find an element $\gamma \in \bar{\Gamma}$ such that for example
  $\gamma z_1 = z_2$. Then all three components $C$, $\gamma C$, $\gamma E_1$
  contain the point $z_2$. Therefore two of these components must be equal.
  By what we said above $C = \gamma C$ follows. But this implies that $\gamma$
  has a fixpoint on $C$ which is excluded by assumption. We conclude that
  $\gamma z_1 = z_2$ is impossible. This proves that the points
  $\bar{z}_1, \ldots, \bar{z}_t$ are different. 
\end{proof}  
  
\begin{proof}[Proof of Theorem \ref{4epeg2t}]
  We consider only open compact subgroups
  $\mathbf{K} \subset H(\mathbb{A}_f) = D^{\times}(\mathbb{A}_f)$ of
  the type as in the statement of the theorem. For the proof it will suffice to consider those
  $\mathbf{K}$ where $\mathbf{K}^p$ is small enough. We choose a chain of
  open compact subgroups of this type 
  \begin{equation}
    \mathbf{K}_1 \supset \mathbf{K}_2 \supset \ldots \supset \mathbf{K}_t
    \supset \ldots ,
  \end{equation}
  which is cofinal to all subgroups of this type.

  We consider open and compact subgroups
  $\mathbf{K}^{\bullet} \subset G^{\bullet}(\mathbb{A}_f)$ as in Proposition
  \ref{BZ8p}  with the following properties:
  \begin{enumerate}
  \item[(a)] $\mathbf{K}^{\bullet} \cap H(\mathbb{A}_f) = \mathbf{K}_t$ for some
    $t \in \mathbb{N}$.
  \item[(b)] The groups $\mathbf{K}^{\bullet}_{\mathfrak{p}_i}$ and
    $\mathbf{K}_{t, \mathfrak{p}_i}$ are related as in (\ref{uniform27e}) for $i=0,\ldots,s$.
  \item[(c)] The natural morphism
    \begin{displaymath}
  \Sh_{\mathbf{K}_t}(H, h_{D}) \times_{\Spec E(H, h_D)} \Spec E
  \rightarrow \Sh_{\mathbf{K}^{\bullet}}(G^{\bullet}, h^{\bullet}_{D})  
    \end{displaymath}
    is an open and closed immersion. Here $\mathbf{K}^{\bullet}$ is chosen such that $(a)$ is satisfied. 
  \end{enumerate}
  We find a chain of open and compact subgroups $\mathbf{K}^{\bullet}$ with the
  properties $(abc)$
  \begin{equation}
    \mathbf{K}^{\bullet}_1 \supset \mathbf{K}^{\bullet}_2 \supset \ldots \supset
    \mathbf{K}^{\bullet}_s \supset \ldots ,
  \end{equation}
  which has the following properties. For each $\mathbf{K}_t$ there is
  a group $\mathbf{K}^{\bullet}_{s}$ which satifies $(abc)$ with respect to
  $\mathbf{K}_t$. Moreover, for an arbitrary $\mathbf{K}^{\bullet}$ satisfying
  $(abc)$, there is a group $\mathbf{K}^{\bullet}_s$ such that
  $\mathbf{K}^{\bullet}_s \subset \mathbf{K}^{\bullet}$ and such that
  $\mathbf{K}^{\bullet}_s \cap H(\mathbb{A}_f) = \mathbf{K}_{t'}$ for some
  $t' > t$. We set
  \begin{equation}\label{uniform28e} 
    \Sh^{pro}(H, h_{D})_{\breve{E}_{\nu}} =\varprojlim\nolimits_{\mathbf{K}_t}
       \Sh_{\mathbf{K}_t}(H, h_{D})_{\breve{E}_{\nu}}. 
    \end{equation} 
  We remark that the connected components of
  $\Sh_{\mathbf{K}_t}(H, h_{D})_{\breve{E}_{\nu}}$ are geometrically connected. This
  follows from \cite[(2.7.1) and (3.9.1)]{D-TS}  because
  $\mathbf{K}_{\mathfrak{p}_0} \in D^\times_{\mathfrak{p}_0}$ is maximal.
  We choose a connected component $Z$ of the left hand side of
  (\ref{uniform28e}). This induces a connected component 
  $Z_{\mathbf{K}_t}$ of $\Sh_{\mathbf{K}_t}(H, h_{D})_{\breve{E}_{\nu}}$ for each
  $t$. The closure $\tilde{Z}_{\mathbf{K}_t}$ of
  $Z_{\mathbf{K}_t}$ in $\widetilde{\Sh}_{\mathbf{K}_t}(H, h_{D})$
  is a connected component there. Since the last schemes are proper over
  $\Spec O_{\breve{E}_{\nu}}$, the natural restriction morphisms
  $\tilde{Z}_{\mathbf{K}_{t+1}} \rightarrow \tilde{Z}_{\mathbf{K}_t}$ are surjective.
  We choose points
  $z_{\mathbf{K}_t} \in \tilde{Z}_{\mathbf{K}_t}(\bar{\kappa}_{E_{\nu}})$ such that
  $z_{\mathbf{K}_{t+1}}$ is mapped to $z_{\mathbf{K}_t}$ for all $t$. Let $\mathbf{K}^{\bullet}_{s}$
  be a subgroup such that
  $\mathbf{K}^{\bullet}_{s}$ induces $  \mathbf{K}_{t}$ as in $(abc)$. Then by the open
  and closed immersion of Definition \ref{uniform3d}, $\tilde{Z}_{\mathbf{K}_t}$
  is also a connected component of
  $\widetilde{\Sh}_{\mathbf{K}^{\bullet}_s}(G^{\bullet}, h^{\bullet}_{D})$.
    We consider $z_{\mathbf{K}_t}$ as a point of
  $\tilde{\mathcal{A}}^{\bullet t}_{\mathbf{K}_s^{\bullet}}(\bar{\kappa}_{E_{\nu}})$. We
  denote this point by $z^{\bullet}_{\mathbf{K}^{\bullet}_s}$. It is represented by the
  isomorphism class of a tuple
  \begin{equation}
    z^{\bullet}_{\mathbf{K}^{\bullet}_s} =
    (A(\mathbf{K}^{\bullet}_{s}),\iota(\mathbf{K}^{\bullet}_{s}),
    \bar{\lambda}(\mathbf{K}^{\bullet}_{s}),
    \bar{\eta}^p(\mathbf{K}^{\bullet}_{s}), ( \bar{\eta}_{\mathfrak{q}_j}(\mathbf{K}^{\bullet}_{s}))_j). 
  \end{equation}
  We note that no datum $(\xi_{\mathfrak{p}_i})_i$ appears because of our choice
  (\ref{uniform27e}). By construction $z^{\bullet}_{\mathbf{K}^{\bullet}_{s+1}}$ is
  mapped to $z^{\bullet}_{\mathbf{K}^{\bullet}_s}$ for all $s$. The triples
  \begin{displaymath}
(A(\mathbf{K}^{\bullet}_{s}),\iota(\mathbf{K}^{\bullet}_{s}),
    \bar{\lambda}(\mathbf{K}^{\bullet}_{s})) 
  \end{displaymath}
  are all isomorphic. Therefore we may choose them independent of $s$. The
  classes $\bar{\eta}_{\mathfrak{q}_j}(\mathbf{K}^{\bullet}_{{s+1}})$ and
  $\bar{\eta}^p(\mathbf{K}^{\bullet}_{{s+1}})$ generate classes modulo
  $\mathbf{K}^{\bullet}_{s}$. We denote these classes by
  $\bar{\eta}_{\mathfrak{q}_j}(\mathbf{K}^{\bullet}_{{s+1}})_{\mid s}$, resp.
  $\bar{\eta}^p(\mathbf{K}^{\bullet}_{{s+1}})_{\mid s}$. Since
  $z^{\bullet}_{\mathbf{K}^{\bullet}_{s+1}}$ is mapped to $z^{\bullet}_{\mathbf{K}^{\bullet}_s}$,
  we obtain an isomorphism of tuples
  \begin{displaymath}
    (A,\iota,\bar{\lambda}, \bar{\eta}^p(\mathbf{K}^{\bullet}_{{s+1}})_{\mid s}, 
    (\bar{\eta}_{\mathfrak{q}_j}(\mathbf{K}^{\bullet}_{{s+1}})_{\mid s})_j
    ) \overset{\sim}{\rightarrow}
    (A,\iota,\bar{\lambda},     \bar{\eta}^p(\mathbf{K}^{\bullet}_{s}), (\bar{\eta}_{\mathfrak{q}_i}(\mathbf{K}^{\bullet}_{s})
)_j). 
    \end{displaymath}
  By this isomorphism the data
  $\bar{\eta}^p(\mathbf{K}^{\bullet}_{{s+1}})$ and  $\bar{\eta}_{\mathfrak{q}_j}(\mathbf{K}^{\bullet}_{{s+1}})$ on the left hand side induce on the
  right hand side data
  $\bar{\eta}^p(\mathbf{K}^{\bullet}_{{s+1}})'$ and  $\bar{\eta}_{\mathfrak{q}_j}(\mathbf{K}^{\bullet}_{{s+1}})'$ such that
  \begin{displaymath}
    \bar{\eta}^p(\mathbf{K}^{\bullet}_{{s+1}})'_{\mid s} =
    \bar{\eta}^p(\mathbf{K}^{\bullet}_{s}),\quad \bar{\eta}_{\mathfrak{q}_j}(\mathbf{K}^{\bullet}_{{s+1}})'_{\mid s} =
    \bar{\eta}_{\mathfrak{q}_i}(\mathbf{K}^{\bullet}_{s}) .
    \end{displaymath}
 Therefore we may assume that 
   \begin{equation} 
       \bar{\eta}^p(\mathbf{K}^{\bullet}_{{s+1}})_{\mid s} =
    \bar{\eta}^p(\mathbf{K}^{\bullet}_{s}), \quad  \bar{\eta}_{\mathfrak{q}_j}(\mathbf{K}^{\bullet}_{{s+1}})_{\mid s} =
    \bar{\eta}_{\mathfrak{q}_j}(\mathbf{K}^{\bullet}_{s}). 
 \end{equation}
Now $\bar{\eta}^p(\mathbf{K}^{\bullet}_{s}) \subset \mathrm{Isom}_{B \otimes \mathbb{A}_f^p}(V \otimes \mathbb{A}^p_f, \mathrm{V}^p(A))$
is a compact subset. Therefore the intersection
$\cap_s \bar{\eta}^p(\mathbf{K}^{\bullet}_{s})$ is not empty. We choose an
element $\eta^p$ in this intersection. It generates the class
$\bar{\eta}^p(\mathbf{K}^{\bullet}_{s})$ for each $s$. Similiarly, we find
for each $j = 1, \ldots, s$ an  isomorphism
$\eta_{\mathfrak{q}_i}: \Lambda_{\mathfrak{q}_j} \isoarrow T_{\mathfrak{q}_i}$
which induces all classes $\bar{\eta}_{\mathfrak{q}_j}(\mathbf{K}^{\bullet}_{s})$.

The tuple 
\begin{equation}
(A, \iota, \bar{\lambda}, \eta^p, ( \eta_{\mathfrak{q}_j})_j) 
  \end{equation}
makes it possible to define the uniformization morphism of the theorem.
For this we consider the morphism (\ref{unimorph1e}) defined by 
substituting $(A, \iota, \bar{\lambda}, \eta^p, ( \eta_{\mathfrak{q}_j})_j) $ for the choice of \eqref{uniform2e}
used there. Let $\mathbb{X} = \prod_{i=0}^s \mathbb{X}_{\mathfrak{p}_i}$ be the
$p$-divisible group of $A$. In the Definition \ref{RZ4d} we take
$\mathbb{X}_0=\mathbb{X}_{\mathfrak{p}_0}$ as
the framing object. If we take for $\rho$ the identity, we obtain a point
of ${\rm RZ}_{\mathfrak{p}_0}(0,0) \subset {\rm RZ}_{\mathfrak{p}_0}$ and by
Lemma \ref{RZ7l} a point 
\begin{displaymath}
  \tilde{z} \in \hat{\Omega}^2_{F_{\mathfrak{p}_0}}(\bar{\kappa}_{E_{\nu}}) =
  \hat{\Omega}^2_{E_{\nu}}(\bar{\kappa}_{E_{\nu}}).  
  \end{displaymath}
From $\mathbb{X}_j=\mathbb{X}_{\mathfrak{p}_j}$, with the $O_{F_{\mathfrak{p}_i}}^{\times}$-homogeneous polarization
induced from $\bar{\lambda}$, the rigidification $\bar{\eta}_{\mathfrak{q}_j}$, 
and the datum $\rho = \id_{\mathbb{X}_i}$, we obtain a point
$\tilde{z}_j(\mathbf{K}_{s,\mathfrak{p}_j}^{\bullet})$ of
${\rm RZ}_{\mathfrak{p}_j, \mathbf{K}_{s,\mathfrak{p}_j}^{\bullet}}$. If we use for 
(\ref{RZ1e}) the isomorphism given by $\eta_{\mathfrak{q}_j}$, the point 
$\tilde{z}_j(\mathbf{K}_{s,\mathfrak{p}_j}^{\bullet})$ corresponds to
$1 \in G^{\bullet}_{\mathfrak{p}_j}/\mathbf{K}^{\bullet}_{s,\mathfrak{p}_j}$ under the
isomorphism of Proposition \ref{RZ6p}. By construction of the uniformization
morphism \eqref{unimorph6e}, the point
\begin{displaymath}
\tilde{z} \times 1 \in  (\hat{\Omega}^2_{E_{\nu}} \times
    G^{\bullet}(\mathbb{A}_f)/\mathbf{K}^{\bullet}_s)(\bar{\kappa}_{E_{\nu}}) 
  \end{displaymath}
is mapped to the point
\begin{displaymath}
  z^{\bullet}_{\mathbf{K}^{\bullet}_s} \in \tilde{Z}_{\mathbf{K}_t}(\bar{\kappa}_{E_{\nu}})
  \subset
  \widetilde{\Sh}_{\mathbf{K}^{\bullet}_s}(G^{\bullet}, h^{\bullet}_{D})(\bar{\kappa}_{E_{\nu}}). 
\end{displaymath}
This implies that
$(\hat{\Omega}^2_{E_{\nu}} \times_{\Spf O_{E_{\nu}}} \Spf O_{\breve{E}_{\nu}})\times 1$ 
is mapped by (\ref{unimorph6e})  to the formal completion of the connected component
$\tilde{Z}_{\mathbf{K}_t}$ of
$\widetilde{\Sh}_{\mathbf{K}^{\bullet}_s}(G^{\bullet}, h^{\bullet}_{D})_{O_{\breve{E}_{\nu}}}$. Now
we restrict (\ref{unimorph6e}) to
\begin{equation}\label{uniform29e}
\check{D}^{\times} \backslash ((\hat{\Omega}_{F_{\mathfrak{p}_0}}^2 
\times_{\Spf O_{F_{\mathfrak{p}_0}},\varphi_0} \Spf O_{\breve{E}_\nu})\times
D^{\times}(\mathbb{A}_f)/\mathbf{K}_t) \rightarrow
\widetilde{\Sh}_{\mathbf{K}^{\bullet}_s}(G^{\bullet}, h^{\bullet}_{D})^\wedge_{/ \Spf O_{\breve{E}_{\nu}}},  
  \end{equation}
cf. Lemma \ref{uniform4l}. The image of the connected component
$(\hat{\Omega}_{F_{\mathfrak{p}_0}}^2 \times_{\Spf O_{F_{\mathfrak{p}_0}},\varphi_0} \Spf O_{\breve{E}_\nu}) \times 1$
is mapped to a connected component of the open and closed formal subscheme 
$\widetilde{\Sh}_{\mathbf{K}}(H, h_{D})^\wedge_{/ \Spf O_{\breve{E}_{\nu}}}$.
But since the Hecke operators $D^{\times}(\mathbb{A}_f)$ act transitively on
the connected components of the last formal scheme and the morphism
(\ref{unimorph6e}) is compatible with Hecke operators, we conclude that 
(\ref{uniform29e}) is a surjective map onto
$\widetilde{\Sh}_{\mathbf{K}}(H, h_{D})^\wedge_{/ \Spf O_{\breve{E}_{\nu}}}$. Since by
Theorem \ref{4epeg1t} and Lemma \ref{uniform4l} the morphism is an open
and closed immersion we conclude that  (\ref{unimorph7e}) is an isomorphism for
$\mathbf{K} = \mathbf{K}_t$. 

Now the tuple $(A, \iota, \bar{\lambda}, \eta^p, ( \eta_{\mathfrak{q}_j})_j) $
defines the uniformization morphism for an arbitrary $\mathbf{K}$. By choosing
$\mathbf{K}_t \subset \mathbf{K}$, we see that (\ref{unimorph7e}) is surjective
and therefore an isomorphism by  Lemma \ref{uniform4l}. 
The compatibility with the Weil descent data is a consequence of
Theorem \ref{4epeg1t}. This completes the proof.
\end{proof}

\section{Conventions about Galois descent}\label{s:desc}

Let $L/E$ be a Galois extension (possibly infinite) with Galois group
$G = \Gal(L/E)$. For $\sigma \in G$ we set 
\begin{equation}\label{tau_c1e}
\sigma_c = \Spec \sigma^{-1}: \Spec L \rightarrow \Spec L.  
\end{equation}
If $\tau \in G$ we find $(\sigma \circ \tau)_{c} = \sigma_{c} \circ \tau_{c}$.  
Let $\pi: X \rightarrow \Spec L$ be a scheme over $L$. We recall that a
descent datum on $X$ relative to $L/E$ is a collection of morphisms
$\varphi_{\sigma}: X \rightarrow X$ for $\sigma \in G$, making the
following diagram  commutative
\begin{equation*}
   \xymatrix{
     X \ar[r]^{\varphi_{\sigma}} \ar[d]^{\pi} & X \ar[d]_{\pi}\\
     \Spec L \ar[r]^{\sigma_c} & \Spec L , 
   }
\end{equation*}
such that  $\varphi_{\sigma} \circ \varphi_{\tau} = \varphi_{\sigma \tau}$ for all
$\sigma, \tau \in G$. In other words, a descent datum is a left action of $G$
on $X$ by semi-linear automorphisms.
A descent datum $(X, \varphi_{\sigma})$ defines a left action of $G$ on
$X(L) = \Hom_{\Spec L}(\Spec L, X)$, 
\begin{displaymath}
  \begin{array}{ccc}
    G \times X(L) & \longrightarrow & X(L).\\
    (\sigma , \alpha) & \longmapsto & \varphi_{\sigma} \circ \alpha \circ
    \Spec \sigma\\
    \end{array}
  \end{displaymath}
We denote the right hand side by $\sigma \times_{\varphi} \alpha$. This is indeed
a point of $X(L)$:
\begin{displaymath}
\pi \circ \varphi_{\sigma} \circ \alpha \circ \Spec \sigma = \sigma_c \circ \pi 
\circ \alpha \circ \Spec \sigma = \sigma_c \circ \Spec \sigma = \id_{\Spec L}. 
\end{displaymath}

Let $u: G \rightarrow \Aut_{L}((X, \varphi))$ be an action of $G$ on this
descent datum. This means that for each $\sigma \in G$ an $L$-morphism
$u_{\sigma}: X \rightarrow X$ is given such that for each $\sigma, \tau \in G$
\begin{displaymath}
  u_{\sigma} \circ u_{\tau} = u_{\sigma \tau}, \quad u_{\sigma} \circ \varphi_{\tau} =
  \varphi_{\tau} \circ u_{\sigma}.
\end{displaymath}
Then $\psi_{\sigma} := u_{\sigma} \circ \varphi_{\sigma}$ is another descent datum
on $X$. It defines another action $\sigma \times_{\psi} \alpha$ of $G$ on $X(L)$.
From the definition we obtain
\begin{equation}\label{GalAb1e}
\sigma \times_{\psi} \alpha = u_{\sigma} \circ (\sigma \times_{\phi} \alpha). 
\end{equation}

If $X_0$ is a scheme over $\Spec E$, there is the canonical
descent datum on $X = X_0 \times_{\Spec E} \Spec L$, 
\begin{displaymath}
  \kappa_{\sigma} = \id_{X_0} \times \sigma_c: X_0 \times_{\Spec E} \Spec L
  \rightarrow X_0 \times_{\Spec E} \Spec L.  
\end{displaymath}
The action of $G$ induced by $\kappa_{\sigma}$ on $X(L)$ coincides with the
action on $X_0(L) = \Hom_{\Spec E}(\Spec L, X_0)$ via $L$, taking into account
the identification $X(L) = X_0(L)$. We denote this action by
$\sigma \alpha_0 = \sigma \times_{\kappa} \alpha_0$,
($\sigma \in G$, $\alpha_0 \in X_0(L)$). 

A homomorphism $a: G \rightarrow \Aut_{E} X_0$ defines an action on the
canonical descent datum $u: G \rightarrow \Aut_{L}((X, \kappa))$ via 
$u_{\sigma} = a_{\sigma} \times \id_{\Spec L}$. We obtain the new descent datum
\begin{displaymath}
  \psi_{\sigma} = u_{\sigma} \circ \kappa_{\sigma} = a_{\sigma} \times \sigma_c:
  X_0 \times_{\Spec E} \Spec L \rightarrow X_0 \times_{\Spec E} \Spec L.  
  \end{displaymath}
The action of $G$ on $X(L) = X_0(L)$ defined by this descent datum is 
\begin{displaymath}
  \sigma \times_{\psi} \alpha_0 = a_{\sigma} \circ \sigma \alpha_0, \quad
  \alpha_0 \in X_0(L). 
  \end{displaymath}

Assume that $X_0 = \coprod_{X_0(E)} \Spec E$ is the constant scheme. Then the
action of $G$ on $X_0(L)$ is trivial. Let $a: G \rightarrow \Aut X_0(E)$ be
an action. It induces an action on $X_0$ which we denote by the same letter
$a$. It defines on $X = X_0 \times_{\Spec E} \Spec L$ the descent datum
$a_{\sigma} \times \sigma_c$. The action on $X(L) = X_0(L)$ induced by this
descent datum is the operation on $X_0(L)$ by $a$ acting on $X_0$. (Note that
$X_0(L) = X_0(E))$.

A descent datum $(X, \varphi_{\sigma})$ is effective if there is scheme
$X_0$ over $E$ such that there is an isomorphism
$X_0 \times_{\Spec E} \Spec L \rightarrow X$ 
which respects the descent data $\kappa_{\sigma}$ resp. $\varphi_{\sigma}$. 
Assume that $L/K$ is a finite field extension and that $X$ is quasiprojective
over $\Spec L$. Then any decent datum
$\varphi_{\sigma}$ is effective (cf. SGA1 Exp. VIII).

\section{Conventions about Shimura varieties}\label{s:shimvar} 

Let $(G,h)$ a Shimura datum. We denote by $X$ the $G(\mathbb{R})$-conjugacy
class of $h$. We consider the operation of $G(\mathbb{R})$ on $X$ from
the left
\begin{displaymath} 
G(\mathbb{R}) \times X \rightarrow X, \quad (g,x) \mapsto gxg^{-1}.  
\end{displaymath}
Let $\mathbf{K}_{\infty} \subset G(\mathbb{R})$ be the stabilizer of $h$.
Then we have a $G(\mathbb{R})$-equivariant map
\begin{displaymath}
  G(\mathbb{R})/\mathbf{K}_{\infty} \overset{\sim}{\longrightarrow} X, \quad
  g \mapsto ghg^{-1}. 
  \end{displaymath}
Let $\mathbf{K} \subset G(\mathbb{A}_f)$ be an open and compact subgroup.
Then we define the complex Shimura variety
\begin{equation}
  \begin{array}{ll} 
  \mathrm{Sh}_{\mathbf{K}}(G,h)_{\mathbb{C}} & = G(\mathbb{Q}) \backslash 
  (X \times (G(\mathbb{A}_f)/\mathbf{K}))\\[2mm] & = G(\mathbb{Q}) \backslash 
  ((G(\mathbb{R})/\mathbf{K}_{\infty}) \times (G(\mathbb{A}_f)/\mathbf{K})).\\ 
    \end{array}
\end{equation}
The group $G(\mathbb{Q})$ acts from the left via the homomorphisms
$G(\mathbb{Q}) \rightarrow G(\mathbb{R})$ and
$G(\mathbb{Q}) \rightarrow G(\mathbb{A}_f)$. 

The group $G(\mathbb{A}_f)$ acts from the right on the tower
$\{ \mathrm{Sh}_{\mathbf{K}} \}_{\mathbf{K}}$ for varying $\mathbf{K}$. For
$a \in G(\mathbb{A}_f)$ this action is given by 
\begin{equation}
  \begin{array}{lccc} 
    \mid_{a}: & \mathrm{Sh}_{\mathbf{K}} & \rightarrow &
    \mathrm{Sh}_{a^{-1} \mathbf{K} a}, \\
    & (x, u) & \mapsto & (x, ua)\\ 
    \end{array} 
  \end{equation} 
where $x \in X$ and $u \in G(\mathbb{A}_f)$. We call this a Hecke operator.
  
\begin{remark}
In \cite{D-TS},  the action of $G(\mathbb{R})$ from the right on $X$ is considered,
\begin{displaymath}
h \times g := g^{-1} h g, \quad h \in X, \; g \in G(\mathbb{R}).  
  \end{displaymath}
The complex Shimura variety is defined as
\begin{displaymath}
  \begin{array}{ll}
    \Sh^{D}_{\mathbf{K}}(G, h)_{\mathbb{C}} & =
    (X\times(\mathbf{K}\backslash G(\mathbb{A}_f))/G(\mathbb{Q})\\ 
  & = ((\mathbf{K}_{\infty} \backslash G(\mathbb{R})) \times
  (\mathbf{K}\backslash G(\mathbb{A}_f))/G(\mathbb{Q})
    \end{array}
  \end{displaymath}
Let $X_{-}$ be the conjugacy class of $h^{-1}$. Then there is a  natural
isomorphism
\begin{equation}\label{Shh1e}
  \Sh_{\mathbf{K}}(G, h)_{\mathbb{C}} \overset{\sim}{\longrightarrow}
  \Sh_{\mathbf{K}}^{D}(G, h^{-1})_{\mathbb{C}} ,
  \end{equation}
 given by 
\begin{displaymath}
  \begin{array}{ccc} 
    G(\mathbb{Q}) \backslash (X \times (G(\mathbb{A}_f)/\mathbf{K})) &
    \overset{\sim}{\longrightarrow} & 
    (X \times(\mathbf{K}\backslash G(\mathbb{A}_f))/G(\mathbb{Q})\\
    (x, g) & \longmapsto & (x^{-1}, g^{-1})  
    \end{array}
\end{displaymath}
\end{remark}
Let $H$ be a torus over $\mathbb{Q}$ and let
\begin{displaymath}
h: \mathbb{S} \rightarrow H_{\mathbb{R}} 
\end{displaymath}
be a morphism of algebraic groups over $\mathbb{R}$. It induces a morphism
of algebraic groups over $\mathbb{C}$
\begin{displaymath}
\mu: \mathbb{G}_{m,\mathbb{C}} \rightarrow H_{\mathbb{C}}. 
  \end{displaymath}
The field of definition $E$ of $\mu$ is the reflex field of $(H, h)$. We consider 
the composite 
\begin{displaymath}
  \mathfrak{r}: \mathrm{Res}_{E/\mathbb{Q}} \mathbb{G}_{m,E} 
  \overset{\mathrm{Res}\, \mu}{\longrightarrow} \mathrm{Res}_{E/\mathbb{Q}} H_E 
  \overset{\Nm_{E/\mathbb{Q}}}{\longrightarrow} H. 
\end{displaymath}
The homomorphism
\begin{equation}\label{Sh-rec3e}
  r(H,h) = \mathfrak{r}^{-1}: \mathrm{Res}_{E/\mathbb{Q}} \mathbb{G}_{m,E}
  \rightarrow H 
  \end{equation}
is called the reciprocity law of $(H,h)$, cf.  \cite[(3.9.1)]{D-TS}.
Let $\mathbf{K} \subset H(\mathbb{A}_f)$ be an open
compact subgroup. There is an open and compact subgroup
$C \subset (E \otimes \mathbb{A}_f)^{\times}$ such that
$r(H,h)(\mathbb{A}_f)(C) \subset \mathbf{K}$. Therefore $r(H,h)$
induces a map
\begin{equation}\label{Sh-rec1e}
  E^{\times} \backslash (E \otimes \mathbb{A}_f)^{\times}/C \rightarrow
  H(\mathbb{Q}) \backslash H(\mathbb{A}_f)/\mathbf{K}
  \end{equation}
By class field theory
\begin{displaymath}
  E^{\times} \backslash (E \otimes \mathbb{A}_f)^{\times}/C =
  E^{\times} \backslash (E \otimes \mathbb{A})^{\times}/C (E \otimes\mathbb{R})^{\times}
\end{displaymath}
corresponds to a finite abelian extension $L$ of $E$. We consider the
homomorphism
\begin{displaymath}
\Gal(E^{ab}/E) \rightarrow \Gal(L/E) =
  E^{\times} \backslash (E \otimes \mathbb{A}_f)^{\times}/C.
  \end{displaymath}
If we compose this with (\ref{Sh-rec1e}) we obtain the class field version of the reciprocity map,
\begin{equation}\label{Sh-rec2e}
  r^{\rm cft}(H,h): \Gal(\bar{E}/E) \rightarrow \Gal(E^{ab}/E) \rightarrow
  H(\mathbb{Q}) \backslash H(\mathbb{A}_f)/\mathbf{K}. 
\end{equation}
This Galois action on $H(\mathbb{Q}) \backslash H(\mathbb{A}_f)/\mathbf{K}$
defines a finite \'etale scheme over $E$ which we denote by
$\mathrm{Sh}_{\mathbf{K}}(H,h)$.
This is called the canonical model of $\mathrm{Sh}_{\mathbf{K}}(H,h)$ over $E$.
By definition 
\begin{displaymath}
  \mathrm{Sh}_{\mathbf{K}}(H,h)_{\mathbb{C}} =
  H(\mathbb{Q}) \backslash H(\mathbb{A}_f)/\mathbf{K}. 
\end{displaymath}
In other words we can say that the $E$-scheme $\mathrm{Sh}_{\mathbf{K}}(H,h)$
is obtained from the constant scheme
$H(\mathbb{Q}) \backslash H(\mathbb{A}_f)/\mathbf{K}$ over $E$ by the descent
datum
\begin{equation}\label{Sh-rec4e}
  \begin{array}{lr}
  r^{\rm cft}(H,h)(\sigma) \times \sigma_c: &
  (H(\mathbb{Q})\backslash
  H(\mathbb{A}_f)/\mathbf{K})\times_{\Spec E}\Spec\bar{E}
  \longrightarrow \qquad \quad \\[2mm]  
  & \qquad (H(\mathbb{Q})\backslash
  H(\mathbb{A}_f)/\mathbf{K})\times_{\Spec E}\Spec\bar{E}, 
    \end{array}
  \end{equation}
for $\sigma \in \Gal(\bar{E}/E)$ and $\sigma_c := \Spec \sigma^{-1}$. We can also
express the last statement by a commutative diagram. There is an isomorphism
of schemes over $\bar{E}$ 
\begin{equation}\label{r-descent1e}
  H(\mathbb{Q})\backslash H(\mathbb{A}_f)/\mathbf{K})\times_{\Spec E}\Spec\bar{E}
  \rightarrow
  \mathrm{Sh}_{\mathbf{K}}(H,h)\times_{\Spec E}\Spec\bar{E}
\end{equation}
such that for each 
$\sigma \in \Gal(\bar{E}/E)$  the following diagram is commutative,
\begin{displaymath}
\xymatrix{
  (H(\mathbb{Q})\backslash H(\mathbb{A}_f)/\mathbf{K})
  \times_{\Spec E}\Spec\bar{E} \ar[d]_{r^{\rm cft}(H,h)(\sigma) \times \sigma_c} \ar[r] 
  & \mathrm{Sh}_{\mathbf{K}}(H,h) \times_{\Spec E}\Spec\bar{E}
  \ar[d]^{id \times \sigma_c}\\
  (H(\mathbb{Q}) \backslash H(\mathbb{A}_f)/\mathbf{K})
  \times_{\Spec E}\Spec\bar{E} \ar[r] & 
  \mathrm{Sh}_{\mathbf{K}}(H,h) \times_{\Spec E}\Spec\bar{E} .
   }
  \end{displaymath}
 For varying $\mathbf{K}$ the isomorphism
is compatible with the action of the Hecke operators $H(\mathbb{A}_f)$.
In the considerations above we can replace the field of definition $E$ of
$\mu$ by any finite extension $E'$, $E \subset E' \subset \mathbb{C}$.
The definition (\ref{Sh-rec3e}) gives
\begin{displaymath}
  r_{E'}(H,h) = \mathfrak{r}^{-1}: \mathrm{Res}_{E'/\mathbb{Q}} \mathbb{G}_{m,E'}
  \rightarrow H.
\end{displaymath}
This reciprocity law gives the action of $\Gal(\bar{E}/E')$ on
$H(\mathbb{Q}) \backslash H(\mathbb{A}_f)/\mathbf{K}$ 
obtained by restriction from (\ref{Sh-rec2e}). 

\begin{remark}
The map
\begin{displaymath}
  \begin{array}{ccc} 
    H(\mathbb{Q}) \backslash H(\mathbb{A}_f)/\mathbf{K}  &
    \overset{\sim}{\longrightarrow}  & 
     \mathbf{K}\backslash H(\mathbb{A}_f)/H(\mathbb{Q})\\
    h & \longmapsto & h^{-1}   
    \end{array}
\end{displaymath}
is equivariant with respect to the action of $E^{\times}(\mathbb{A}_f)$ on the
left hand side by $r(H,h)$ and the action of $E^{\times}(\mathbb{A}_f)$ on the
right hand side by $r(H, h^{-1})$.
Therefore we obtain an isomorphism of canonical models over $E$, 
\begin{displaymath}
\Sh_{\mathbf{K}}(H, h) \overset{\sim}{\longrightarrow}
  \Sh_{\mathbf{K}}^{D}(H, h^{-1}). 
\end{displaymath}
\end{remark}
We go back to a general Shimura datum $(G,h)$. We denote by $E(G,h)$ the
Shimura field. We call a model $\Sh(G,h)$ over $E$ of $\Sh(G,h)_{\mathbb{C}}$
canonical if for each maximal torus $H \subset G$ and each
$h': \mathbb{S} \rightarrow H_{\mathbb{R}}$ which is conjugate to $h$ in
$G(\mathbb{R})$, the induced morphism
$\Sh(H,h')_{\mathbb{C}} \rightarrow \Sh(G,h)_{\mathbb{C}}$ is defined over the
compositum of $E(H,h')$ and $E(G,h)$.
With this definition (\ref{Shh1e}) induces an isomorphism of canonical models,
cf. \cite[3.13]{D-TS}.

\begin{equation}\label{Shh2e}
\Sh_{\mathbf{K}}(G, h) \overset{\sim}{\longrightarrow}
  \Sh_{\mathbf{K}}^{D}(G, h^{-1}). 
\end{equation}

We now consider the situation of \cite[4.9--4.11]{D-TS}.  
Let $L$ be a semisimple algebra over $\mathbb{Q}$ with a positive involution
$*: L \rightarrow L$. Let $F \subset L$ be a subfield in the center of $L$
which is invariant by the involution $*$. 
Let $V$ be a faithful $L$-module which is finite-dimensional over $\mathbb{Q}$. Let $\psi: V \times V \rightarrow \mathbb{Q}$
be an alternating $\mathbb{Q}$-bilinear form such that
\begin{displaymath}
\psi(\ell x, y) = \psi(x, \ell^{*} y), \quad \ell \in L, \; x,y \in V. 
\end{displaymath}
We consider the algebraic group $G$ over $\mathbb{Q}$ given by
\begin{displaymath}
  G(\mathbb{Q}) = \{g \in \GL_{L} (V) \; | \; \psi(gx, gy) = \psi(\mu(g) x,y),
  \; \mu(g) \in F^{\times}\}. 
  \end{displaymath}
There is up to conjugation by an element of $G(\mathbb{R})$ a unique complex
stucture $J: V \otimes \mathbb{R} \rightarrow V \otimes \mathbb{R}$,
$J^{2} = - \id$ which commutes with the action of $L$ and such that
\begin{displaymath}
\psi(Jx,y) 
  \end{displaymath}
is a symmetric and positive definite. Let
$h: \mathbb{S} \rightarrow G_{\mathbb{R}}$ such that  
$h(z)$ acts on $V \otimes \mathbb{R}$ by multiplication by $z$ with respect
to the complex structure just introduced. Then $(G,h)$ is a Shimura datum.
We set
\begin{displaymath}
  \mathbf{t}(\ell) = {\rm Tr}_{\mathbb{C}} (\ell \mid V \otimes \mathbb{R}), \quad
  \ell \in L. 
  \end{displaymath}
The numbers $\mathbf{t}(\ell)$ generate over $\mathbb{Q}$ the Shimura field
$E(G,h)$. The canonical model $\Sh_{\mathbf{K}}(G,h)$ is the coarse moduli
scheme of the following functor $\mathcal{M}(L,V,\psi)$ on the category
of $E$-schemes $S$.
\begin{definition}\label{Shh1d}
  A point of $\mathcal{M}(L,V,\psi)(S)$ is given by the following data: 
  \begin{enumerate}
  \item[(a)] an abelian scheme $A$ over $S$ up to isogeny with an action
    $\iota: L \rightarrow \End^{o} A$, 
\item[(b)] an $F^{\times}$-homogeneous polarization $\bar{\lambda}$ of $A$,
\item[(c)] a class $\bar{\eta}$  modulo $\mathbf{K}$ of $L \otimes \mathbb{A}_f$-module
  isomorphisms  
\begin{displaymath}
\eta: V \otimes \mathbb{A}_f \isoarrow \hat{V}(A). 
\end{displaymath}
such that for each $\lambda \in \bar{\lambda}$ there is locally for the
  Zariski topology a constant 
  $\xi(\lambda) \in (F \otimes \mathbb{A}_f)^{\times}(1)$
  with
  \begin{displaymath}
\psi(\xi(\lambda) v_1, v_2) = E^{\lambda}(\eta(v_1), \eta(v_2)). 
  \end{displaymath} 
\item[(d)] The $L$-module $H_1(A, \mathbb{Q})$ with its Riemann form
  defined by $\lambda$ is isomorphic to $(V, \psi)$, up to a factor
  in $F^{\times}$. 
  \end{enumerate}

  We require that the following condition holds
  \begin{displaymath}
\Trace (\iota(\ell) \mid \Lie A) = \mathbf{t}(\ell), \quad \ell \in L. 
  \end{displaymath}
\end{definition}

We reformulate \cite[5.11]{D-TS}  with our conventions.
\begin{proposition}\label{zentralerTwist1p}
  Let $(G,h)$ be a Shimura datum. Let $Z \subset G$ be the connected center
  of $G$. Let $\delta: \mathbb{S} \rightarrow Z_{\mathbb{R}}$ be a
  homomorphism. Let $E \subset \mathbb{C}$ be a finite extension of
  $\mathbb{Q}$ which contains the Shimura fields $E(G,h)$ and $E(Z,\delta)$. 
  Let $\mathbf{K} \subset G(\mathbb{A}_f)$ be an open and compact subgroup.
  We denote by $\mathrm{Sh}_{\mathbf{K}}(G,h)$ and
  $\mathrm{Sh}_{\mathbf{K}}(G,h\delta)$ the quasi-canonical models over $E$. Let
  \begin{displaymath}
    r_E^{\rm cft}(Z,\delta): \Gal(\bar{E}/E) \rightarrow
    Z(\mathbb{Q}) \backslash Z(\mathbb{A}_f)/(\mathbf{K} \cap Z(\mathbb{A}_f))  
  \end{displaymath}
  be the reciprocity law. There is an isomorphism of schemes over $\bar{E}$
  \begin{equation}\label{deltaTwist1e}
    \mathrm{Sh}_{\mathbf{K}}(G,h) \times_{\Spec E}\Spec\bar{E} \rightarrow
    \mathrm{Sh}_{\mathbf{K}}(G,h\delta) \times_{\Spec E}\Spec\bar{E}
    \end{equation}
  such that for each $\sigma \in \Gal(\bar{E}/E)$ the following diagram
  is commutative,
  \begin{displaymath}
\xymatrix{
\mathrm{Sh}_{\mathbf{K}}(G,h)
  \times_{\Spec E}\Spec\bar{E} \ar[d]_{r^{\rm cft}_E(Z,\delta)(\sigma) \times \sigma_c} \ar[r] 
  & \mathrm{Sh}_{\mathbf{K}}(G,h\delta) \times_{\Spec E}\Spec\bar{E}
  \ar[d]^{id \times \sigma_c}\\
  \mathrm{Sh}_{\mathbf{K}}(G,h) \times_{\Spec E}\Spec\bar{E} \ar[r] & 
  \mathrm{Sh}_{\mathbf{K}}(G,h\delta) \times_{\Spec E}\Spec\bar{E}  . 
   }
  \end{displaymath}
  For varying $\mathbf{K}$ the morphism (\ref{deltaTwist1e}) is compatible
  with the Hecke operators induced by elements $g \in G(\mathbb{A}_f)$. 
  \end{proposition}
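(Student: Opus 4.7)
The plan is to leverage the fact that $\delta\colon \BS \to Z_\BR$ factors through the center, so $\delta$ is fixed by conjugation. Consequently the $G(\BR)$-conjugacy classes $X$ of $h$ and $X_\delta$ of $h\delta$ stand in $G(\BR)$-equivariant bijection via $h' \mapsto h'\delta$, giving a canonical identification of complex Shimura varieties
\[
\Sh_\mathbf{K}(G,h)_\BC = G(\BQ)\backslash\bigl(X \times G(\BA_f)/\mathbf{K}\bigr) \overset{\sim}{\longrightarrow} G(\BQ)\backslash\bigl(X_\delta \times G(\BA_f)/\mathbf{K}\bigr) = \Sh_\mathbf{K}(G,h\delta)_\BC,
\]
compatibly with Hecke correspondences. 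Base-changing via a fixed embedding $E \hookrightarrow \bar E \hookrightarrow \BC$ yields the isomorphism \eqref{deltaTwist1e} of schemes over $\bar E$.

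The substantive part is to identify the Galois twist relating the two descent data. Here I would exploit the characterization of the canonical model by its action on special points (cf.\ the definition of canonical model preceding Proposition \ref{zentralerTwist1p}). Let $T \subset G$ be a maximal torus and $h_T\colon \BS \to T_\BR$ be $G(\BR)$-conjugate to $h$, with reflex field $E(T,h_T)$. Because $\delta$ factors through $Z \subset T$, the homomorphism $h_T\delta$ is $G(\BR)$-conjugate to $h\delta$, and on the associated cocharacters
\[
\mu_{h_T\delta} = \mu_{h_T}\cdot\mu_\delta\colon \BG_{m,\BC} \to T_\BC .
\]
Taking the composite with the norm as in \eqref{Sh-rec3e}, and using that $\mu_\delta$ is defined over $E(Z,\delta)$, one obtains over a sufficiently large finite extension $E'/E$ the identity of reciprocity laws
\[
r_{E'}(T, h_T\delta) = r_{E'}(T, h_T)\cdot (\iota \circ r_{E'}(Z, \delta)),
\]
where $\iota\colon Z \hookrightarrow T$ is the inclusion; passing to the idelic form gives the same factorization for $r^{\rm cft}$.

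From here I would verify the commutative diagram on the subset of special points associated to $(T, h_T)$ for varying $T$: on this subset, $\Gal(\bar E/E')$ acts on $\Sh_\mathbf{K}(G,h)(\bar E)$ via $r^{\rm cft}(T, h_T)$ and on $\Sh_\mathbf{K}(G,h\delta)(\bar E)$ via $r^{\rm cft}(T, h_T\delta)$, and by the displayed factorization these differ exactly by multiplication by $r^{\rm cft}_E(Z,\delta)(\sigma)$, which is an element of $Z(\BA_f) \subset G(\BA_f)$ and hence acts as a Hecke operator on $\Sh_\mathbf{K}(G,h\delta)_{\bar E}$. Since special points are Zariski-dense in each connected component and a morphism of Shimura varieties is determined by its values there, this local check on special points is enough to establish the commutative diagram globally. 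Compatibility with Hecke operators for varying $\mathbf{K}$ follows from the compatibility already present at the level of the complex identification.

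The main obstacle is bookkeeping the sign conventions: the paper uses $h$ where \cite{D-TS} uses $h^{-1}$ (as recalled after \eqref{BZh2e}), and the reciprocity law \eqref{Sh-rec3e} is defined as $\mathfrak r^{-1}$. These conventions must be tracked carefully so that the multiplicative factorization of $\mu_{h_T\delta}$ translates into the stated commutative diagram with $r^{\rm cft}_E(Z,\delta)(\sigma)$ acting on $\Sh_\mathbf{K}(G,h)$ (and not its inverse). Once conventions are aligned, the proof reduces to the torus computation above combined with the density of special points, which is precisely the content of \cite[5.11]{D-TS} translated into this paper's framework.
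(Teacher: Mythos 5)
Your proposal is correct and takes essentially the same route as the paper: the paper's proof simply invokes Deligne's characterization of canonical models through their action on special points \cite[5.11]{D-TS} together with the explicit torus descent-datum formula (\ref{Sh-rec4e}). Your argument is a detailed unwinding of exactly that reduction --- passing to maximal tori, factoring the reciprocity law as $r_{E'}(T,h_T\delta)=r_{E'}(T,h_T)\cdot(\iota\circ r_{E'}(Z,\delta))$, and using density of special points --- which the paper compresses into two sentences.
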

\begin{proof}
  By the definition of a canonical model one can reduce the question to the
  case when $G$ is an algebraic torus (cf. \cite[5.11]{D-TS}). Then the proposition
  is a consequence of (\ref{Sh-rec4e}). 
  \end{proof}

We formulate a ''local'' version of the last proposition, keeping the notations
there. We fix a diagram as in (\ref{BZ2e})
\begin{displaymath}
\mathbb{C} \leftarrow \bar{\mathbb{Q}} \rightarrow \bar{\mathbb{Q}}_p.
  \end{displaymath}
It determines a $p$-adic place $\nu$ of $E$. Let
$\mu(\delta): \mathbb{G}_{m,\mathbb{C}} \rightarrow Z_{\mathbb{C}}$ the
homomorphism associated to $\delta$ as usual. It is defined over $E_{\nu}$,
\begin{displaymath}
  \mu_{\nu}: \mathbb{G}_{m,E_{\nu}} \rightarrow Z_{E_{\nu}}.
\end{displaymath}
We consider the homomorphism
\begin{displaymath}
  \mathfrak{r}_{\nu}: \mathbb{G}_{m,E_{\nu}} \overset{\mu_{\nu}}{\longrightarrow}
  Z_{E_{\nu}} \overset{\Nm_{E_{\nu}/\mathbb{Q}_p}}{\longrightarrow} Z_{\mathbb{Q}_p}.
\end{displaymath}
We define $r_{\nu}(Z,\delta)$ as the composite
\begin{equation}\label{local-rec1e}
  r_{\nu}(Z,\delta): E_{\nu}^{\times}
  \overset{\mathfrak{r}_{\nu}^{-1}}{\longrightarrow} Z(\mathbb{Q}_p)
  \rightarrow Z(\mathbb{Q})\backslash Z(\mathbb{A}_f)/
  (\mathbf{K} \cap Z(\mathbb{A}_f)),   
\end{equation}
where the last arrow is induced by the inclusion
$Z(\mathbb{Q}_p) \subset Z(\mathbb{A}_f)$. By local class field theory, this
induces a homomorphism
\begin{displaymath}
  r_{\nu}^{\rm cft}(Z,\delta): \Gal(\bar{E}_{\nu}/E_{\nu}) \rightarrow
  Z(\mathbb{Q})\backslash Z(\mathbb{A}_f)/ (\mathbf{K} \cap Z(\mathbb{A}_f)). 
  \end{displaymath}
\begin{corollary}\label{zentralerTwist1c}
  We denote by $\mathrm{Sh}_{\mathbf{K}}(G,h)_{E_{\nu}}$ and
  $\mathrm{Sh}_{\mathbf{K}}(G,h\delta)_{E_{\nu}}$ the schemes over $E_{\nu}$
  obtained by base change from the canonical models. There is an isomorphism of schemes over $\bar{E}_{\nu}$ 
  \begin{displaymath}
    \mathrm{Sh}_{\mathbf{K}}(G,h)_{E_{\nu}} \times_{\Spec E_{\nu}} \Spec \bar{E}_{\nu} 
    \rightarrow
  \mathrm{Sh}_{\mathbf{K}}(G,h\delta)_{E_{\nu}}\times_{\Spec E_{\nu}}\Spec\bar{E}_{\nu}
    \end{displaymath}
  such that for any $\sigma \in \Gal(\bar{E}_{\nu}/E_{\nu})$ the following
  diagram is commutative
    \begin{displaymath}
\xymatrix{
\mathrm{Sh}_{\mathbf{K}}(G,h)_{E_{\nu}} \times_{\Spec E_{\nu}} \Spec \bar{E}_{\nu}
   \ar[d]_{r^{\rm cft}_{\nu}(Z,\delta)(\sigma) \times \sigma_c} \ar[r] 
   & \mathrm{Sh}_{\mathbf{K}}(G,h\delta)_{E_{\nu}} 
   \times_{\Spec E_{\nu}} \Spec \bar{E}_{\nu} \ar[d]^{id \times \sigma_c}\\
   \mathrm{Sh}_{\mathbf{K}}(G,h)_{E_{\nu}} \times_{\Spec E_{\nu}} \Spec \bar{E}_{\nu}
   \ar[r] & 
  \mathrm{Sh}_{\mathbf{K}}(G,h\delta)_{E_{\nu}}\times_{\Spec E_{\nu}}\Spec \bar{E}_{\nu}.
   }
  \end{displaymath}
  \end{corollary}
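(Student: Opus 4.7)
The plan is to deduce the corollary from the global Proposition \ref{zentralerTwist1p} by base change, identifying the restriction of the global reciprocity law to the decomposition group at $\nu$ with the local reciprocity law $r_\nu^{\rm cft}(Z, \delta)$. First I would choose an embedding $\bar{E} \hookrightarrow \bar{E}_{\nu}$ extending the embedding $E \hookrightarrow E_{\nu}$ induced by our place $\nu$. This produces an inclusion $\Gal(\bar{E}_{\nu}/E_{\nu}) \hookrightarrow \Gal(\bar{E}/E)$, and a compatible inclusion of decomposition groups at the level of abelianizations.

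Next, I would base change the isomorphism (\ref{deltaTwist1e}) from Proposition \ref{zentralerTwist1p} along $\Spec \bar{E}_{\nu} \to \Spec \bar{E}$. Since $\Sh_{\mathbf{K}}(G, h)_{E_{\nu}}$ and $\Sh_{\mathbf{K}}(G, h\delta)_{E_{\nu}}$ are by definition the base changes of the canonical models from $E$ to $E_{\nu}$, the Galois action of $\sigma \in \Gal(\bar{E}_{\nu}/E_{\nu})$ on their base change to $\bar{E}_{\nu}$ is just the restriction of the global $\Gal(\bar{E}/E)$-action via the inclusion above. Consequently, the proposition yields a commutative square exactly like the one in the corollary, except with the left vertical arrow given by $r^{\rm cft}_{E}(Z, \delta)(\sigma) \times \sigma_c$ (the restriction of the global reciprocity to $\Gal(\bar{E}_{\nu}/E_{\nu})$).

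The heart of the proof is then to verify the identity of maps $\Gal(\bar{E}_{\nu}/E_{\nu}) \to Z(\mathbb{Q}) \backslash Z(\mathbb{A}_f)/(\mathbf{K} \cap Z(\mathbb{A}_f))$,
\[
r^{\rm cft}_{E}(Z, \delta)\big|_{\Gal(\bar{E}_{\nu}/E_{\nu})} \;=\; r^{\rm cft}_{\nu}(Z, \delta).
\]
This is a purely torus-theoretic, class-field-theoretic statement: it follows from local-global compatibility of the Artin reciprocity map, which asserts that the composite
\[
\Gal(\bar{E}_{\nu}/E_{\nu}) \twoheadrightarrow \Gal(E^{ab}_{\nu}/E_{\nu})\ \xrightarrow{\rm loc.\ c.f.t.}\ E_{\nu}^{\times} \longrightarrow E^{\times}\backslash (E\otimes \mathbb{A}_f)^{\times}
\]
agrees with the restriction to $\Gal(\bar{E}_{\nu}/E_{\nu})$ of the global reciprocity map $\Gal(\bar{E}/E) \to E^{\times}\backslash (E\otimes \mathbb{A}_f)^{\times}$, where the third arrow views $e \in E_{\nu}^{\times}$ as the idèle with component $e$ at $\nu$ and $1$ elsewhere. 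Post-composing this equality with the homomorphism $\mathfrak{r}^{-1}_{\nu}: E_{\nu}^{\times} \to Z(\mathbb{Q}_p) \subset Z(\mathbb{A}_f)$ of (\ref{local-rec1e}) and with the projection to the double-coset space yields the desired identity, using that $\mathfrak{r}^{-1}$ is compatible with $\mathfrak{r}^{-1}_{\nu}$ via the inclusion $E_{\nu}^{\times} \hookrightarrow (E \otimes \mathbb{A}_f)^{\times}$ (which is the case because $\mathfrak{r}$ is defined as a norm from $\Res_{E/\mathbb{Q}}$).

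The main obstacle is largely a bookkeeping one: making sure the direction of arrows in the reciprocity convention (recall that the definition uses $\mathfrak{r}^{-1}$) matches the sign conventions for local class field theory used elsewhere in the paper. Once the diagram of local-vs-global Artin maps is set up correctly for the torus $\Res_{E/\mathbb{Q}} \mathbb{G}_m$, the compatibility of Hecke operators for varying $\mathbf{K}$ is inherited from the global statement, and the corollary follows.
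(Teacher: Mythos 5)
Your argument is correct and is exactly what the paper's one-line proof ("This follows from the compatibilities of local and global class field theory") is compressing: you base change the diagram of Proposition~\ref{zentralerTwist1p}, restrict the global action to a decomposition group, and invoke local-global compatibility of the Artin map for $\Res_{E/\mathbb{Q}}\mathbb{G}_m$ to match $r^{\rm cft}_{E}(Z,\delta)|_{\Gal(\bar{E}_{\nu}/E_{\nu})}$ with $r^{\rm cft}_{\nu}(Z,\delta)$. Same route, just with the bookkeeping made explicit.
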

\begin{proof}
This follows from the compatibilities of local and global class field theory. 
\end{proof}

Our final topic is  the following variant of \cite[Prop. 1.15]{D-TS}. A similar variant appears in  Kisin \cite[Lem. 2.1.2]{KisinJAMS}.  

\begin{proposition}\label{Chevalley2p}
  Let $S$ be a finite set of prime numbers. 
  Let $M \subset G$ be closed immersion of reductive subgroups over
  $\mathbb{Q}$. We assume that $M$ is the kernel of a homomorphism
  $G \rightarrow T$ to a torus over $\mathbb{Q}$.  Let
  $h: \mathbb{S} \rightarrow M_{\mathbb{R}} \rightarrow G_{\mathbb{R}}$
  be a homomorphism of algebraic groups such that $(M,h)$ and $(G,h)$
  are Shimura data. 
  Let $\mathbf{I} = \mathbf{I}_S \mathbf{I}^S$ be an open and compact subgroup
  $M(\mathbb{A}_f)$, where $\mathbf{I}_S \subset \prod_{p \in S} M(\mathbb{Q}_p)$
  and $\mathbf{I}^S \subset M(\mathbb{A}_f^S)$. Let    
  $\mathbf{K}_S \subset \prod_{p \in S} G(\mathbb{Q}_p)$ be a compact  open 
   subgroup such that
  \begin{displaymath}
\mathbf{K}_S \cap (\prod\nolimits_{p \in S} M(\mathbb{Q}_p)) = \mathbf{I}_S. 
    \end{displaymath}
Then there exists an open compact subgroup
  $\mathbf{K}^S \subset G(\mathbb{A}_f^S)$ which contains $\mathbf{I}^S$ 
  such that the induced morphism of schemes over $\mathbb{C}$ 
  \begin{equation}\label{Einbettung1e}  
    \Sh_{\mathbf{I}}(M,h)_{\mathbb{C}} \rightarrow
    \Sh_{\mathbf{K}_S \mathbf{K}^S}(G,h)_{\mathbb{C}} 
    \end{equation}
is an open and closed immersion. 
  \end{proposition}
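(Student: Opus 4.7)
The plan is to adapt the strategy of Deligne \cite[Prop. 1.15]{D-TS} (cf.\ the variant of Kisin \cite[Lem. 2.1.2]{KisinJAMS}) to allow $\mathbf{K}_S$ to be prescribed. After enlarging $\mathbf{K}^S$ to contain $\mathbf{I}^S$ and then shrinking inside $G(\mathbb{A}_f^S)$ so that $\mathbf{K}^S \cap M(\mathbb{A}_f^S) = \mathbf{I}^S$, the hypothesis on $\mathbf{K}_S$ yields $\mathbf{K}_S \mathbf{K}^S \cap M(\mathbb{A}_f) = \mathbf{I}$, so the map \eqref{Einbettung1e} is well defined.

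I first show it is étale. Writing $X_M \subset X_G$ for the $M(\mathbb{R})$- and $G(\mathbb{R})$-conjugacy classes of $h$, normality of $M$ in $G$ (as $\ker \nu$ for $\nu : G \to T$) implies that the $M(\mathbb{R})$-stabilizer of any $ghg^{-1}$ equals $g Z_M(h)(\mathbb{R}) g^{-1}$; hence every $M(\mathbb{R})$-orbit in $X_G$ has the dimension of $X_M$, so $X_M$ is open in $X_G$ and \eqref{Einbettung1e} is a local isomorphism. It therefore suffices to arrange that \eqref{Einbettung1e} be injective on $\mathbb{C}$-points: the map will then be an open immersion, and its image a union of complete connected components of the target, hence also closed.

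For injectivity, suppose $[(x_i, m_i)]_M$ ($i = 1, 2$) have the same image; there exist $\gamma \in G(\mathbb{Q})$, $k \in \mathbf{K}_S \mathbf{K}^S$ with $\gamma x_1 \gamma^{-1} = x_2$ and $\gamma m_1 k = m_2$. Applying $\nu$ (which kills $m_i \in M(\mathbb{A}_f)$) gives $\nu(\gamma) \in \nu(\mathbf{K}_S \mathbf{K}^S)$, while the fact that $\gamma \in G(\mathbb{R})$ permutes the $M(\mathbb{R})$-orbit $X_M$, combined with normality, yields $\gamma \in M(\mathbb{R}) \cdot Z_G(h)(\mathbb{R})$, whence $\nu(\gamma) \in \nu(Z_G(h)(\mathbb{R}))$. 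The Shimura-datum axiom that $Z_G(h)(\mathbb{R})/Z(G)(\mathbb{R})$ is compact, together with the discreteness of $T(\mathbb{Q})$ in $T(\mathbb{A})$ modulo the Dirichlet unit subgroup (a form of Chevalley's theorem, already used in Proposition \ref{Chevalley1p}), forces the set
\[
  \Sigma_0 := T(\mathbb{Q}) \cap \nu(Z_G(h)(\mathbb{R})) \cdot \nu(\mathbf{K}_S \mathbf{K}^S_0)
\]
(for any initial choice $\mathbf{K}^S_0 \supset \mathbf{I}^S$) to be finite. Choosing $\mathbf{K}^S \subset \mathbf{K}^S_0$ so that $\nu(\mathbf{K}_S \mathbf{K}^S) \cdot \nu(Z_G(h)(\mathbb{R}))$ avoids the finitely many nonidentity elements of $\Sigma_0$ forces $\nu(\gamma) = 1$, hence $\gamma \in M(\mathbb{Q})$ and $k \in M(\mathbb{A}_f) \cap \mathbf{K}_S\mathbf{K}^S = \mathbf{I}$, proving injectivity.

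The main obstacle is the finiteness of $\Sigma_0$: it requires combining the Archimedean compactness of $Z_G(h)(\mathbb{R})/Z(G)(\mathbb{R})$ with the Chevalley/Dirichlet discreteness statement for $T(\mathbb{Q})$, treating the central $\mathbb{Q}$-split part of $T$ separately since this is where the Archimedean compactness fails and a finer unit-theoretic analysis is needed.
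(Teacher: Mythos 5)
Your route is genuinely different from the paper's: the paper delegates the hard step (that $\Sh_{\mathbf{I}}(M,h) \to \Sh_{\mathbf{K}}(G,h)$ is a closed immersion for suitable $\mathbf{K}^S$) to \cite[Lem.~2.1.2]{KisinJAMS}, and then concludes openness from the dimension count $M^{\mathrm{ad}}\cong G^{\mathrm{ad}}$ and normality. You instead try to re-prove the closed immersion directly, which is a legitimate plan, but the argument as written has a gap exactly where the work is.

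The problem is the claimed finiteness of $\Sigma_0 = T(\mathbb{Q}) \cap \nu(Z_G(h)(\mathbb{R}))\cdot \nu(\mathbf{K}_S\mathbf{K}^S_0)$. For this to be finite you would need the Archimedean set $\nu(Z_G(h)(\mathbb{R}))\subset T(\mathbb{R})$ to be (close to) compact, but it need not be: $Z_G(h)(\mathbb{R})$ contains $Z(G)(\mathbb{R})$, whose image in $T(\mathbb{R})$ can be a positive-dimensional split torus, so $\nu(Z_G(h)(\mathbb{R}))$ contains unbounded directions. You acknowledge this ("the central $\mathbb{Q}$-split part of $T$ ... a finer unit-theoretic analysis is needed"), but this is not a loose end to be tidied later — it is exactly the content of the lemma. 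Moreover, even granted finiteness of $\Sigma_0$, the final step "shrink $\mathbf{K}^S$ so that $\nu(\mathbf{K}_S\mathbf{K}^S)\cdot\nu(Z_G(h)(\mathbb{R}))$ avoids $\Sigma_0\setminus\{1\}$" does not go through by a simple shrinking: those elements of $\Sigma_0$ that already lie in $\nu(\mathbf{K}_S)\cdot\nu(Z_G(h)(\mathbb{R}))$ cannot be removed by shrinking $\mathbf{K}^S$ alone. What actually makes this work (both in Deligne's original Prop.~1.15 and in Kisin's variant with a prescribed $\mathbf{K}_S$) is the full force of Chevalley's theorem applied to the finitely generated subgroup of $T(\mathbb{Q})$ generated by the relevant units, showing that a small enough $\mathbf{K}^S$ forces $\nu(\gamma)$ into $m$-th powers, from which one deduces $\nu(\gamma)=1$ using the specific structure of $Z_G(h)$. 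That is the step you have flagged but not carried out, so the proof is incomplete. Also note, as a smaller point, that $\Sigma_0$ is not finite "for any initial choice $\mathbf{K}^S_0\supset\mathbf{I}^S$"; one must take $\mathbf{K}^S_0$ compact open from the outset.
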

\begin{proof}
  Let $Z_G$ be the center of $G$ and let $G^{\mathrm{der}}$ be the derived group.
  The map $Z_G \times G^{\mathrm{der}} \rightarrow G$ is an isogeny. Since
  $G^{\mathrm{der}}$ is mapped to $\{ 1 \} \in T$ we see that $G$ and $Z_G$
  have the same image in $T$. We obtain that the homomorphism
  $Z_G \times M \rightarrow G$ is surjective. Therefore $Z_M \subset Z_G$.
  We obtain a morphism $M^{\mathrm{ad}} \rightarrow  G^{\mathrm{ad}}$ which is an isomorphism.
  Let $X_G$ be the set of conjugates of $h$ by elements of $G(\mathbb{R})$
  and define $X_M$ in the same way. Since the adjoint groups are the same, the induced map 
  $X_M \rightarrow X_G$ is an isomorphism onto a union of connected components
  of $X_G$. This implies that the Shimura varieties
  $\Sh_{\mathbf{I}}(M,h)_{\mathbb{C}}$ and
  $\Sh_{\mathbf{K}_S\mathbf{K}^S}(G,h)_{\mathbb{C}}$ have the same dimension.
  By \cite{KisinJAMS} we may choose $\mathbf{K}^{S}$ in such a way that
  (\ref{Einbettung1e}) is a closed immersion. But since both varieties
  are normal of the same dimension, the induced morphisms on the local
  rings must be isomorphisms. Therefore (\ref{Einbettung1e}) is also open. 
  \end{proof}
The main ingredient of the proof in \cite{KisinJAMS} is the following theorem.
Because it is needed for other purposes in this paper, we state it here. 

\begin{proposition} (Theorem of Chevalley) \; 
  Let $T$ be an algebraic torus over $\mathbb{Q}$. Let
  $\mathcal{E} \subset T(\mathbb{Q})$ be a finitely generated subgroup.
  Let $S$ be a  finite set of rational primes. We denote by
  $\mathbb{A}_{f}^{S}$ the restricted product over all $\mathbb{Q}_{\ell}$
  where $\ell$ runs over all prime numbers $\ell \notin S$. 
  Let $m$ be an integer.
 Then there exists a compact  open subgroup
  $C \subset T(\mathbb{A}_{f}^{S})$ such that
  $C \cap \mathcal{E} \subset \mathcal{E}^m$. 
\end{proposition}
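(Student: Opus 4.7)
My strategy is to reduce to the case where $T$ is a restriction of scalars of a split torus, and then construct $C$ as a product of congruence conditions at finitely many auxiliary places outside $S$, chosen via Chebotarev.

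First, pick a finite Galois extension $K/\BQ$ that splits $T$, yielding a closed immersion of algebraic groups over $\BQ$,
\begin{displaymath}
T \hookrightarrow T' := \Res_{K/\BQ}(\BG_m^d), \qquad d = \dim T.
\end{displaymath}
This induces a closed topological embedding $T(\BA_f^S) \hookrightarrow T'(\BA_f^S) = (\BA_{K,f}^{S'})^{\times d}$, where $S'$ is the set of finite places of $K$ above $S$. Any compact open $C' \subset T'(\BA_f^S)$ with $C' \cap \mathcal{E} \subset \mathcal{E}^m$ (viewing $\mathcal{E}$ inside $(K^\times)^d = T'(\BQ)$) yields $C := C' \cap T(\BA_f^S)$, compact open in $T(\BA_f^S)$ and satisfying the same property. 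So we may replace $(T, \mathcal{E})$ by $(T', \mathcal{E})$ and assume $T = \Res_{K/\BQ}(\BG_m^d)$, so that $\mathcal{E}$ is a finitely generated subgroup of $(K^\times)^d$.

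Since $\mathcal{E}$ is finitely generated, the quotient $\mathcal{E}/\mathcal{E}^m$ is finite; let $\bar e_1=1, \bar e_2,\ldots, \bar e_n$ denote its elements and, for each $i\ge 2$, fix a lift $e_i\in \mathcal{E}$. The plan is to find, for each $i\ge 2$, a finite place $v_i\notin S'$ of $K$ and an open subgroup $U_{v_i}$ of finite index in $(O_{K,v_i}^\times)^d$ such that
\begin{displaymath}
e_i \notin U_{v_i}\cdot \mathcal{E}^m \qquad \text{in } (K_{v_i}^\times)^d .
\end{displaymath}
Granting such data, set $C := \prod_{v\notin S'} W_v$, where $W_{v_i} := U_{v_i}$ and $W_v := (O_{K,v}^\times)^d$ for $v\notin S'\cup\{v_2,\ldots,v_n\}$. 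Then $C$ is a compact open subgroup of $(\BA_{K,f}^{S'})^{\times d}$, and if $x\in C\cap \mathcal{E}$ lay in the coset $\bar e_i\mathcal{E}^m$ with $i\ge 2$, writing $x = e_i\gamma^m$ with $\gamma\in \mathcal{E}$ would give $e_i = x\gamma^{-m}\in W_{v_i}\cdot \mathcal{E}^m = U_{v_i}\cdot \mathcal{E}^m$ at $v_i$, contradicting the choice of $(v_i,U_{v_i})$. Hence $x\in \mathcal{E}^m$, as required.

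The main obstacle is the construction of $(v_i, U_{v_i})$: we must exclude $e_i$ from the closure of $\mathcal{E}^m$ in $(K_{v_i}^\times)^d$. Let $\gamma_1,\ldots,\gamma_r$ be generators of the free part of $\mathcal{E}$ and consider the Kummer extension
\begin{displaymath}
L := K\bigl(\zeta_m,\ \gamma_1^{1/m},\ldots,\gamma_r^{1/m}\bigr) \supset K(\zeta_m).
\end{displaymath}
By Kummer theory, $\Gal(L/K(\zeta_m))$ embeds into $(\BZ/m\BZ)^r$, and the class of any element of $\mathcal{E}$ in $\mathcal{E}/\mathcal{E}^m$ (modulo its torsion part) is detected by the induced pairing with a Frobenius element. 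By Chebotarev's density theorem, for each non-trivial class $\bar e_i$ there exist infinitely many finite places $v\notin S'$ of $K$, unramified in $L$ and completely split in $K(\zeta_m)$ (so that $m\mid |k_v^\times|$), whose Frobenius in $\Gal(L/K(\zeta_m))$ is non-orthogonal to the Kummer class of $e_i$; picking one such $v_i$ and $U_{v_i}:=(1+v_iO_{K,v_i})^d$, the residue-field reduction sends $e_i$ outside the image of $\mathcal{E}^m$ in $(k_{v_i}^\times)^d/\bigl((k_{v_i}^\times)^m\bigr)^d$, which is exactly the desired condition. The (finite) torsion subgroup of $\mathcal{E}$ is handled similarly, using cyclotomic primes of sufficiently large order, completing the proof.
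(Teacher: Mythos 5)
Your initial reduction to $T' = \Res_{K/\BQ}(\BG_m^d)$ matches the paper's strategy (the paper simply invokes Chevalley \cite{Che} for $\Res_{L/\BQ}\BG_m$ and asserts the easy reduction), and the final deduction from the local conditions $(v_i, U_{v_i})$ is correct. But the Kummer--Chebotarev step has a genuine gap: the Kummer pairing only detects the image of $\mathcal{E}$ in $(K(\zeta_m)^\times)^d / ((K(\zeta_m)^\times)^m)^d$, so the claim that ``the class of any element of $\mathcal{E}$ in $\mathcal{E}/\mathcal{E}^m$ is detected by the induced pairing with a Frobenius'' is false whenever $\mathcal{E}$ is not saturated in $(K^\times)^d$. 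Concretely, take $K=\BQ$, $d=1$, $m=2$, $\mathcal{E}=\langle 4\rangle$: then $4\notin\mathcal{E}^2$, yet $L = \BQ(\zeta_2, 4^{1/2}) = \BQ$ is trivial, and $4$ is a square in every $\BQ_v^\times$, so no Frobenius can distinguish $4$ from $\mathcal{E}^2$ in the way you propose. (A suitable $v$ does exist --- e.g.\ $v=5$, where $4$ has even order in $\BF_5^\times$ while $16\equiv 1$ --- but it is found by a different mechanism than your Kummer pairing.)

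The gap is repairable along the lines of Chevalley's own argument: first enlarge the set $S'$ of excluded places so that every generator of $\mathcal{E}$ is an $S'$-unit, so that $\mathcal{E}\subset (O_{K,S'}^\times)^d$; then replace $\mathcal{E}$ by its saturation $\tilde{\mathcal{E}}$ in the finitely generated group $(O_{K,S'}^\times)^d$ (Dirichlet's $S$-unit theorem), which has finite index $N := [\tilde{\mathcal{E}}:\mathcal{E}]$, and replace $m$ by $mN$. For the saturated group one does have the injectivity you need: if $e\in\tilde{\mathcal{E}}$ and $e=\alpha^{mN}$ with $\alpha\in (K^\times)^d$, then $\alpha\in (O_{K,S'}^\times)^d$ and $\alpha$ lies in the saturation, so $e\in\tilde{\mathcal{E}}^{mN}$. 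A compact open $C$ with $C\cap\tilde{\mathcal{E}}\subset \tilde{\mathcal{E}}^{mN}$ then gives $C\cap\mathcal{E}\subset \tilde{\mathcal{E}}^{mN}\subset\mathcal{E}^m$, since $\tilde{\mathcal{E}}^{N}\subset\mathcal{E}$. With this saturation step inserted, your Chebotarev argument goes through; as written, it does not.
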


In the case where $L$ is a number field and
$T = \Res_{L/\mathbb{Q}} \mathbb{G}_{m,L}$, this is the first theorem in \cite{Che}.
The general case is easily reduced to this.


\begin{thebibliography}{ABC3} 
 \bibitem[B]{B} J.-F. Boutot, \textit{Uniformisation $p$-adique des vari\'et\'es de Shimura},  S\'eminaire Bourbaki, Vol. 1996/97. Ast\'erisque {\bf 245} (1997), Exp. No. 831, 307--322. 
 
 \bibitem[BC]{BC} J.-F. Boutot, H. Carayol, \textit{ Uniformisation $p$-adique des courbes de Shimura: les th\'eor\`emes de Cherednik et de Drinfeld}, in: Courbes modulaires et courbes de Shimura (Orsay, 1987/1988). Ast\'erisque {\bf 196-197} (1991),  45--158.
 
  \bibitem[BZ]{BZ} J.-F. Boutot, T. Zink, \textit{ The $p$-adic uniformization of Shimura curves}, preprint 95--107, Univ. Bielefeld (1995).
   
  \bibitem[Car]{C} H. Carayol, \textit{Sur la mauvaise r\'eduction des courbes de Shimura}, Compositio math. {\bf 59} (1986), 151-230.
  
  \bibitem[CF]{CF} Cassels-Fr\"ohlich, {Algebraic Number Theory}, 2nd ed. London Math. Soc., 2010.
\bibitem[Ch]{Ch} I. V. Cherednik, \textit{ Uniformization of algebraic curves by discrete arithmetic subgroups of $\PGL_2(k_w)$ with compact quotient spaces},  (Russian) Mat. Sb. (N.S.) {\bf 100}(142) (1976), no. 1, 59--88, 165. 
\bibitem[Che]{Che} C. Chevalley, \textit{Deux th\'eor\`emes d'arithm\'etique}, J. Math. Soc. Japan {\bf 3} (1951), 36--44.
\bibitem[De]{D-TS}P. Deligne, \textit{ Travaux de Shimura}, S\'em. Bourbaki 1970/71, expos\'e 389, Springer Lecture Notes 244 (1971).

\bibitem[DM]{DM} P. Deligne, D. Mumford, \textit{The irreducibility of the space of curves of given genus}. Inst. Hautes Etudes Sci. Publ. Math. {\bf 36} (1969), 75--109. 

\bibitem[DG]{DG} M.Demazure, P.Gabriel, \textit{Groupes Alg\'ebriques},
  Paris, Amsterdam 1970.
\bibitem[Dr]{Dr} V. G. Drinfeld,
  \textit{ Coverings of p-adic symmetric domains},  (Russian) Funkcional.
  Anal. i Prilozen. {\bf 10} (1976), no. 2, 29--40. 

\bibitem[K]{KisinJAMS}
M.~Kisin, \textit{Integral models for {S}himura varieties of abelian type}.  J.
  Amer. Math. Soc. {\bf 23} (2010), no.~4, 967--1012.
\bibitem[KR1]{KRalt} S. Kudla, M. Rapoport, \textit{An alternative description of the Drinfeld $p$-adic half-plane},  Annales de l'Institut Fourier {\bf 64}, no. 3 (2014), 1203--1228. 

\bibitem[KR2]{KRnew} S. Kudla, M. Rapoport, \textit{New cases of $p$-adic uniformization}, Ast\'erisque {\bf 370} (2015), 207--241.

\bibitem[KRZ]{KRZ} S. Kudla, M. Rapoport, Th. Zink,  \textit{On the $p$-adic uniformization of unitary Shimura curves}, arXiv:2007.05211 
 
\bibitem[Mum]{Mum} D. Mumford, \textit{An analytic construction of degenerating curves over complete local rings,} Compositio Math. {\bf 24} (1972), 129--174.

\bibitem[RZ]{RZ} M. Rapoport, Th. Zink,  Period spaces for $p$-divisible groups. Annals  of Mathematics Studies, {\bf 141}, Princeton University Press,  Princeton, 1996.

\bibitem[V]{V} V.E.Voskresenskij, \textit{Algebraic Tori} (in Russian), Izdat. ``Nauka'', Moscow, 1977.
  
\bibitem[Z1]{Z-sR} Th. Zink,
  \textit{\"Uber die schlechte Reduktion einiger Shimuramannigfaltigkeiten,}
  Compositio Math. {\bf 45} (1982), no. 1, 15--107. 

  \end{thebibliography}
\end{document}